\documentclass[12pt]{amsart}
\usepackage{amsmath,amsthm,amsfonts,amscd,amssymb,eucal,latexsym,mathrsfs,stmaryrd,enumitem}
\usepackage[all]{xy}

\setlength{\textwidth}{15.4cm}
\setlength{\oddsidemargin}{6mm}
\setlength{\evensidemargin}{6mm}

\newtheorem{theorem}{Theorem}[section]
\newtheorem{corollary}[theorem]{Corollary}
\newtheorem{lemma}[theorem]{Lemma}
\newtheorem{proposition}[theorem]{Proposition}

\theoremstyle{definition}
\newtheorem{definition}[theorem]{Definition}
\newtheorem{remark}[theorem]{Remark}

\newtheorem{example}[theorem]{Example}

\newtheorem{problem}[theorem]{Problem}

\newcommand{\cC}{{\mathcal C}}
\newcommand{\cU}{{\mathcal U}}

\newcommand{\cT}{{\mathcal T}}

\newcommand{\cP}{{\mathcal P}}

\newcommand{\cZ}{{\mathcal Z}}

\newcommand{\cV}{{\mathcal V}}

\newcommand{\sC}{{\mathscr C}}

\newcommand{\sP}{{\mathscr P}}

\newcommand{\Cb}{{\mathbb C}}
\newcommand{\Zb}{{\mathbb Z}}
\newcommand{\Tb}{{\mathbb T}}

\newcommand{\Nb}{{\mathbb N}}
\newcommand{\Zbn}{{\mathbb Z}_{\geq 0}}

\newcommand{\eps}{\varepsilon}
\newcommand{\unit}{1}

\DeclareMathOperator{\diam}{diam}

\DeclareMathOperator{\towdim}{dim_{tow}}

\DeclareMathOperator{\nucdim}{dim_{nuc}}
\DeclareMathOperator{\nucdimplus}{dim_{nuc}^{+1}}
\DeclareMathOperator{\amdim}{dim_{am}}

\DeclareMathOperator{\asdim}{asdim}

\DeclareMathOperator{\ftowdim}{dim_{ftow}}
\DeclareMathOperator{\dad}{dad}
\DeclareMathOperator{\dadplus}{dad^{+1}}
\DeclareMathOperator{\ftowdimplus}{dim_{ftow}^{+1}}

\DeclareMathOperator{\dimplus}{dim^{+1}}

\DeclareMathOperator{\towdimplus}{dim_{tow}^{+1}}
\DeclareMathOperator{\amdimplus}{dim_{am}^{+1}}

\allowdisplaybreaks

\begin{document}

\title{Dimension, comparison, and almost finiteness}
\author{David Kerr}
\address{
Department of Mathematics,
Texas A\&M University, 
College Station, TX 77843-3368, USA}
\email{kerr@math.tamu.edu}

\date{October 30, 2018}

\begin{abstract}
We develop a dynamical version of some of the theory 
surrounding the Toms--Winter conjecture for simple separable nuclear C$^*$-algebras
and study its connections to the C$^*$-algebra side via the crossed product.
We introduce an analogue of hyperfiniteness for free actions of amenable groups 
on compact spaces and show that it plays the role of $\cZ$-stability in the 
Toms--Winter conjecture in its relation to dynamical comparison, and 
also that it implies $\cZ$-stability of the crossed product. This property, 
which we call almost finiteness,
generalizes Matui's notion of the same name from the zero-dimensional setting.
We also introduce a notion of tower dimension as a partial analogue of nuclear dimension
and study its relation to dynamical comparison and almost finiteness,
as well as to the dynamical asymptotic dimension and amenability dimension
of Guentner, Willett, and Yu.
\end{abstract}

\maketitle

\section{Introduction}

Two of the cornerstones of the theory of von Neumann algebras with separable predual are the following
theorems due to Murray--von Neumann \cite{MurvNe43} and Connes \cite{Con76}, respectively:
\begin{enumerate}
\item there is a unique hyperfinite II$_1$ factor,

\item injectivity is equivalent to hyperfiniteness.
\end{enumerate}
Injectivity is a form of amenability that gives operator-algebraic expression to the 
idea of having an invariant mean, while hyperfiniteness means that the algebra
can be expressed as the weak operator closure
of an increasing sequence of finite-dimensional $^*$-subalgebras
(or, equivalently, that one has local $^*$-ultrastrong approximation by such $^*$-subalgebras \cite{EllWoo76}).
The basic prototype for the relation between an invariant-mean-type property
and finite or finite-dimensional approximation
is the equivalence between amenability and the F{\o}lner property
for discrete groups, and indeed Connes's proof of (ii)
draws part of its inspiration from the Day--Namioka proof of this equivalence.

In the theory of measured equivalence relations on standard probability spaces one has 
the following analogous pair of results, the first of which is a theorem of Dye \cite{Dye59}
and the second of which is the Connes--Feldmann--Weiss theorem \cite{ConFelWei81}
(here p.m.p.\ stands for probability-measure-preserving):
\begin{enumerate}
\item[(iii)] there is a unique hyperfinite ergodic p.m.p.\ equivalence relation,

\item[(iv)] amenability is equivalent to hyperfiniteness.
\end{enumerate}
Again amenability is defined as the existence of a suitable type of invariant mean, while hyperfiniteness
means that the relation is equal a.e.\ to an increasing union of subrelations with finite classes.
Thus among both
II$_1$ factors and p.m.p.\ equivalence relations there is a unique amenable object 
and it can be characterized via a finite or finite-dimensional approximation property.
The two settings are furthermore linked in a direct technical way by the
equivalence of the following three conditions for a free p.m.p.\ action of a countably infinite group:
\begin{enumerate}
\item[(v)] the orbit equivalence relation of the action is hyperfinite,

\item[(vi)] the crossed product is isomorphic to the unique hyperfinite II$_1$ factor,

\item[(vii)] the group is amenable.
\end{enumerate}
The implication (vii)$\Rightarrow$(v) was established
by Ornstein and Weiss as a consequence of their Rokhlin-type tower theorem \cite{OrnWei80,OrnWei87}
and can also be deduced from the Connes--Feldman--Weiss theorem. The implication
(vii)$\Rightarrow$(vi) was established by
Connes as an application of his result that injectivity implies hyperfiniteness and can 
also be derived in a more elementary way using the Ornstein--Weiss tower theorem
(it is interesting to note however that one needs the full force of Connes's theorem
in order to show that the group von Neumann algebra of an amenable group is hyperfinite).

In the type III case there is a similarly definitive theory, with the isomorphism classes 
being much more abundant but still classifiable in a nice way. For the present discussion
however we will leave this aside, since our focus will be on 
amenable type II phenomena in the topological-dynamical and C$^*$-algebraic realms, 
where the uniqueness in (i) and (iii) already gets replaced by a vast 
array of possible behaviour for which a complete classification is likely hopeless
without the addition of further regularity hypotheses.
In fact our principal aim has been to clarify what kind of regularity
properties on the dynamical side match up, at least through analogy and one-way implications,
with the key regularity properties of finite nuclear dimension, $\cZ$-stability,
and strict comparison that have helped set the stage for the dramatic advances
made over the last few years in the classification program for simple separable 
nuclear (i.e., amenable) C$^*$-algebras. In the process we will try to reimagine
the equivalence of (v) and (vi) in the context of actions on compact metrizable spaces
by introducing an analogue of hyperfiniteness and relating it to the $\cZ$-stability
of the crossed product.

For C$^*$-algebras, the strictest and simplest technical analogue of a hyperfinite 
von Neumann algebra would be an AF algebra, which similarly means that the algebra 
can be expressed as the closure of an increasing union of 
finite-dimensional $^*$-subalgebras (or, equivalently, 
that one has local approximation by such $^*$-subalgebras), 
but with the weak operator topology replaced by the norm topology.
In the 1970s, separable AF algebras were shown to be classified by their ordered $K$-theory (Elliott)
as well as by related combinatorial objects called Bratteli diagrams (Bratteli).
This reinforced the affinity with von-Neumann-algebraic hyperfiniteness by revealing a 
parallel structural tractability, however different the nature of the invariants.

What is remarkable is that the classification of AF algebras 
ended up being only the beginning of a much more ambitious
program that was launched in the 1980s by Elliott, who realized that $C^*$-inductive
limits of more general types of building blocks could be classified
by ordered $K$-theory paired with traces and suggested that a similar classification might hold for even
larger classes of (or perhaps even all) separable nuclear C$^*$-algebras.
The Elliott program has experienced
many successes and several surprising twists over the last twenty-five years 
through the efforts of many researchers 
and has recently culminated, in the simple unital UCT case, with a definitive classification 
which merely assumes the abstract regularity hypothesis of finite nuclear dimension
(this result combines theorems of Gong--Lin--Niu \cite{GonLinNiu15}, 
Elliott--Gong--Lin--Niu \cite{EllGonLinNiu15}, and White--Winter--Tikuisis \cite{TikWhiWin17},
while also incorporating the earlier Kirchberg--Phillips classification 
on the purely infinite side \cite{Kir94,Phi00}).
The UCT (universal coefficient theorem) is a homological condition relating
$K$-theory and $KK$-theory which is possibly redundant and is 
automatic for crossed products of actions of countable amenable groups 
on compact metrizable spaces by a result of Tu \cite{Tu99}.

That classifiability of simple separable unital C$^*$-algebras
in the UCT class now boils down to the simple question of whether the nuclear dimension is finite
belies the critical role that several other regularity properties have played and continue to play
in classification theory. The most important among these
are $\cZ$-stability (i.e., tensorial absorption of 
the Jiang--Su algebra $\cZ$), strict comparison, and tracial rank conditions.
Strict comparison is a C$^*$-algebraic version of the property 
that the comparability of projections in a type II von Neumann algebra is determined on traces
and applies more generally to positive elements in a C$^*$-algebra
with respect to the relation of Cuntz subequivalence.
The notion of tracial rank, which has its roots in work of Gong \cite{EllGon96} and Popa \cite{Pop97}
and was formalized and applied by Lin
in his seminal work of the 1990s as a way to circumvent inductive limit hypotheses 
in the stably finite case \cite{Lin01a,Lin01b,Lin04}, 
continues to do much of the technical legwork in classification.
The simple unital projectionless C$^*$-algebra $\cZ$ was 
introduced in the 1990s by Jiang and Su, who observed the parallel between its tensorial behaviour
and that of the hyperfinite II$_1$ factor $R$ \cite{JiaSu99}.

Winter's approach to classification, which was developed in the 2000s and has greatly impacted the 
course of the subject \cite{Win14}, made novel use of the operation of tensoring with $\cZ$, 
rendering greater urgency to the problem of recognizing when a C$^*$-algebra is $\cZ$-stable
and strengthening the analogy with $R$ through the latter's use in Connes's classification work,
which served as an inspiration.
Winter was also the first to realize the significance of
dimensional invariants based on nuclearity-type finite-dimensional approximation,
among which nuclear dimension has become the most eminent, and 
the connection between such invariants and $\cZ$-stability has become a centerpiece of his program.
In fact, it is a conjecture of Toms and Winter that for infinite-dimensional 
simple separable unital nuclear C$^*$-algebras the following three conditions are equivalent:
\begin{enumerate}
\item finite nuclear dimension, 

\item $\cZ$-stability,

\item strict comparison.
\end{enumerate}
The implications (i)$\Rightarrow$(ii) and (ii)$\Rightarrow$(iii) are theorems of Winter \cite{Win12}
and R{\o}rdam \cite{Ror04}, respectively. Matui and Sato proved (iii)$\Rightarrow$(ii) when
the set of extremal tracial states is finite and nonempty \cite{MatSat12},
and this was later generalized to the case where the extreme traces form a nonempty compact set 
with finite covering dimension \cite{KirRor14,Sat12,TomWhiWin15}.
The implication (ii)$\Rightarrow$(i) was first established by Sato, White, and Winter
in the case of a unique tracial state \cite{SatWhiWin15} and then more generally by
Bosa, Brown, Sato, Tikuisis, White, and Winter when the extreme tracial states form a nonempty compact set
\cite{BosBroSatTikWhiWin16}.
Thus the Toms--Winter conjecture has been fully confirmed in the case that the extreme tracial states form 
a nonempty compact set with finite covering dimension, and in particular when there is a unique tracial state.

The goal of these notes is to promote the development of a dynamical version of this theory 
surrounding the Toms--Winter conjecture, including connections to the C$^*$-algebra side
via the crossed product. This program requires first of all identifying the appropriate analogues of
nuclear dimension, strict comparison, and $\cZ$-stability.
There is a natural dynamical version of strict comparison 
which has appeared in lectures of Winter 
and has been studied by Buck in the case $G=\Zb$ \cite{Buc13}
(see also \cite{GlaWei95} for an earlier application of this concept to minimal
transformations of the Cantor set).
In parallel with \cite{Win10}, we simply refer to it as {\it comparison},
and also define the useful higher-order notions of $m$-comparison for integers $m\geq 0$,
with comparison representing the case $m=0$ (Definition~\ref{D-comparison}).
There are also by now several analogues of nuclear dimension, including the dynamic asymptotic
dimension and amenability dimension of Guentner, Willett, and Yu \cite{GueWilYu17}, 
and we will introduce here another, called
{\it tower dimension}, whose connections to nuclear dimension and dynamical comparison
are particularly stark, as shown in Sections~\ref{S-towdim nucdim} and \ref{S-towdim comparison}.
Although dynamic asymptotic dimension, amenability dimension, and tower dimension
do not coincide in general, there are inequalities relating them in the finite-dimensional case, 
and they are all equal when the space is zero-dimensional (see Section~\ref{S-dad}).

What has been missing is a dynamical substitute for $\cZ$-stability.
We introduce here a notion of {\it almost finiteness} for group actions on compact
metrizable spaces that will play the role of $\cZ$-stability in the Toms--Winter conjecture and 
of hyperfiniteness in the p.m.p.\ setting. We have adopted the terminology from Matui's
almost finiteness for groupoids, seeing that in the case of free actions on zero-dimensional
compact metrizable spaces our definition
reduces to Matui's (Section~\ref{S-zero dim}). As a comparison with the measure-theoretic framework, 
we recall that, for a free p.m.p.\ action $G\curvearrowright (X,\mu )$
of a countable amenable group, we can express the property of hyperfiniteness, in accordance
with the original proof of Ornstein and Weiss,
by saying that for every $\eps > 0$ there are measurable sets $V_1 , \dots , V_n \subseteq X$
and finite sets $S_1 , \dots , S_n \subseteq G$ with prescribed approximate invariance 
(in the F{\o}lner sense)
such that 
\begin{enumerate}
\item the sets $sV_i$ for $i=1,\dots ,n$ and $s\in S_i$ are pairwise disjoint, and

\item $\mu (X\setminus \bigsqcup_{i=1}^n S_i V_i ) < \eps$.
\end{enumerate}
The pair $(V_i ,S_i )$ we refer to as a {\it tower}, the set $V_i$ as the {\it base} of the tower,
the set $S_i$ as the {\it shape} of the tower, and the sets $sV_i$ for $s\in S_i$ as the {\it levels}
of the tower.
In the definition of almost finiteness, the sets $V_i$ are replaced by open sets and the smallness
of the remainder in (ii) is expressed topologically in terms of comparison with a portion of the
tower levels. Note in particular that almost finiteness implies that the acting group is amenable
because of the F{\o}lner requirement on the shapes of the towers.
In Theorem~\ref{T-af Z-stable} we prove that, for actions of countably infinite
groups on compact metrizable spaces, almost finiteness implies that the crossed product
is $\cZ$-stable. As we discuss at the end of Section~\ref{S-af Z-stable},
this can be used to give new examples of classifiable crossed products
for which dynamical techniques connected to nuclear dimension 
(such as in \cite{GueWilYu17} or Section~\ref{S-towdim nucdim}) are inapplicable 
due to finite-dimensionality
requirements on the space. What is particularly novel from the classification perspective
is that many of these examples can exhibit both infinite asymptotic dimension in the group
and positive topological entropy in the dynamics.

It is important to point out that almost finiteness is not an analogue of $\cZ$-stability by itself,
but rather of the conjunction of $\cZ$-stability and nuclearity. In view of 
classification theory, this combination (or its conjectural Toms--Winter equivalent, finite nuclear dimension)
could be argued to be the true topological analogue 
of hyperfiniteness, as opposed to just nuclearity, which is the direct technical translation 
of hyperfiniteness into the realm of C$^*$-algebras and as such is an essentially
measure-theoretic property. The interpretation of
almost finiteness as a combination of $\cZ$-stability and nuclearity is illustrated at a technical
level in the proof of Theorem~\ref{T-af Z-stable}, which relies on a
criterion for $\cZ$-stability that is special to the nuclear setting, 
due to Hirshberg and Orovitz (Theorem~\ref{T-Z-stable}).
The general characterization of $\cZ$-stability from which the Hirshberg--Orovitz result is derived
(Proposition~2.3 of \cite{Win10}) involves an additional approximate centrality requirement
that does not seem to translate into dynamical terms, and in particular does not seem to be amenable 
to the kind of tiling techniques that are integral to the proof of Theorem~\ref{T-af Z-stable}.

Consider now the following triad of properties for a free minimal action $G\curvearrowright X$
of a countably infinite amenable group on a compact metrizable space:
\begin{enumerate}
\item finite tower dimension,

\item almost finiteness,

\item comparison.
\end{enumerate}
In Theorem~\ref{T-comparison af} we establish the implication 
(ii)$\Rightarrow$(iii), as well as the converse (iii)$\Rightarrow$(ii)
in the case that the set $E_G (X)$ of ergodic $G$-invariant Borel probability measures is finite.
This precisely parallels the results of R{\o}rdam \cite{Ror04} and Matui--Sato \cite{MatSat12} 
mentioned above. Moreover, the argument for (iii)$\Rightarrow$(ii), like that of Matui and Sato, 
relies on an appeal to measure-theoretic structure, 
which in our case is the Ornstein--Weiss tower theorem.
In our proof of (iii)$\Rightarrow$(ii) it is enough that the action have
$m$-comparison for some $m\geq 0$, which is important as we also prove in Theorem~\ref{T-tower}
that if the covering dimension $\dim (X)$ is finite then (i) implies $m$-comparison 
for some $m\geq 0$,
and hence comparison in the case that $E_G (X)$ is finite.
Thus if $E_G (X)$ and $\dim (X)$ are both finite then we have 
(i)$\Rightarrow$(ii)$\Leftrightarrow$(iii), which we record as 
Theorem~\ref{T-comparison af tower}.

In \cite{Mat12} Matui showed that his property of almost finiteness for groupoids
has several nice consequences for the homology of the groupoid and its relation to 
both the topological full group and the $K$-theory of the reduced groupoid C$^*$-algebra.
In particular, if the groupoid is furthermore assumed to be principal (which amounts to
freeness in the case of actions) then
the first homology group is canonically isomorphic 
to the quotient of the topological full group by the subgroup generated by the elements
of finite order. Matui observes in Lemma~6.3 of \cite{Mat12} that 
the groupoids associated to free actions of $\Zb^d$ 
on zero-dimensional compact metrizable spaces are almost finite.
By combining the work of Szab{\'o}, Wu, and Zacharias in \cite{SzaWuZac17} with 
Theorems~\ref{T-zero dim} and Theorem~\ref{T-comparison af tower} we deduce
that this also holds for free minimal actions $G\curvearrowright X$
of finitely generated nilpotent groups on zero-dimensional compact metrizable spaces 
with $E_G (X)$ finite (Remark~\ref{R-nilpotent}).

While our results suggest that almost finiteness and comparison are full-fledged
dynamical analogues of their Toms--Winter counterparts, tower dimension and
its relatives unfortunately fall short on this account, despite their utility
in establishing finite nuclear dimension for crossed products of large classes of actions.
The problem is that tower dimension, dynamical asymptotic dimension, and amenability dimension
are too much affected by the dimensionality of the acting group and too little affected by the
dimensionality of the space and its interaction with the dynamics
(as captured by an invariant like mean dimension). On the side of the space, if we drop the assumption 
of finite-dimensionality then the implication (i)$\Rightarrow$(ii) fails, even for $G = \Zb$ 
(Example~\ref{E-not af}).
One can attempt to rectify this 
by imposing a small diameter condition on the tower levels in the definition
of tower dimension (we call the resulting invariant the {\it fine tower dimension}) but one would
not gain anything in the effort to relate dimensional invariants to almost 
finiteness and comparison since finite fine tower dimension already implies that $\dim (X)$ is finite.
Even more serious is the structural restriction imposed from the side of the group:
the tower dimension, dynamical asymptotic dimension, 
and amenability dimension are always infinite whenever $G$ has infinite asymptotic dimension,
which occurs frequently in the amenable case, an example being the Grigorchuk group.
In contrast, a generic free minimal action of any countably infinite amenable group
on the Cantor set is almost finite \cite{ConJacKerMarSewTuc17}.
Given that tower dimension seems as close as we can come in dynamics to being able to 
formally mimic the definition of nuclear dimension, and that it connects naturally to 
dynamical $m$-comparison and nuclear dimension in one direction of logical implication, 
we will perhaps have to be content with the prospect that the Toms--Winter conjecture
cannot be fully analogized within the coordinatized framework of group actions.
On the other hand, it is conceivable that the tower dimension of a free minimal action 
$G\curvearrowright X$ is always finite when $G$ is amenable and has finite asymptotic dimension 
and $X$ has finite covering dimension. This is indeed what happens if we furthermore
assume $G$ to be finitely generated and nilpotent (see Example~\ref{E-nilpotent}).

One more curious fact worth mentioning here is the possibility, suggested by 
the work of Elliott and Niu \cite{EllNiu14},
that for free minimal actions the small boundary property (or, alternatively, zero mean dimension)
is equivalent to $\cZ$-stability of the crossed product. 
Elliott and Niu showed that for free minimal $\Zb$-actions
the small boundary property (which is equivalent to zero mean dimension in this case)
implies $\cZ$-stability. The small boundary property and mean dimension are
formally very different from either nuclear dimension or $\cZ$-stability
and are more akin to slow dimension growth in inductive limit C$^*$-algebras,
as demonstrated by the proof in \cite{EllNiu14}, which employs arguments from an article 
of Toms on the equivalence of slow dimension growth and $\cZ$-stability for 
unital simple ASH algebras \cite{Tom11}.

We begin in Section~\ref{S-notation} by laying down some basic notation and terminology
used throughout the paper. In Section~\ref{S-comparison} we define (dynamical) comparison,
and also more generally $m$-comparison. Section~\ref{S-towdim} introduces tower dimension
and Section~\ref{S-dad} establishes inequalities relating it to dynamical asymptotic dimension 
and amenability dimension. In Section~\ref{S-towdim nucdim} we show how to derive an upper 
bound for the nuclear dimension of the crossed product of a free action of an amenable group
in terms of the tower dimension of the action and the covering dimension of the space.
In Section~\ref{S-towdim comparison} we prove that if the acting group is amenable and 
the tower dimension and covering dimension are both finite, with values $d$ and $c$, 
then the action has $((c+1)(d+1)-1)$-comparison.
In Section~\ref{S-af} we introduce almost finiteness and in Section~\ref{S-af comparison}
we establish Theorem~\ref{T-comparison af} relating it to comparison.
In Section~\ref{S-zero dim} we prove
that, for free actions on the Cantor set, almost finiteness is equivalent to having
clopen tower decompositions of the space with almost invariant shapes, so that it
reduces to Matui's notion of almost finiteness in this setting.
The behaviour of almost finiteness under extensions is investigated in Section~\ref{S-af extn}.
In Section~\ref{S-af Z-stable} we show that almost finiteness implies $\cZ$-stability
and use it to give new examples of classifiable crossed products.
Finally, in Section~\ref{S-type} we prove that, for free minimal actions 
of an amenable group on the Cantor set, almost finiteness implies that the clopen
type semigroup is almost unperforated, that this almost unperforation
in turn implies comparison, and that all three of these properties are equivalent
when the set $E_G (X)$ of ergodic $G$-invariant Borel probability measures is finite.
\medskip

\noindent{\it Acknowledgements.}
The author was partially supported by NSF grant DMS-1500593. He thanks
George Elliott, Xin Ma, and the referee for comments and corrections, and G{\'a}bor Szab{\'o} and Jianchao Wu 
for helpful discussions.

\section{General notation and terminology}\label{S-notation}

Throughout the paper $G$ is a countable discrete group.

For a compact Hausdorff space $X$, we write $C(X)$
for the unital C$^*$-algebra of continuous complex-valued functions on $X$.
For an open set $V\subseteq X$ we denote by $C_0 (V)$ the C$^*$-algebra of 
continuous complex-valued functions on $V$ which vanish at infinity,
which can be naturally viewed as a sub-C$^*$-algebra of $C(X)$.
We write $M(X)$ for the
convex set of all regular Borel probability measures on $X$, which is 
compact as a subset of the dual $C(X)^*$ equipped with the weak$^*$ topology. 
We denote the indicator function of a set $A\subseteq X$ by $\unit_A$.
The covering dimension of $X$ is written $\dim (X)$.

Actions on compact Hausdorff spaces are always assumed to be continuous.
Let $G\curvearrowright X$ be such an action. The image of a point $x\in X$ under a
group element $s$ is expressed as $sx$. For $A\subseteq X$, $s\in G$, and $K\subseteq G$
we write $sA = \{ sx : x\in A \}$ and $KA = \{ sx : s\in K,\, x\in A \}$. 
We write $M_G (X)$ for the convex set of $G$-invariant regular Borel probability measures on $X$,
which is a weak$^*$ compact subset of $M(X)$. We write $E_G (X)$ for the set of extreme points
of $M_G (X)$, which are precisely the ergodic measures in $M_G (X)$.

The {\it chromatic number} of a family $\cC$ of subsets of a given set is defined as
the least $d\in\Nb$ such that there is a partition of $\cC$ into $d$ subcollections
each of which is disjoint.

For any of the various notions of dimension which will appear, we will add a superscript
$+1$ to denote the value of the dimension plus one, so that $\dimplus (X) = \dim (X) + 1$, 
for example. 
This ``denormalization'' serves to streamline many formulas.

\section{Comparison and $m$-comparison}\label{S-comparison}

Throughout $G\curvearrowright X$ is an action on a compact metrizable space.

\begin{definition}
Let $m\in\Nb$. Let $A,B\subseteq X$. We write 
$A\prec_m B$ if for every closed set $C\subseteq A$ there exist a finite collection $\cU$
of open subsets of $X$ which cover $C$, an $s_U \in G$ for each $U\in\cU$, and a partition 
$\cU = \cU_0 \sqcup\cdots\sqcup \cU_m$ such that for each $i=0,\dots ,m$ the images 
$s_U U$ for $U\in\cU_i$ are pairwise disjoint subsets of $B$.
When $m=0$ we also write $A\prec B$.
\end{definition}

Note that the relation $\prec$ is transitive, as is straightforward to check.

\begin{definition}\label{D-comparison}
Let $m\in\Nb$. The action $G\curvearrowright X$ 
is said to have {\it $m$-comparison} if $A\prec_m B$ for all nonempty open sets $A,B\subseteq X$ 
satisfying $\mu (A) < \mu (B)$ for every $\mu\in M_G (X)$.
When $m=0$ we will also simply say that the action has {\it comparison}.
\end{definition}

The condition of nonemptiness on $A$ and $B$ above is included so as to cover the situation when
$M_G (X)$ is empty and can otherwise be dropped, as for example when $G$ is amenable.

The following lemma will be used repeatedly throughout the paper
and will be needed here to verify Proposition~\ref{P-open closed}.

\begin{lemma}\label{L-portmanteau}
Let $X$ be a compact metrizable space with compatible metric $d$
and let $\Omega$ be a weak$^*$ closed subset of $M(X)$.
Let $A$ be a closed subset of $X$ and $B$ an open subset of $X$
such that $\mu (A) < \mu (B)$ for all $\mu\in\Omega$. Then there exists
an $\eta > 0$ such that the sets 
\begin{align*}
B_- &= \{ x\in X : d(x,X\setminus B) > \eta \} , \\
A_+ &= \{ x\in X : d(x,A) \leq \eta \}
\end{align*}
satisfy $\mu (A_+ ) + \eta \leq \mu (B_- )$ for all $\mu\in \Omega$.
\end{lemma}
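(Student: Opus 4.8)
The plan is to argue by contradiction, using the weak$^*$ compactness of $\Omega$ together with the portmanteau theorem for weak convergence of probability measures.

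First I would set up the auxiliary families. For $\eta>0$ put $B_-^{\eta}=\{x\in X: d(x,X\setminus B)>\eta\}$ and $A_+^{\eta}=\{x\in X: d(x,A)\leq\eta\}$, so that the sets $B_-,A_+$ in the statement are $B_-^{\eta},A_+^{\eta}$. Since $d$ is a compatible metric, the functions $x\mapsto d(x,X\setminus B)$ and $x\mapsto d(x,A)$ are continuous, so each $B_-^{\eta}$ is open and each $A_+^{\eta}$ is closed; moreover, because $B$ is open and $A$ is closed, $B_-^{\eta}\uparrow B$ and $A_+^{\eta}\downarrow A$ as $\eta\downarrow 0$. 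Consequently, for every fixed $\mu\in M(X)$, continuity of the probability measure $\mu$ along monotone sequences of Borel sets gives $\mu(B_-^{\eta})\uparrow\mu(B)$ and $\mu(A_+^{\eta})\downarrow\mu(A)$. (If $\Omega=\emptyset$ the assertion is vacuous, so assume $\Omega\neq\emptyset$.)

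Now suppose the conclusion fails. Negating the statement, for every $n\geq 1$ there is a $\mu_n\in\Omega$ with $\mu_n(B_-^{1/n})<\mu_n(A_+^{1/n})+1/n$. As $\Omega$ is a weak$^*$ closed subset of the weak$^*$ compact, metrizable set $M(X)$, it is itself weak$^*$ compact and metrizable, so after passing to a subsequence we may assume $\mu_n\to\mu$ weak$^*$ for some $\mu\in\Omega$. The point is to pass both sides of the inequality to the limit despite the sets varying with $n$: fix an auxiliary $\eta_0>0$ and note that once $1/n<\eta_0$ we have $B_-^{\eta_0}\subseteq B_-^{1/n}$ and $A_+^{1/n}\subseteq A_+^{\eta_0}$, whence $\mu_n(B_-^{1/n})\geq\mu_n(B_-^{\eta_0})$ and $\mu_n(A_+^{1/n})\leq\mu_n(A_+^{\eta_0})$. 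Applying the portmanteau theorem ($\liminf_n\nu_n(U)\geq\nu(U)$ for $U$ open, $\limsup_n\nu_n(F)\leq\nu(F)$ for $F$ closed) to the fixed open set $B_-^{\eta_0}$ and the fixed closed set $A_+^{\eta_0}$ then yields
\[
\liminf_{n}\mu_n(B_-^{1/n})\ \geq\ \mu(B_-^{\eta_0}),
\qquad
\limsup_{n}\mu_n(A_+^{1/n})\ \leq\ \mu(A_+^{\eta_0}).
\]
Letting $\eta_0\downarrow 0$ and invoking the monotone limits above gives $\liminf_n\mu_n(B_-^{1/n})\geq\mu(B)$ and $\limsup_n\mu_n(A_+^{1/n})\leq\mu(A)$. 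Since $\mu_n(B_-^{1/n})\leq\mu_n(A_+^{1/n})+1/n$ and $1/n\to 0$, this forces
\[
\mu(B)\ \leq\ \liminf_{n}\mu_n(B_-^{1/n})\ \leq\ \limsup_{n}\mu_n(A_+^{1/n})\ \leq\ \mu(A),
\]
contradicting $\mu(A)<\mu(B)$ (legitimate since $\mu\in\Omega$). Hence some $\eta>0$ has the required property.

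The one delicate point is exactly this interchange of limits: the portmanteau theorem applies to a fixed Borel set, while here both the measures $\mu_n$ and the sets $B_-^{1/n},A_+^{1/n}$ move with $n$. Sandwiching by the fixed sets $B_-^{\eta_0}$ and $A_+^{\eta_0}$ and sending $\eta_0\to 0$ only after the limit in $n$ has been taken resolves this, and is the only step that needs care; the rest (continuity of $d(\cdot,A)$ and $d(\cdot,X\setminus B)$, monotone continuity of $\mu$, weak$^*$ sequential compactness of $\Omega$) is routine.
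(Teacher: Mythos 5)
Your proof is correct and follows essentially the same route as the paper: negate the statement to extract a sequence $\mu_n$ violating the inequality with $\eta=1/n$, pass to a weak$^*$ convergent subsequence using compactness of $\Omega$, compare the moving sets $A_+^{1/n}, B_-^{1/n}$ against fixed ones via monotonicity, apply the portmanteau theorem, and finally invoke $A_+^{\eta}\downarrow A$, $B_-^{\eta}\uparrow B$. The only (cosmetic) difference is the order of limits: you first form $\liminf/\limsup$ over $n$ and then send $\eta_0\downarrow 0$, while the paper derives $\mu(A_{n_j})\geq\mu(B_{n_j})$ for each fixed $j$ and then lets $j\to\infty$.
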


\begin{proof}
Suppose that the conclusion does not hold.
Then for every $n\in\Nb$ we can find a $\mu_n \in \Omega$
such that the sets 
\begin{align*}
B_n &= \{ x\in X : d(x,X\setminus B) > 1/n \} , \\
A_n &= \{ x\in X : d(x,A) \leq 1/n \}
\end{align*}
satisfy $\mu_n (A_n ) + 1/n > \mu_n (B_n )$. 
By the compactness of $\Omega$ there is a subsequence $\{ \mu_{n_k} \}$ of $\{ \mu_n \}$ 
which weak$^*$ converges to some $\mu\in\Omega$.
For a fixed $j\in\Nb$ we have, for every $k\geq j$,
\begin{align*}
\mu_{n_k} (A_{n_j} ) + \frac{1}{n_k} 
\geq \mu_{n_k} (A_{n_k} ) + \frac{1}{n_k}
> \mu_{n_k} (B_{n_k} ) 
\geq \mu_{n_k} (B_{n_j} ) ,
\end{align*}
and since $A_{n_j}$ is closed and $B_{n_j}$ is open the portmanteau theorem 
(\cite{Kec95}, Theorem~17.20) then yields
\begin{align*}
\mu (A_{n_j} ) 
\geq \limsup_{k\to\infty} \mu_{n_k} (A_{n_j}) 
\geq \liminf_{k\to\infty} \mu_{n_k} (B_{n_j} ) 
\geq \mu (B_{n_j} ) .
\end{align*}
Note that $B$ is equal to the increasing union of the sets $B_{n_j}$
for $j\in\Nb$, while $A$ is equal to the decreasing intersection
of the sets $A_{n_j}$ for $j\in\Nb$. Thus
\begin{align*}
\mu (A) 
= \lim_{j\to\infty} \mu (A_{n_j} ) 
\geq \lim_{j\to\infty} \mu (B_{n_j} )
= \mu (B) ,
\end{align*}
contradicting our hypothesis.
\end{proof}

In practice, we will use the following characterization as our effective definition
of $m$-comparison, usually without saying so.

\begin{proposition}\label{P-open closed}
Let $m\in\Nb$. The action $G\curvearrowright X$ has $m$-comparison if and only if $A\prec_m B$
for every closed set $A\subseteq X$ and nonempty open set $B\subseteq X$ 
satisfying $\mu (A) < \mu (B)$ for all $\mu\in M_G (X)$.
\end{proposition}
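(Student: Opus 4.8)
The plan is to prove both implications, with the forward direction being essentially trivial and the reverse direction — from the ``closed $A$'' formulation to the ``open $A$'' formulation in Definition~\ref{D-comparison} — being where the work lies; Lemma~\ref{L-portmanteau} is precisely the tool for that passage.

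For the forward implication, suppose the action has $m$-comparison. Let $A$ be closed, $B$ nonempty open, with $\mu(A) < \mu(B)$ for all $\mu\in M_G(X)$. If $A$ is empty the relation $A\prec_m B$ holds vacuously (take the empty cover), so assume $A$ is nonempty. The issue is that $A$ need not be open, so we cannot directly apply the definition. However, $A\prec_m B$ only asks us to handle closed subsets $C\subseteq A$, and any such $C$ is also a closed subset of any open neighborhood of $A$. So it suffices to find an open set $A'$ with $A\subseteq A'$ and $\mu(A') < \mu(B)$ for all $\mu\in M_G(X)$: then $A'\prec_m B$ by $m$-comparison, and since every closed $C\subseteq A$ is a closed subset of $A'$, we get the covering/partition data witnessing $A\prec_m B$. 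To produce $A'$: if $M_G(X)=\emptyset$ take $A' = X\setminus\{b\}$ for any point $b$ not in the closure issue — actually simpler, just note that when $M_G(X)=\emptyset$ the condition $A'\prec_m B$ must hold for all nonempty open $A',B$, so pick any nonempty open $A'\supseteq A$ with $A'\neq X$ if $B\neq X$, and if $B = X$ one still needs $\mu$-freeness vacuous, handled directly. When $M_G(X)\neq\emptyset$, apply Lemma~\ref{L-portmanteau} with $\Omega = M_G(X)$ (which is weak$^*$ compact, hence weak$^*$ closed in $M(X)$) to obtain $\eta>0$ such that $A_+ = \{x : d(x,A)\leq\eta\}$ and $B_- = \{x : d(x,X\setminus B)>\eta\}$ satisfy $\mu(A_+) + \eta \leq \mu(B_-) \leq \mu(B)$ for all $\mu\in M_G(X)$. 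The open set $A' := \{x : d(x,A) < \eta\}$ contains $A$ and is contained in $A_+$, so $\mu(A') \leq \mu(A_+) < \mu(B)$; thus $A' \prec_m B$, which gives $A\prec_m B$ as explained.

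For the reverse implication, assume $A\prec_m B$ holds for all closed $A$ and nonempty open $B$ with $\mu(A)<\mu(B)$ on $M_G(X)$. Let $A,B$ be nonempty open sets with $\mu(A)<\mu(B)$ for all $\mu\in M_G(X)$; we must show $A\prec_m B$, i.e., handle an arbitrary closed $C\subseteq A$. Since $C$ is closed and $C\subseteq A$, we have $\mu(C)\leq\mu(A) < \mu(B)$ for all $\mu\in M_G(X)$, so by hypothesis $C \prec_m B$. But $C\prec_m B$ unwinds to: for every closed $C'\subseteq C$ there is a finite open cover of $C'$ with the stated partition-and-disjointness property in $B$. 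In particular, taking $C' = C$ itself gives exactly the data required to witness that the \emph{particular} closed set $C\subseteq A$ satisfies the defining clause for $A\prec_m B$. Since $C\subseteq A$ was an arbitrary closed subset, $A\prec_m B$ follows.

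The only genuine obstacle is the $M_G(X)=\emptyset$ bookkeeping in the forward direction: one must be careful that the nonemptiness hypotheses in Definition~\ref{D-comparison} are respected when fabricating the intermediate open set $A'$, and that the relation $A\prec_m B$ is still meaningful (and, when $M_G(X)=\emptyset$, is simply asserted to hold for all nonempty open pairs). Everything else is a routine matter of unwinding definitions, together with the single quantitative input from Lemma~\ref{L-portmanteau} that lets an open neighborhood of a closed set inherit the strict measure inequality.
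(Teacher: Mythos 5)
Your proof is correct and takes essentially the same approach as the paper: the nontrivial direction (that $m$-comparison implies the closed--open formulation) is handled by fattening the closed set $A$ to an open set $A'$ using Lemma~\ref{L-portmanteau} and then applying $m$-comparison, while the other direction is a direct unwinding of the definition of $\prec_m$. One small slip: your introductory sentence labels the directions backwards (you say Lemma~\ref{L-portmanteau} is used for the reverse implication, but your own paragraphs correctly apply it to the forward one), and the aside about $M_G(X)=\emptyset$ is unnecessary since Lemma~\ref{L-portmanteau} holds vacuously with $\Omega=\emptyset$.
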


\begin{proof}
For the nontrivial direction, suppose that the action has $m$-comparison.
Let $A$ be a closed subset of $X$ and $B$ a nonempty open subset of $X$  
such that $\mu (A) < \mu (B)$ for all $\mu\in M_G (X)$.
Fixing a compatible metric $d$ on $X$,
by Lemma~\ref{L-portmanteau} there is an $\eta > 0$ such that the open
set $A' = \{ x\in X : d(x,A) < \eta \}$ satisfies $\mu (A') < \mu (B)$ for all $\mu\in M_G (X)$.
Then $A' \prec_m B$ by $m$-comparison, and so $A \prec_m B$, as desired.
\end{proof}

The remainder of the section is aimed at showing
that if $X$ is zero-dimensional then we can express comparison
using clopen sets and clopen partitions, as asserted by Proposition~\ref{P-clopen comparison}.

\begin{proposition}\label{P-clopen subequivalence}
Suppose that $X$ is zero-dimensional. Let $m\in\Nb$, and let $A$ and $B$ be clopen subsets of $X$.
Then $A\prec_m B$ if and only if there exist
a clopen partition $\cP$ of $A$, an $s_U \in G$ for every $U\in\cP$, and a partition
$\cP = \cP_0 \sqcup\cdots\sqcup \cP_m$ such that for each $i=0,\dots ,m$
the images $s_U U$ for $U\in\cP_i$ are pairwise disjoint subsets of $B$.
\end{proposition}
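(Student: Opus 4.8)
The plan is to prove both directions. The backward implication is essentially immediate: if such a clopen partition $\cP$ with the prescribed elements $s_U$ and splitting into $\cP_0 \sqcup \cdots \sqcup \cP_m$ exists, then given any closed $C \subseteq A$ we simply take $\cU = \cP$ (each member of $\cP$ is open, and together they cover all of $A$, hence $C$), and the required disjointness and containment in $B$ come for free from the partition data. So the content is in the forward direction.

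For the forward direction, suppose $A \prec_m B$. Since $A$ is clopen, it is itself a closed subset of $A$, so applying the definition of $\prec_m$ with $C = A$ we obtain a finite collection $\cU$ of open sets covering $A$, elements $s_U \in G$ for $U \in \cU$, and a partition $\cU = \cU_0 \sqcup \cdots \sqcup \cU_m$ such that for each $i$ the images $s_U U$ for $U \in \cU_i$ are pairwise disjoint subsets of $B$. The issue is that the $U$'s are merely open and overlap, whereas we want a \emph{partition} of $A$ into \emph{clopen} pieces. First I would shrink and disjointify: using zero-dimensionality and compactness of $A$, refine the open cover $\{U \cap A : U \in \cU\}$ of $A$ to a finite clopen partition $\cP$ of $A$ that is \emph{subordinate} to it, meaning each $P \in \cP$ is contained in some $U(P) \in \cU$. (Concretely: cover $A$ by finitely many clopen sets each inside some $U$, then disjointify them in a linear order, which preserves clopenness.) Now assign to each $P \in \cP$ the group element $s_P := s_{U(P)}$ and put $P$ into $\cP_i$ whenever $U(P) \in \cU_i$; if $U(P)$ lies in several $\cU_i$, just pick one. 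Then $s_P P = s_{U(P)} P \subseteq s_{U(P)} U(P) \subseteq B$, and for $P \neq P'$ in the same $\cP_i$, either $U(P) = U(P')$ — in which case $s_P P$ and $s_{P'} P'$ are disjoint because $P, P'$ are disjoint and $s_P = s_{P'}$ — or $U(P) \neq U(P')$ both in $\cU_i$, in which case $s_P P \subseteq s_{U(P)} U(P)$ and $s_{P'} P' \subseteq s_{U(P')} U(P')$ are contained in two disjoint sets. Either way the levels within each $\cP_i$ are pairwise disjoint, which is exactly what is required.

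The only real subtlety — and the step I would be most careful about — is the disjointification in a zero-dimensional space: I need that a finite open cover of a clopen set $A$ can be refined to a finite clopen partition of $A$, each piece sitting inside a cover element. This is standard but should be stated cleanly: by compactness of $A$ and zero-dimensionality of $X$, each point $x \in A$ has a clopen neighbourhood $W_x \subseteq U$ for some $U \in \cU$; extract a finite subcover $W_{x_1}, \dots, W_{x_k}$ of $A$, intersect with $A$ (still clopen, since $A$ is clopen), and replace $W_{x_j} \cap A$ by $(W_{x_j} \cap A) \setminus \bigcup_{l < j} W_{x_l}$, which is a difference of clopen sets and hence clopen. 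Discarding empty pieces yields the desired partition $\cP$, with $U(P)$ recorded along the way. Once this is in place the rest of the argument is the bookkeeping described above, and I would not belabor it.
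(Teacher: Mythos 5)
Your proposal is correct and follows essentially the same path as the paper's proof: apply the definition with $C=A$, use compactness and zero-dimensionality to produce a clopen cover of $A$ subordinate to the given open cover, disjointify it in a fixed order, and inherit the group element and colour of each clopen piece from the open set containing it. The paper's version shrinks each open set to a clopen one still covering $A$ before disjointifying rather than starting from point-neighbourhoods, but the two bookkeepings are interchangeable and your verification of the disjointness of the images is exactly the point the paper leaves implicit.
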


\begin{proof}
For the nontrivial direction, suppose that $A\prec_m B$. 
Then there exist $j_0 , \dots , j_m \in\Nb$,
open sets $U_{i,j}$ for $0\leq i\leq m$ and $1\leq j\leq j_i$ which 
cover $A$, and $s_{i,j} \in G$ such that for each $i=0,\dots ,m$
the images $s_{i,j} U_{i,j}$ for $j=1,\dots , j_i$ are pairwise disjoint subsets of $B$.
By the normality of $X$ we can then find, for all $i,j$,
a closed set $C_{i,j} \subseteq U_{i,j}$ such that the sets $C_{i,j}$ for all $i,j$ still cover $A$.
By compactness and zero-dimensionality, for given $i,j$ we can produce finitely many clopen sets 
contained in $U_{i,j}$ which cover $C_{i,j}$, and so we may assume that $C_{i,j}$ is clopen
by replacing it with the union of these clopen sets.
We now recursively define,
with respect to the lexicographic order on the pairs $i,j$,
\begin{align*}
A_{i,j} = \bigg(\bigg(A\setminus \bigsqcup_{k=0}^{i-1} \bigsqcup_{l=1}^{j_i} A_{k,l} \bigg) 
\cap C_{i,j} \bigg)\setminus (C_{i,1} \cup\cdots\cup C_{i,j-1} ) .
\end{align*}
These sets form a clopen partition of $A$ and for each $i=0,\dots ,m$ the images 
$s_{i,j} A_{i,j}$ for $j=1,\dots , j_i$ are pairwise disjoint subsets of $B$, as desired.
\end{proof}

\begin{proposition}\label{P-clopen comparison}
Suppose that $X$ is zero-dimensional. Let $m\in\Nb$.
Then the action $G\curvearrowright X$ has
$m$-comparison if and only if for all nonempty clopen sets $A,B\subseteq X$ 
satisfying $\mu (A) < \mu (B)$ for every $\mu\in M_G (X)$ there exist
a clopen partition $\cP$ of $A$, an $s_U \in G$ for every $U\in\cP$, and a partition
$\cP = \cP_0 \sqcup\cdots\sqcup \cP_m$ such that for each $0=1,\dots ,m$
the images $s_U U$ for $U\in\cP_i$ are pairwise disjoint subsets of $B$.
\end{proposition}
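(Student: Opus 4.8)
The plan is to reduce Proposition~\ref{P-clopen comparison} to the already-established characterization in Proposition~\ref{P-open closed} together with the clopen refinement of subequivalence in Proposition~\ref{P-clopen subequivalence}. One direction is immediate: if the clopen condition holds, then for any closed $A$ and nonempty open $B$ with $\mu(A) < \mu(B)$ for all $\mu \in M_G(X)$, we approximate from outside by a clopen set $A'$ with $A \subseteq A'$ and still $\mu(A') < \mu(B)$ for all $\mu \in M_G(X)$ (using Lemma~\ref{L-portmanteau} to get room, then compactness and zero-dimensionality to squeeze a clopen set between $A$ and a small metric neighborhood), and similarly shrink $B$ to a clopen $B'$ with $A' \prec_m B'$ guaranteed by hypothesis, hence $A \prec_m B' \subseteq B$. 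Then $m$-comparison follows from Proposition~\ref{P-open closed}.

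For the substantive direction, suppose the action has $m$-comparison, and let $A, B$ be nonempty clopen sets with $\mu(A) < \mu(B)$ for every $\mu \in M_G(X)$. Since $A$ is in particular closed and $B$ is open, Proposition~\ref{P-open closed} gives $A \prec_m B$. Now $A$ and $B$ are clopen, so Proposition~\ref{P-clopen subequivalence} applies directly and converts this into exactly the asserted clopen-partition data: a clopen partition $\cP$ of $A$, elements $s_U \in G$, and a partition $\cP = \cP_0 \sqcup \cdots \sqcup \cP_m$ so that for each $i$ the images $s_U U$ for $U \in \cP_i$ are pairwise disjoint subsets of $B$. That is precisely the conclusion.

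The main point requiring care is the clopen-neighborhood step in the easy direction: one must verify that when $X$ is zero-dimensional and $A$ is closed, there is a clopen $A'$ with $A \subseteq A' \subseteq \{x : d(x,A) < \eta\}$. This follows because the closed set $A$ and the closed set $\{x : d(x,A) \geq \eta\}$ are disjoint, each point of $A$ has a clopen neighborhood inside $\{x : d(x,A) < \eta\}$, and compactness of $A$ yields a finite clopen cover whose union is the desired $A'$; symmetrically one produces a clopen $B' \subseteq B$ with $\mu(A') < \mu(B')$ for all $\mu \in M_G(X)$ by first enlarging $A$ to $A'$ with room to spare via Lemma~\ref{L-portmanteau} and then shrinking $B$. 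Beyond this, everything is a direct citation of the two preceding propositions, so I expect no real obstacle. The only subtlety worth a sentence in the writeup is that the statement as displayed has an evident typo (``for each $0=1,\dots,m$'' should read ``for each $i=0,\dots,m$''), which the proof silently corrects.
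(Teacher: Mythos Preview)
Your proposal is correct and follows essentially the same approach as the paper: both directions are handled by combining Lemma~\ref{L-portmanteau} (to produce clopen approximants $A'$, $B'$ with the measure inequality preserved) and Proposition~\ref{P-clopen subequivalence} (to upgrade $\prec_m$ between clopen sets to the clopen-partition form). The only cosmetic difference is that you label the direction ``$m$-comparison $\Rightarrow$ clopen condition'' as the substantive one, whereas the paper calls it immediate---and indeed it is, since a clopen $A$ is already open, so $A\prec_m B$ follows straight from the definition (or from Proposition~\ref{P-open closed}, as you note), and then Proposition~\ref{P-clopen subequivalence} finishes; the approximation work via Lemma~\ref{L-portmanteau} lives entirely in the other direction.
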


\begin{proof}
The forward direction is immediate from Proposition~\ref{P-clopen subequivalence}.
Suppose conversely that the action satisfies the condition in the proposition statement
involving clopen sets and let us establish $m$-comparison. Let $A$ be a closed subset of $X$
and $B$ a nonempty open subset of $X$ satisfying $\mu (A) < \mu (B)$ for all $\mu\in M_G (X)$.
By Lemma~\ref{L-portmanteau} there exists an $\eta > 0$ such that the sets
\begin{align*}
B_- &= \{ x\in X : d(x,X\setminus B) > \eta \} , \\
A_+ &= \{ x\in X : d(x,A) \leq \eta \}
\end{align*}
satisfy $\mu (A_+ ) < \mu (B_- )$ for all $\mu\in \Omega$. By an argument as
in the proof of Proposition~\ref{P-clopen subequivalence}, we can find clopen sets $A' , B' \subseteq X$
such that $A\subseteq A' \subseteq A_+$ and $B_- \subseteq B' \subseteq B$,
in which case $\mu (A' ) \leq \mu (A_+ ) < \mu (B_- ) \leq \mu (B' )$ for all $\mu\in \Omega$.
It follows by our hypothesis that there exist
a clopen partition $\cP$ of $A'$, an $s_U$ for every $U\in\cP$, and a partition
$\cP = \cP_0 \sqcup\cdots\sqcup \cP_m$ such that for each $i=0,\dots ,m$
the images $s_U U$ for $U\in\cP_i$ are pairwise disjoint subsets of $B'$.
Since $A\subseteq A'$ and $B' \subseteq B$,
we conclude (by Proposition~\ref{P-open closed}) that the action has $m$-comparison.
\end{proof}

\section{Tower dimension}\label{S-towdim}

Throughout $G\curvearrowright X$ is a free action on a compact Hausdorff space.

\begin{definition}
A {\it tower} is a pair $(V,S)$ consisting of a subset $V$ of $X$ and a finite subset $S$ of $G$
such that the sets $sV$ for $s\in S$ are pairwise disjoint. The set $V$ is the {\it base} of the tower,
the set $S$ is the {\it shape} of the tower, and the sets $sV$ for $s\in S$ 
are the {\it levels} of the tower.
We say that the tower $(V,S)$ is {\it open} if $V$ is open, {\it clopen} if $V$ is clopen,
and {\it measurable} if $V$ is measurable.
A collection of towers $\{ (V_i ,S_i ) \}_{i\in I}$ is said to {\it cover} $X$ 
if $\bigcup_{i\in I} S_i V_i = X$.
\end{definition}

\begin{definition}
Let $E$ be a finite subset of $G$. A collection of towers $\{ (V_i ,S_i ) \}_{i\in I}$ covering $X$
is {\it $E$-Lebesgue} if for every $x\in X$ there are an $i \in I$ and a $t\in S_i$ such that 
$x\in tV_i$ and $Et\subseteq S_i$.
\end{definition}

\begin{definition}\label{D-towdim}
The {\it tower dimension} $\towdim (X,G)$ of the action $G\curvearrowright X$ 
is the least integer $d\geq 0$ 
with the property that for every finite set $E\subseteq G$ 
there is an $E$-Lebesgue collection of open towers $\{ (V_i ,S_i) \}_{i\in I}$ covering $X$
such that the family $\{ S_i V_i \}_{i\in I}$ has chromatic number at most $d+1$.
If no such $d$ exists we set $\towdim (X,G) = \infty$.
\end{definition}

In the above definition one may assume, whenever convenient, that for each $i$ the identity element 
$e$ is contained in $S_i$ (i.e., the base $V_i$ is actually a level of the tower), 
for one can choose a $t \in S_i$ (assuming that $S_i$ is nonempty, as we may)
and replace $S_i$ by $S_i t^{-1}$ and $V_i$ by $tV_i$.

Note that if $G$ is not locally finite then the tower dimension must be at least $1$,
for if $E$ is a symmetric finite subset of $G$ and $\{ (V_i ,S_i) \}_{i\in I}$ is 
an $E$-Lebesgue collection of towers for which the sets $S_i V_i$ partition $X$ 
then for each $i$ with $V_i \neq\emptyset$ the set $S_i$ contains 
$\langle E\rangle S_i$ where $\langle E\rangle$ is the subgroup of $G$ generated by $E$. 

\begin{remark}
When $X$ is zero-dimensional we can equivalently restrict to clopen towers in Definition~\ref{D-towdim},
since we can use normality to slightly shrink the base of each of the open towers $(V_i ,T_i )$ 
to a closed set without destroying the fact that the collection of towers covers $X$, and then use
compactness and zero-dimensionality to slightly enlarge each of these closed bases to a clopen base
which is contained in the corresponding original base.
\end{remark}

\begin{example}\label{E-Z Cantor}
Let $\Zb\curvearrowright X$ be a minimal action on the Cantor set. This is given
by $(n,x)\mapsto T^n x$ for some transformation $T$ and is automatically free. 
We can decompose $X$ into clopen towers by the following standard procedure.
Take a nonempty clopen
set $V\subseteq X$, and consider the first return map which assigns to each $x\in V$
the smallest $n_x \in\Nb$ for which $T^{n_x} x\in V$, which is well defined by minimality.
This map is continuous by the clopenness of $V$ and so there is a clopen partition
$\{ V_1 , \dots , V_k \}$ of $V$ and integers $1 \leq n_1 < n_2 < \dots < n_k$ such that 
for each $i$ the set of all points in $V$ with return time $n_i$ is equal to $V_i$.
Setting $S_i = \{ 0,\dots , n_i - 1\}$, we thus have a collection of clopen towers $\{ (V_i , S_i )\}_{i=1}^k$
such that the sets $S_i V_i$ are pairwise disjoint, and since the union $\bigsqcup_{i=1}^k S_i V_i$
is closed and $T$-invariant it must be equal to $X$ by minimality. 
The only problem is that this collection
will not satisfy the Lebesgue condition in the definition of tower dimension.
To remedy this, we produce a second collection of towers by taking the image of the original 
one under some power of $T$, 
and make sure that the the numbers $n_i$ are sufficiently large.
More precisely, let $E$ be a finite subset of $\Zb$ and choose an $N > 2\max_{n\in E} |n|$.
Since the action is free, by shrinking $V$ we can force $n_1$ to be much larger than $N$,
which will imply that the collection of towers 
$\{ (V_i , S_i )\}_{i=1}^k \cup \{ (T^{-N} V_i , S_i + N )\}_{i=1}^k$ is $E$-Lebesgue, as is easily verified.
Thus the tower dimension of the action is at most $1$, and hence equal to $1$ 
by the observation following Definition~\ref{D-towdim}.
\end{example}

The following is verified by taking the inverse images 
under the extension map $Y\to X$ of all of towers at play in the definition 
of tower dimension. 

\begin{proposition}\label{P-towdim}
Let $G\curvearrowright Y$ be a free action on a compact Hausdorff space which is an extension
of $G\curvearrowright X$, meaning that there is $G$-equivariant continuous surjection $Y\to X$. 
Then
\[
\towdim (Y,G)\leq \towdim (X,G) .
\]
\end{proposition}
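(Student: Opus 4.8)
The plan is to pull back towers along the factor map and observe that all the defining properties survive. Let $\pi\colon Y\to X$ be the given $G$-equivariant continuous surjection, and suppose $\towdim(X,G)=d<\infty$ (otherwise there is nothing to prove). Fix a finite set $E\subseteq G$. Applying the definition of tower dimension to $G\curvearrowright X$, we obtain an $E$-Lebesgue collection of open towers $\{(V_i,S_i)\}_{i\in I}$ covering $X$ with the family $\{S_iV_i\}_{i\in I}$ having chromatic number at most $d+1$. Set $\widetilde V_i=\pi^{-1}(V_i)$ for each $i\in I$.

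The key steps are then the following routine verifications. First, each $\widetilde V_i$ is open in $Y$ by continuity of $\pi$. Second, $(\widetilde V_i,S_i)$ is a tower: by $G$-equivariance, $s\widetilde V_i=\pi^{-1}(sV_i)$ for every $s\in G$, so if the sets $sV_i$ ($s\in S_i$) are pairwise disjoint then so are their preimages $s\widetilde V_i$. (Freeness of $G\curvearrowright Y$ is part of the hypothesis and is what makes the statement about towers meaningful, but the disjointness itself is purely set-theoretic from the disjointness downstairs.) Third, $\{(\widetilde V_i,S_i)\}_{i\in I}$ covers $Y$: again $S_i\widetilde V_i=\pi^{-1}(S_iV_i)$, and since $\bigcup_i S_iV_i=X$ and $\pi$ is surjective, $\bigcup_i S_i\widetilde V_i=\pi^{-1}(X)=Y$. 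Fourth, the collection is $E$-Lebesgue: given $y\in Y$, put $x=\pi(y)$ and choose $i\in I$ and $t\in S_i$ with $x\in tV_i$ and $Et\subseteq S_i$; then $y\in\pi^{-1}(tV_i)=t\widetilde V_i$ with the same $t$, so the same $i,t$ witness the Lebesgue condition for $y$.

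Finally, for the chromatic number: a partition of the index set $I$ into $d+1$ subcollections $I=I_1\sqcup\cdots\sqcup I_{d+1}$, each with $\{S_iV_i\}_{i\in I_k}$ disjoint, pulls back verbatim, since preimages of pairwise disjoint sets are pairwise disjoint and $S_i\widetilde V_i=\pi^{-1}(S_iV_i)$. Hence $\{S_i\widetilde V_i\}_{i\in I}$ has chromatic number at most $d+1$. This exhibits, for the arbitrary finite set $E$, an $E$-Lebesgue cover of $Y$ by open towers with chromatic number at most $d+1$, so $\towdim(Y,G)\leq d=\towdim(X,G)$.

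There is really no main obstacle here — the proof is a direct pullback argument, and the only point that deserves a word of care is the use of $G$-equivariance to commute $\pi^{-1}$ past the $G$-action (so that levels, covers, and partitions all transport cleanly); everything else is formal. One should just note in passing that if some $V_i$ were empty the corresponding $\widetilde V_i$ is empty as well, causing no trouble, and that we do not even need $Y$ or $X$ to be metrizable for this particular statement, only the compact Hausdorff hypothesis already in force in this section.
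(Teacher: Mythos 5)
Your proof is correct and follows the same route as the paper, which simply states that the inequality is verified by taking inverse images under the extension map of all the towers in the definition of tower dimension. You have merely spelled out the routine verifications (openness, disjointness, the cover property, the Lebesgue condition, and the chromatic number bound all pull back through $\pi^{-1}$ using $G$-equivariance) that the paper leaves implicit.
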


\begin{example}\label{E-mdim}
It was shown in \cite{GioKer10} that there are free minimal $\Zb$-actions on compact metrizable spaces
such that the crossed product $C(X)\rtimes\Zb$ fails to be $\cZ$-stable. By Proposition~\ref{P-towdim}
the examples given there have tower dimension at most $1$ since they factor onto an odometer, which 
has tower dimension $1$ by Example~\ref{E-Z Cantor} (note that $1$ is always a lower bound
for the tower dimension of free $\Zb$-actions by the observation following Definition~\ref{D-towdim}).
\end{example}

We recall that the asymptotic dimension $\asdim (G)$ of the group $G$ \cite{Gro93} can be expressed
as the least integer $d\geq 0$ such that for every finite set $E\subseteq G$ 
there exists a family $\{ U_i \}_{i\in I}$ of subsets
of $G$ of multiplicity at most $d+1$ with the following properties: 
\begin{enumerate}
\item there exists a finite set $F\subseteq G$  
such that for every $i\in I$ there is a $t\in G$ with $U_i \subseteq Ft$, and

\item for each $t\in G$ there is an $i\in I$ for which $Et\subseteq U_i$ (Lebesgue condition).
\end{enumerate}
If no such $d$ exists then $\asdim (G)$ is declared to be infinite.
It is readily seen that the asymptotic dimension is zero if and only if the group is locally finite.
The asymptotic dimension of $\Zb^m$ for $m\in\Nb$ is equal to $m$,
while the asymptotic dimension of the free group $F_m$ for $m\in\Nb$ is equal to $1$.
An example of a finitely generated amenable group with infinite asymptotic dimension 
is the Grigorchuk group \cite{Smi07}.
See \cite{BelDra08} for a general reference on the subject.

The following inequality is a refinement of the observation in the second paragraph following Definition~\ref{D-towdim},
which can be rephrased as saying that $\towdim (X,G)$ is nonzero whenever $\asdim (G)$ is nonzero.

\begin{proposition}\label{P-asymptotic dimension}
$\towdim (X,G) \geq \asdim (G)$.
\end{proposition}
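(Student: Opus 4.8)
The plan is to deduce the inequality $\towdim(X,G)\geq\asdim(G)$ by converting a low-chromatic-number $E$-Lebesgue tower cover of $X$ into a family of subsets of $G$ witnessing the corresponding bound on $\asdim(G)$. So suppose $d=\towdim(X,G)<\infty$ and fix a finite set $E\subseteq G$; we must produce a family of subsets of $G$ of multiplicity at most $d+1$ satisfying the boundedness condition (i) and the Lebesgue condition (ii) in the definition of asymptotic dimension recalled above. By the remark following Definition~\ref{D-towdim} we may assume $e\in S_i$ for each tower, so that the base $V_i$ is one of the levels.

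The key construction is the following. By Definition~\ref{D-towdim} choose an $E$-Lebesgue collection of open towers $\{(V_i,S_i)\}_{i\in I}$ covering $X$ with $\{S_iV_i\}_{i\in I}$ of chromatic number at most $d+1$. Fix any base point $x_0\in X$ (this uses that $X$ is nonempty; if $X=\emptyset$ the statement is vacuous), and for each $i\in I$ with $x_0\in S_iV_i$ there is a unique $t_i\in S_i$ with $x_0\in t_iV_i$ — unique because the levels $sV_i$, $s\in S_i$, are pairwise disjoint. Set
\[
U_i = S_i t_i^{-1}\subseteq G .
\]
I claim the family $\{U_i\}$, indexed over those $i$ with $x_0\in S_iV_i$, does the job for $E$. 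First, condition (i): since the shapes $S_i$ need not be uniformly bounded, this is the one point requiring care, and it is why we should interleave an extra reduction — see below. Second, the Lebesgue condition (ii): given $t\in G$, the point $t^{-1}x_0\in X$; wait, more directly, apply the $E$-Lebesgue property to the point $x_0$ itself won't suffice for all $t$, so instead one applies it to the point $tx_0$. Actually the cleanest route: for $t\in G$, consider $t^{-1}\cdot(\text{something})$ — let me reorganize.

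The right way to handle both conditions at once is to \emph{not} fix a single base point but to exploit freeness together with a Zorn/compactness packaging, or — more simply — to observe that the Lebesgue and multiplicity data in the tower cover are purely "local in $G$" once one tracks a single orbit. Concretely: pick any $x_0\in X$ and consider the orbit map $G\to X$, $s\mapsto sx_0$, which is injective by freeness. For each $i\in I$ set $W_i=\{s\in G: sx_0\in S_iV_i\}$ and then $U_i:=W_i$. The multiplicity bound transfers directly: if $s\in G$ lies in $U_{i_1}\cap\cdots\cap U_{i_k}$ then $sx_0$ lies in $S_{i_1}V_{i_1}\cap\cdots\cap S_{i_k}V_{i_k}$, so the chromatic number bound $d+1$ on $\{S_iV_i\}$ forces $k\leq d+1$ (a family of chromatic number $\le d+1$ has multiplicity $\le d+1$). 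The Lebesgue condition: given $t\in G$, the $E$-Lebesgue property applied to the point $tx_0$ gives $i\in I$ and $u\in S_i$ with $tx_0\in uV_i$ and $Eu\subseteq S_i$; then for any $v\in Et$ we have $vx_0 = (vu^{-1})\cdot ux_0$ and $ux_0\in ... $ — here one checks $vx_0\in S_iV_i$ because $vu^{-1}t x_0$: hmm, we need $vx_0\in S_iV_i$, i.e. $v x_0 = w\cdot(\text{pt of }V_i)$ for some $w\in S_i$; writing $tx_0\in uV_i$ means $u^{-1}tx_0\in V_i$, so $vx_0 = (vt^{-1})t x_0 = (vt^{-1}u)(u^{-1}tx_0)\in (vt^{-1}u)V_i$, and $vt^{-1}u\in Eu\subseteq S_i$ since $vt^{-1}\in E$. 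Hence $Et\subseteq U_i$. Finally, condition (i): the set $U_i = W_i$ is contained in $\{s: sx_0\in S_iV_i\}$; using $e\in S_i$ and picking $t_i\in S_i$ with... — one bounds $U_i$ by noting $W_i = S_i\cdot\{s: sx_0\in V_i\}$, and $\{s: sx_0\in V_i\}$ is nonempty only for $i$ with $V_i$ meeting the orbit, and for such $i$ one has $W_i\subseteq S_i\, t$ for a single $t$ \emph{provided} $V_i$ meets the orbit in at most one point — which fails in general, so one must first pass to a subcover or shrink the bases.

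The main obstacle is exactly this last point: arranging that condition (i) of the $\asdim$ definition holds, i.e. that each $U_i$ is contained in a single left translate $Ft$ of a fixed finite set $F$. The tower-dimension definition imposes no uniform bound on the shapes $S_i$, and a single base $V_i$ can meet a $G$-orbit in many points, so $W_i$ can be a union of many translates of $S_i$. The fix is to refine the open tower cover before extracting the $U_i$: using compactness of $X$ and shrinking each base $V_i$ to a small enough open set (small relative to the finitely many group elements in $\bigcup_i S_i$ that are actually "seen" near a given point — but $\bigcup_i S_i$ need not be finite either). I expect the intended argument instead fixes $x_0$, restricts attention to the finitely-many-up-to-the-needed-data towers via a local-finiteness/compactness argument around the single orbit segment $Et_0x_0$, and thereby gets $F$ as (a translate of) the union of the relevantly-many shapes. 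Making condition (i) precise — identifying the single finite $F$ that works for all $i$ — is the crux; everything else (multiplicity, Lebesgue) is the routine transfer sketched above.

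\begin{remark}
In writing up, one should state at the outset: if $\towdim(X,G)=\infty$ there is nothing to prove, and if $X=\emptyset$ then $\asdim(G)$ is to be read off the empty action, so assume $d:=\towdim(X,G)<\infty$ and $X\neq\emptyset$; fix $x_0\in X$ and use freeness of the orbit map $s\mapsto sx_0$ throughout.
\end{remark}
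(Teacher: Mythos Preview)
Your proposal has a genuine gap: you correctly identify that the sets $W_i=\{s\in G:sx_0\in S_iV_i\}$ satisfy the multiplicity and Lebesgue conditions but fail the uniform boundedness condition (i), and you never resolve this. Your speculative fixes (shrinking bases, passing to a subcover near a finite orbit segment) do not work, because the orbit will generically revisit each $V_i$ infinitely often no matter how small you make it.

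The missing idea, which is exactly what the paper does, is to \emph{index by visits rather than by towers}. Set $L_i=\{t\in G:tx_0\in V_i\}$ and take the family $\bigcup_{i\in I}\{S_it:t\in L_i\}$, indexed by pairs $(i,t)$. Each member is a single right translate of some $S_i$, so condition~(i) holds with $F=\bigcup_i S_i$ (by compactness one may take $I$ finite, as a finite subcollection of the tower levels already witnesses the $E$-Lebesgue property). For fixed $i$ the sets $S_it$, $t\in L_i$, are pairwise disjoint: if $s=at_1=bt_2$ with $a,b\in S_i$ and $t_1,t_2\in L_i$, then $sx_0\in aV_i\cap bV_i$, whence $a=b$ (tower levels are disjoint) and $t_1=t_2$. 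Thus the multiplicity at any $s\in G$ is at most the number of distinct $i$ with $sx_0\in S_iV_i$, which is $\leq d+1$. Your Lebesgue verification goes through unchanged, now landing in the single set $S_ic$ with $c=u^{-1}t\in L_i$. You had the germ of this in your first attempt (the sets $S_it_i^{-1}$), but restricted yourself to one translate per tower instead of one per orbit visit to the base.
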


\begin{proof}
We may assume that $\towdim (X,G)$ is finite. 
Let $E$ be a finite subset of $G$. Setting $d = \towdim (X,G)$, we can then find an
$E$-Lebesgue collection of open towers $\{ (V_i ,S_i) \}_{i\in I}$ covering $X$
such that the family $\{ S_i V_i \}_{i\in I}$ has chromatic number at most $d+1$.
Pick an $x\in X$. For every $i\in I$ set $L_i = \{ s\in G : sx\in V_i \}$.
Then the family $\bigcup_{i\in I} \{ S_i t : t\in L_i \}$ of subsets of $G$
is readily seen to satisfy the conditions in the above formulation of asymptotic dimension
with respect to the set $E$.
\end{proof}

\begin{example}\label{E-nilpotent}
Let $m\in\Nb$. It follows easily from Theorems~3.8 and 4.6 of \cite{Sza15}
that for every $d\in\Nb$ there is a constant $C > 0$ such that
for every free action $\Zb^m \curvearrowright X$ on a compact metrizable space
with $\dim (X) \leq d$ one has 
\begin{align}\label{E-C}
\towdimplus (X,\Zb^m ) \leq C\cdot\dimplus (X) ,
\end{align}
and that we can also relax the hypothesis $\dim (X) \leq d$
by merely requiring that the action have the topological small boundary property with 
respect to $d$ (\cite{Sza15}, Definition~3.2).
The arguments in Section~7 of \cite{SzaWuZac17} show more generally that 
that if $G$ is finitely generated and nilpotent then there exists such a $C > 0$ 
such that (\ref{E-C}) holds for every free action $G\curvearrowright X$ 
on a compact metrizable space with $\dim (X) \leq d$.
\end{example}

Finally, we define a variant of tower dimension which requires that the bases of the towers
have small diameter.

\begin{definition}
The {\it fine tower dimension} $\ftowdim (X,G)$ of the action $G\curvearrowright X$ 
is the least integer $d\geq 0$ 
with the property that for every finite set $E\subseteq G$ and $\delta > 0$
there is an $E$-Lebesgue collection of open towers $\{ (V_i ,S_i) \}_{i\in I}$ covering $X$
such that $\diam (sV_i ) < \delta$ for all $i\in I$ and $s\in S_i$ 
and the family $\{ S_i V_i \}_{i\in I}$ has chromatic number at most $d+1$.
If no such $d$ exists we set $\ftowdim (X,G) = \infty$.
\end{definition}

\begin{proposition}\label{P-coarse towdim}
One has
\begin{align*}
\towdimplus (X,G) \leq \ftowdimplus (X,G) \leq \towdimplus (X,G)\cdot\dimplus (X) .
\end{align*}
In particular, $\ftowdim (X,G) < \infty$
if and only if $\towdim (X,G) < \infty$ and $\dim (X) < \infty$.
\end{proposition}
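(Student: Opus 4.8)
The plan is to prove the two inequalities separately. The left-hand inequality $\towdimplus(X,G) \leq \ftowdimplus(X,G)$ is essentially immediate: every collection of open towers witnessing a value of the fine tower dimension (with any choice of $\delta$, say $\delta = 1$) is in particular a collection of open towers witnessing the ordinary tower dimension, since the extra diameter constraint is simply dropped. So if $\ftowdim(X,G) = d$ then for every finite $E \subseteq G$ we obtain an $E$-Lebesgue cover by open towers with chromatic number at most $d+1$, giving $\towdim(X,G) \leq d$.

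For the right-hand inequality $\ftowdimplus(X,G) \leq \towdimplus(X,G) \cdot \dimplus(X)$ I would assume both factors on the right are finite, say $\towdim(X,G) = d$ and $\dim(X) = c$, and produce, for a given finite $E \subseteq G$ and $\delta > 0$, an $E$-Lebesgue cover by open towers with small levels and chromatic number at most $(c+1)(d+1)$. Start with an $E$-Lebesgue collection of open towers $\{(V_i, S_i)\}_{i \in I}$ covering $X$ whose family of footprints $\{S_i V_i\}$ has chromatic number at most $d+1$. The idea is then to refine the bases. By compactness choose a finite subcover, so $I$ is finite; then for each $i$ and each $s \in S_i$ we want to cut $sV_i$ into pieces of diameter less than $\delta$. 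Using that $\dim(X) = c$, cover $X$ by an open refinement $\cW$ of the cover by $\delta/2$-balls such that $\cW$ has multiplicity at most $c+1$; partition $\cW$ into $c+1$ disjoint subfamilies $\cW_0, \dots, \cW_c$ by a chromatic-number argument. Now for each $i$, each $s \in S_i$, and each $W \in \cW$, the set $W \cap sV_i$ has diameter less than $\delta$; pulling this structure back through the group element $s$, we get, for each color class $j \in \{0,\dots,c\}$, the open base $V_{i,j} = \bigcup_{W \in \cW_j} s^{-1}(W \cap sV_i)$ — but one must be careful that this is the same for all $s \in S_i$, so instead one should intersect $V_i$ with $\bigcap_{s \in S_i} s^{-1}\cW_j$-pieces, i.e. set $V_{i,j}^W = V_i \cap \bigcap_{s \in S_i} s^{-1}(W_s)$ ranging over choices; the cleaner route is to note that the finite partition-of-unity / shrinking machinery lets us replace $V_i$ by a finite open cover $\{V_{i,k}\}_k$ such that $\diam(sV_{i,k}) < \delta$ for all $s \in S_i$ and such that the index set $k$ carries $c+1$ colors compatibly across all $i$.

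Concretely, here is the mechanism I would use. For each $i$, the map $V_i \to \prod_{s \in S_i} X$, $x \mapsto (sx)_{s \in S_i}$ is a homeomorphism onto its image, so the cover of this image by products of $\delta/2$-balls pulls back to an open cover of $V_i$ each of whose members has all its $S_i$-translates of diameter less than $\delta$. Taking a common refinement across all (finitely many) $i$ and then an open refinement of multiplicity at most $c+1$ with a $(c+1)$-coloring, we obtain open towers $(V_{i,k}, S_i)$ refining the original ones; the footprints $S_i V_{i,k} \subseteq S_i V_i$ inherit the $(d+1)$-coloring from $i$, and within a fixed $i$-color the new towers get further separated into $c+1$ classes by the $k$-coloring, yielding chromatic number at most $(c+1)(d+1)$ overall. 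The $E$-Lebesgue property is preserved because each $x$ still lies in some $tV_{i,k}$ with $t \in S_i$ and $Et \subseteq S_i$ (the shapes are unchanged). This gives $\ftowdim(X,G) + 1 \leq (c+1)(d+1) = \towdimplus(X,G)\cdot\dimplus(X)$.

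The main obstacle is bookkeeping the coloring correctly: one must ensure that refining the bases does not increase the chromatic number beyond the product bound, which requires that the refinement of each $V_i$ be done with a $(c+1)$-coloring that is \emph{uniform in $i$} (so that the product structure of colors is genuinely $(d+1) \times (c+1)$), and that disjointness of footprints within each product-color class follows from disjointness of $\{S_i V_i\}$ in the original $i$-color together with disjointness of the refining pieces in the $k$-color. The finiteness of $I$ (via compactness) is what makes the uniform choice of refinement and coloring possible. The ``in particular'' clause is then immediate: $\ftowdim(X,G) < \infty$ iff both $\towdim(X,G) < \infty$ and $\dim(X) < \infty$, since finiteness of the product $\towdimplus(X,G)\cdot\dimplus(X)$ is equivalent to finiteness of both factors, and conversely $\towdim \leq \ftowdim$ while $\dim(X) \leq \ftowdim(X,G)$ follows because the small-diameter towers in particular give open covers of $X$ of arbitrarily small mesh with bounded multiplicity.
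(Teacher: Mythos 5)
Your proposal is correct and follows essentially the same route as the paper: shrink each tower base, use $\dim(X)=c$ to refine it by a finite open cover with chromatic number at most $c+1$ and small-diameter translates, and take the product of the two colorings to get chromatic number at most $(d+1)(c+1)$ while the shapes (hence the $E$-Lebesgue condition) are unchanged. One small remark: your worry about making the $(c+1)$-coloring ``uniform in $i$'' is unnecessary --- since distinct towers with the same $i$-color already have disjoint footprints, and within a single tower the levels are disjoint, the pair (original $i$-color, new $j$-color) is automatically a valid $(d+1)(c+1)$-coloring regardless of how the $j$-colors are chosen for each $i$ separately; your closing observation that $\dim(X)\leq\ftowdim(X,G)$ (via arbitrarily fine covers of bounded chromatic number) is a useful explicit justification of the ``in particular'' clause, which the paper leaves implicit.
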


\begin{proof}
The first inequality is trivial. For the second, we may suppose that 
$\towdimplus (X,G)$ and $\dimplus (X)$ are both finite. Denote these numbers by $d$ and $c$,
respectively. Let $E$ be a finite subset of $G$ and $\delta > 0$. 
Then there is an $E$-Lebesgue collection 
of towers $\{ (V_i ,S_i) \}_{i\in I}$ covering $X$ such that the family 
$\{ S_i V_i \}_{i\in I}$ has chromatic number at most $d+1$. By normality
we can find open sets $U_i \subseteq X$ with $\overline{U_i} \subseteq V_i$
such that the family $\{ S_i U_i \}_{i\in I}$ still covers $X$. Since $X$ has covering dimension $c$,
by compactness we can find for each $i$ a collection $\{ V_{i,1} ,\dots ,V_{i,k_i} \}$
of open subsets of $U_i$ which covers $\overline{V_i}$, 
satisfies $\diam (sV_i ) < \delta$ for all $s\in S_i$, and
has chromatic number at most $c+1$. Then $\{ (V_{i,j} ,S_i) : i\in I ,\, 1\leq k\leq j_i \}$
is an $E$-Lebesgue collection of towers such that each level of each tower
has diameter less than $\delta$ 
and the family $\{ S_i V_i \}_{i\in I}$ has chromatic number at most $(d+1)(c+1)$.
This establishes the second inequality.
\end{proof}

\section{Tower dimension, amenability dimension, and dynamic asymptotic dimension}\label{S-dad}

Throughout $G\curvearrowright X$ is a free action on a compact metrizable space.
Our aim here is to establish inequalities connecting its
tower dimension, amenability dimension, and dynamic asymptotic dimension 
(Theorem~\ref{T-dimensions}). We will see in particular that when the space is zero-dimensional, 
all of these dimensions are equal (Corollary~\ref{C-dimensions Cantor}).

The notions of amenability dimension and dynamic asymptotic dimension are due to
Guentner, Willett, and Yu \cite{GueWilYu17} and are recalled in Definitions~\ref{D-amdim} 
and \ref{D-dad} (these do not require freeness or metrizability). 
After defining amenability dimension we establish an inequality
relating it to tower dimension in Theorem~\ref{T-towdim lower amdim}. We then turn to 
dynamic asymptotic dimension and prove some lemmas which will help us link it
to tower dimension in Theorem~\ref{T-dimensions}.

Write $\Delta (G)$ for the set of probability measures on $G$, and $\Delta_d (G)$
for the set of probability measures on $G$ whose support has cardinality at most $d+1$.
We view both as subsets of $\ell^1 (G)$.

\begin{definition}\label{D-amdim}
The {\it amenability dimension} $\amdim (X,G)$ of the action $G\curvearrowright X$ 
is the least integer $d\geq 0$
with the property that for every finite set $F\subseteq G$ and $\eps > 0$ there is a continuous 
map $\varphi : X\to\Delta_d (G)$ such that
\[
\sup_{x\in X} \| \varphi (sx) - s\varphi (x) \|_1 < \eps
\]
for all $s\in F$.
\end{definition}

If $G$ is finite then every action $G\curvearrowright X$
has amenability dimension at most $|G|$, since we may construct a $G$-invariant map by sending everything in $X$
to the uniform probability measure on $G$. More generally,
if $G$ is amenable we can construct an approximately invariant continuous map $\varphi : X\to\Delta (G)$  
by sending everything in $X$ to the uniform probability measure on a sufficiently left invariant
finite subset of $G$. However, when $G$ is infinite the cardinality of the supports of such maps will necessarily 
tend to infinity as the approximate invariance becomes better and better, 
and so to derive bounds for the amenability dimension in this case 
one must search for maps which are approximately equivariant for reasons other than 
approximate invariance.
Indeed the support constraint in the definition of amenability dimension 
results in phenomena that are qualitatively very different
from the approximate invariance we see in an amenable group and instead involve 
the presence of collections of towers as in the definition of tower dimension.

\begin{theorem}\label{T-towdim lower amdim}
The action $G\curvearrowright X$ satisfies
\[
\amdim (X,G) \leq \towdim (X,G) .
\]
\end{theorem}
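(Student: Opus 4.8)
The plan is to convert an $E$-Lebesgue collection of open towers with controlled chromatic number directly into an approximately equivariant map into $\Delta_d(G)$, using a partition of unity subordinate to the tower levels and transporting mass along the group elements that define the tower structure. Fix a finite set $F\subseteq G$ and $\eps>0$. The key point is to first enlarge $F$ to a symmetric finite set $E$ containing $e$ that is ``large enough'' relative to $F$ and $\eps$: concretely I want $E$ to absorb a Følner-type thickening so that the fraction of the shape $S_i$ near its ``boundary'' (the part $t$ with $Ft\not\subseteq S_i$, or more precisely $Et\not\subseteq S_i$) can be controlled. Then apply the definition of $\towdim(X,G)=d$ to get an $E$-Lebesgue cover by open towers $\{(V_i,S_i)\}_{i\in I}$ with $\{S_iV_i\}_{i\in I}$ of chromatic number at most $d+1$.

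Next I would build the map. Choose a partition of unity $\{h_{i,t}\}$ indexed by pairs $(i,t)$ with $i\in I$, $t\in S_i$, subordinate to the open cover $\{tV_i\}$ of $X$ (this uses compactness to pass to a finite subcover, and normality/Urysohn; note one can assume $I$ finite). Define $\varphi:X\to\ell^1(G)$ by
\begin{align*}
\varphi(x)=\sum_{i\in I}\sum_{t\in S_i} h_{i,t}(x)\,\delta_{t}.
\end{align*}
This is continuous and probability-measure-valued. The support of $\varphi(x)$ at a given $x$ consists of those $t$ with $h_{i,t}(x)\neq 0$ for some $i$; since $x$ lies in at most $d+1$ of the sets $S_iV_i$ and for each such $i$ the levels $tV_i$, $t\in S_i$, are pairwise disjoint, $x$ lies in at most one level per $i$, so $|\supp\varphi(x)|\le d+1$, i.e.\ $\varphi$ maps into $\Delta_d(G)$.

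The heart of the argument — and the main obstacle — is the estimate $\sup_{x}\|\varphi(sx)-s\varphi(x)\|_1<\eps$ for $s\in F$. Here $s\varphi(x)=\sum_{i,t}h_{i,t}(x)\delta_{st}$. The trick is that the Lebesgue condition lets us reindex: since the cover is $E$-Lebesgue and $F\subseteq E$, for each $x$ there is $(i_0,t_0)$ with $x\in t_0V_{i_0}$ and $Et_0\subseteq S_{i_0}$, so in particular $st_0\in S_{i_0}$ whenever $s\in F\subseteq E$, meaning the ``shifted'' mass $\delta_{st_0}$ still corresponds to a genuine level $st_0V_{i_0}$ of the same tower. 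One then compares $\varphi(sx)$, which near $sx$ is supported on levels $t'V_i$ containing $sx$, with the shifted masses $st V_i\ni sx$. The discrepancy comes only from the ``boundary'' pairs $(i,t)$ where $t\in S_i$ but $st\notin S_i$ (equivalently where the Lebesgue witness fails), plus the usual slack in matching two partitions of unity. This is where the enlargement of $F$ to $E$ pays off: a standard Følner/averaging argument, or alternatively a direct bookkeeping using that $\{S_iV_i\}$ has bounded chromatic number so that only boundedly many towers are active at each point, shows these boundary contributions are uniformly $<\eps$. I expect this reindexing-and-boundary-estimate step to require the most care; the construction of $\varphi$ and the support bound are routine once the cover is in hand. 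Finally, since $F$ and $\eps$ were arbitrary, this proves $\amdim(X,G)\le d=\towdim(X,G)$.
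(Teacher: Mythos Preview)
Your construction of $\varphi$ and the support bound $|\supp\varphi(x)|\le d+1$ are correct. The gap is in the approximate equivariance estimate, and it is a genuine one: an \emph{arbitrary} partition of unity subordinate to $\{tV_i\}$ has no relationship to the group action, so the claim that ``the discrepancy comes only from boundary pairs'' is false as stated. Concretely, comparing $s\varphi(x)=\sum_{i,t}h_{i,t}(x)\delta_{st}$ with $\varphi(sx)=\sum_{i,t'}h_{i,t'}(sx)\delta_{t'}$ term by term (setting $t'=st$) requires $h_{i,st}(sx)\approx h_{i,t}(x)$ for interior pairs; but $h_{i,st}$ and $x\mapsto h_{i,t}(s^{-1}x)$ are two unrelated bump functions on the same level $stV_i$, and can differ by $1$ pointwise. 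Even when a tower is a clopen partition of $X$ and the $h_{i,t}$ are indicator functions, your $\varphi$ jumps by $\|\cdot\|_1=2$ at the top of the tower, so no choice of $E$ rescues the \emph{sup-norm} estimate. Your ``F{\o}lner thickening'' intuition controls only the \emph{proportion} of boundary levels, which is irrelevant here.

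The paper's proof repairs exactly this. It takes $E=F^n$ with $n$ large, constructs a single function $g_i$ on the base $V_i$ (as a max of translated bump functions), and uses the dynamically coherent family $\alpha_t(g_i)$ on the levels, so that $\alpha_t(g_i)(sx)=\alpha_{s^{-1}t}(g_i)(x)$ exactly. These functions are then weighted by $k/n$, where $k$ is the ``depth'' of $t$ in $S_i$ relative to $F$ (via a filtration $B_{i,0},\dots,B_{i,n}$ with $sB_{i,k}\subseteq B_{i,k-1}\cup B_{i,k}\cup B_{i,k+1}$ for $s\in F$), so that moving by $s\in F$ changes the weight by at most $1/n$. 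This yields $|\hat h_{i,t}(sx)-\hat h_{i,s^{-1}t}(x)|\le 1/n$ uniformly. Because the weighted functions no longer sum to $1$, a normalization by $H=\sum\hat h_{i,t}\ge 1$ is needed, and the bounded chromatic number ensures $|H(sx)^{-1}-H(x)^{-1}|\le (d+1)/n$. Altogether one gets $\|\varphi(sx)-s\varphi(x)\|_1\le (d+1)(d+2)/n<\eps$. Your sketch is missing both the dynamically generated functions and the depth-weighting; without them the argument cannot close.
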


\begin{proof}
We denote the induced action of $G$ on $C(X)$ by $\alpha$,
that is, $\alpha_s (f)(x) = f(s^{-1} x)$ for all $s\in G$, $f\in C(X)$, and $x\in X$.

We may assume that $\towdim (X,G)$ is finite, and we denote this number by $d$. 
Fix a compatible metric $d$ on $X$.
 
Let $F$ be a finite subset of $G$ and let $\eps > 0$. In order to verify the condition
in the definition of amenability dimension we may assume that 
$F^{-1} = F$ by replacing $F$ with $F\cup F^{-1}$, and also that $e\in F$.
Choose an integer $n > 1$ such that $(d+1)(d+2)/n < \eps$. 
By the definition of tower dimension, there is an $F^n$-Lebesgue 
collection of towers $\{ (V_i,S_i ) \}_{i\in I}$
such that $\{ S_i V_i \}_{i\in I}$ is a cover of $X$ with chromatic number at most $d+1$.

By the $F^n$-Lebesgue condition and a simple compactness argument 
we can find a $\delta > 0$ such that for every $x\in X$ there are an $i\in I$ and a $t\in S_i$ such that 
$d(x, X\setminus tV_i ) > \delta$ and $F^n t\subseteq S_i$.
For every $i\in I$ and $t\in S_i$ define the function $\hat{g}_{i,t} \in C(X)$ by 
\[
\hat{g}_{i,t} (x) = \min \{ 1,\delta^{-1} d(x,X\setminus tV_i)\} .
\]
For every $i\in I$ set 
\[
g_i = \max_{t\in S_i} \alpha_{t^{-1}} (\hat{g}_{i,t} ) ,
\]
and note that for $t\in S_i$ the support of the function $\alpha_t (g_i )$ is contained in $tV_i$.

Let $i\in I$. Set $B_{i,n} = \bigcap_{t\in F^n} tS_i$ and 
$B_{i,0} = G \setminus \bigcap_{t\in F} tS_i$.
For $k=1,\dots ,n-1$ set 
\[
B_{i,k} = \bigg(\bigcap_{t\in F^k} tS_i \bigg)\setminus\bigcap_{s\in F^{k+1}} tS_i . 
\]
The sets $B_{i,k}$ for $k=0,\dots ,n$ form a partition of $G$, and for all $t\in F$ we have
\begin{enumerate}
\item $tB_{i,0} \subseteq B_{i,0} \cup B_{i,1}$,

\item $tB_{i,k} \subseteq B_{i,k-1} \cup B_{i,k} \cup B_{i,k+1}$ for every $k=1,\dots ,n-1$, 

\item $tB_{i,n} \subseteq B_{i,n-1} \cup B_{i,n}$.
\end{enumerate}
For each $t\in G$ take $k$ such that $t\in B_{i,k}$ and define the function
\[
\hat{h}_{i,t} = \frac{k}{n} \alpha_t (g_i ) 
\]
in $C(X)$, and note that $|\hat{h}_{i,t} (sx) - \hat{h}_{i,s^{-1} t} (x)| \leq 1/n$
for all $x\in X$ and $s\in F$.

Now set $H = \sum_{i\in I} \sum_{t\in G} \hat{h}_{i,t}$. By our choice of $\delta$,
for every $x\in X$ there is an $i\in I$ and a $t\in S_i$ such that 
$d(x, X\setminus tV_i ) > \delta$ and $F^n t\subseteq S_i$, in which case $t\in B_{i,n}$ and hence, 
in view of the definition of $g_i$,
\begin{align*}
\hat{h}_{i,t} (x) = \alpha_t (g_i )(x) \geq \hat{g}_{i,t} (x) = 1 .
\end{align*}
This shows that $H\geq 1$. Setting 
\[
h_{i,t} = H^{-1} \hat{h}_{i,t} 
\]
for every $i\in I$ and $t\in G$, we then define a continuous map $\varphi : X\to \Delta_d (G)$ by
\[
\varphi (x)(t) = \sum_{i\in I} h_{i,t} (x) 
\]
for $x\in X$ and $t\in G$.

Since for each $x\in X$ the set of all $i\in I$ such that $x\in S_i V_i$
has cardinality at most $d+1$, for $s\in F$ the difference between the values of 
$H$ at $x$ and $sx$ is at most $(d+1)/n$.
Since $H \geq 1$, it follows that the difference between the values
of $H^{-1}$ at $x$ and $sx$ is also at most $(d+1)/n$.
Consequently for every $x\in X$, $s\in F$, and $t\in G$ we have
\begin{align*}
|h_{i,t} (sx) - h_{i,s^{-1} t} (x)|
&\leq H (sx)^{-1} \big| \hat{h}_{i,t} (sx) - \hat{h}_{i,s^{-1} t} (x)\big| \\
&\hspace*{20mm} \ + \big| H (sx)^{-1} - H (x)^{-1}\big| \hat{h}_{i,s^{-1} t} (x) \\
&\leq \frac{d+2}{n} ,
\end{align*}
while $h_{i,t} (sx) = h_{i,s^{-1} t} (x) = 0$ whenever $x\notin S_i V_i$.
Using again the fact that for each $x\in X$ the set of all $i\in I$ such that $x\in S_i V_i$
has cardinality at most $d+1$, it follows that for every $x\in X$ and $s\in F$ we have
\begin{align*}
\| \varphi (sx) - s\varphi (x) \|_1
&= \sum_{t\in G} |\varphi (sx)(t) - \varphi (x)(s^{-1} t)| \\
&\leq \sum_{t\in G} \sum_{i\in I} |h_{i,t} (sx) - h_{i,s^{-1} t} (x)| \\
&\leq (d+1)\bigg( \frac{d+2}{n} \bigg) < \eps ,
\end{align*}
from which we conclude that $\amdim (X,G) \leq d$. 
\end{proof}

\begin{definition}\label{D-dad}
The {\it dynamic asymptotic dimension} $\dad (X,G)$ of the action $G\curvearrowright X$ 
is the least integer $d\geq 0$ with the property that for every finite set $E\subseteq G$ 
there are a finite set $F\subseteq G$ and an open cover $\cU$ of $X$ of cardinality $d+1$ 
such that, for all $x\in X$ and $s_1 , \dots , s_n \in E$, if the points
$x, s_1 x, s_2 s_1 x , \dots , s_n \cdots s_1 x$ are contained in a common member of $\cU$
then $s_n \cdots s_1 \in F$.
\end{definition}

\begin{definition}
Let $E$ be a finite subset of $G$. An open cover $\cU$ of $X$ is said to be {\it $E$-Lebesgue} 
if for every $x\in X$ there is an $1\leq i\leq n$ such that $Ex \subseteq U_i$.
\end{definition}

\begin{remark}
The above definition should not be confused with the $E$-Lebesgue condition for a collection of towers.
Given a collection of towers $\cT = \{ (V_i ,S_i \}_{i\in I}$ 
such that the family $\cV = \{ S_i V_i \}_{i\in I}$
covers $X$, if $\cT$ is $E$-Lebesgue then $\cV$ is $E$-Lebesgue, but not conversely.
For example, if $E$ contains an element of infinite order then there is no 
tower $(V,S)$ such that the singleton $\{ (V,S) \}$ is $E$-Lebesgue and $SV = X$,
although $\{ X \}$ is an $E$-Lebesgue cover of $X$. For collections of towers 
the $E$-Lebesgue condition involves the way in which each tower is coordinatized by its shape,
while no such coordinatization is at play when dealing with members of an arbitrary cover.
\end{remark}

The following is part of Corollary~4.2 in \cite{GueWilYu17}.

\begin{proposition}\label{P-dad Lebesgue}
In Definition~\ref{D-dad} the open cover $\cU$ can be chosen to be $E$-Lebesgue. 
\end{proposition}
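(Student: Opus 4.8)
The plan is to strengthen an arbitrary open cover $\cU = \{U_1,\dots,U_{d+1}\}$ witnessing $\dad(X,G) = d$ with respect to a suitably enlarged finite set $E'\supseteq E$ into an $E$-Lebesgue cover, by shrinking the sets $U_i$ slightly and re-expanding them in a controlled way. Concretely, given $E$, first apply Definition~\ref{D-dad} with the finite set $E' = (E\cup E^{-1}\cup\{e\})^2$ (or some similar symmetric enlargement) to obtain a finite set $F\subseteq G$ and an open cover $\cU$ of cardinality $d+1$ with the stated orbit-return property. Using normality, choose a closed shrinking: closed sets $C_i\subseteq U_i$ with $\bigcup_i C_i = X$. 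Then define the enlarged sets $U_i' = \bigcap_{s\in E}s^{-1}(\interior C_i)$... — wait, that runs the wrong way; instead set $W_i = \{x : Ex\subseteq U_i\}$ and observe these need not cover. The correct move is: from the closed shrinking $\{C_i\}$, use compactness to find a finite $K\subseteq G$-independent gauge, then set $U_i' = EU_i$ intersected appropriately. Let me restate the key idea cleanly.

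\begin{proof}
Let $E$ be a finite subset of $G$; we may assume $e\in E$ and $E = E^{-1}$. Apply Definition~\ref{D-dad} to the finite set $E' = E^2$ to obtain a finite set $F\subseteq G$ and an open cover $\cU = \{U_1,\dots,U_{d+1}\}$ of $X$ such that for all $x\in X$ and $s_1,\dots,s_n\in E'$, if $x, s_1 x,\dots, s_n\cdots s_1 x$ all lie in a common $U_i$ then $s_n\cdots s_1\in F$. By normality of $X$, choose a closed shrinking $\{C_1,\dots,C_{d+1}\}$ with $C_i\subseteq U_i$ and $\bigcup_i C_i = X$. Put $V_i = X\setminus \bigcup_{j\neq i}(X\setminus U_j)$... — more simply, for each $x\in X$ fix $i(x)$ with $x\in \interior C_{i(x)}$ (shrinking once more if needed so the $C_i$ have the $x$ in their interiors), and set
\[
U_i' = \{ x\in X : sx\in U_i \text{ for all } s\in E\}^{\circ},
\]
the interior of the set of $x$ whose whole $E$-orbit lands in $U_i$.

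Two things must be checked. First, $\{U_i'\}$ is $E$-Lebesgue: if $x\in U_i'$ then by definition $Ex\subseteq U_i$, and it remains to see the $U_i'$ cover $X$. This is where the enlargement $E' = E^2$ and the return bound are used. For a fixed $x$, consider the finitely many points $Ex$; each lies in some $U_j$. Using the $E'$-return property one shows that in fact the entire orbit $Ex$ can be placed inside a single $U_j$ for $j$ in the "deepest" part of the cover relative to $x$ — precisely, among the indices $i$ with $x\in \interior C_i$, an iterated application of the orbit-containment hypothesis along chains of elements of $E$ (which have length $\le 2$ when composed, hence lie in $E' = E^2$) forces $Ex\subseteq U_i$ for at least one such $i$; continuity then yields a neighbourhood of $x$ inside $U_i'$. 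Hence $\{U_i'\}$ covers $X$. Second, $\{U_i'\}$ still witnesses $\dad(X,G)\le d$ with the \emph{same} $F$: if $x, s_1 x,\dots, s_n\cdots s_1 x\in U_i'$ with all $s_k\in E$, then since $U_i'\subseteq U_i$ the same points lie in $U_i$, and $s_k\in E\subseteq E'$, so $s_n\cdots s_1\in F$ by the choice of $\cU$. As $E$ was arbitrary, the definition of $\dad(X,G)$ is satisfied with covers that are $E$-Lebesgue.
\end{proof}

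The main obstacle is the covering claim for $\{U_i'\}$: one must genuinely exploit the dynamic-asymptotic-dimension condition (not just normality) to promote "each individual point of the orbit $Ex$ lies in \emph{some} $U_j$" to "the whole orbit $Ex$ lies in \emph{one} $U_j$." The mechanism is that if $y$ and $sy$ lie in different members of $\cU$ for every admissible choice of member, then one can build a long orbit segment through a single $U_i$ whose "length" exceeds what $F$ allows, contradicting the return bound — this is exactly the kind of pigeonhole-along-orbits argument used in \cite{GueWilYu17}, and carrying it out carefully (with the bookkeeping of which enlargement $E'$ of $E$ is needed so that all composed group elements stay controlled, and ensuring $F$ does not need to grow) is the technical heart of the proof. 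Everything else — the normality shrinkings, the continuity arguments producing open neighbourhoods, and the verification that the return property passes to subsets — is routine.
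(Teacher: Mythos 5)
Your shrinking construction cannot work, for a structural rather than technical reason: shrinking an open cover can only make the $E$-Lebesgue condition harder to satisfy, never easier. Concretely, with $U_i' \subseteq \{x : Ex\subseteq U_i\}$, the sets $U_i'$ cover $X$ if and only if the original cover $\cU$ is already $E$-Lebesgue — if some $x$ has $Ex\not\subseteq U_i$ for every $i$, then that same $x$ lies in no $U_i'$. So your covering claim is not a reduction of the problem; it \emph{is} the problem. And the return property cannot supply it, because it is a one-way implication ("whenever an orbit segment stays in one $U_i$, the corresponding group element lies in $F$"); it constrains segments that stay in a single $U_i$ but neither creates nor enlarges any. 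Nothing in the hypothesis forbids a point $x$ with $x\in U_0\setminus U_1$ and $sx\in U_1\setminus U_0$ for some $s\in E$; such a configuration is fully compatible with the return property for any enlargement $E'$, and it permanently blocks the $E$-Lebesgue condition for $\cU$ and for every shrinking of $\cU$. The paragraph where you invoke "an iterated application of the orbit-containment hypothesis" to force $Ex\subseteq U_i$ for some $i$ is precisely where the argument has no content: the hypothesis you appeal to is conditional on recurrence and cannot be used to produce it.

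There is also a secondary slip. Even if $\{U_i'\}$ did cover $X$, that would show the \emph{original} cover $\cU$ is $E$-Lebesgue, not that $\{U_i'\}$ is: $x\in U_j'$ gives only $Ex\subseteq U_j$, whereas the $E$-Lebesgue condition for the new cover requires $Ex\subseteq U_j'$. This could be remedied by outputting $\cU$ itself, but the first issue is fatal regardless of which cover you output.

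The paper does not prove this proposition; it invokes Corollary~4.2 of \cite{GueWilYu17}. The construction there is genuinely different and cannot be implemented by passing to $E$-invariant cores: roughly, it \emph{expands} sets along orbits, exploiting the finiteness of orbit segments confined to a single member of the cover (a consequence of the return bound), so as to guarantee the $E$-Lebesgue condition at the cost of enlarging $F$. Your closing instinct that the dynamic-asymptotic-dimension condition, not just normality, must be the engine is correct; the mechanism you chose simply points the wrong way.
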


We next record some lemmas that will allow us to establish the inequality
$\ftowdimplus (X,G) \leq \dadplus (X,G)\cdot\dimplus (X)$ in Theorem~\ref{T-dimensions}.

\begin{definition}\label{D-castle 1}
Let $G\curvearrowright X$ be a free action on a compact metric space. 
A {\it castle} is a finite collection of towers $\{ (V_i , S_i) \}_{i\in I}$
such that the sets $S_i V_i$ for $i\in I$ are pairwise disjoint.
The {\it levels} of the castle are the sets $sV_i$ for $i\in I$ and $s\in S_i$.
We say that the castle is {\it open} if each of the towers is open.
\end{definition}

\begin{definition}\label{D-R}
For sets $W\subseteq X$ and $E\subseteq G$ we write $R_{W,E}$ for the equivalence relation
on $W$ under which two points $x$ and $y$ are equivalent if there exist 
$s_1 , \dots , s_n \in E\cup E^{-1} \cup \{ e \}$ such that $y = s_n \cdots s_1 x$ and 
$s_k \cdots s_1 x \in W$ for $k=1,\dots ,n-1$.
Note that $R_{W,E}$ is symmetric because the set $E\cup E^{-1} \cup \{ e \}$ is symmetric.
\end{definition}

For an equivalence relation $R$ on a set $Z$ and an $A\subseteq Z$
we write $[A]_R$ for the saturation of $A$, i.e., the set of all $x\in Z$
for which there exists a $y\in A$ such that $xRy$.
For sets $W,A\subseteq X$ we write $\partial_A W$ for the boundary of $A\cap W$ 
as a subset of the set $A$ equipped with the relative topology.

We will use without comment the following properties of covering dimension 
for a metrizable space $Y$. The second and third are consequences of the fact that 
covering dimension and large inductive dimension coincide in the metrizable setting.
See \cite{Nag83} for more information.
\begin{enumerate}
\item If $A$ is a closed subset of $Y$ then $\dim (A)\leq \dim (Y)$.

\item For every open set $U\subseteq Y$ 
and closed set $C\subseteq U$ there exists an open set $V\subseteq Y$
with $C\subseteq V\subseteq U$ and $\dim (\partial V) < \dim (Y)$.

\item If $\{ C_1 , \dots , C_n \}$ is a closed covering of $Y$ then
$\dim (Y)\leq \max_{i=1,\dots ,n} \dim (C_i )$.
\end{enumerate}

\begin{lemma}\label{L-small diam}
Let $A$ be a nonempty closed subset of $X$ and let $\delta > 0$. 
Then there is a finite collection $\{ B_1 , \dots , B_n  \}$
of pairwise disjoint relatively open subsets of $A$ of diameter less than $\delta$ 
such that the set $\bigsqcup_{j=1}^n B_j$ is dense in $A$ and $\dim (\partial_A B_j ) < \dim (A)$
for every $j=1,\dots ,n$.
\end{lemma}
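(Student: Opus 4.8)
The plan is to build the sets $B_j$ by a finite induction that peels off one relatively open piece of small diameter at a time, each time controlling the boundary dimension via property (ii) of covering dimension. First I would cover $A$ by finitely many open balls $W_1 , \dots , W_m$ (in the metric of $X$) of radius less than $\delta / 2$, so that each $W_k \cap A$ has diameter less than $\delta$; by compactness of $A$ such a finite subcover exists. The idea is then to disjointify these sets one at a time, but naive disjointification would produce pieces whose relative boundaries in $A$ are not controlled, so instead at each stage I shrink slightly before taking the union.

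More precisely, I would proceed inductively on $k$ and maintain at stage $k$ a finite pairwise disjoint collection of relatively open subsets of $A$, each of diameter less than $\delta$ and each with relative boundary in $A$ of dimension $< \dim (A)$, whose union together with $W_{k+1} \cup \cdots \cup W_m$ still covers $A$. At stage $k+1$, having built $B_1 , \dots , B_r$ so far, let $U = (W_{k+1} \cap A) \setminus \bigcup_{j=1}^r \overline{B_j}^{\,A}$, which is relatively open in $A$ and has diameter less than $\delta$. The closed set $C = A \setminus (B_1 \cup \cdots \cup B_r \cup W_{k+2} \cup \cdots \cup W_m)$ is contained in $U$ (anything in $C$ lies in $W_{k+1}$ since the $W$'s cover, and cannot lie in any $\overline{B_j}^{\,A}$ without contradicting disjointness with what remains — here one checks the bookkeeping). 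Applying property (ii) of covering dimension inside the metrizable space $A$ to the pair $C \subseteq U$, I obtain a relatively open $B_{r+1}$ with $C \subseteq B_{r+1} \subseteq U$ and $\dim (\partial_A B_{r+1}) < \dim (A)$; it is automatically disjoint from $B_1 , \dots , B_r$ and of diameter $< \delta$, and the union of $B_1 , \dots , B_{r+1}$ with $W_{k+2} \cup \cdots \cup W_m$ still covers $A$ because $C$ was absorbed.

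After $m$ stages the leftover open sets are exhausted and I have a finite pairwise disjoint family $\{ B_1 , \dots , B_n \}$ of relatively open subsets of $A$, each of diameter $< \delta$ and with $\dim (\partial_A B_j) < \dim (A)$; and $\bigsqcup_j B_j$ meets every $W_k \cap A$ (indeed it covers $A$ minus the union of the finitely many nowhere-dense relative boundaries $\partial_A B_j$), hence is dense in $A$ since a finite union of closed sets with empty interior in $A$ has empty interior (Baire, or directly: each $\partial_A B_j$ is nowhere dense in $A$, being a boundary of a relatively open set). The main obstacle I anticipate is the bookkeeping in the inductive step — correctly formulating the invariant so that $C \subseteq U$ really holds and so that the new boundary does not accidentally inflate the dimension — rather than any deep point; the dimension-theoretic input is entirely supplied by property (ii), and the smallness of diameters is built in from the choice of the $W_k$. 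One minor care: if $\dim (A) = 0$ then property (ii) gives $\partial_A B_j = \emptyset$, i.e. the $B_j$ are relatively clopen, and the argument still goes through verbatim.
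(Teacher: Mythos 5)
Your plan is sound and close in spirit to the paper's---both cover by small open sets, use property~(ii) of covering dimension to control boundary dimension, and then disjointify---but as written the inductive step has a genuine gap. You define $U = (W_{k+1}\cap A)\setminus\bigcup_{j\le r}\overline{B_j}^{\,A}$ and assert $C\subseteq U$. For this you would need every point of $C$ to avoid every $\overline{B_j}^{\,A}$, but the invariant you maintain only tells you that $C$ avoids each $B_j$, not its relative closure. A point $x\in\partial_A B_j$ that happens not to lie in $B_1,\dots,B_r$ or in $W_{k+2},\dots,W_m$ lies in $C$ but not in $U$; the parenthetical appeal to ``disjointness with what remains'' does not rule this out, since $x\notin B_j$ is perfectly compatible with $x\in\overline{B_j}^{\,A}$.

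The fix is small: since the $B_j$ are relatively open, the set $(W_{k+1}\cap A)\setminus\bigcup_{j\le r}B_j$ is already relatively open in $A$, disjoint from every $B_j$, and of diameter less than $\delta$; take that to be $U$ instead. Then $C\subseteq U$ is immediate from the invariant, property~(ii) applied inside $A$ yields $B_{r+1}$ with $C\subseteq B_{r+1}\subseteq U$ and $\dim(\partial_A B_{r+1})<\dim(A)$, and disjointness from the earlier $B_j$ is automatic. With this repair the induction closes and in fact gives slightly more than the lemma asks for: the $B_j$ end up partitioning $A$, not merely forming a dense relatively open subset.

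For comparison, the paper proceeds non-iteratively: it first shrinks the cover to closed sets $C_j\subseteq U_j$ still covering $X$, applies property~(ii) once per index to get relatively open $V_j$ with $C_j\cap A\subseteq V_j\subseteq U_j\cap A$ and small boundary, and only then disjointifies by $B_j=V_j\setminus(\overline{V_1}\cup\cdots\cup\overline{V_{j-1}})$. The boundary of each $B_j$ is controlled via $\partial_A\overline{V_l}\subseteq\partial_A V_l$ together with property~(iii), and density of $\bigsqcup_j B_j$ follows because $A\setminus\bigsqcup_j B_j$ lies in $\bigcup_l\partial_A V_l$, a finite union of nowhere dense sets. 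Your interleaved version, once corrected, reapplies property~(ii) at each stage rather than all at once up front; the dimension-theoretic input is the same.
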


\begin{proof}
By compactness there exists a finite open cover $\{ U_1 , \dots , U_n \}$ of $X$ 
whose members each have diameter less than $\delta$, and by normality we can find closed sets $C_j \subseteq U_j$
such that the collection $\{ C_1 , \dots , C_n \}$ is also a cover of $X$. Relativizing to $A$,
we can then find for each $j=1,\dots ,n$ a relatively open subset $V_j$ of $A$ such that
$C_j \cap A \subseteq V_j \subseteq U_j \cap A$ and $\dim (\partial_A V_j ) < \dim (A)$.
Now recursively define $B_1 = V_1$ and 
$B_j = V_j \setminus (\overline{V_1} \cup\cdots\cup\overline{V_{j-1}} )$ for $j=2,\dots , n$.
Then the set $B = \bigsqcup_{j=1}^n B_j$ is dense in $A$ and for every $j=1,\dots , n$ we have
\begin{align*}
\dim (\partial_A B_j ) 
&\leq \max \{\dim (\partial_A V_j ),\dim (\partial_A \overline {V_{j-1}} ) ,\dots , \dim (\partial_A \overline {V_1} )\} \\
&\leq \max \{\dim (\partial_A V_j ),\dim (\partial_A V_{j-1} ) ,\dots , \dim (\partial_A V_1 )\} \\
&< \dim (A) . \qedhere
\end{align*}
\end{proof}

\begin{lemma}\label{L-closed}
Let $E$ be a finite subset of $G$ with $E^{-1} = E$ and $e\in E$. 
Let $C$ be a closed subset of $X$, and suppose that there is a finite set $F\subseteq G$
such that, for all $x\in C$ and $s_1 , \dots , s_m \in E$, if 
$s_k \cdots s_1 x \in C$ for all $k=1,\dots ,m$ 
then $s_m \cdots s_1 \in F$. Let $A$ be a closed subset of $C$.
Then $[A]_{R_{C,E}}$ is closed.
\end{lemma}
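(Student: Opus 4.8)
The key observation is that because $F$ is finite and the action is free, the relation $R_{C,E}$ has a uniform bound on how far two equivalent points can travel: if $x \in A \subseteq C$ and $y = s_n \cdots s_1 x$ with all intermediate points in $C$, then either $y$ already equals some $s x$ with $s$ in a fixed finite set, or a ``cancellation'' argument reduces the word. More precisely, I claim $[A]_{R_{C,E}} = \bigcup_{s \in F'} (sA \cap C)$ for a suitable finite set $F' \subseteq G$, and each $sA \cap C$ is closed since $A$ and $C$ are closed and $s$ acts homeomorphically; a finite union of closed sets is closed, which gives the conclusion.

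The first step is to pin down $F'$. Given $x \in A$ and a witnessing word $s_1, \dots, s_n \in E \cup E^{-1} \cup \{e\} = E$ (using $E^{-1} = E$, $e \in E$) with $s_k \cdots s_1 x \in C$ for $k = 1, \dots, n-1$ and $y = s_n \cdots s_1 x$, I want to bound $s_n \cdots s_1$. The subtlety is that the hypothesis of Lemma~\ref{L-closed} only controls words all of whose \emph{partial products applied to $x$} land in $C$ — and the final point $y = s_n \cdots s_1 x$ need not lie in $C$ (it lies in $[A]_{R_{C,E}}$, which could stick out of $C$). So I would argue: $s_{n-1} \cdots s_1 x \in C$ and $x \in C$, so by hypothesis $s_{n-1} \cdots s_1 \in F$; hence $s_n \cdots s_1 \in EF$. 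Thus taking $F' = EF$, every element of $[A]_{R_{C,E}}$ has the form $gx$ with $g \in F'$ and $x \in A$, i.e. $[A]_{R_{C,E}} \subseteq \bigcup_{g \in F'} gA$. Intersecting with the ambient structure: actually $[A]_{R_{C,E}} \subseteq X$ but each such $gx$ is reached only when the word stays in $C$; at minimum $x \in A \subseteq C$, so we get $[A]_{R_{C,E}} \subseteq \bigcup_{g \in F'}(gA)$, and each $gA$ is closed. That already suffices — a finite union of closed sets is closed — and we need not even intersect with $C$.

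Then the reverse-and-equality bookkeeping can be skipped: for closedness we only need the inclusion $[A]_{R_{C,E}} \subseteq \bigcup_{g \in F'} gA$ together with the fact that the right-hand side is a finite union of closed sets, hence closed, and that $[A]_{R_{C,E}}$ need not equal it — wait, that does not give closedness of $[A]_{R_{C,E}}$ itself. So I do need to show $[A]_{R_{C,E}}$ \emph{equals} a finite union of closed sets, not merely sits inside one. I would therefore refine: show that $[A]_{R_{C,E}} = \bigcup_{g \in F'} \big( gA \cap [A]_{R_{C,E}} \big)$ is not obviously closed, so instead I would directly prove that for each $g \in F'$ the set $\{ x \in A : gx \in [A]_{R_{C,E}} \text{ via a length-one or bounded extension} \}$ — hmm. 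Cleaner approach: prove $[A]_{R_{C,E}}$ is closed by showing it is the image of a closed set under a continuous map. Consider the finite set $W$ of all ``admissible words'' $w = (s_1, \dots, s_n)$ with $s_i \in E$, $n \le N$ where $N = |F|$ bounds word length (again by freeness and finiteness of $F$, since distinct partial products $s_k \cdots s_1$ applied to a point of $C$ staying in $C$ all lie in $F$, so $n \le |F|$). For each such $w$, the set $A_w = \{ x \in A : s_k \cdots s_1 x \in C \text{ for } k \le n-1 \}$ is closed (finite intersection of preimages of the closed set $C$ under homeomorphisms, intersected with $A$), and $[A]_{R_{C,E}} = \bigcup_{w \in W} (s_n \cdots s_1) A_w$, a finite union of continuous images of closed sets, hence closed.

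The main obstacle is getting the uniform bound $n \le |F|$ on word length. The point: in a reduced witnessing word we may assume the partial products $e = s_0, \; s_1, \; s_2 s_1, \dots, s_{n-1} \cdots s_1$ (where $s_0 \cdots s_1 := e$) are all distinct — if $s_j \cdots s_1 = s_k \cdots s_1$ for $j < k \le n-1$ then by freeness $s_j \cdots s_1 x = s_k \cdots s_1 x$ and we can excise the loop $s_{j+1}, \dots, s_k$ to get a shorter word with the same endpoints and all partial products still in $C$. After this reduction, $s_0 \cdots s_1 = e \in F$ (assuming $e \in F$, or enlarge $F$ by $e$) and $s_k \cdots s_1 \in F$ for $1 \le k \le n-1$ by hypothesis, and these are $n$ distinct elements of $F$, so $n \le |F|$. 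Hence $W$ is finite, and the argument closes. I expect the loop-excision/freeness step to be the one requiring the most care to state correctly, and I would present it as the heart of the proof, with the topology (closedness of $A_w$, continuity of translation, finite unions) being routine.
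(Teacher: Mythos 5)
Your argument rests on the same central observation as the paper's: after excising loops among the partial products, the length of a witnessing word is bounded by $|F|$, so only finitely many witness configurations exist. The paper packages this as a sequential-limit argument (pass to a subsequence with a fixed word, take limits, use closedness of $A$ and $C$), while you package it as an explicit decomposition of $[A]_{R_{C,E}}$ into a finite union of closed sets, each a homeomorphic image of a closed set. The underlying idea and the level of the argument are the same; your version has the virtue of making the word-length bound explicit, which the paper leaves implicit.

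Two corrections are needed. First, your worry that $[A]_{R_{C,E}}$ ``could stick out of $C$'' is misplaced: by Definition~\ref{D-R} the relation $R_{C,E}$ lives on $C$, so its saturations are subsets of $C$, and in particular the endpoint $y = s_n\cdots s_1 x$ of a witnessing word is automatically in $C$, giving $s_n\cdots s_1\in F$ directly with no $EF$ detour. Second, and this one actually breaks your stated identity: your $A_w = \{x\in A : s_k\cdots s_1 x\in C\text{ for }k\le n-1\}$ omits the condition $s_n\cdots s_1 x\in C$ on the endpoint, so $(s_n\cdots s_1)A_w$ can contain points lying outside $C$ and hence outside $[A]_{R_{C,E}}$; the claimed equality $[A]_{R_{C,E}}=\bigcup_{w\in W}(s_n\cdots s_1)A_w$ is then only an inclusion $\subseteq$. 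Strengthen $A_w$ to require $s_k\cdots s_1 x\in C$ for all $k\le n$ (still a finite intersection of closed sets and hence closed), or equivalently intersect the right-hand side with $C$; either way the argument then closes.

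A smaller point: freeness does no work in your loop excision, and you invoke it in the wrong direction --- from $s_j\cdots s_1 = s_k\cdots s_1$ one gets $s_j\cdots s_1 x = s_k\cdots s_1 x$ immediately, with no hypothesis on the action. The excision and the count of at most $|F|$ distinct partial products take place entirely at the level of group elements, so this lemma does not in fact need the section's standing freeness assumption.
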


\begin{proof}
Let $x$ be a point in $X$ which is the limit of  
some sequence $\{ x_n \}$ in $[A]_{R_{C,E}}$,
and let us show that $x\in [A]_{R_{C,E}}$.
Since $F$ is finite
we can assume, by passing to a subsequence, 
that there are $s_1 , \dots , s_m \in E$ and $a_n \in A$ such that
for every $n$ we have $x_n = s_m \cdots s_1 a_n$ and $s_k \cdots s_1 a_n \in C$
for $k=1,\dots ,m$. By the continuity of the action, we have
$a_n = s_1^{-1} \cdots s_m^{-1} x_n \to s_1^{-1} \cdots s_m^{-1} x$ as $n\to\infty$. 
Writing $a = s_1^{-1} \cdots s_m^{-1} x$, which belongs to $A$ since $A$ is closed,
we then have $x = s_m \cdots s_1 a$, and also
$s_k \cdots s_1 a \in C$ for $k=1,\dots ,m$ since $C$ is closed.
Thus $x$ and $a$ are $R_{C,E}$-equivalent, so that $x\in [A]_{R_{C,E}}$. 
We conclude that $[A]_{R_{C,E}}$ is closed.
\end{proof}

\begin{lemma}\label{L-lower dim}
Let $E$ be a finite subset of $G$ with $E^{-1} = E$ and $e\in E$. Let $\delta > 0$.
Let $U$ be an open subset of $X$ and $F$ a finite subset of $G$
such that, for all $x\in U$ and $s_1 , \dots , s_m \in E$, if 
$s_k \cdots s_1 x \in U$ for all $k=1,\dots ,m$ 
then $s_m \cdots s_1 \in F$. 
Let $C$ be a nonempty closed subset of $X$ such that $C\subseteq U$.
Let $A$ be a closed subset of $\overline{U}$ with $A = [A]_{R_{\overline{U},E}}$.
Then there are an open set $W\subseteq X$ with $C\subseteq W\subseteq \overline{W}\subseteq U$,
an open castle $\{ (V_i ,S_i ) \}_{i\in I}$, and sets $O_i \subseteq V_i$ such that 
\begin{enumerate}
\item $\diam (sV_i ) < \delta$ for all $i\in I$ and $s\in S_i$,

\item $\bigsqcup_{i\in I} S_i V_i \subseteq W$,

\item $[tx]_{R_{W,E}} = S_i x$ for every $i\in I$, $t\in S_i$, and $x\in O_i$, 

\item the set $(A\cap\overline{W} )\setminus \bigsqcup_{i\in I} S_i O_i$ is closed and
has dimension strictly less than $\dim (A)$.
\end{enumerate}
\end{lemma}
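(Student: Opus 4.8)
The plan is to shrink $U$ to a suitable open $W$, cut a finite union of group-translates of $A$ into pieces of small diameter by Lemma~\ref{L-small diam}, use these pieces to single out one point in each $R_{W,E}$-class, and assemble the chosen points --- sorted by their piece and by the ``shape'' of their class --- into the required open castle, with a closed set of dimension $<\dim A$ going into the exceptional set of~(iv). To begin, we may assume $F=F^{-1}$ and $e\in F$ (this only weakens the hypothesis on $U$), and we fix a finite symmetric $Q\subseteq G$ with $F\cup FF^{-1}F\subseteq Q$. By freeness $x\mapsto d(x,tx)$ is strictly positive on the compact space $X$ for each $t\in Q\setminus\{e\}$; let $\eps_0>0$ be a common lower bound. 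Using uniform continuity of the maps $x\mapsto tx$, $t\in Q$, fix $\delta'\in(0,\delta]$ with $\delta'<\eps_0/3$ such that $\diam Y<\delta'$ implies $\diam(tY)<\delta$ for all $Y\subseteq X$ and $t\in Q$. Put $A^{**}=QA$; by property~(iii) of covering dimension this closed set has $\dim A^{**}=\dim A$. Since $A^{**}$ is metrizable, covering and large inductive dimension agree on it, and a standard patching argument using normality produces an open $W$ with $C\subseteq W\subseteq\overline W\subseteq U$ and $\dim(Q\partial W\cap A^{**})<\dim A$.

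Because $\overline W\subseteq U$, the hypothesis on $U$ applies to every $R_{W,E}$-chain, so each class $[x]_{R_{W,E}}$ lies in $Fx$; writing $S(x)=\{s\in F:sx\in[x]_{R_{W,E}}\}$ we get $[x]_{R_{W,E}}=S(x)x$ and $e\in S(x)\subseteq F$ by freeness. For $e\in S\subseteq F$ let $Y_S=\{x\in W:S(x)=S\}$; these partition $W$. Since ``$s\in S(x)$'' is a finite union, over the $E$-paths in $F$ from $e$ to $s$, of finite intersections of open conditions of the form ``a prescribed translate of $x$ lies in $W$'', each $\{x:s\in S(x)\}$ is open, so every $Y_S$ is locally closed and its frontier --- and that of its interior $Z_S$ --- lies in $Q\partial W$. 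Next apply Lemma~\ref{L-small diam} to $A^{**}$ with parameter $\delta'$, obtaining pairwise disjoint relatively open $B_1,\dots,B_n\subseteq A^{**}$ of diameter $<\delta'$ with $\bigsqcup_j B_j$ dense in $A^{**}$ and $\dim(\partial_{A^{**}}B_j)<\dim A$. As $\delta'<\eps_0$, distinct points of a common $R_{W,E}$-class lie at distance $\geq\eps_0>\delta'$, so each $B_j$ meets each class in at most one point and $B_j\cap tB_j=\emptyset$ for $t\in Q\setminus\{e\}$. This supplies a canonical point $x_\kappa$ in each $R_{W,E}$-class $\kappa$ meeting $\bigsqcup_j B_j$: take $j(\kappa)$ least with $\kappa\cap B_{j(\kappa)}\neq\emptyset$ and let $x_\kappa$ be the unique point of $\kappa\cap B_{j(\kappa)}$.

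Let $D$ be the union of the $\partial_{A^{**}}B_j$, of $Q\partial W\cap A^{**}$, and of $A^{**}\setminus\bigsqcup_j B_j$ (contained in $\bigcup_j\partial_{A^{**}}B_j$); then $D$ is closed with $\dim D<\dim A$. Put $\mathcal B=(A\cap\partial W)\cup\big(A\cap\overline W\cap\big[(A\cap D)\cup[D\cap A\cap\overline W]_{R_{\overline W,E}}\big]\big)$; this is closed by Lemma~\ref{L-closed} (valid since $\overline W\subseteq U$), has dimension $<\dim A$ since the saturation is contained in $F\cdot D$ ($R_{\overline W,E}$-classes lying in $F$-orbits), and --- using that $A\cap\overline W$ is $R_{\overline W,E}$-saturated because $A=[A]_{R_{\overline U,E}}$ --- its part off $A\cap\partial W$ is a union of full $R_{W,E}$-classes. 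Now index towers by the finitely many pairs $(j,S)$ with $1\leq j\leq n$ and $e\in S\subseteq F$, and set $O_{(j,S)}=\{x\in A\cap B_j\setminus\mathcal B:\ S(x)=S,\ x=x_{[x]_{R_{W,E}}}\}$; deleting $\mathcal B$ forces $O_{(j,S)}\subseteq Z_S\cap W$, and then $S\cdot O_{(j,S)}\subseteq A\cap W$ is a union of full classes. One checks that the closures $\overline{S\cdot O_{(j,S)}}$ are pairwise disjoint modulo $\mathcal B$; granting this, one separates the $\overline{S\cdot O_{(j,S)}}\setminus\mathcal B$ by pairwise disjoint open sets in the normal space $X\setminus\mathcal B$ and shrinks --- intersecting with $Z_S$, with $W$, and with a tube about $B_j$ whose $Q$-translates all have diameter $<\delta$ --- to obtain open bases $V_{(j,S)}\supseteq O_{(j,S)}$ forming an open castle with $\bigsqcup_{(j,S)}S\cdot V_{(j,S)}\subseteq W$ and $\diam(sV_{(j,S)})<\delta$ for $s\in S$, giving~(i) and~(ii). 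For~(iii), $x\in O_{(j,S)}$ gives $S(x)=S$, so $[x]_{R_{W,E}}=Sx$ and hence $[tx]_{R_{W,E}}=Sx$ for $t\in S$. For~(iv), each $S\cdot O_{(j,S)}$ is relatively open in $A\cap\overline W$, so the set in~(iv) is relatively closed there, hence closed; and any point $z$ of it lies in $A\cap\partial W\subseteq\mathcal B$, or in a class $\kappa\subseteq A$ which either misses $\bigsqcup_j B_j$ (so $z\in A^{**}\setminus\bigsqcup_j B_j\subseteq\mathcal B$) or has $x_\kappa\in\mathcal B$ (otherwise $x_\kappa\in O_{(j(\kappa),S(x_\kappa))}$ and $z\in\kappa$ would be covered), whence $z\in\kappa\subseteq\mathcal B$; so the set in~(iv) lies in $\mathcal B$ and has dimension $<\dim A$.

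The one genuinely technical point --- the main obstacle --- is the claim that after removing the low-dimensional closed set $\mathcal B$ the enlarged levels $\overline{S\cdot O_{(j,S)}}$ are pairwise disjoint, equivalently that the (non-constructive) selection of marked points can be realized by a genuine open castle with pairwise disjoint full levels while the discarded set stays closed of dimension strictly below $\dim A$. This forces one to track carefully how the non-open shape loci $Y_S$, the piece frontiers $\partial_{A^{**}}B_j$, and $\partial W$ interact under the finitely many translations occurring in the shapes, and is exactly where the conditions $\delta'<\eps_0/3$ and $\dim(Q\partial W\cap A^{**})<\dim A$ are brought to bear.
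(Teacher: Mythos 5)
Your proposal takes a genuinely different route from the paper's. Where the paper builds the shape decomposition first --- showing the shape function $\varphi(x)=\{s\in F:sx\in[x]_{R_{W,E}}\}$ is locally constant on $X_0=A\setminus[\partial_A W_1]_{R_{\overline W,E}}$, defining the $O_i$ by a first-come-first-served peeling of closures indexed by shape, and only at the end invoking Lemma~\ref{L-small diam} to shrink the levels --- you instead apply Lemma~\ref{L-small diam} up front to the translated set $A^{**}=QA$, exploit the small-diameter pieces $B_j$ (each meeting each $R_{W,E}$-class at most once since $\delta'<\eps_0$) to mark a canonical representative in each class, and index the towers by (piece, shape) pairs. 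That is a real alternative and the bookkeeping is cleverly organized.

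However, there is a genuine gap: you state, rather than prove, the claim that after removing $\mathcal B$ the closures $\overline{S\cdot O_{(j,S)}}$ are pairwise disjoint, and you likewise invoke without justification that each $S\cdot O_{(j,S)}$ is relatively open in $A\cap\overline W$ (which is what you use to conclude that the set in~(iv) is closed). You flag this explicitly as ``the one genuinely technical point --- the main obstacle''; that honesty is welcome, but these two claims are exactly where the real work of the lemma lies, and they are not trivialities. Because the marked-point condition ``$x=x_{[x]_{R_{W,E}}}$'' unfolds as a conjunction of requirements of the form ``$sx\notin B_{j''}$'' for $s\in S$, $j''<j$, which are relatively \emph{closed} in $A^{**}$, the sets $O_{(j,S)}$ are a priori only locally closed; one must argue that removing $\mathcal B$ (which absorbs $\partial_{A^{**}}B_{j''}$, $Q\partial W\cap A^{**}$, and their $R_{\overline W,E}$-saturations) turns both conditions open, and then run a sequential argument --- entirely parallel to the paper's two sequential continuity arguments for $\varphi$ and for the relative openness of $sO_i$ --- to get disjointness of the closures and relative openness of the levels. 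Having traced through the construction, I believe both claims hold with the ingredients you name ($\delta'<\eps_0/3$, $F^{-1}F\subseteq Q$, $\partial W\subseteq Q\partial W\subseteq D$, and the fact that $A$ is $R_{\overline U,E}$-saturated), so the gap appears fillable, but the work required is comparable to what the paper actually does and cannot be deferred. A secondary point: producing $W$ with $\dim(Q\partial W\cap A^{**})<\dim A$ needs slightly more than the quoted separation fact --- you should separate inside the closed metrizable subspace $Q^{-1}A^{**}$ and then extend to an open set of $X$ --- and this too deserves a couple of lines.
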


\begin{proof}
Take an open set $W_0 \subseteq X$ with $C\subseteq W_0 \subseteq \overline{W_0}\subseteq U$.
Then we can find a relatively open subset $W_1$ of $A$ with 
$A\cap C \subseteq W_1 \subseteq A\cap W_0$ such that $\dim (\partial_A W_1 ) < \dim (A)$. 
Now take an open set $W\subseteq W_0$ such that $W_1 = A\cap W$ and $C\subseteq W$, and note that
$\overline{W}\subseteq \overline{W_0} \subseteq U$.
Set
\[
X_0 = A \setminus [\partial_A W_1 ]_{R_{\overline{W},E}} 
\]
Since $A$ is closed the set $\partial_A W_1$ is closed, and thus
by Lemma~\ref{L-closed} the set $[\partial_A W_1 ]_{R_{\overline{W},E}}$ is closed, 
so that $X_0$ is relatively open in $A$. Moreover, since $[\partial_A W_1 ]_{R_{\overline{W},E}}$ is
contained in $F\partial_A W_1$ we have
\begin{align}\label{E-boundary dim}
\dim ([\partial_A W_1 ]_{R_{\overline{W},E}} )
\leq \dim (F\partial_A W_1 )
= \dim (\partial_A W_1 ) 
< \dim (A) .
\end{align}

Define a map $\varphi : X_0 \to \sP (F)$ (the power set of $F$) by 
\[
\varphi (x) = \{ s\in F : sx \in [x]_{R_{W,E}} \} ,
\]
which by freeness is determined by the equation
$\varphi (x) x = [x]_{R_{W,E}}$.
Let us verify that $\varphi$ is continuous. Let $x\in X_0$. 
Since $W$ is open and the action is continuous, we can find 
a relatively open subset
$V$ of $X_0$ containing $x$ such that $\varphi (x) \subseteq \varphi (y)$ for every 
$y\in V$. Suppose that there exists a sequence $\{ x_n \}$ in $X_0$ converging to $x$
such that $\varphi (x) \neq \varphi (x_n)$ for every $n$. 
We may assume, by passing to a subsequence, that there is a $t\in F$ such that
$t\notin \varphi (x)$ and $t\in \varphi (x_n)$ for every $n$.
Since the cardinality of each equivalence class of $R$ is bounded above by $|F|$,
we can also assume, by passing to a further subsequence,
that there are $s_1 , \dots , s_m \in E$ such that
$s_m \cdots s_1 = t$ and $s_k \cdots s_1 x_n \in W$ for $k=1,\dots ,m$.
Then by the continuity of the action 
we have $s_k \cdots s_1 x \in \overline{W}$ for $k=1,\dots ,m$.
Now if it were the case that $s_k \cdots s_1 x \notin W$ for some $1\leq k\leq m$,
then since  $s_k \cdots s_1 x \in [A]_{R_{\overline{U},E}} = A$
and $s_k \cdots s_1 x_n \in [A]_{R_{\overline{U},E}} \cap W = A\cap W$ for every $n$ 
it would follow that $s_k \cdots s_1 x \in\partial_A W_1$ and hence 
$x\in [\partial_A W_1 ]_{R_{\overline{W},E}}$,
contradicting the membership of $x$ in $X_0$. 
Therefore $s_k \cdots s_1 x \in W$ for $k=1,\dots ,m$, showing that 
$t\in\varphi (x)$, a contradiction. We conclude from this that $\varphi$
is constant on some open neighbourhood of $x$, and hence that $\varphi$ is continuous on $X_0$, 
as desired.

Enumerate the subsets of $F$ containing $e$ as $S_1 , \dots , S_q$.
Recursively define subsets $O_1 , \dots , O_q$ of $X_0$ by setting
$O_1 = \varphi^{-1} (S_1 )$ and, for $i=2,\dots ,q$,
\begin{align*}
O_i = \varphi^{-1} (S_i ) \setminus 
\big(\overline{S_{i-1} \varphi^{-1} (S_{i-1} )} \cup\cdots\cup \overline{S_1 \varphi^{-1} (S_1 )} \big) .
\end{align*}
The sets $S_i O_i$ for $i=1,\dots ,q$ are pairwise disjoint because $R_{W,E}$ is an equivalence relation.
Note also that each set $S_i O_i$ is contained in $X_0$ since $[A]_{R_{W,E}} \subseteq [A]_{R_{\overline{U},E}} = A$
and $[x]_{R_{W,E}} \subseteq [\partial_A W_1 ]_{R_{\overline{W},E}}$ for every $x\in [\partial_A W_1 ]_{R_{\overline{W},E}}$. Moreover, for every $i=1,\dots , q$ we have, using 
the relative openness of $\varphi^{-1} (S_i )$ in $A$ and the closedness of $A$ and
appealing to (\ref{E-boundary dim}),
\begin{align*}
\dim (\partial_A S_i \varphi^{-1} (S_i )) 
&= \dim (S_i \partial_A \varphi^{-1} (S_i )) \\
&= \dim (\partial_A \varphi^{-1} (S_i )) 
\leq \dim ([\partial_A W_1 ]_{R_{\overline{W},E}} ) 
< \dim (A)
\end{align*}
and hence
\begin{align*}
\dim (\partial_A O_i )
&\leq \max \big(\dim (\partial_A \varphi^{-1} (S_i )) , \dim (\partial_A \overline{S_{i-1} \varphi^{-1} (S_{i-1} )}) , \dots,
\dim (\partial_A \overline{S_1 \varphi^{-1} (S_1 )}) \big) \\
&\leq \max \big(\dim (\partial_A \varphi^{-1} (S_i )) , \dim (\partial_A S_{i-1} \varphi^{-1} (S_{i-1} )) , \dots,
\dim (\partial_A S_1 \varphi^{-1} (S_1 ))\big) \\
&< \dim (A) .
\end{align*}
We thus have a castle $\{ (O_i ,S_i ) \}_{1\leq i\leq q}$ with the following properties:
\begin{enumerate}
\item $\bigsqcup_{i=1}^q S_i O_i \subseteq X_0 \subseteq\bigsqcup_{i=1}^q \overline{S_i O_i}$,

\item $[tx]_{R_{W,E}} = S_i x$ for every $i=1,\dots ,q$, $t\in S_i$, and $x\in O_i$,

\item $\dim (\partial_A O_i ) < \dim (A)$ for every $i=1,\dots , q$.
\end{enumerate}
By Lemma~\ref{L-small diam} and uniform continuity there is a family $\{ B_1 , \dots , B_n  \}$
of pairwise disjoint relatively open subsets of $A$ 
such that the diameter of $sB_j$ is less than $\delta$ for every $j=1,\dots ,n$ and $s\in F$,
the set $\bigsqcup_{j=1}^n B_j$ is dense in $A$, and $\dim (\partial_A B_j ) < \dim (A)$
for every $j=1,\dots ,n$. 
Replacing the castle $\{ (O_i ,S_i ) \}_{1\leq i\leq q}$ (the details of whose construction we don't care about)
with the castle $\{ (O_i \cap B_j ,S_i ) \}_{1\leq i\leq q,\, 1\leq j \leq n}$ and relabeling,
we may assume that, in addition to satisfying (i) to (iii), the castle $\{ (O_i ,S_i ) \}_{1\leq i\leq q}$ 
has the property that all of its levels have diameter less than $\delta$
(to see that condition (iii) still holds observe that
$\dim (\partial_A (O_i \cap B_j )) \leq \dim (\partial_A O_i \cup \partial_A B_j ) 
\leq \max \{ \dim (\partial_A O_i ) , \dim (\partial_A B_j ) \} < \dim (A)$).

Next let $1\leq i\leq q$ and $s\in S_i$ and let us show that $sO_i$ is relatively open in $X_0$.
Suppose to the contrary that there exists a sequence $\{ y_n \}$ in $X_0 \setminus sO_i$
which converges to some $y\in sO_i$. Set $x=s^{-1} y \in O_i$. 
Then there are $s_1 , \dots , s_m \in E$ such that $s = s_m \dots s_1$ and $s_k \cdots s_1 x \in W$ for every $k=1,\dots ,m$.
For every $n$ set $x_n = s^{-1} y_n = s_1^{-1} \cdots s_m^{-1} y_n$, and note that $x_n \to x$ by the continuity of the action.
Since $W$ is open we may assume, by passing to subsequences, that for each $n$ we have $s_k \cdots s_1 x_n \in W$ 
for every $k=1,\dots ,m$, which means that $x_n$ and $y_n$ are $R_{W,E}$-equivalent. Since 
$y_n$ belongs to $A$ but not $[\partial_A W]_{R_{W,E}}$, 
this implies that $x_n \in [A]_{R_{W,E}} \subseteq [A]_{R_{\overline{U},E}} = A$
and $x_n \notin [\partial_A W]_{R_{W,E}}$. Therefore $x_n \in X_0$ for every $n$.
Since $x_n = s^{-1} y_n \notin O_i$ for every $n$ and $x_n \to x$, 
and $O_i$ is relatively open in $X_0$ by the continuity of $\varphi$, 
we thus arrive at a contradiction. We therefore conclude that $sO_i$ is relatively open in $X_0$.
It follows that for each $i=1,\dots ,q$ and $s\in S_i$ we can find an open subset
$V_{i,s}$ of $W$ which contains $sO_i$ and has diameter less than $\delta$ so that the sets $V_{i,s}$
are pairwise disjoint. For each $i=1,\dots ,q$ set $V_i = \bigcap_{s\in S_i} s^{-1} V_{i,s}$
Then $\{ (V_i ,S_i ) \}_{1\leq i\leq q}$ is an open castle such that $O_i \subseteq V_i$ for every $i=1,\dots ,q$
and
\begin{enumerate}
\item[(iv)] $\dim (sV_i ) < \delta$ for all $i=1,\dots ,q$ and $s\in S_i$,

\item[(v)] $\bigsqcup_{i=1}^q S_i V_i \subseteq W$.
\end{enumerate}

Define the set
\begin{align*}
D = S_1\partial_A O_1 \cup\dots\cup S_q \partial_A O_q ,
\end{align*}
which can be written as $\partial_A (S_1 O_1 )\cup\dots\cup \partial_A (S_q O_q )$ by the relative
openness of each $O_i$ in $A$ and thus by (i) satisfies $X_0 \setminus D \subseteq \bigsqcup_{i=1}^q S_i O_i$. 
Using (iii) we have
\begin{align}\label{E-dim max}
\dim (D) 
&\leq\max \{ \dim (S_1\partial_A O_1 ), \dots , \dim (S_q\partial_A O_q ) \} \\
&\leq\max \{ \dim (\partial_A O_1 ), \dots , \dim (\partial_A O_q ) \} \notag \\
&< \dim (A) . \notag
\end{align}

Finally, set $A' = (A\cap\overline{W} )\setminus \bigsqcup_{i=1}^q S_i O_i$, which is relatively closed in $A$ 
and hence closed in $X$.
Since $X_0 \setminus D \subseteq \bigsqcup_{i=1}^q S_i O_i$ and $A\cap \overline{W} = \overline{A\cap W} = \overline{W_1}$, 
we have
\begin{align*}
A' \subseteq (A\cap\overline{W}) \setminus (X_0 \setminus D) 
&\subseteq \partial_A W_1 \cup (W_1 \setminus X_0 ) \cup D \\
&\subseteq [\partial_A W_1 ]_{R_{\overline{W},E}} \cup D 
\end{align*}
and hence, using (\ref{E-boundary dim}) and (\ref{E-dim max}) and the fact that $[\partial_A W_1 ]_{R_{\overline{W},E}}$
and $D$ are relatively closed in $A$,
\begin{align*} 
\dim (A' ) 
\leq \max \{ \dim ([\partial_A W_1 ]_{R_{\overline{W},E}} ), \dim (D) \} 
< \dim (A) .
\end{align*}
This completes the verification of the required properties.
\end{proof}

\begin{lemma}\label{L-lower dim 2}
Let $E$, $\delta$, $U$, $F$, and $C$ be as in the statement of Lemma~\ref{L-lower dim}.
Then there are a nonnegative integer $d\leq \dim (X)$ and
for each $j=1,\dots , d+1$ an open castle $\{ (V_i , S_i )\}_{i\in I_j}$ and
sets $O_i \subseteq V_i$
such that 
\begin{enumerate}
\item $C\subseteq\bigcup_{j=1}^{d+1} \bigsqcup_{i\in I_j} S_i O_i 
\subseteq \bigcup_{j=1}^{d+1} \bigsqcup_{i\in I_j} S_i V_i \subseteq U$, 

\item $[x]_{R_{C,E}} \subseteq S_i x$ for every $i\in I$, $t\in S_i$, and $x\in tO_i \cap C$, and

\item for every $j=1,\dots , d+1$  one has $\diam (sV_i ) < \delta$ for all $i\in I_j$ and $s\in S_i$.
\end{enumerate}
\end{lemma}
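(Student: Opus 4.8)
The plan is to apply Lemma~\ref{L-lower dim} repeatedly, at most $\dim (X)+1$ times, each application peeling off one castle and replacing the still-uncovered part of $C$ by a closed set whose covering dimension has strictly dropped, until that set is empty. Concretely, I would build a descending sequence of open sets $U = W_0 \supseteq W_1 \supseteq W_2 \supseteq\cdots$ with $\overline{W_j}\subseteq W_{j-1}$ for each $j\geq 1$, together with, at the start of stage $j$, a nonempty closed set $C_{j-1}\subseteq W_{j-1}$ (the part of $C$ not yet covered) and a closed set $A_{j-1}\subseteq\overline{W_{j-1}}$ satisfying $A_{j-1}=[A_{j-1}]_{R_{\overline{W_{j-1}},E}}$, $C_{j-1}\subseteq A_{j-1}$, and $\dim (A_{j-1})\leq\dim (X)-(j-1)$. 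The initial data $W_0 = U$, $C_0 = C$, $A_0 = \overline U$ qualifies because $\overline U$ is the whole ambient closed set, hence equals its own $R_{\overline U,E}$-saturation, and $\dim (\overline U)\leq\dim (X)$ as $\overline U$ is closed in $X$.

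Stage $j$ runs as follows. Apply Lemma~\ref{L-lower dim} with the data $U$, $\overline U$, $A$, $C$ there replaced by $W_{j-1}$, $\overline{W_{j-1}}$, $A_{j-1}$, $C_{j-1}$ (the return hypothesis for $F$ carries over to $W_{j-1}\subseteq U$). This yields $W_j$ with $C_{j-1}\subseteq W_j\subseteq\overline{W_j}\subseteq W_{j-1}$, an open castle $\{ (V_i ,S_i )\}_{i\in I_j}$ whose levels all have diameter $<\delta$ and satisfy $\bigsqcup_{i\in I_j}S_iV_i\subseteq W_j$, sets $O_i\subseteq V_i$ for $i\in I_j$, and a closed set $A_j':=(A_{j-1}\cap\overline{W_j})\setminus\bigsqcup_{i\in I_j}S_iO_i$ with $\dim (A_j')<\dim (A_{j-1})$. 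Set $C_j:=C_{j-1}\cap A_j'$, which equals $C_{j-1}\setminus\bigsqcup_{i\in I_j}S_iO_i$ since $C_{j-1}\subseteq A_{j-1}\cap\overline{W_j}$; if $C_j=\emptyset$ the process stops with $d+1=j$, and otherwise set $A_j:=[A_j']_{R_{\overline{W_j},E}}$ and continue to stage $j+1$.

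The step I expect to be the main obstacle is showing that this loop genuinely closes up, i.e.\ that $A_j$ is again a closed, $R_{\overline{W_j},E}$-saturated subset of $\overline{W_j}$ with $\dim (A_j)\leq\dim (X)-j$. The device for this is that the relevant saturations are taken relative to the closed sets $\overline{W_j}$: since $\overline{W_j}\subseteq W_{j-1}\subseteq\cdots\subseteq U$, the return hypothesis on $U$ forces the $R_{\overline{W_j},E}$-class of any $x\in\overline{W_j}$ to lie in $Fx$, so Lemma~\ref{L-closed} applies with $\overline{W_j}$ as its ambient closed set and gives that $A_j=[A_j']_{R_{\overline{W_j},E}}$ is closed; saturation is then immediate by idempotence, $C_j\subseteq A_j'\subseteq A_j$, and the same bound gives $A_j\subseteq FA_j'$, hence, using the covering-dimension behaviour of finite closed unions, $\dim (A_j)\leq\dim (FA_j')=\dim (A_j')<\dim (A_{j-1})\leq\dim (X)-(j-1)$. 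Since $\dim (A_j')\leq\dim (X)-j$, the set $A_j'$ — and with it $C_j\subseteq A_j'$ — must be empty once $j=\dim (X)+1$, so the process terminates after $d+1\leq\dim (X)+1$ stages, i.e.\ with $d\leq\dim (X)$ (and $d\geq 0$ since $C=C_0\neq\emptyset$ forces at least one stage).

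It remains to read off (i)--(iii) from the castles $\{ (V_i ,S_i )\}_{i\in I_j}$, $j=1,\dots ,d+1$. An easy induction gives $C_j=C\setminus\bigcup_{k=1}^{j}\bigsqcup_{i\in I_k}S_iO_i$, so $C_{d+1}=\emptyset$ yields $C\subseteq\bigcup_{j=1}^{d+1}\bigsqcup_{i\in I_j}S_iO_i$, while $O_i\subseteq V_i$ and the nesting of the $W_j$ inside $U$ supply the remaining inclusions of (i); condition (iii) is precisely the diameter clause provided by Lemma~\ref{L-lower dim} at each stage. Condition (ii) is inherited from the corresponding identity in Lemma~\ref{L-lower dim}(iii): for $i\in I_j$, $t\in S_i$ and $x\in tO_i\cap C$, an $R_{C,E}$-chain out of $x$ stays in $C$ and, because $\bigsqcup_{i\in I_j}S_iV_i\subseteq W_j$, does not escape $W_j$, so it is an $R_{W_j,E}$-chain and therefore $[x]_{R_{C,E}}\subseteq[x]_{R_{W_j,E}}\subseteq S_i x$.
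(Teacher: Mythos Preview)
Your strategy and almost all of the execution match the paper's proof: repeatedly apply Lemma~\ref{L-lower dim}, each time working inside a smaller open set and with an $R$-saturated closed set whose dimension has strictly dropped, and stop once the uncovered residue is empty. The dimension bookkeeping and the termination argument are fine.

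There is, however, a genuine gap in your verification of (ii). At stage $j$ you feed the shrinking set $C_{j-1}$ (rather than the original $C$) into Lemma~\ref{L-lower dim}, so the output only guarantees $C_{j-1}\subseteq W_j$, not $C\subseteq W_j$. Consequently, for $j\geq 2$ an $R_{C,E}$-chain out of a point $x\in tO_i\cap C$ need not stay inside $W_j$: the chain lives in $C$, and $C$ may meet $W_1\setminus W_j$. The clause ``because $\bigsqcup_{i\in I_j}S_iV_i\subseteq W_j$'' does not help here --- that inclusion only locates the starting point $x$, not the rest of the chain --- so the containment $[x]_{R_{C,E}}\subseteq [x]_{R_{W_j,E}}$ is unjustified for $j\geq 2$.

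The fix is exactly what the paper does: at every stage pass the \emph{original} closed set $C$ (not $C_{j-1}$) to Lemma~\ref{L-lower dim}. Since $C\subseteq W_{j-1}$ inductively, this is legitimate, and the lemma then returns $W_j$ with $C\subseteq W_j$. With that in hand, $[x]_{R_{C,E}}\subseteq [x]_{R_{W_j,E}}=S_i (t^{-1}x)$ for all $i\in I_j$, $t\in S_i$, $x\in tO_i\cap C$, as required. None of the rest of your argument changes: $C_j:=C\setminus\bigcup_{k\leq j}\bigsqcup_{i\in I_k}S_iO_i$ is still contained in $A_j'$ (since $C_{j-1}\subseteq A_{j-1}$ and $C_{j-1}\subseteq C\subseteq\overline{W_j}$), and the dimension descent proceeds exactly as you wrote.
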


\begin{proof}
Take an open set $U_0$ with $C\subseteq U_0 \subseteq \overline{U_0} \subseteq U$
and set $A_0 = [\overline{U_0} ]_{R_{\overline{U_0} ,E}}$. Then $A_0$ is closed by Lemma~\ref{L-closed},
and $A_0 = [A_0 ]_{R_{\overline{U_0} ,E}}$. Thus
by Lemma~\ref{L-lower dim} there is an open set $U_1$ 
with $C\subseteq U_1 \subseteq \overline{U_1} \subseteq U_0$,
an open castle $\{ (V_i ,S_i) \}_{i\in I_1}$, and sets $O_i \subseteq V_i$ such that
\begin{enumerate}
\item $\diam (sV_i ) < \delta$ for all $i\in I_1$ and $s\in S_i$,

\item $\bigsqcup_{i\in I_1} S_i V_i \subseteq U_1$,

\item $[tx]_{R_{U_1,E}} = S_i x$ for every $i\in I_1$, $t\in S_i$, and $x\in O_i$, and

\item the set $A_0' = (A_0 \cap \overline{U_0} )\setminus \bigsqcup_{i\in I_1} S_i O_i$ is closed and satisfies
\[
\dim (A_0' ) < \dim (A_0) .
\]
\end{enumerate}
Note that (iii) implies that for every $i\in I_1$, $t\in S_i$, and $x\in tO_i \cap C$
we have
\begin{align*}
[x]_{R_{C,E}} \subseteq [x]_{R_{U_1,E}} = S_i x .
\end{align*}

Set $A_1 = [A_0' ]_{R_{\overline{U_0} ,E}}$, which is closed by Lemma~\ref{L-closed}, and observe that
since $A_1 \subseteq FA_0'$ we have
\begin{align*}
\dim (A_1 ) \leq \dim (FA_0' ) = \dim (A_0' ) < \dim (A_0 ).
\end{align*}

Apply Lemma~\ref{L-lower dim} again, this time using $A_1$ and $U_1$, to get an open set $U_2$ 
with $C\subseteq U_2 \subseteq \overline{U_2} \subseteq U_1$, an open castle $\{ (V_i ,S_i) \}_{i\in I_2}$,
and sets $O_i \subseteq V_i$ such that
\begin{enumerate}
\item $\diam (sV_i ) < \delta$ for all $i\in I_2$ and $s\in S_i$,

\item $\bigsqcup_{i\in I_2} S_i V_i \subseteq U_2$,

\item $[tx]_{R_{U_2,E}} = S_i x$ for every $i\in I_1$, $t\in S_i$, and $x\in O_i$, and

\item the set $A_1' = (A_1 \cap \overline{U_1} )\setminus \bigsqcup_{i\in I_2} S_i O_i$ is closed and satisfies
\[
\dim (A_1' ) < \dim (A_1) .
\]
\end{enumerate}
Note that (iii) implies that for every $i\in I_2$, $t\in S_i$, and $x\in tO_i \cap C$
we have
\begin{align*}
[x]_{R_{C,E}} \subseteq [x]_{R_{U_2,E}} = S_i x .
\end{align*}

Set $A_2 = [A_1' ]_{R_{A_1,E}}$, which is closed by Lemma~\ref{L-closed}, and observe that
since $A_2 \subseteq FA_1'$ we have
\begin{align*}
\dim (A_2 ) \leq \dim (FA_1' ) = \dim (A_1' ) < \dim (A_1 ).
\end{align*}

Continue this procedure by recursively applying Lemma~\ref{L-lower dim} to produce at the $j$th stage
sets $U_j$ and $A_j$, an open castle $\{ (V_i ,S_i) \}_{i\in I_j}$, and sets $O_i \subseteq V_i$ as above, 
until we reach the point that 
$\dim (A_{d+1} ) = -1$
for some $d\leq \dim (X)$.
The castles $\{ (V_i ,S_i) \}_{i\in I_j}$ and sets $O_i$ then satisfy the 
requirements of the lemma.
\end{proof}

\begin{lemma}\label{L-Lebesgue}
Let $E$ be a finite subset of $G$ and let $\{ U_1 , \dots , U_n \}$ be an $E$-Lebesgue open cover of $X$.
Then there exist closed sets $C_i \subseteq U_i$ such that 
$\{ C_1 , \dots , C_n \}$ is an $E$-Lebesgue cover of $X$.
\end{lemma}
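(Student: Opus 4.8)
The plan is to reduce the statement to the elementary reformulation of the $E$-Lebesgue property: for each $i$ put $W_i = \bigcap_{s\in E} s^{-1}U_i$. Each $W_i$ is open, being a finite intersection of the open sets $s^{-1}U_i$ (here we use that $E$ is finite and that the action is by homeomorphisms), and the hypothesis that $\{U_1,\dots,U_n\}$ is $E$-Lebesgue says precisely that $\{W_1,\dots,W_n\}$ covers $X$: for $x\in X$ there is an $i$ with $Ex\subseteq U_i$, i.e.\ $sx\in U_i$ for all $s\in E$, i.e.\ $x\in W_i$.

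Next I would invoke normality of $X$ (the shrinking lemma for finite open covers of a normal space) to obtain closed sets $D_1,\dots,D_n$ with $D_i\subseteq W_i$ for each $i$ and $\bigcup_{i=1}^n D_i = X$. I then set $C_i = ED_i = \bigcup_{s\in E} sD_i$. Since $E$ is finite and each translate $sD_i$ is closed ($D_i$ is closed in the compact space $X$ and $s$ acts as a homeomorphism), $C_i$ is closed. It is contained in $U_i$ because $D_i\subseteq W_i$ forces $sD_i\subseteq U_i$ for every $s\in E$. Finally, $\{C_1,\dots,C_n\}$ is a cover, since for any fixed $s_0\in E$ we have $\bigcup_{i=1}^n C_i \supseteq s_0\bigcup_{i=1}^n D_i = s_0 X = X$, and it is $E$-Lebesgue: given $x\in X$, choose $j$ with $x\in D_j$; then $Ex\subseteq ED_j = C_j$.

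There is no genuine obstacle in this argument; the only subtlety worth flagging is that one must shrink the cover $\{W_i\}$ rather than $\{U_i\}$ directly, so that after re-translating by $E$ one still lands inside $U_i$. (If $E=\emptyset$ the $E$-Lebesgue condition is vacuous and the conclusion is just the ordinary shrinking of $\{U_i\}$, so one may harmlessly assume $E\neq\emptyset$ to make the cover claim above go through.)
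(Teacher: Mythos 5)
Your proof is correct and is essentially the paper's argument: pass to the open sets $W_i=\bigcap_{s\in E}s^{-1}U_i$ (the paper calls them $V_i$), shrink to a closed cover $\{D_i\}$ by normality, and then translate back by $E$. One detail worth flagging: the paper writes $C_i=\bigcup_{s\in E}s^{-1}D_i$, which appears to be a sign slip, since from $D_i\subseteq\bigcap_{s\in E}s^{-1}U_i$ one gets $sD_i\subseteq U_i$ but not $s^{-1}D_i\subseteq U_i$ in general; your choice $C_i=ED_i=\bigcup_{s\in E}sD_i$ is the one that is both contained in $U_i$ and makes the $E$-Lebesgue verification ($x\in D_j\Rightarrow Ex\subseteq C_j$) go through.
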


\begin{proof}
For $i=1,\dots , n$ write $V_i$ for the set of all $x\in X$ such that
$Ex\subseteq U_i$. This is an open set since $U_i$ is open and the action is continuous.
Because $\{ U_1 , \dots , U_n \}$ is $E$-Lebesgue, the collection $\{ V_1 , \dots , V_n \}$
covers $X$. By normality we can find closed sets $D_i \subseteq V_i$
such that $\{ D_1 , \dots , D_n \}$ still covers $X$. For each $i$
define $C_i = \bigcup_{s\in E} s^{-1} D_i$, which is a closed subset of $U_i$. Then 
$\{ C_1 , \dots , C_n \}$ is an $E$-Lebesgue cover of $X$ of the required kind.
\end{proof}

\begin{theorem}\label{T-dimensions}
The action $G\curvearrowright X$ satisfies
\begin{align*}
\dadplus (X,G) \leq \amdimplus (X,G) 
&\leq \towdimplus (X,G) \\
&\leq \ftowdimplus (X,G) \leq \dadplus (X,G)\cdot\dimplus (X) .
\end{align*}
\end{theorem}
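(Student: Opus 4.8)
The four inequalities fall into two that are immediate and two that need an argument. The middle inequality $\amdimplus(X,G)\le\towdimplus(X,G)$ is exactly Theorem~\ref{T-towdim lower amdim} (add $1$ throughout), and $\towdimplus(X,G)\le\ftowdimplus(X,G)$ is the trivial half of Proposition~\ref{P-coarse towdim}. So the plan is to prove the two outer inequalities $\dadplus(X,G)\le\amdimplus(X,G)$ and $\ftowdimplus(X,G)\le\dadplus(X,G)\cdot\dimplus(X)$, in each case assuming the right-hand side is finite (otherwise there is nothing to do).

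For $\dadplus(X,G)\le\amdimplus(X,G)$ I would set $d=\amdim(X,G)$, fix a finite $E\subseteq G$, put $\eta=1/(2(d+1)^2)$, and invoke Definition~\ref{D-amdim} for $E$ with tolerance $\eps<\eta/4$ to obtain a continuous $\varphi\colon X\to\Delta_d(G)$ with $\sup_{x}\|\varphi(sx)-s\varphi(x)\|_1<\eps$ for $s\in E$. For $\mu\in\Delta_d(G)$ write $a_1(\mu)\ge a_2(\mu)\ge\cdots$ for the nonincreasing rearrangement of its values (padded with zeros), which is coordinatewise $1$-Lipschitz and hence continuous in $\mu$, and set $b_k(\mu)=a_k(\mu)-a_{k+1}(\mu)$; since $\mu$ has at most $d+1$ atoms one computes $\sum_{k=1}^{d+1}k\,b_k(\mu)=\sum_j a_j(\mu)=1$, so there is a $k\in\{1,\dots,d+1\}$ with $b_k(\mu)\ge 1/(k(d+1))\ge 2\eta$. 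Hence $\{U_0,\dots,U_d\}$, where $U_{k-1}=\{x\in X:b_k(\varphi(x))>\eta\}$, is an open cover of $X$ by $d+1$ sets, and on the compact set $\{x:b_k(\varphi(x))\ge\eta\}\supseteq\overline{U_{k-1}}$ the top $k$ atoms of $\varphi(x)$ are separated from the rest by a gap of at least $\eta$, so the set $\sigma(x)$ of those $k$ atoms is a well-defined, locally constant function, and therefore $\Sigma_k=\bigcup_{x\in\overline{U_{k-1}}}\sigma(x)$ is a finite subset of $G$. The crux is that $\eps<\eta/4$ forces $\sigma(sx)=s\,\sigma(x)$ whenever $s\in E$ and $x,sx\in U_{k-1}$, because $s\varphi(x)$ has top-$k$ set $s\,\sigma(x)$ with the same gap and an $\ell^1$-perturbation of size less than $\eta/4$ cannot change the top-$k$ set; iterating this along a path $x,s_1x,\dots,s_n\cdots s_1x$ that stays in a single $U_{k-1}$ (with $s_j\in E$) gives $\sigma(s_n\cdots s_1x)=s_n\cdots s_1\,\sigma(x)$, and then for any $h\in\sigma(x)\subseteq\Sigma_k$ we have $s_n\cdots s_1 h\in\Sigma_k$, so $s_n\cdots s_1\in\Sigma_k\Sigma_k^{-1}$. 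Taking $F=\bigcup_{k=1}^{d+1}\Sigma_k\Sigma_k^{-1}$ then verifies Definition~\ref{D-dad}.

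For $\ftowdimplus(X,G)\le\dadplus(X,G)\cdot\dimplus(X)$ I would set $d=\dad(X,G)$ and $c=\dim(X)$, fix a finite $E\subseteq G$ and $\delta>0$, and enlarge $E$ so that $E=E^{-1}\ni e$. By Proposition~\ref{P-dad Lebesgue} there are a finite $F\subseteq G$ and an $E$-Lebesgue open cover $\{U_1,\dots,U_{d+1}\}$ of $X$ with the property that $s_m\cdots s_1\in F$ whenever $x,s_1x,\dots,s_m\cdots s_1x$ lie in a common $U_i$ with $s_1,\dots,s_m\in E$, and by Lemma~\ref{L-Lebesgue} I may shrink to closed $C_i\subseteq U_i$ with $\{C_1,\dots,C_{d+1}\}$ still $E$-Lebesgue. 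For each $i$ the data $(U_i,F,C_i)$ satisfies the hypotheses of Lemmas~\ref{L-lower dim} and~\ref{L-lower dim 2}, so the latter yields an integer $d_i\le c$ and, for $j=1,\dots,d_i+1$, an open castle $\{(V_l,S_l)\}_{l\in I_{i,j}}$ together with sets $O_l\subseteq V_l$ such that $C_i$ is covered by the $S_lO_l$, all the $S_lV_l$ are contained in $U_i$, every level has diameter less than $\delta$, and the $R_{C_i,E}$-class of any point of a set $tO_l$ meeting $C_i$ lies inside the levels of the tower $(V_l,S_l)$ through that point. Taking the union of all the castles over $1\le i\le d+1$ and $1\le j\le d_i+1$ gives a collection of open towers split into at most $(d+1)(c+1)$ castles; since within a castle the sets $S_lV_l$ are pairwise disjoint, colouring each tower by its castle shows the family $\{S_lV_l\}$ has chromatic number at most $(d+1)(c+1)$, its levels have diameter less than $\delta$, and it covers $X$ because $X=\bigcup_i C_i$. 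For the $E$-Lebesgue condition I would, given $z\in X$, choose $i$ with $Ez\subseteq C_i$; then $z\in tO_l\cap C_i$ for some tower and some $t\in S_l$, and for each $s\in E$ the point $sz\in C_i$ is $R_{C_i,E}$-equivalent to $z$, so $sz$ lies in the levels of $(V_l,S_l)$ through $z$ and freeness gives $st\in S_l$; hence $Et\subseteq S_l$ and $z\in tV_l$. This gives $\ftowdim(X,G)\le(d+1)(c+1)-1$, as required.

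I expect the main obstacle to be, in the first inequality, the calibration of the threshold $\eta$ against the tolerance $\eps$: the top-$k$-atom map $\sigma$ must at once be defined on a full open cover, be locally constant (hence finite-valued) on the relevant compacta, and be $E$-equivariant along paths inside a single $U_{k-1}$, and all three must fall out of a single choice of parameters. The fourth inequality is in essence an assembly of the preparatory lemmas, whose deepest ingredient --- the dimension-reducing induction inside Lemma~\ref{L-lower dim 2} --- is already available; there the only real care needed is to check that the property supplied by Proposition~\ref{P-dad Lebesgue} is precisely the hypothesis Lemma~\ref{L-lower dim 2} demands and that the towers it produces, taken together, retain the $E$-Lebesgue property with the stated chromatic number.
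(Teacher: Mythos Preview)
Your proof is correct and, for the two substantive inequalities, matches the paper closely. The paper handles the fourth inequality exactly as you do --- via Proposition~\ref{P-dad Lebesgue}, Lemma~\ref{L-Lebesgue}, and Lemma~\ref{L-lower dim 2}, followed by the same verification of the $E$-Lebesgue condition through the inclusion $[x]_{R_{C_i,E}}\subseteq S_l x$ and freeness --- so there is no difference there beyond notation.

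The one genuine divergence is the first inequality. The paper does not prove $\dadplus(X,G)\le\amdimplus(X,G)$ at all; it simply cites Theorem~4.11 and Remark~4.14 of \cite{GueWilYu17}. Your direct argument (the pigeonhole $\sum_{k=1}^{d+1} k\,b_k(\mu)=1$ to find a gap, the open cover by gap levels, local constancy of the top-$k$-atom map $\sigma$ on compacta, and the equivariance $\sigma(sx)=s\sigma(x)$ forced by $\eps<\eta/4$) is sound and is essentially a compressed version of the Guentner--Willett--Yu proof. So you have supplied a self-contained argument where the paper outsources one; otherwise the two proofs coincide.
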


\begin{proof}
The first inequality follows from Theorem~4.11 of \cite{GueWilYu17},
as pointed out in Remark~4.14 of that paper. The second inequality is Theorem~\ref{T-towdim lower amdim}.
The third inequality is trivial. 

It remains to establish the last inequality. For this we may assume that 
$\dad (X,G)$ and $\dim (X)$ are both finite.
Let $E$ be a finite subset of $G$ with $E^{-1} = E$ and $e\in E$.
By Proposition~\ref{P-dad Lebesgue} there are a finite set $F\subseteq G$ and 
an $E$-Lebesgue open cover $\{ U_0 , \dots , U_d \}$ of $X$ with $d\leq \dad (X,G)$
such that, for all $j=0,\dots, d$, if $x\in U_j$ and $s_1 , \dots , s_n \in E$ satisfy
$s_k \cdots s_1 x \in U_j$ for all $k=1,\dots , n$ 
then $s_n \cdots s_1 \in F$. By Lemma~\ref{L-Lebesgue} there exist closed sets
$C_j \subseteq U_j$ for $j=0,\dots , d$ such that $\{ C_0 , \dots , C_d \}$
is an $E$-Lebesgue cover of $X$. 
By Lemma~\ref{L-lower dim 2}, for every $j=0,\dots , d$ there are a collection of 
open towers $\{ (V_i , S_i) \}_{i\in I_j}$ with chromatic number at most $\dim (X)+1$
and levels of diameter less than $\delta$ and sets $O_i \subseteq V_i$ such that 
$C_j\subseteq\bigcup_{j=0}^d \bigsqcup_{i\in I_j} S_i O_i \subseteq \bigcup_{j=0}^d \bigsqcup_{i\in I_j} S_i V_i \subseteq U_j$ 
and 
\begin{align}\label{E-inclusion}
[x]_{R_{C_j,E}} \subseteq S_i x
\end{align}
for every $i\in I_j$, $t\in S_i$, and $x\in tO_i \cap C_j$. 
Note that the collection of open towers $\{ (V_i , S_i ) \}_{i\in I_j ,\, 0\leq j\leq d}$
has chromatic number at most $(d+1)(\dim (X)+1)$.

Now let $x\in X$. Since the cover $\{ C_0 , \dots , C_d \}$ is $E$-Lebesgue, 
there is a $0\leq j\leq d$ such that 
$Ex\subseteq C_j$. Since $x\in C_j$ there are $i\in I_j$, $t\in S_i$, 
and $y\in O_i$ such that $x=ty$.
By (\ref{E-inclusion}) and our choice of $j$, for every $s\in E$ we have 
$sty \in [ty]_{R_{C_j ,E}} \subseteq S_i y$
so that $Ety \subseteq S_i y$ and hence $Et\subseteq S_i$.
This shows that the collection of open towers $\{ (V_i , S_i ) \}_{i\in I_j ,\, 0\leq j\leq d}$
is $E$-Lebesgue.
We have thus verified that $\ftowdimplus (X,G) \leq \dadplus (X,G)\cdot\dimplus (X)$,
as desired.
\end{proof}

\begin{corollary}\label{C-dimensions Cantor}
Suppose that $X$ is zero-dimensional. Then the action $G\curvearrowright X$ satisfies
\begin{align*}
\towdim (X,G) 
= \ftowdim (X,G)
= \dad (X,G) 
= \amdim (X,G) .
\end{align*}
\end{corollary}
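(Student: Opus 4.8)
The plan is to derive the corollary directly from Theorem~\ref{T-dimensions} by exploiting the hypothesis $\dim (X) = 0$, i.e., $\dimplus (X) = 1$. With this substitution the chain of inequalities in Theorem~\ref{T-dimensions} becomes
\begin{align*}
\dadplus (X,G) \leq \amdimplus (X,G) \leq \towdimplus (X,G) \leq \ftowdimplus (X,G) \leq \dadplus (X,G) ,
\end{align*}
where the last inequality now reads $\ftowdimplus (X,G) \leq \dadplus (X,G) \cdot 1 = \dadplus (X,G)$. Since the first and last quantities in the chain coincide, all four must be equal, and subtracting $1$ from each (i.e., passing from the denormalized $\dimplus$ quantities back to the actual dimensions) yields the asserted equalities. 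The one point requiring a word of care is the possibility that some of these dimensions are infinite: if $\dad (X,G) = \infty$ then the chain gives no information a priori, but in that case the displayed inequalities $\dadplus \leq \amdimplus \leq \towdimplus \leq \ftowdimplus$ force all of them to be $\infty$ as well, so equality still holds; and conversely if $\ftowdim (X,G) = \infty$ the same monotonicity forces $\dad (X,G) = \infty$. Thus the equalities hold with values in $\Nb \cup \{\infty\}$ in all cases.

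I do not anticipate a genuine obstacle here, since the corollary is a formal consequence of the already-established theorem once the dimension-zero hypothesis is inserted; the only thing to be explicit about is the $\infty$ bookkeeping just described. I would write the proof as essentially two sentences: first observe that $\dimplus (X) = 1$ collapses the outer inequality of Theorem~\ref{T-dimensions}, and second note that the resulting two-sided squeeze (together with the monotonicity chain in the infinite case) forces all four denormalized quantities to agree, whence the result follows.

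\begin{proof}
Since $X$ is zero-dimensional we have $\dim (X) = 0$, so $\dimplus (X) = 1$.
The final inequality in Theorem~\ref{T-dimensions} therefore reads
$\ftowdimplus (X,G) \leq \dadplus (X,G)$, and combining this with the other three
inequalities of that theorem we obtain
\begin{align*}
\dadplus (X,G) \leq \amdimplus (X,G) \leq \towdimplus (X,G) \leq \ftowdimplus (X,G) \leq \dadplus (X,G) .
\end{align*}
Hence all four quantities are equal (this also covers the case in which they are
infinite, since the chain of inequalities forces all of them to be $\infty$ as soon
as any one of them is). Subtracting $1$ gives
\begin{align*}
\dad (X,G) = \amdim (X,G) = \towdim (X,G) = \ftowdim (X,G) ,
\end{align*}
as desired.
\end{proof}
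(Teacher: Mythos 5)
Your proof is correct and follows exactly the route the paper intends: setting $\dimplus(X)=1$ in Theorem~\ref{T-dimensions} collapses the chain of inequalities, forcing equality of all four quantities (with the infinite case handled by monotonicity, as you note). The paper leaves this unproved precisely because it is this immediate consequence, so nothing further is needed.
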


\section{Tower dimension and nuclear dimension}\label{S-towdim nucdim}

Let $G\curvearrowright X$ be a free action on a compact Hausdorff space.
We write $C(X)\rtimes_\lambda G$ for the associated reduced crossed product.
In Section~8 of \cite{GueWilYu17}, Guentner, Willett, and Yu showed that
\[
\nucdimplus (C(X)\rtimes_\lambda G) \leq \dadplus (X,G)\cdot\dimplus (X) . 
\]
By Theorem~\ref{T-dimensions} this implies that 
\begin{align}\label{E-nucdim}
\nucdimplus (C(X)\rtimes_\lambda G)\leq \towdimplus (X,G) \cdot \dimplus (X) .
\end{align}
We will give here a shorter direct proof of (\ref{E-nucdim}) 
in order to illustrate the formal affinity between tower dimension and nuclear dimension.
This can be seen as a distillation
of the arguments in Section~8 of \cite{GueWilYu17} into their simplest combinatorial form.

First we recall the definition of nuclear dimension \cite{WinZac10}.
We use the abbreviation {\it c.p.c.}\ for ``completely positive contractive''.
A map $\varphi : A\to B$ between C$^*$-algebras is {\it order zero} if
it preserve orthogonality, that is,
$\varphi (a_1 )\varphi (a_2 ) = 0$ for all $a_1 , a_2 \in A$ satisfying $a_1 a_2 = 0$.

\begin{definition}
The {\it nuclear dimension} $\nucdim (A)$ of a C$^*$-algebra $A$ is the least integer $d\geq 0$ 
such that for every finite
set $\Omega\subseteq A$ and $\eps > 0$ there are finite-dimensional C$^*$-algebras
$B_0 , \dots , B_d$ and linear maps 
\begin{align*}
A\stackrel{\varphi}{\longrightarrow} B_0 \oplus\cdots\oplus B_d \stackrel{\psi}{\longrightarrow} A
\end{align*}
such that $\varphi$ is c.p.c., $\psi |_{B_i}$ is c.p.c.\ and order zero for each $i=0,\dots , d$,
and 
\[
\| \psi\circ\varphi (a) - a \| < \eps
\]
for every $a\in\Omega$.
If no such $d$ exists then we set $\nucdim (A) = \infty$.
\end{definition}

Let $(V,S)$ be an open tower.
Write $A_{V,S}$ for the $C^*$-subalgebra of $C(X)\rtimes_\lambda G$ generated by the sets
$u_s C_0 (V) u_t^*$ for $s,t\in S$. Denoting by $M_T$ the matrix algebra with entries indexed 
by pairs in $T\times T$ and by $\{ e_{s,t} \}_{s,t\in T}$ the matrix units of $M_T$,
there is a canonical isomorphism $M_T \otimes C_0 (V) \to A_{V,S}$
determined by
\[
e_{s,t} \otimes f \mapsto u_s f u_t^* 
\]
for $s,t\in S$ and $f\in C_0 (V)$.

\begin{theorem}\label{T-nucdim towdim}
The action $G\curvearrowright X$ satisfies
\[
\nucdimplus (C(X)\rtimes_\lambda G) \leq \towdimplus (X,G) \cdot \dimplus (X) .
\]
\end{theorem}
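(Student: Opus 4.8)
The plan is to mimic the definition of nuclear dimension using the combinatorial data coming from tower dimension, with the covering dimension of $X$ used to refine each tower's base. Fix a finite set $\Omega \subseteq C(X)\rtimes_\lambda G$ and $\eps > 0$; by density we may assume $\Omega$ is a finite set of the form $\{ f u_s : f \in C(X),\ s \in E \}$ for some finite $E \subseteq G$, and we may enlarge $E$ so that it is symmetric and contains $e$. Set $d = \towdim(X,G)$ and $c = \dim(X)$. Applying the definition of tower dimension to a sufficiently large finite set $E' \supseteq E^{?}$ (chosen so that the $E'$-Lebesgue condition gives room to push functions supported near a level of a tower across by elements of $E$ and stay inside the tower), we obtain an $E'$-Lebesgue collection of open towers $\{(V_i,S_i)\}_{i\in I}$ covering $X$ with $\{S_iV_i\}_{i\in I}$ of chromatic number at most $d+1$, so $I = I_0 \sqcup \cdots \sqcup I_d$ with each $\{S_iV_i\}_{i\in I_j}$ disjoint. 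The $I_j$ will index the $d+1$ summands coming from tower dimension; the extra factor of $c+1$ will come from subdividing each $V_i$.

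Next I would build a partition of unity subordinate to the cover $\{S_iV_i\}$. Using the Lebesgue condition and a compactness argument as in the proof of Theorem~\ref{T-towdim lower amdim}, there is $\delta > 0$ and for each $i$ a function witnessing that the ``deep'' part of each tower level $tV_i$ (points at distance $>\delta$ from the complement) already covers $X$; from this one produces continuous functions $h_i \in C(X)$ with $\supp h_i \subseteq V_i$, $\sum_{i} \sum_{t\in S_i} \alpha_t(h_i) = 1$, and such that $h_i$ (hence each level function) is, up to $\eps$-error in norm and after composing with the group action by elements of $E$, supported well inside a single level. Now for each $i$ use $\dim(X) = c$ to choose, on the closure of the support of $h_i$, a finite closed (or relatively clopen-up-to-boundary) refinement into $c+1$ disjoint families, giving functions $h_{i,0}, \dots, h_{i,c}$ with $h_i = \sum_{l=0}^c h_{i,l}$ and each $h_{i,l}$ a sum over a disjoint family. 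The pair $(j,l)$ then indexes the $(d+1)(c+1)$ summands $B_{j,l}$.

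For the maps: into the $(j,l)$ summand put $B_{j,l} = \bigoplus_{i\in I_j} M_{S_i}$, using the identification $M_{S_i} \otimes C_0(V_i) \cong A_{V_i,S_i}$ recalled just before the statement and evaluating the $C_0(V_i)$ factor at a point (or rather building the down map $\psi$ out of the functions $h_{i,l}^{1/2}$ and the canonical matrix units). Concretely, define $\psi|_{B_{j,l}}$ on $e_{s,t}^{(i)}$ by $e_{s,t}^{(i)} \mapsto u_s\, g_{i,l}\, u_t^*$ where $g_{i,l}$ is a suitable nonnegative function with $g_{i,l} \leq h_{i,l}$ (or $g_{i,l}^2 = h_{i,l}$), which is c.p.c.\ and order zero on $B_{j,l}$ since the $S_iV_i$ for $i\in I_j$ are disjoint and each restriction $M_{S_i} \ni e_{s,t} \mapsto u_s g_{i,l} u_t^*$ is a c.p.\ order zero map by the Stinespring-type picture (functional calculus applied to a $*$-homomorphism). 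Define $\varphi$ by the dual/adjoint recipe: $\varphi(a)$ records, in the $(i,s,t)$ matrix entry, the coefficient $h_{i,l}^{1/2} \alpha_{?}(\cdot) h_{i,l}^{1/2}$-type conditional-expectation value of $u_s^* a u_t$ compressed to $V_i$; one checks $\varphi$ is c.p.c.\ (it is a sum of compressions of the canonical conditional expectation) and that $\|\psi\circ\varphi(fu_s) - fu_s\| < \eps$ for $fu_s \in \Omega$ using $\sum_{i,t}\alpha_t(h_i)=1$ together with the fact that, for $x$ where $h_i(t^{-1}x)\neq 0$, the Lebesgue condition guarantees $st \in S_i$ so the term $u_{st} h_i u_t^*$ genuinely lives in $A_{V_i,S_i}$ and matches $fu_s$ up to the $\delta$-small displacement error.

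The main obstacle I anticipate is the verification of $\|\psi\circ\varphi(fu_s)-fu_s\|<\eps$: one must track how the function $f$ gets cut up by the partition of unity $\{h_{i,l}\}$, shifted by $s$, and reassembled, and show that the only error comes from group elements $t$ for which $st \notin S_i$ — which the $E'$-Lebesgue condition with $E'$ large enough (relative to $E$ and the diameters involved) forces to contribute negligibly because $h_i(t^{-1}x)$ is then forced to be small. Getting the order-zero property of $\psi|_{B_{j,l}}$ exactly right — ensuring that the $h_{i,l}$ for fixed $(j,l)$ but varying $i$ have disjoint supports, which uses precisely the chromatic-number-$(d+1)$ decomposition — and confirming $\varphi$ is genuinely contractive rather than merely bounded are the remaining technical points, but these are routine once the indexing is set up; everything else is bookkeeping with matrix units and the $M_{S_i}\otimes C_0(V_i)\cong A_{V_i,S_i}$ identification.
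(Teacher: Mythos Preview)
Your overall architecture matches the paper's, but there is a real gap in how you control the error $\|\psi\circ\varphi(fu_s)-fu_s\|$. You assert that the only error comes from $t\in S_i$ with $st\notin S_i$, and that ``the $E'$-Lebesgue condition with $E'$ large enough \dots\ forces $h_i(t^{-1}x)$ to be small'' there. This is false: the Lebesgue condition guarantees that every point lies deep in \emph{some} tower, but it says nothing about the size of $\alpha_t(h_i)$ when $t$ sits near the boundary of the shape $S_i$. If $h_i$ is a fixed function on the base $V_i$ and the partition of unity is built from its translates $\alpha_t(h_i)$ for all $t\in S_i$, there is no mechanism forcing $\alpha_t(h_i)$ to be small for boundary $t$; at a point $x$ lying only in boundary levels of the towers meeting it, the boundary terms must sum to $1$. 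What is actually needed is a tapering in the \emph{group} direction: the paper partitions $S_i = B_{i,0}\sqcup\cdots\sqcup B_{i,n}$ according to $F$-depth in $S_i$, replaces $\alpha_t(g_i)$ by $\frac{k}{n}\alpha_t(g_i)$ for $t\in B_{i,k}$, and renormalizes. This yields $\|u_s h_i - h_i u_s\|\leq (d+2)/n$, which is the estimate doing all the work. Your $\delta$-argument (borrowed from Theorem~\ref{T-towdim lower amdim}) addresses depth in the \emph{space} direction---distance from $\partial(tV_i)$---and cannot substitute for it.

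There is also a structural difference in how $\dimplus(X)$ enters. You propose to subdivide each $h_i$ into $h_{i,0},\dots,h_{i,c}$ and map directly into $\bigoplus_{i\in I_j}M_{S_i}$. For that to recover $f$ you would need the pieces to have small diameter so that $f$ is approximately constant on each, which you do not arrange. The paper sidesteps this: it compresses into the subalgebra $A_k=\bigoplus_{i\in I_k}A_i$ with $A_i\cong M_{S_i}\otimes C_0(V_i)$ via $a\mapsto q_k^{1/2}aq_k^{1/2}$, invokes $\nucdim(A_k)\leq\dim(X)$ as a black box to obtain finite-dimensional approximations $\theta_k,\psi_k$ of $A_k$, and extends $\theta_k$ to all of $C(X)\rtimes_\lambda G$ by Arveson. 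The approximate commutation of $q_k^{1/2}$ with $u_s$ (coming precisely from the tapering) is what makes $q_k^{1/2}fu_sq_k^{1/2}\approx fu_sq_k\in A_k$ and closes the loop.
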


\begin{proof}
We denote the induced action of $G$ on $C(X)$ by $\alpha$,
that is, $\alpha_s (f)(x) = f(s^{-1} x)$ for all $s\in G$, $f\in C(X)$, and $x\in X$.

We may assume that $\towdim (X,G)$ is finite. For brevity we denote this quantity by $d$.
Let $\Omega$ be a finite subset of $C(X)\rtimes G$ and $\eps > 0$. In order to 
verify the existence of the desired maps in the definition of nuclear dimension
which approximately factorize the identity map on $C(X)\rtimes G$ to within $\eps$ 
on the set $\Omega$, we may assume that $\Omega = \{ fu_s : f\in\Upsilon , \, s\in F \}$
where $\Upsilon$ is a finite set of functions  
in $C(X)$ and $F$ is a finite subset of $G$ satisfying $F^{-1} = F$ 
and $e\in F$.

Let $n$ be an integer greater than $1$, to be determined.
By the definition of tower dimension, there is an $F^n$-Lebesgue 
collection of open towers $\{ (V_i ,S_i) \}_{i\in I}$ covering $X$
such that the family $\{ S_i V_i \}_{i\in I}$ has chromatic number at most $d+1$.
For convenience we may assume that for each $i$ the set $S_i$ contains $e$,
for if necessary we can choose a $t \in S_i$ and replace $S_i$ by $S_i t^{-1}$ 
and $V_i$ by $tV_i$. 

Take a partition of unity $\{ \hat{g}_{i,t} \}_{i\in I, t\in S_i}$ subordinate to the
open cover $\{ tV_i \}_{i\in I,t\in S_i}$. We will modify these functions so that over each
tower they are dynamically generated by a single function and together sum to a function which,
while no longer necessarily equal to $1$, is still greater than or equal to $1$. 
For every $i\in I$ set $g_i = \max_{t\in S_i} \alpha_{t^{-1}} (\hat{g}_{i,t} )$.
Then for every $i\in I$ and $t\in S_i$ the support of $\alpha_t (g_i )$ is contained in $tV_i$,
and $\alpha_t (g_i ) \geq \hat{g}_{i,t}$, which implies that 
$\sum_{i\in I} \sum_{t\in S_i} \alpha_t (g_i ) \geq 1$.

Let $i\in I$. Set $B_{i,n} = \bigcap_{t\in F^n} tS_i$ and $B_{i,0} = S_i \setminus \bigcap_{t\in F} tS_i$.
For $k=1,\dots ,n-1$ set 
\[
B_{i,k} = \bigg(\bigcap_{t\in F^k} tS_i \bigg)\setminus\bigcap_{t\in F^{k+1}} tS_i . 
\]
The sets $B_{i,k}$ for $k=0,\dots ,n$ form a partition of $S_i$, and for all $s\in F$ we have
\begin{enumerate}
\item $sB_{i,k} \subseteq B_{i,k-1} \cup B_{i,k} \cup B_{i,k+1}$ for every $k=1,\dots ,n-1$, 

\item $sB_{i,n} \subseteq B_{i,n-1} \cup B_{i,n}$.
\end{enumerate}
Since for each $t\in S_i$ the function $\alpha_t (g_i )$ is supported in the tower level $tV_i$, it follows that
the function
\[
\hat{h}_i = \sum_{k=0}^n \sum_{t\in B_{i,k}} \frac{k}{n} \alpha_t (g_i ) 
\]
satisfies 
$\sup_{x\in X} |\hat{h}_i (s^{-1} x) - \hat{h}_i (x)|\leq 1/n$ 
for every $s\in F$.
Put $H = \sum_{i\in I} \hat{h}_i$. Then $H\geq 1$ by the $F^n$-Lebesgue condition, 
and so for every $i$ we can set $h_i = H^{-1} \hat{h}_i$, which gives us a partition of unity
$\{ h_i \}_{i\in I}$ in $C(X)$.

Let $s\in F$.
Let $x\in X$.
The collection of all $i\in I$ such that $x\in S_i V_i$
has cardinality at most $d+1$, and so the difference between the values of 
$H$ at $x$ and $s^{-1} x$ is at most $(d+1)/n$.
Since $H \geq 1$, it follows that the difference between the values
of $H^{-1}$ at $x$ and $s^{-1} x$ is also at most $(d+1)/n$. We then get, for every $i$,
\begin{align}\label{D-bound}
\| u_s h_i - h_i u_s \|
&= \| u_s h_i u_s^{-1} - h_i \| \\
&= \sup_{x\in X} |h_i (s^{-1} x) - h_i (x)| \notag \\
&\leq \sup_{x\in X} H(s^{-1} x)^{-1} \big| \hat{h}_i (s^{-1} x) - \hat{h}_i (x)\big| \notag \\
&\hspace*{30mm} \ + \sup_{x\in X} \big| H(s^{-1} x)^{-1} - H(x)^{-1}\big| \hat{h} (x) \notag \\
&\leq \frac{d+2}{n} . \notag 
\end{align}

Since the collection $\{ S_i V_i \}_{i\in I}$ has chromatic number at most $d+1$,
there is a partition
$I_0 , \dots , I_d$ of $I$  such that for every $k=0,\dots, d$
the collection $\{ S_i V_i \}_{i\in I_k}$ is disjoint.
For each $k=0,\dots ,d$ set $q_k = \sum_{i\in I_k} h_i$.

For $i\in I$ we write $A_i$ for the $C^*$-subalgebra of $C(X)\rtimes G$ generated by the sets
$u_s C_0 (V_i ) u_t^*$ for $s,t\in S_i$. 
Let $k\in \{ 0,\dots , d\}$ and set $A_k = \bigoplus_{i\in I_k} A_i$.
Since the $A_i$ for $i\in I_k$ are pairwise orthogonal
as sub-C$^*$-algebras of $C(X)\rtimes G$, we can view $A_k$ as a C$^*$-subalgebra of $C(X)\rtimes G$.
Since $A_i \cong M_{S_i} \otimes C_0 (V_i)$ for every $i$ (as explained prior to the statement of the theorem), 
the nuclear dimension of $A_i$ is at most $\dim (X)$, as one can verify by a straightforward partition of
unity argument using the formulation of covering dimension in term of the chromatic numbers of open covers
(see the proof of Proposition~3.4 in \cite{KirWin04}).
Noting that $fu_s q_k = u_s \alpha_s (f)q_k \in A_k$ for every $f\in\Upsilon$ and $s\in F$ 
since $\hat{h}_i$ vanishes on $B_{i,0}$ for each $i$,
we can thus find finite-dimensional $C^*$-algebras $D_{k,0} , \dots , D_{k,m_k}$ with $m_k \leq \dim (X)$,
a c.p.c.\ map $\theta_k : A_k \to D_{k,0} \oplus\cdots\oplus D_{k,m_k}$, 
and a map $\psi_k : D_{k,0} \oplus\cdots\oplus D_{k,m_k} \to A_k \subseteq C(X)\rtimes G$ 
whose restriction to each summand is c.p.c.\ and order zero 
such that 
\begin{align}\label{E-homogeneous}
\| \psi_k \circ\theta_k (fu_s q_k ) - fu_s q_k \| < \frac{\eps}{2(d+1)}
\end{align}
for all $f\in\Upsilon$ and $s\in F$. 
By Arveson's extension theorem we can extend $\theta_k$ to a c.p.c.\ map
$C(X)\rtimes G \to D_{k,0} \oplus\cdots\oplus D_{k,m_k}$, which we will again call $\theta_k$.
Define the c.p.c.\ map
$\varphi_k : C(X)\rtimes G \to D_{k,0} \oplus\cdots\oplus D_{k,m_k}$ by 
\[
\varphi_k (a) = \theta_k (q_k^{1/2} a q_k^{1/2} ) .
\]

Now define the maps
\begin{align*}
C(X)\rtimes G \stackrel{\varphi}{\longrightarrow} 
\bigoplus_{k=0}^d D_{k,0} \oplus\cdots\oplus D_{k,m_k} 
\stackrel{\psi}{\longrightarrow} C(X)\rtimes G
\end{align*}
by $\varphi = \bigoplus_{k=0}^d \varphi_k$ and
\[
\psi (a_0 , \dots , a_d ) = \psi_0 (a_0 ) + \dots + \psi_d (a_d ) . 
\]
Then $\varphi$ is c.p.c.\ and the restriction of $\psi$ to each $D_{k,j}$ is c.p.c.\ and order zero.
Since $m_0 + \cdots + m_d \leq \dimplus (X,G) \cdot\dimplus (X)$,
to obtain the desired upper bound on $\nucdim (C(X)\rtimes G)$ it remains to verify that 
$\| \psi\circ\varphi (fu_s ) - fu_s \| < \eps$ for all $f\in\Upsilon$ and $s\in F$. 

By a straightforward functional calculus argument that uses a polynomial approximation 
to the function $x\mapsto x^{1/2}$ on $[0,1]$, we see from (\ref{D-bound}) 
that if $n$ is small enough relative to $d$ then
for each $i\in I$, $f\in\Upsilon$, and $s\in F$ we will have
\[
\| h_i^{1/2} fu_s h_i^{1/2} - fu_s h_i \| < \frac{\eps}{2(d+1)} .
\]
Let $s\in F$ and $k\in \{0,\dots ,d\}$. Since for every $i\in I_k$ the element
$h_i^{1/2} fu_s h_i^{1/2} - fu_s h_i$
belongs to $A_i$ and the sub-C$^*$-subalgebras $A_i$ for $i\in I_k$ are pairwise orthogonal, we get
\[
\| q_k^{1/2} fu_s q_k^{1/2} - fu_s q_k \| = 
\max_{i\in I_k} \, \| h_i^{1/2} fu_s h_i^{1/2} - fu_s h_i \| < \frac{\eps}{2(d+1)} .
\]
Using (\ref{E-homogeneous}) this yields
\begin{align*}
\| \psi_k \circ\varphi_k (fu_s ) - fu_s q_k \|
&\leq \| \psi_k \circ\theta_k (q_k^{1/2} fu_s q_k^{1/2} - fu_s q_k ) \| \\
&\hspace*{20mm} \ + \| \psi_k \circ\theta_k (fu_s q_k ) - fu_s q_k \| \\
&< \| h_k^{1/2} fu_s q_k^{1/2} - fu_s q_k \| + \frac{\eps}{2(d+1)} < \frac{\eps}{d+1} ,
\end{align*}
whence
\begin{align*}
\| \psi\circ\varphi (fu_s ) - fu_s \| 
&= \bigg\| \sum_{k=0}^d (\psi_k \circ\varphi_k (fu_s ) - fu_s q_k ) \bigg\| \\
&\leq \sum_{k=0}^d \| \psi_k \circ\varphi_k (fu_s ) - fu_s q_k \| \\
&< (d+1)\cdot \frac{\eps}{d+1} = \eps ,
\end{align*}
as desired.
\end{proof}

\section{Tower dimension and comparison}\label{S-towdim comparison}

We aim here to establish Theorem~\ref{T-tower}.

\begin{lemma}\label{L-Folner towers}
Suppose that $G$ is amenable.
Let $G\curvearrowright X$ be a free action with tower dimension $d<\infty$.
Let $K$ be a finite subset of $G$ and $\delta > 0$. Then 
there is a finite collection $\{ (V_i,S_i ) \}_{i\in I}$ of open towers covering $X$
such that $S_i$ is $(K,\delta )$-invariant for every $i\in I$ 
and the family $\{ S_i V_i \}_{i\in I}$ has chromatic number at most $d+1$.
\end{lemma}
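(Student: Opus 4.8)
The plan is to start from the definition of tower dimension to get, for a suitable finite set $E$, an $E$-Lebesgue collection of open towers $\{(V_i,S_i)\}_{i\in I}$ covering $X$ with chromatic number at most $d+1$, and then to show that the $E$-Lebesgue condition forces each shape $S_i$ (or at least the portion of it that matters) to be $(K,\delta)$-invariant once $E$ is taken large enough. More precisely, I would first invoke amenability to fix a finite $(K,\delta/2)$-invariant set $E_0\subseteq G$, and then apply Definition~\ref{D-towdim} with the finite set $E = E_0 K \cup E_0 \cup \{e\}$ (symmetrized if convenient) to obtain an $E$-Lebesgue collection of open towers $\{(V_i,S_i)\}_{i\in I}$ covering $X$ with $\{S_iV_i\}_{i\in I}$ of chromatic number at most $d+1$.

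\textbf{Trimming the towers.} The subtlety is that the $E$-Lebesgue condition only says that \emph{every point} $x$ lies in some level $tV_i$ with $Et\subseteq S_i$; it says nothing about the ``bad'' levels $tV_i$ for which $Et\not\subseteq S_i$, and a given $S_i$ need not be $(K,\delta)$-invariant as it stands. The fix is to discard the bad levels: for each $i\in I$ let $S_i' = \{\, t\in S_i : Et\subseteq S_i \,\}$, and replace the tower $(V_i,S_i)$ by the sub-tower $(V_i,S_i')$ (still a tower, since its levels are among those of $(V_i,S_i)$, hence pairwise disjoint, and the chromatic number of $\{S_i'V_i\}$ is no larger). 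By the $E$-Lebesgue property, for every $x\in X$ there are $i\in I$ and $t\in S_i$ with $x\in tV_i$ and $Et\subseteq S_i$, i.e.\ $t\in S_i'$; hence $x\in S_i'V_i$, so the trimmed collection $\{(V_i,S_i')\}_{i\in I}$ still covers $X$. Discarding the indices $i$ with $S_i'=\emptyset$, we may assume each $S_i'$ is nonempty.

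\textbf{Følner-invariance of the trimmed shapes.} It remains to check that each nonempty $S_i'$ is $(K,\delta)$-invariant. By construction, if $t\in S_i'$ then $E_0 K t \subseteq E t \subseteq S_i$ and also $E_0 K(kt)\subseteq \dots$ — more to the point, for $k\in K$ we have $E(kt) \subseteq E_0K\cdot kt$; one checks, using $E\supseteq E_0K\cup E_0$, that $kt\in S_i'$ whenever $t$ lies suitably deep inside, and the set of $t\in S_i'$ for which this can fail is contained in $S_i\setminus E_0^{-1}(\text{core})$, of relative size at most $\delta$ by the $(K,\delta/2)$-invariance of $E_0$ together with a counting argument over the partition induced by $E$-membership. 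Concretely, fix any $t_0$ witnessing $S_i'\neq\emptyset$; then $S_i\supseteq Et_0\supseteq E_0Kt_0$, so $S_i$ contains a large $E_0$-invariant block, and the standard Ornstein–Weiss-type estimate shows $|S_i'\triangle KS_i'|/|S_i'|<\delta$. I expect \emph{this last bookkeeping step — choosing $E$ precisely and verifying the Følner estimate for the trimmed shapes} — to be the main obstacle; the rest is essentially a repackaging of the definitions, and in particular no appeal to covering dimension or to comparison is needed for this lemma, only the definition of tower dimension and amenability of $G$.
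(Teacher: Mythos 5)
Your proof has a genuine gap at exactly the step you flag as the ``main obstacle,'' and it is not a bookkeeping issue but a structural one. The trimming $S_i' = \{t\in S_i : Et\subseteq S_i\} = \bigcap_{s\in E}s^{-1}S_i$ does restore the cover, and that much is correct. But the resulting set $S_i'$ need not be $(K,\delta)$-invariant, and nothing in the definition of tower dimension prevents it from being wildly sparse. Concretely, take $G=\Zb$, $E=\{-M,\dots,M\}$, and a shape $S_i$ that is a union of $m$ disjoint intervals of length $2M+1$ separated by large gaps. Then $S_i' $ consists of the $m$ midpoints, a set of isolated points, and for $K=\{-1,0,1\}$ one has $|KS_i' \triangle S_i'|/|S_i'| = 2$. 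The definition of tower dimension places no constraint ruling out such shapes: the $E$-Lebesgue condition only asks that the ``good'' levels over all $i$ together cover $X$, and the chromatic number bound controls overlaps of the sets $S_iV_i$, not the internal structure of $S_i$. Your heuristic ``$S_i\supseteq Et_0$, so $S_i$ contains a large $E_0$-invariant block'' is true for one $t_0$ but says nothing about the rest of $S_i'$, and the ``standard Ornstein--Weiss-type estimate'' you invoke does not apply.

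The paper's proof sidesteps this entirely by first invoking the exact tiling theorem of Downarowicz--Huczek--Zhang: one fixes $(K,\delta)$-invariant sets $F_1,\dots,F_n$ and center sets $C_1,\dots,C_n$ with $G = \bigsqcup_k \bigsqcup_{c\in C_k} F_k c$, sets $F = \bigcup_k F_kF_k^{-1}$, and takes an $F$-Lebesgue collection of towers $\{(V_i,T_i)\}$. One shows that $\bigcap_{s\in F}s^{-1}T_i$ (your $S_i'$) is contained in $T_i'$, the union of those tiles $F_kc$ lying entirely inside $T_i$; so the towers $(V_i,T_i')$ still cover $X$. Crucially, one then does \emph{not} keep $T_i'$ as the shape: one splits each $(V_i,T_i')$ into subtowers whose shapes are the individual tiles $F_kc\subseteq T_i'$, each a right translate of some $F_k$ and hence automatically $(K,\delta)$-invariant. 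Since the levels of the subtowers are a subfamily of the levels of $(V_i,T_i)$, the chromatic number bound is preserved. Your proposal lacks this tiling input, and without some such exact decomposition of $G$ into Følner pieces there is no way to force Følner-invariance of the resulting shapes.
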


\begin{proof}
By the main theorem of \cite{DowHucZha16} there exist nonempty
$(K,\delta )$-invariant finite sets $F_1 , \dots , F_n \subseteq G$
and sets $C_1 , \dots , C_n \subseteq G$ such that
\begin{align*}
G = \bigsqcup_{k=1}^n \bigsqcup_{c\in C_k } F_k c .
\end{align*}
Set $F = F_1 F_1^{-1} \cup\cdots\cup F_n F_n^{-1}$. By our tower dimension hypothesis there is 
a finite $F$-Lebesgue collection of open towers $\{ (V_i,T_i ) \}_{i\in I}$ covering $X$
such that the family $\{ S_i V_i \}_{i\in I}$ has chromatic number at most $d+1$.
For each $i\in I$ set
\begin{gather*}
T_i' = \bigcup \{ F_k c : 1\leq k\leq n,\, c\in C_k , \text{ and } F_k c \subseteq T_i \} , \\
T_i'' = \bigcap_{s\in F} s^{-1} T_i .
\end{gather*}
Let $i\in I$. Let $x\in T_i \setminus T_i'$.
Take $1\leq k\leq n$ and $c\in C_k$ such that $x\in F_k c \cap T_i$. Then 
there exists a $y\in F_k c \cap (G\setminus T_i )$. We have $x=sc$ and $y=tc$ for some
$s,t\in F_k$, whence $ts^{-1} x = y \notin T_i$, which shows that $x\notin T_i''$ since $ts^{-1} \in F$.
We conclude from this that $T_i'' \subseteq T_i'$.
It follows by the $F$-Lebesgue condition that the towers $(V_i,T_i' )$ for $i\in I$ cover $X$.

Finally, for each $i\in I$ write $T_i' = \bigsqcup_{j\in J_i} S_j$ where
each $S_j$ has the form $F_k c$ for some $1\leq k\leq n$ and $c\in C_k$. Then
the collection of open towers $\{ (V_i,S_j ) \}_{i\in I, j\in J_i}$ covers $X$,
each of its shapes is $(K,\delta )$-invariant, and the family $\{ S_i V_i \}_{i\in I}$ 
has chromatic number at most $d+1$, as desired.
\end{proof}

\begin{theorem}\label{T-tower} 
Suppose that $G$ is amenable.
Let $X$ be a compact metric space with covering dimension $c<\infty$. 
Let $G\curvearrowright X$ be a free action with tower dimension $d<\infty$. 
Then the action has $((c+1)(d+1)-1)$-comparison. 
\end{theorem}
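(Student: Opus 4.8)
The plan is to push the part of a given closed set lying inside each of the towers supplied by Lemma~\ref{L-Folner towers} into the target set $B$, column by column, and to keep the number of colour classes under control using the covering dimension of $X$.

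First I would use Proposition~\ref{P-open closed} to reduce to proving $A\prec_m B$ for a closed set $A\subseteq X$ and a nonempty open set $B\subseteq X$ with $\mu(A)<\mu(B)$ for all $\mu\in M_G(X)$, where $m=(c+1)(d+1)-1$ and $M_G(X)\neq\emptyset$ by amenability. Applying Lemma~\ref{L-portmanteau} I obtain $\eta>0$, a closed neighbourhood $A_+$ of $A$, and an open set $B_-\subseteq B$ with $\mu(A_+)+\eta\leq\mu(B_-)$ for all $\mu$, and I fix continuous $f,f'$ with $\unit_A\leq f\leq\unit_{A_+}$ and $\unit_{B_-}\leq f'\leq\unit_B$. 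The key measure-theoretic input is then a uniform estimate, established by a routine weak$^*$-compactness argument: there are a finite $K\subseteq G$ and $\delta>0$ such that every $(K,\delta)$-invariant finite set $S\subseteq G$ satisfies $\frac1{|S|}\sum_{s\in S}f(sx)\leq\sup_\mu\mu(A_+)+\eta/4$ and $\frac1{|S|}\sum_{s\in S}f'(sx)\geq\inf_\mu\mu(B_-)-\eta/4$ for all $x\in X$ (otherwise, from counterexamples along finite sets with $K\uparrow G$ and $\delta\downarrow 0$ one extracts empirical measures whose weak$^*$ limit is a $G$-invariant measure violating the gap). In particular, for such $S$ and any $x$, the number of $s\in S$ with $sx\in A$ is strictly less than the number of $s\in S$ with $sx\in B_-$.

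Next I would apply Lemma~\ref{L-Folner towers} with this $K$ and $\delta$ to get a finite collection $\{(V_i,S_i)\}_{i\in I}$ of open towers covering $X$ with each $S_i$ being $(K,\delta)$-invariant and $\{S_iV_i\}_{i\in I}$ of chromatic number at most $d+1$. For $v\in V_i$ set $P_i(v)=\{t\in S_i:tv\in A\}$ and $Q_i(v)=\{t\in S_i:tv\in B_-\}$; by the estimate $|P_i(v)|<|Q_i(v)|$, so fix an injection $\sigma_v^i\colon P_i(v)\hookrightarrow Q_i(v)$. The crucial local point—which uses one-sidedly that $A$ is closed and $B_-$ is open—is that for $v_0\in V_i$ with $P_i(v_0)\neq\emptyset$ the set
\[
W_{v_0}^i=\{v\in V_i:P_i(v)\subseteq P_i(v_0)\}\cap\bigcap_{t\in P_i(v_0)}\big(\sigma_{v_0}^i(t)^{-1}B\cap V_i\big)
\]
is open, contains $v_0$, and satisfies $\sigma^i_{v_0}(t)W_{v_0}^i\subseteq B$ for every $t\in P_i(v_0)$; and on $W^i_{v_0}$ one can use the single injection $\sigma^i_{v_0}$ (restricted to $P_i(v)\subseteq P_i(v_0)$) to move, for $v\in W^i_{v_0}$ and $t\in P_i(v)$, the point $tv$ to $\sigma^i_{v_0}(t)v=(\sigma^i_{v_0}(t)t^{-1})\cdot tv\in B$. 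The sets $tW^i_{v_0}$, over $i$, over $v_0\in V_i$ with $P_i(v_0)\neq\emptyset$, and over $t\in P_i(v_0)$, form an open cover of $A$; I extract a finite subcover and for each $i$ let $v_0^{i,1},\dots,v_0^{i,N_i}$ be the occurring base points.

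The final step, the dimension bookkeeping, is the one I expect to be the main obstacle. Within tower $i$ the relevant part of $V_i$ is covered by the finitely many $W^i_{v_0^{i,l}}$; since $\dim X=c$, this finite open cover refines to a finite open cover $\cW_i$ of chromatic number at most $c+1$, and I assign to each $W'\in\cW_i$ one index $l(W')$ with $W'\subseteq W^i_{v_0^{i,l(W')}}$, so $W'$ inherits the injection and the inclusion of moved sets into $B$. Then $\cU=\{tW':i\in I,\ W'\in\cW_i,\ t\in P_i(v_0^{i,l(W')})\}$, with group element $\sigma^i_{v_0^{i,l(W')}}(t)t^{-1}$ attached to $tW'$, still covers $A$ and sends each member into $B$. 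For the colouring I combine a partition $I=I_0\sqcup\dots\sqcup I_d$ witnessing the bound for $\{S_iV_i\}$ with the partitions $\cW_i=\cW_i^0\sqcup\dots\sqcup\cW_i^c$, colouring $tW'$ by the pair $(k,j)$ with $i\in I_k$ and $W'\in\cW_i^j$; using that distinct towers in one $I_k$ have disjoint $S_iV_i$, that distinct members of one $\cW_i^j$ are disjoint, and that $\sigma^i_{v_0}$ is injective (distinct $t$'s land in distinct tower levels), one checks that within each of these $(d+1)(c+1)$ classes the moved sets are pairwise disjoint subsets of $B$. This is exactly $A\prec_m B$ with $m=(c+1)(d+1)-1$, hence $m$-comparison. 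The two delicate points are making the ``uniform matching over an open piece of the base'' rigorous—it genuinely depends on $A$ being closed while $B_-$ is open—and verifying that the product colouring really stays within $(c+1)(d+1)$ classes.
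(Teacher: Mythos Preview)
Your overall architecture matches the paper's: reduce via Lemma~\ref{L-portmanteau}, establish a uniform F{\o}lner estimate, invoke Lemma~\ref{L-Folner towers}, refine inside each tower using the covering dimension of $X$, and assemble a product colouring with $(c+1)(d+1)$ classes.

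There is, however, a genuine gap in your F{\o}lner step. From $\unit_{B_-}\leq f'$ you cannot lower-bound $|Q_i(v)|=|\{s:sv\in B_-\}|$ by $\sum_s f'(sv)$; the inequality goes the wrong way. And even if you could, combining your two separate estimates would require $\sup_\mu \mu(A_+)+\eta/2 \leq \inf_\mu \mu(B_-)$, which does \emph{not} follow from the pointwise bound $\mu(A_+)+\eta\leq\mu(B_-)$ (the sup and inf may be realised at different measures). The fix is to argue the comparison directly rather than via $\sup$ and $\inf$: show that for sufficiently invariant $S$ one has
\[
\frac{1}{|S|}\sum_{s\in S}\unit_{A_+}(sv)+\frac{\eta}{2}\leq\frac{1}{|S|}\sum_{s\in S}\unit_{B_-}(sv)
\]
for all $v$, by extracting from a failure a weak$^*$ limit $\mu\in M_G(X)$ and applying the portmanteau theorem (using that $A_+$ is closed and $B_-$ is open). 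This is exactly what the paper does, and it immediately yields $|P_i(v)|<|Q_i(v)|$.

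The remaining difference from the paper is in how the covering dimension enters. The paper refines each tower base into pieces so small that every level of each refined tower has diameter less than $\eta$; then a level meeting $A$ lies inside $A_+$ and a level meeting $B_-$ lies inside $B$, so the injection depends only on the refined tower index, not on the base point. Your route---pointwise injections $\sigma_v^i$, local consistency neighbourhoods $W^i_{v_0}$ (open because $A$ is closed and $B$ is open), and then a dimension refinement of those neighbourhoods---also works once the estimate above is in place, and your disjointness check for the product colouring is correct. The paper's version is somewhat cleaner since it avoids the point-dependent choice of injection, but both arrive at the same $(c+1)(d+1)$ bound.
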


\begin{proof}
Let $A$ be a closed subset of $X$ and $B$ an open subset of $X$ such that 
$\mu (A) < \mu (B)$ for all $\mu\in M_G (X)$. By Lemma~\ref{L-portmanteau} we can find
an $\eta > 0$ such that the sets
\begin{align*}
B_- &= \{ x\in X : d(x,X\setminus B ) > \eta \} , \\
A_+ &= \{ x\in X : d(x,A) \leq \eta \}
\end{align*}
satisfy $\mu (A_+ ) + \eta \leq \mu (B_- )$ for all $\mu\in M_G (X)$.

We claim that there are a finite set $K\subseteq G$ and a $\delta > 0$ such
that if $F$ is a nonempty $(K,\delta )$-invariant finite subset of $G$ then 
for all $x\in X$ we have
\begin{align}\label{E-distribution}
\frac{1}{|F|} \sum_{s\in F} \unit_{A_+} (sx) + \frac{\eta}{2} 
&\leq \frac{1}{|F|} \sum_{s\in F} \unit_{B_-} (sx) .
\end{align}
Suppose that this is not possible.
Then there exists a F{\o}lner sequence $\{ F_n \}$ and a sequence $\{ x_n \}$ in $X$ such that,
writing $\mu_n$ for the probability measure $(1/|F_n|) \sum_{s\in F_n} \delta_{sx}$, we have
\begin{align*}
\mu_n (A_+ ) + \frac{\eta}{2} > \mu_n (B_- )
\end{align*}
for all $n$.
By passing to a subsequence
we may assume that the sequence $\{ \mu_n \}$ converges to some $\mu\in M(X)$,
and the F{\o}lner property implies that $\mu$ is $G$-invariant, as is easily verified.
Since $B_-$ is open and $A_+$ is closed, the portmanteau theorem yields
\begin{align*}
\mu (B_- ) + \frac{\eta}{2} 
\leq \liminf_{n\to\infty} \mu_n (B_- ) + \frac{\eta}{2} 
\leq \limsup_{n\to\infty} \mu_n (A_+ ) + \eta 
\leq \mu (A_+ ) + \eta ,
\end{align*}
contradicting our choice of $\eta$. The desired $K$ and $\delta$ thus exist.

By Lemma~\ref{L-Folner towers} there are
a finite collection $\{ (V_i,T_i ) \}_{i\in I}$ of open towers covering $X$
and a partition $I = I_0 \sqcup\cdots\sqcup I_d$ such that for every $i\in I$ the shape
$T_i$ is $(K,\delta )$-invariant and for every $j=0,\dots ,d$
the sets $T_i V_i$ for $i\in I_j$ are pairwise disjoint.
By normality we can find for every $i\in I$ a closed set
$V_i' \subseteq V_i$ such that the sets $T_i V_i'$ for $i\in I$ still cover $X$.
Using the formulation of covering dimension in term of the chromatic numbers of open covers,
we can then find, for each $i\in I$, a finite collection $\cU_i$
of open subsets of $V_i$ such that
\begin{enumerate}
\item the collection $\cU_i$ covers $V_i'$,

\item each of the sets $sU$ for $s\in T_i$ and $U\in\cU_i$ has diameter less than $\eta$, and

\item there is a partition $\cU_i = \cU_{i,0} \sqcup\cdots\sqcup \cU_{i,c}$
such that the collection $\cU_{i,j}$ is disjoint for each $j$.
\end{enumerate}
For convenience we reindex the collection of towers 
$\{ (U , T_i ) \}_{i\in I, U\in\cU_i}$ as $\{ (U_j , S_j ) \}_{j\in J}$.
Then the shapes $S_j$ are all $(K,\delta )$-invariant
and, setting $m = (c+1)(d+1)-1$, there is a partition 
$J = J_0 \sqcup\cdots\sqcup J_m$ such that for each $k=0,\dots ,m$
the sets $S_j U_j$ for $j\in J_k$ are pairwise disjoint.

Let $0\leq k\leq m$ and $j\in J_k$. 
Since the levels of the tower $(U_j , S_j )$ all have diameter less than $\eta$,
if $sU_j \cap A \neq \emptyset$ for some $s\in S_j$ then $sU_j \subseteq A_+$,
and so by (\ref{E-distribution}) the sets 
\begin{align*}
S_{j,1} &= \{ s\in S_j : sU_j \cap A \neq \emptyset \} , \\
S_{j,2} &= \{ s\in S_j : sU_j \cap B_- \neq \emptyset \}
\end{align*}
must satisfy $|S_{j,1} |/|S_j | + \eta /2 \leq |S_{j,2} |/|S_j |$ and hence 
$|S_{j,1} |\leq |S_{j,2} |$. We can thus find an injection $\varphi_j : S_{j,1} \to S_{j,2}$.
Now the sets $sU_j$ for $s\in S_{j,1}$ and $j\in J$ cover $A$, while
for each $k=0,\dots ,m$ the pairwise disjoint sets $\varphi (s) U_j = (\varphi (s)s^{-1} )sU_j$ 
for $j\in J_k$ and $s\in S_{j,1}$ are contained in $B$
since the levels of the tower $(U_j , S_j )$ all have diameter less than $\eta$.
This verifies that $A\prec_m B$, as desired.
\end{proof}

\section{Almost finiteness}\label{S-af}

We begin by recalling the following notion of castle from Definition~\ref{D-castle 1}.

\begin{definition}\label{D-castle 2}
Let $G\curvearrowright X$ be a free action on a compact metric space. 
A {\it castle} is a finite collection of towers $\{ (V_i , S_i) \}_{i\in I}$
such that the sets $S_i V_i$ for $i\in I$ are pairwise disjoint.
The {\it levels} of the castle are the sets $sV_i$ for $i\in I$ and $s\in S_i$.
We say that the castle is {\it open} if each of the towers is open, 
and {\it clopen} if each of the towers is clopen.
\end{definition}

\begin{definition}\label{D-af}
We say that a free action $G\curvearrowright X$ on a compact metric space
is {\it almost finite} if for every $n\in\Nb$, finite set $K\subseteq G$, and $\delta > 0$
there are 
\begin{enumerate}
\item an open castle $\{ (V_i ,S_i ) \}_{i\in I}$ whose shapes are $(K,\delta )$-invariant
and whose levels have diameter less than $\delta$, 

\item sets $S_i' \subseteq S_i$ such that $|S_i' | < |S_i |/n$ and
\[
X\setminus \bigsqcup_{i\in I} S_i V_i \prec \bigsqcup_{i\in I} S_i' V_i .
\]
\end{enumerate}
\end{definition}

\begin{remark}\label{R-tower comparison}
Observe in the context of Definition~\ref{D-af} 
that if we have sets $S_i' \subseteq S_i$ satisfying
\[
X\setminus \bigsqcup_{i\in I} S_i V_i \prec \bigsqcup_{i\in I} S_i' V_i 
\]
then any other sets $S_i'' \subseteq S_i$ with $|S_i'' | \geq |S_i' |$ will similarly satisfy
\[
X\setminus \bigsqcup_{i\in I} S_i V_i \prec \bigsqcup_{i\in I} S_i'' V_i .
\]
since the relation $\prec$ is transitive and 
$\bigsqcup_{i\in I} S_i' V_i \prec\bigsqcup_{i\in I} S_i'' V_i$.
The latter follows from the fact that for each $i$ we have $S_i' V_i \prec S_i'' V_i$, 
which can be witnessed by taking
an injection $\varphi : S_i' \to S_i''$ and considering the open collections
$\{ sV_i : s\in S_i' \}$ and $\{ \varphi (s)V_i : s\in S_i' \}$, the first of which
partitions $S_i' V_i$ and the second of which partitions the subset $\varphi (S_i')V_i$
of $S_i'' V_i$.
\end{remark}

\begin{remark}\label{R-af extn}
Almost finiteness does not pass to extensions, the obstruction being the diameter condition.
For example, the minimal actions in \cite{GioKer10} factor onto an odometer,
which is almost finite, but are not themselves almost finite by Theorem~\ref{T-af Z-stable},
since their crossed product fails to be $\cZ$-stable. 
See however Theorem~\ref{T-af extension finite}.
\end{remark}

\begin{example}
Every free $\Zb^m$-action on a zero-dimensional compact metrizable space is almost finite.
This was established in Lemma~6.3 of \cite{Mat12} in the language of groupoids, whose translation
to Definition~\ref{D-af} is discussed in the first paragraph of Section~\ref{S-zero dim}.
\end{example}

The following was shown in \cite{ConJacKerMarSewTuc17}.

\begin{theorem}\label{T-tiling}
Let $G$ be a countable amenable group. Then a generic free minimal action 
of $G$ on the Cantor set is almost finite.
\end{theorem}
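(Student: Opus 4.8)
\textbf{Proof strategy for Theorem~\ref{T-tiling}.}
The plan is to work in the Polish space $\Act(G,X)$ of actions of $G$ on the Cantor set $X$ (with its usual topology), fix a compatible metric, and show that the set of free minimal almost finite actions is residual. Since it is already known (and classical) that the free minimal actions form a dense $G_\delta$ in $\Act(G,X)$, it suffices to show that almost finiteness holds generically \emph{among} free minimal actions, i.e.\ that for each $n\in\Nb$, each finite $K\subseteq G$, and each $\delta>0$, the set of actions admitting a castle as in Definition~\ref{D-af} contains a dense open subset of the free minimal actions. One then intersects these countably many dense open sets (over a countable cofinal family of triples $(n,K,\delta)$) with the dense $G_\delta$ of free minimal actions.

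The heart of the matter is a density statement: given a free minimal action $\action$ and a triple $(n,K,\delta)$, one must perturb $\action$ slightly (within $\Act(G,X)$, staying free and minimal) to an action for which the required open castle, with $(K,\delta)$-invariant shapes, levels of diameter less than $\delta$, and a small leftover set dominated in the $\prec$ order by $\bigsqcup S_i'V_i$, actually exists. The key tool is a Rokhlin-type tower (tiling) theorem for free actions of amenable groups on the Cantor set: by exact tilings of $G$ (as in \cite{DowHucZha16}) one obtains, on a suitable symbolic/profinite model, a \emph{clopen} castle $\{(V_i,S_i)\}_{i\in I}$ whose shapes are translates of prescribed $(K,\delta)$-invariant F{\o}lner sets and whose levels $\bigsqcup_{i}S_iV_i$ exhaust $X$ up to a clopen remainder of arbitrarily small measure uniformly over all invariant measures. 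Refining the $V_i$ by a fine clopen partition makes the levels have diameter less than $\delta$. Since the remainder can be made to have uniformly small measure while $\bigsqcup_i S_i'V_i$ (with $|S_i'|$ close to $|S_i|/n$) has uniformly bounded-below measure, one needs a comparison-type input to conclude $X\setminus\bigsqcup_i S_iV_i \prec \bigsqcup_i S_i'V_i$; here one uses that the generic such model action has comparison (or one builds the model so that this particular subequivalence is transparent, e.g.\ by absorbing the remainder into spare levels of the same tiles). The perturbation step then consists in recoordinatizing: one produces an action \emph{conjugate to a small perturbation of $\action$} that is isomorphic to (an inverse limit involving) this model, using the fact that free minimal $G$-actions on the Cantor set can be approximated in $\Act(G,X)$ by actions factoring through such tiling systems.

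Concretely, I would proceed as follows. (1) Recall/set up the genericity framework: the free minimal actions are a dense $G_\delta$ in $\Act(G,X)$, and almost finiteness, being an $\forall(n,K,\delta)\,\exists(\text{castle})$ condition with the castle data clopen and the $\prec$-relation witnessed by finitely many clopen sets, is a $G_\delta$ condition; so it is enough to prove density of each constituent open condition. (2) Fix $(n,K,\delta)$ and a free minimal $\action$; choose a $(K,\delta)$-invariant F{\o}lner set and invoke the exact tiling theorem of \cite{DowHucZha16} to get exact tilings of $G$ by $(K,\delta)$-invariant tiles. (3) Build a clopen model action on a Cantor set carrying the associated clopen castle with small, uniformly controlled leftover and with $|S_i'|$ just below $|S_i|/n$, arranging the leftover to be $\prec$-dominated by $\bigsqcup_i S_i'V_i$ (using comparison for the model, or a direct tile-swallowing construction). (4) Perturb $\action$ within $\Act(G,X)$ to an action conjugate to a system extending this model, keeping freeness and minimality; this yields an action in the desired open set arbitrarily close to $\action$. (5) Intersect over a countable cofinal family of triples and with the free minimal $G_\delta$ to conclude. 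The main obstacle is step (4): making the perturbation honest — i.e.\ genuinely realizing the combinatorial tiling data as a small perturbation of an arbitrary given free minimal action while preserving minimality and freeness — requires the nontrivial structural input that such tiling systems are dense among free minimal Cantor $G$-actions, and controlling the $\prec$-domination of the leftover uniformly is the delicate quantitative point within it.
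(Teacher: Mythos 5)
The paper does not actually prove Theorem~\ref{T-tiling}: it cites it to \cite{ConJacKerMarSewTuc17} (Conley--Jackson--Kerr--Marks--Seward--Tucker-Drob), so there is no in-paper proof to compare against. Evaluating your proposal on its own terms and against the cited source: the overall Baire-category framework (Polish space of Cantor actions, almost finiteness as a countable intersection of open conditions, exact tilings of $G$ from \cite{DowHucZha16} as the combinatorial input) is the correct one and matches the spirit of the cited proof. However, there are two substantive issues.

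First, you work directly with Definition~\ref{D-af}, including the relation $\prec$ and a non-exhausted remainder. On the Cantor set this is the wrong formulation to target. Theorem~\ref{T-zero dim} of the paper shows that almost finiteness for a free action on a zero-dimensional space is equivalent to the existence, for every finite $K\subseteq G$ and $\delta>0$, of a clopen castle whose levels \emph{partition} $X$ and whose shapes are $(K,\delta)$-invariant; there is no leftover set and no $\prec$ condition to verify. Working with that characterization removes exactly the parts of your argument you flag as delicate: the quantitative control of the $\prec$-domination, and the worry about invoking comparison for the model (which risks circularity, since comparison is downstream of almost finiteness in this paper). The cited source establishes genericity of the clopen-tower-decomposition property, not of the $\prec$ formulation.

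Second, and more seriously, your step (4) --- the density step --- is precisely where the entire mathematical content lies, and your proposal does not actually carry it out; you yourself identify it as ``the main obstacle.'' ``Recoordinatizing'' and ``absorbing the remainder into spare levels'' is a description of the goal, not an argument. What one must produce, for an arbitrary free minimal $G$-action on the Cantor set and arbitrary $(K,\delta)$, is a free minimal action arbitrarily close in $\Act(G,X)$ that admits an exact clopen castle with $(K,\delta)$-invariant shapes, and showing that such perturbed actions are dense is essentially equivalent to the theorem. The exact tilings of \cite{DowHucZha16} supply suitable shape data but say nothing about how to realize that data as a clopen castle on a perturbation of a \emph{given} action while preserving freeness and minimality. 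You also assert without justification that free minimal actions form a dense $G_\delta$ in $\Act(G,X)$; this itself requires an argument (or a more careful choice of ambient Polish space, such as a space of extensions of a fixed free minimal action), and the cited source is careful about exactly this point. As written, your proposal is a correct high-level outline but leaves the decisive construction unproved.
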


The following two facts are simple consequences of Definition~\ref{D-af}.

\begin{proposition}\label{P-af inverse}
Almost finiteness is preserved under inverse limits of free actions.
\end{proposition}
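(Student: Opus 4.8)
The plan is to verify directly that if $G\curvearrowright X = \varprojlim (X_k, \pi_{k})$ is an inverse limit of free actions, each of which is almost finite, then $G\curvearrowright X$ is almost finite. Fix $n\in\Nb$, a finite set $K\subseteq G$, and $\delta>0$. First I would recall the basic facts about the inverse limit: there are $G$-equivariant continuous surjections $\rho_k\colon X\to X_k$, and the pullbacks $\rho_k^* C(X_k)$ form an increasing chain of unital C$^*$-subalgebras of $C(X)$ whose union is dense; equivalently, the preimages under the $\rho_k$ of open (resp.\ closed, resp.\ clopen) subsets of the $X_k$ form a basis-like family, and every finite open cover of $X$ is refined by $\rho_k^{-1}$ of a finite open cover of $X_k$ for $k$ large enough. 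In particular, since a compatible metric on $X$ can be chosen so that the $\rho_k$ are "almost isometric" in the sense that sets of the form $\rho_k^{-1}(W)$ for $W\subseteq X_k$ of small diameter have small diameter in $X$ once $k$ is large (this uses compactness and the fact that the diameters of the fibers of $\rho_k$ tend to $0$ uniformly), we can transport a castle from some $X_k$ up to $X$ without losing the diameter control.

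Next, using almost finiteness of $G\curvearrowright X_k$, for a suitably large $k$ (chosen so that $\rho_k^{-1}$ preserves the diameter bound $\delta$) I would extract an open castle $\{(W_i, S_i)\}_{i\in I}$ in $X_k$ with $(K,\delta)$-invariant shapes and levels of diameter less than $\delta$ (in the metric on $X_k$), together with subsets $S_i'\subseteq S_i$ with $|S_i'|<|S_i|/n$ such that
\[
X_k\setminus \bigsqcup_{i\in I} S_i W_i \prec \bigsqcup_{i\in I} S_i' W_i
\]
in $G\curvearrowright X_k$. Setting $V_i = \rho_k^{-1}(W_i)$, the pair $(V_i, S_i)$ is an open tower in $X$ because $\rho_k$ is equivariant (so $s V_i = \rho_k^{-1}(s W_i)$, and disjointness of the $sW_i$ pulls back), the shapes are unchanged hence still $(K,\delta)$-invariant, and by the choice of $k$ the levels $sV_i = \rho_k^{-1}(sW_i)$ have diameter less than $\delta$ in $X$. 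Moreover $\{(V_i,S_i)\}_{i\in I}$ is a castle in $X$ since the pairwise disjointness of the sets $S_i W_i$ pulls back to pairwise disjointness of the $S_i V_i$, and $|S_i'|<|S_i|/n$ still holds.

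It remains to check that $\prec$ is inherited, i.e.\ that
\[
X\setminus \bigsqcup_{i\in I} S_i V_i \prec \bigsqcup_{i\in I} S_i' V_i
\]
in $G\curvearrowright X$. Here I would use that $X\setminus\bigsqcup_i S_i V_i = \rho_k^{-1}\bigl(X_k\setminus\bigsqcup_i S_i W_i\bigr)$ and that $\bigsqcup_i S_i' V_i = \rho_k^{-1}\bigl(\bigsqcup_i S_i' W_i\bigr)$, and then invoke the general principle that subequivalence $A\prec B$ in $G\curvearrowright X_k$, witnessed by a finite open cover $\cU$ of a closed subset $C\subseteq A$ with group elements $s_U$ and the disjointness-into-$B$ property, pulls back under an equivariant surjection to $\rho_k^{-1}(A)\prec\rho_k^{-1}(B)$: given a closed $C'\subseteq\rho_k^{-1}(A)$, its image $\rho_k(C')$ is closed in $A$, cover it by such a $\cU$, and then $\{\rho_k^{-1}(U)\}_{U\in\cU}$ with the same $s_U$ witnesses $C'\prec\rho_k^{-1}(B)$ because $s_U\rho_k^{-1}(U)=\rho_k^{-1}(s_U U)$ and disjointness pulls back. (This is precisely the argument used for the extension inequality in Proposition~\ref{P-towdim}.) The main obstacle — and the only point requiring genuine care — is the diameter condition: one must argue that the fibers of $\rho_k$ shrink uniformly, so that pulling back $\delta$-small sets from $X_k$ yields $\delta$-small sets in $X$, which is where the $n$, $K$, and the first choice of $\delta$ in $X_k$ must be slightly tightened to absorb the metric distortion. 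Everything else is a routine transport of combinatorial data along the equivariant surjection.
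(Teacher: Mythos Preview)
Your argument is correct and is precisely the routine verification the paper has in mind: the paper gives no proof of Proposition~\ref{P-af inverse}, merely recording it as a ``simple consequence of Definition~\ref{D-af}''. Pulling back the castle and the witnesses for $\prec$ along an equivariant surjection is exactly the right move (and, as you note, is the same mechanism as in Proposition~\ref{P-towdim}); the only point requiring any thought is the diameter condition, and your treatment is fine. To make that step completely clean you can phrase it as follows: since the sets $\rho_j^{-1}(U)$ for $j\in\Nb$ and $U\subseteq X_j$ open form a basis for $X$, compactness gives a $k$ and a finite open cover $\cV$ of $X_k$ with $\diam(\rho_k^{-1}(V))<\delta$ for every $V\in\cV$; taking $\delta'$ to be a Lebesgue number for $\cV$, any $W\subseteq X_k$ with $\diam(W)<\delta'$ lies in some $V\in\cV$ and hence $\diam(\rho_k^{-1}(W))<\delta$. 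Then apply almost finiteness of $G\curvearrowright X_k$ with parameter $\delta'$ in place of $\delta$ and pull back.
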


\begin{proposition}
Let $G\curvearrowright X$ be a free action on a compact metrizable space, and
suppose that $G$ can be expressed as a union of an increasing sequence $G_1 \subseteq G_2 \subseteq\dots$
of subgroups such that the restriction action $G_n \curvearrowright X$ is almost finite
for every $n$. Then the action $G\curvearrowright X$ is almost finite.
\end{proposition}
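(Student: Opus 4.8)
The plan is to reduce almost finiteness of $G\curvearrowright X$ directly to the hypothesis, exploiting that the finite set $K$ appearing in Definition~\ref{D-af} must lie in some $G_m$. Concretely: fix a compatible metric on $X$ and let $n\in\Nb$, a finite set $K\subseteq G$, and $\delta>0$ be given. Since $K$ is finite and $G=\bigcup_m G_m$ is an increasing union, choose $m$ with $K\subseteq G_m$. The restriction action $G_m\curvearrowright X$ is free (a restriction of a free action to a subgroup is free) and almost finite by hypothesis, so applying Definition~\ref{D-af} to it with the same $n,K,\delta$ produces an open castle $\{(V_i,S_i)\}_{i\in I}$ with each $S_i$ a $(K,\delta)$-invariant finite subset of $G_m$, all levels of diameter less than $\delta$, and subsets $S_i'\subseteq S_i$ with $|S_i'|<|S_i|/n$ and $X\setminus\bigsqcup_{i\in I}S_iV_i\prec\bigsqcup_{i\in I}S_i'V_i$, the relation $\prec$ here being taken with respect to the action of $G_m$.

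Next I would verify that this very same data witnesses almost finiteness of $G\curvearrowright X$. Since $S_i\subseteq G_m\subseteq G$, the collection $\{(V_i,S_i)\}_{i\in I}$ is still a castle for the $G$-action: the castle axioms only ask that each $S_i$ be finite and that the sets $S_iV_i$ be pairwise disjoint, which makes no reference to the ambient group. The property of $(K,\delta)$-invariance of a finite set $F$ involves only $K$, $F$, and products $kf$ with $k\in K$ and $f\in F$; these products are the same computed in $G_m$ or in $G$ because $G_m$ is a subgroup, so each $S_i$ remains $(K,\delta)$-invariant as a subset of $G$. The diameter bound on the levels is a statement about $X$ and its fixed metric, hence unchanged. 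Finally, any witness for $X\setminus\bigsqcup_iS_iV_i\prec\bigsqcup_iS_i'V_i$ in the $G_m$-action --- namely, for each closed $C\subseteq X\setminus\bigsqcup_iS_iV_i$, a finite open cover $\cU$ of $C$ together with elements $s_U\in G_m$ whose images $s_UU$ are pairwise disjoint subsets of $\bigsqcup_iS_i'V_i$ --- is also a witness in the $G$-action, since $s_U\in G_m\subseteq G$. Thus all the requirements of Definition~\ref{D-af} hold for $G\curvearrowright X$ with this choice of castle and sets $S_i'$, and as $n,K,\delta$ were arbitrary the proposition follows.

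The substance of the argument is the single observation that every ingredient of the almost-finiteness data produced for the subgroup $G_m$ transfers verbatim to $G$, so there is no genuine obstacle to overcome; the only point needing a moment's attention is confirming that none of the three conditions --- $(K,\delta)$-invariance, small diameter of the levels, and the comparison relation $\prec$ --- is secretly sensitive to which group is acting, and each of them is not, for the elementary reasons just indicated.
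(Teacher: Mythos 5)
Your proof is correct and is essentially the argument the paper has in mind: the paper presents this proposition (together with the preceding one on inverse limits) with the remark that it is a "simple consequence of Definition~\ref{D-af}" and offers no further detail. You have filled in exactly the verification one would want: since the quantifier in Definition~\ref{D-af} is over an arbitrary but \emph{finite} set $K\subseteq G$, such a $K$ lands in some $G_m$; the castle, the $(K,\delta)$-invariance of the shapes (a purely combinatorial condition on finite subsets of the group, insensitive to passing from $G_m$ to $G$), the diameter bound on the levels, and the subsets $S_i'$ all transfer verbatim; and a witness for $\prec$ using translations in $G_m$ is a fortiori a witness using translations in $G$, since the relation $\prec$ only becomes easier to satisfy as the acting group grows. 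Nothing is missing.
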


\begin{problem}
Let $G\curvearrowright X$ be a uniquely ergodic
free minimal action of a countable amenable group on the Cantor set. Must it be almost finite?
\end{problem}

\section{Almost finiteness and comparison}\label{S-af comparison}

In Theorem~\ref{T-comparison af} we relate almost finiteness and comparison.
By combining this with Theorem~\ref{T-tower} we are then able to give a connection 
between tower dimension, almost finiteness, and comparison, which we record
as Theorem~\ref{T-comparison af tower}.

\begin{lemma}\label{L-regularity}
Let $X$ be a compact metrizable space and let $\Omega$ be a weak$^*$ closed subset of $M(X)$.
Let $A$ be a closed subset of $X$ such that $\mu (A) = 0$ for all $\mu\in\Omega$, and let $\eps > 0$.
Then there is a $\delta > 0$ such that 
\[
\mu (\{ x\in X : d(x,A) \leq \delta \} ) < \eps
\]
for all $\mu\in\Omega$.
\end{lemma}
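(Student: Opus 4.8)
The statement is a compactness argument of exactly the same flavor as Lemma~\ref{L-portmanteau}, so the plan is to mimic that proof. Suppose the conclusion fails. Then for every $n\in\Nb$ there is a measure $\mu_n \in\Omega$ with $\mu_n (A_n ) \geq \eps$, where $A_n = \{ x\in X : d(x,A) \leq 1/n \}$. By weak$^*$ compactness of $\Omega$, pass to a subsequence $\{ \mu_{n_k} \}$ converging to some $\mu\in\Omega$.

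The key step is to transfer the lower bound $\mu_{n_k}(A_{n_k}) \geq \eps$ to the limit. Fix $j\in\Nb$. For all $k$ with $n_k \geq n_j$ we have $A_{n_k} \subseteq A_{n_j}$, hence $\mu_{n_k}(A_{n_j}) \geq \mu_{n_k}(A_{n_k}) \geq \eps$. Since $A_{n_j}$ is closed, the portmanteau theorem (\cite{Kec95}, Theorem~17.20) gives
\[
\mu (A_{n_j}) \geq \limsup_{k\to\infty} \mu_{n_k}(A_{n_j}) \geq \eps .
\]
Now $A$ is the decreasing intersection of the closed sets $A_{n_j}$ over $j\in\Nb$ (using that $A$ is closed, so $\bigcap_j A_{n_j} = \overline{A} = A$), whence $\mu (A) = \lim_{j\to\infty} \mu (A_{n_j}) \geq \eps > 0$, contradicting $\mu (A) = 0$ for all $\mu\in\Omega$. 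This proves the lemma.

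I do not anticipate a real obstacle here; the only point requiring a word of care is the direction of the monotonicity and the fact that one takes a \emph{decreasing} intersection of closed sets (as opposed to the increasing union of open sets in Lemma~\ref{L-portmanteau}), so that continuity of measure along the nested sequence applies and no openness of $A$ is needed. Everything else is a routine subsequence extraction plus one application of portmanteau.
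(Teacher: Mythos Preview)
Your proof is correct and is essentially identical to the paper's own argument: contradiction, subsequence extraction via weak$^*$ compactness of $\Omega$, portmanteau applied to the closed sets $A_{n_j}$, and continuity of measure along the decreasing intersection $\bigcap_j A_{n_j} = A$. The paper phrases the inclusion step slightly more tersely (writing $\mu_{n_k}(A_{n_j}) \geq \eps$ for $k \geq j$ without spelling out $A_{n_k} \subseteq A_{n_j}$), but the logic is the same.
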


\begin{proof}
Suppose that the conclusion does not hold. 
Then for every $n\in\Nb$ we can find a $\mu_n \in\Omega$
such that the set $A_n = \{ x\in X : d(x,A) \leq 1/n \}$ satisfies $\mu_n (A_n ) \geq\eps$.
By the compactness of $\Omega$ there is a subsequence $\{ \mu_{n_k} \}$ of $\{ \mu_n \}$ 
which weak$^*$ converges to some $\mu\in\Omega$.
For a fixed $j\in\Nb$ we have $\mu_{n_k} (A_{n_j} ) \geq\eps$ for every $k \geq j$,
and since $A_{n_j}$ is closed the portmanteau theorem then yields
\begin{align*}
\mu (A_{n_j} ) \geq \limsup_{k\to\infty} \mu_{n_k} (A_{n_j} ) \geq \eps .
\end{align*}
As $A$ is closed it is equal to the intersection of the decreasing sequence of sets $A_{n_j}$,
and so $\mu (A) = \lim_{j\to\infty} \mu (A_{n_j} ) \geq\eps$,
in contradiction to our hypothesis.
\end{proof}

\begin{theorem}\label{T-comparison af}
Suppose that $G$ is amenable.
Let $G\curvearrowright X$ be a free minimal action and consider the following conditions:
\begin{enumerate}
\item the action is almost finite,

\item the action has comparison,

\item the action has $m$-comparison for all $m\geq 0$,

\item the action has $m$-comparison for some $m\geq 0$.
\end{enumerate}
Then (i)$\Rightarrow$(ii)$\Rightarrow$(iii)$\Rightarrow$(iv),
and if $E_G (X)$ is finite then all four conditions are equivalent.
\end{theorem}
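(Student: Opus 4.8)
The plan is to prove the chain of implications (i)$\Rightarrow$(ii)$\Rightarrow$(iii)$\Rightarrow$(iv) and then the reverse implication (iv)$\Rightarrow$(i) under the hypothesis that $E_G(X)$ is finite. The implications (ii)$\Rightarrow$(iii)$\Rightarrow$(iv) are essentially formal: (iv)$\Rightarrow$(iii) and (ii) as the special case $m=0$ are immediate, while (ii)$\Rightarrow$(iii) follows once we observe that if $A\prec B$ then $A\prec_m B$ for every $m$ (one may simply take the trivial partition $\cU=\cU_0$ with the remaining $\cU_i$ empty), so $0$-comparison formally implies $m$-comparison for all $m$. The two substantive steps are (i)$\Rightarrow$(ii) and, under the extra hypothesis, (iv)$\Rightarrow$(i).

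For (i)$\Rightarrow$(ii), suppose the action is almost finite and let $A\subseteq X$ be closed and $B\subseteq X$ nonempty open with $\mu(A)<\mu(B)$ for all $\mu\in M_G(X)$; by Proposition~\ref{P-open closed} this suffices. First I would apply Lemma~\ref{L-portmanteau} to $\Omega = M_G(X)$ to get $\eta>0$ and enlarged/shrunk sets $A_+$, $B_-$ with $\mu(A_+)+\eta\le\mu(B_-)$ for all $\mu\in M_G(X)$; then a standard averaging argument as in the proof of Theorem~\ref{T-tower} (pass to a F{\o}lner sequence, form empirical measures, use weak$^*$ compactness and invariance of the limit, apply portmanteau) produces a finite set $K\subseteq G$ and $\delta_0>0$ such that every $(K,\delta_0)$-invariant finite $F\subseteq G$ satisfies $\frac{1}{|F|}\sum_{s\in F}\unit_{A_+}(sx)+\tfrac{\eta}{2}\le\frac{1}{|F|}\sum_{s\in F}\unit_{B_-}(sx)$ for all $x\in X$. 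In particular, choosing $n>4/\eta$, this guarantees that for any such $F$ and any $x$, writing $F_1=\{s\in F: sx\in A_+\}$, we have $|F|/n < |F\setminus A_+ \text{-type bound}|$ — more precisely the ``good'' levels (those meeting $B_-$) outnumber the ``bad'' levels meeting $A$ by a definite proportion, and the leftover set $X\setminus\bigsqcup S_iV_i$ can be absorbed using the $S_i'$ with $|S_i'|<|S_i|/n$. Now invoke almost finiteness with this $n$, this $K$, and a $\delta\le\min(\delta_0,\eta)$ small enough that the diameter condition forces each level $sV_i$ meeting $A$ to lie inside $A_+$ and each level meeting $B_-$ to lie inside $B$. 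One then covers $A$ by: (a) the levels of the castle that meet $A$, which by the counting estimate can be injected, tower by tower, into levels lying inside $B_-\subseteq B$ via group translations (this is where the $(K,\delta)$-invariance is used), arranged into $m+1$ disjoint subcollections according to the chromatic structure of the castle (here $m=0$ works because the levels of a single castle are already pairwise disjoint); and (b) the remainder $C := \overline{X\setminus\bigsqcup S_iV_i}$, handled using $X\setminus\bigsqcup S_iV_i\prec\bigsqcup S_i'V_i$ and Remark~\ref{R-tower comparison}, noting $\bigsqcup S_i'V_i$ maps by translations into $B_-$ as well since $|S_i'|<|S_i|/n$ leaves room among the $B_-$-meeting levels. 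Transitivity of $\prec$ then gives $A\prec B$.

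For (iv)$\Rightarrow$(i) when $E_G(X)$ is finite, the strategy mirrors Matui--Sato and uses the Ornstein--Weiss tower theorem as the measure-theoretic input. Given $n$, $K$, $\delta$, first note that since $E_G(X)=\{\mu_1,\dots,\mu_r\}$ is finite and the action is free and minimal, one can run the Ornstein--Weiss Rokhlin lemma simultaneously for all the $\mu_j$ (or, using the Downarowicz--Huczek--Zhang exact tiling of $G$ as in Lemma~\ref{L-Folner towers}) to produce a clopen — or at least measurable, then regularized to open via Lemma~\ref{L-regularity} — castle $\{(V_i,S_i)\}_{i\in I}$ with $(K,\delta)$-invariant shapes, levels of diameter $<\delta$, and $\mu_j(X\setminus\bigsqcup_{i\in I}S_iV_i)$ arbitrarily small for every $j$, hence $\mu(X\setminus\bigsqcup S_iV_i)$ small uniformly over $M_G(X)$ by convexity. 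Choosing $S_i'\subseteq S_i$ with $|S_i'|$ just below $|S_i|/n$ but with $\sum$-mass comparable to $\tfrac1n\mu(\bigsqcup S_iV_i)$, the F{\o}lner invariance of the $S_i$ ensures $\mu(\bigsqcup_{i}S_i'V_i)\ge \tfrac{1}{2n}$, say, for all $\mu\in M_G(X)$, while the residual set has measure $<\tfrac{1}{2n}$; thus $\mu(X\setminus\bigsqcup S_iV_i)<\mu(\bigsqcup S_i'V_i)$ for all $\mu\in M_G(X)$, and $m$-comparison gives $X\setminus\bigsqcup S_iV_i\prec_m\bigsqcup S_i'V_i$. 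The last point to address is that Definition~\ref{D-af} demands $\prec$, not $\prec_m$: here one uses that $\bigsqcup S_i'V_i$ is a disjoint union of levels, so by absorbing the ``$m+1$ colours'' into the base — i.e. replacing the $S_i'$ by slightly larger sets $S_i''$ with $|S_i''|<|S_i|/n$ still, chosen to accommodate all $m+1$ layers at once, which is possible precisely because we built in the factor-$n$ slack — one upgrades $\prec_m$ to $\prec$ (again invoking Remark~\ref{R-tower comparison} and the elementary fact that $\prec_m$ into a disjoint union of enough translated copies implies $\prec$).

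The main obstacle I expect is the bookkeeping in the last paragraph: carefully choosing the sizes $|S_i'|$ (and the auxiliary $|S_i''|$) so that three competing constraints are met simultaneously — $|S_i'|<|S_i|/n$ as required by almost finiteness, enough $\sum$-mass that $\mu(\bigsqcup S_i'V_i)$ beats the residual measure uniformly over the (finitely many extreme, hence all) invariant measures, and enough room to collapse the $m+1$ colours of the $\prec_m$-witness into a genuine $\prec$. This requires the F{\o}lner near-invariance of the shapes to translate uniform measure bounds into uniform counting bounds, which is exactly where the finiteness of $E_G(X)$ and the $(K,\delta)$-invariance from the tiling theorem interact; the $m$-comparison hypothesis with arbitrary fixed $m$ (rather than $m=0$) costs only a bounded multiplicative factor in these counts, which the factor-$n$ slack in Definition~\ref{D-af} readily absorbs once $n$ is taken large relative to $m$.
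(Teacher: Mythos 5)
Your overall architecture matches the paper's: (i)$\Rightarrow$(ii) via a uniform F{\o}lner counting estimate (obtained by a weak$^*$ compactness argument on empirical measures) together with the small-diameter, $(K,\delta)$-invariant castle; and (iv)$\Rightarrow$(i) via Ornstein--Weiss towers, $m$-comparison, and a color-spreading device to upgrade $\prec_m$ to $\prec$. A small variant in your (i)$\Rightarrow$(ii) is that you route both the in-tower part of $A$ and the image of the remainder into $B_-$-meeting levels, relying on the slack $|S_i'|<|S_i|/n$ to keep the two injections disjoint; the paper instead carves out a small ball $C\subseteq B\setminus A$ with $\mu(C)$ uniformly small, adds a third uniform counting estimate for a shrinkage $C_-$, and routes the remainder exclusively into $C$-levels, so disjointness from the $B_-$-levels is automatic. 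Both work; yours is slightly more economical but requires you to actually verify that the count of $A$-meeting levels plus $|S_i'|$ stays below the count of $B_-$-meeting levels, which you leave implicit.

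The (iv)$\Rightarrow$(i) step contains a genuine parameter-ordering slip. You first fix $S_i'$ with $|S_i'|$ ``just below $|S_i|/n$'', apply $m$-comparison with target $\bigsqcup_i S_i'V_i$, and then propose to \emph{enlarge} $S_i'$ to $S_i''$ (still below $|S_i|/n$) so as to hold all $m+1$ color classes. But if $|S_i'|$ is already close to $|S_i|/n$, enlarging by a factor of roughly $m+1$ overshoots $|S_i|/n$; there is no room left. The correct ordering, which the paper follows, is to plan ahead: choose $m+1$ pairwise disjoint subsets $S_{i,0},\dots,S_{i,m}$ of $S_i$, each of cardinality $\kappa$ with $\kappa/|S_i|$ on the order of $1/(n(m+1))$, apply $m$-comparison with target $\bigsqcup_i S_{i,0}V_i$ built from only \emph{one} of these copies (this is where finiteness of $E_G(X)$ enters, to push the residual measure below the measure of this one copy uniformly over $M_G(X)$), and then send the $m+1$ colors of the resulting $\prec_m$-witness to the $m+1$ copies respectively. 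The union $\bigsqcup_{j=0}^m S_{i,j}$ then has cardinality $(m+1)\kappa<|S_i|/n$ and is what you feed into Definition~\ref{D-af} as $S_i'$. Relatedly, your closing remark that the slack ``readily absorbs'' the factor $m+1$ ``once $n$ is taken large relative to $m$'' inverts the quantifiers: $n$ is given in Definition~\ref{D-af} and must be handled for every value, including small ones, so the compensation has to come from shrinking $\kappa$ by the factor $m+1$, not from choosing $n$ large.
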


\begin{proof}
(i)$\Rightarrow$(ii). Let $A$ be a closed subset of $X$
and $B$ an open subset of $X$ such that
$\mu (A) < \mu (B)$ for all $\mu\in M_G (X)$. 
We aim to show that $A\prec B$, which will establish (ii).
By Lemma~\ref{L-portmanteau} there exists an $\eta > 0$ such that 
$\mu (A) + \eta \leq \mu (B)$ for all $\mu\in M_G (X)$.
As the set $B\setminus A$ must be nonempty, we can pick a $y\in B\setminus A$.
By Lemma~\ref{L-regularity} there is a $\kappa > 0$ such that the
closed ball $C = \{ x\in X : d(x,y) \leq \kappa \}$ is contained in $B\setminus A$
and satisfies $\mu (C) \leq \eta /2$ for all $\mu\in M_G (X)$.
By minimality the open ball $C_- = \{ x\in X : d(x,y) < \kappa /2 \}$
satisfies $\mu (C_- ) > 0$ for all $\mu\in M_G (X)$, and so by Lemma~\ref{L-portmanteau}
(taking $A=\emptyset$ and $B = C_-$ there) there is a $\theta > 0$ such that
$\mu (C_- ) \geq\theta$ for all $\mu\in M_G (X)$.

Set $\tilde{B} = B\setminus C$. Then for all $\mu\in M_G (X)$ we have
\begin{align*}
\mu (\tilde{B} ) = \mu (B) - \mu (C) \geq \mu (A) + \frac{\eta}{2} > \mu (A)
\end{align*}
and so by Lemma~\ref{L-portmanteau} there exists an $\eta' > 0$ with $\eta' \leq \eta$
such that the sets
\begin{align*}
B_- &= \{ x\in X : d(x,X\setminus \tilde{B} ) > \eta' \} , \\
A_+ &= \{ x\in X : d(x,A) \leq \eta' \}
\end{align*}
satisfy $\mu (A_+ ) + \eta' \leq \mu (B_- )$ for all $\mu\in M_G (X)$.
Note that each of the sets $A_+$ and $B_-$ is disjoint from $C$.

We claim that there are a finite set $K\subseteq G$ and a $\delta > 0$ such
that if $F$ is a nonempty $(K,\delta )$-invariant finite subset of $G$ then 
for all $x\in X$ the following both hold:
\begin{align}\label{E-uniform}
\frac{1}{|F|} \sum_{s\in F} \unit_{A_+} (sx) + \frac{\eta'}{2} 
&\leq \frac{1}{|F|} \sum_{s\in F} \unit_{B_-} (sx) , \\
\frac{1}{|F|} \sum_{s\in F} \unit_{C_-} (sx) &\geq \frac{\theta}{2} . \label{E-uniform 2}
\end{align}
Suppose to the contrary that this is not possible.
Then we can find a F{\o}lner sequence $\{ F_n \}$ and a sequence $\{ x_n \}$ in $X$ such that,
writing $\mu_n$ for the probability measure $(1/|F_n|) \sum_{s\in F_n} \delta_{sx}$, 
one of the following holds:
\begin{enumerate}
\item $\mu_n (A_+ ) + \eta' /2 > \mu_n (B_- )$ for all $n$, 

\item $\mu_n (C_- ) < \theta /2$ for all $n$. 
\end{enumerate}
Suppose first that (i) holds. By passing to a subsequence
we may assume that the sequence $\{ \mu_n \}$ converges to some $\mu\in M(X)$,
and it is readily verified using the F{\o}lner property that $\mu$ is $G$-invariant.
Since $B_-$ is open and $A_+$ is closed, the portmanteau theorem then yields
\begin{align*}
\mu (B_- ) + \frac{\eta'}{2} 
\leq \liminf_{n\to\infty} \mu_n (B_- ) + \frac{\eta'}{2} 
\leq \limsup_{n\to\infty} \mu_n (A_+ ) + \eta' 
\leq \mu (A_+ ) + \eta' ,
\end{align*}
a contradiction. If on the other hand (ii) holds,
then as before we may assume that $\{ \mu_n \}$ converges to some $\mu\in M_G (X)$,
and since $C_-$ is open the portmanteau theorem yields
\begin{align*}
\mu (C_- ) \leq \liminf_{n\to\infty} \mu_n (C_- ) \leq \frac{\theta}{2} ,
\end{align*}
a contradiction. We may thus find the desired $K$ and $\delta$. 

Set $\eps = \min \{ \eta' , \kappa /2 \}$
and choose an integer $n > 3/\theta$. Then by almost finiteness there are 
\begin{enumerate}
\item an open castle $\{ (V_i,S_i ) \}_{i\in I}$ whose shapes are $(K,\delta )$-invariant
and whose levels have diameter less than $\eps$, and

\item sets $S_i' \subseteq S_i$ such that $|S_i' | < |S_i |/n$ and the set 
$D := \bigsqcup_{i\in I} S_i V_i$ satisfies
\[
X\setminus D \prec \bigsqcup_{i\in I} S_i' V_i .
\]
\end{enumerate}
Let $i\in I$. Since the levels of the towers all have diameter less than 
both $\eta'$ and $\kappa /2$, by (\ref{E-uniform}) and (\ref{E-uniform 2}) the sets 
\begin{align*}
S_{i,1} &= \{ s\in S_i : sV_i \cap A \neq\emptyset \} , \\
S_{i,2} &= \{ s\in S_i : sV_i \cap B_- \neq\emptyset \} , \\
S_{i,3} &= \{ s\in S_i : sV_i \cap C_- \neq\emptyset \}
\end{align*}
satisfy 
\begin{align*}
\frac{|S_{i,1} |}{|S_i|} + \frac{\eta'}{2} \leq \frac{|S_{i,2} |}{|S_i|} 
\hspace*{5mm}\text{and}\hspace*{5mm}
\frac{|S_{i,3} |}{|S_i|} \geq \frac{\theta}{2} 
\end{align*}
so that $|S_{i,1} | \leq |S_{i,2} |$ and $|S_{i,3} | \geq |S_i' |$. 
We can thus find injective maps $\varphi_i : S_{i,1} \to S_{i,2}$
and $\psi_i : S_i' \to S_{i,3}$.

Since $X\setminus D \prec \bigsqcup_{i\in I} S_i' V_i$ we can find a finite collection $\cU$ of
open subsets of $X$ which cover $X\setminus D$ and a $t_U \in G$ for each $U\in\cU$
such that the images $t_U U$ for $U\in\cU$ are pairwise disjoint subsets of $\bigsqcup_{i\in I} S_i' V_i$.
For all $U\in\cU$, $i\in I$, and $s\in S_i'$ write $W_{U,i,s}$ for the (possibly empty) open set
$U\cap t_U^{-1} sV_i$. These open sets cover $X\setminus D$, 
and so in particular cover $(X\setminus D)\cap A$, and the images
$\psi_i (t_U ) W_{U,i,s}$ for $U\in\cU$, $i\in I$, and $s\in S_i'$ are pairwise disjoint subsets of
$B\cap\bigsqcup_{i\in I} S_{i,3} V_i$.
At the same time, the open sets $sV_i$ for $i\in I$ and $s\in S_{i,1}$ cover $D\cap A$,
while the images $\varphi_i (s)V_i = (\varphi_i (s)s^{-1} )sV_i$ for $i\in I$ and $s\in S_{i,1}$ 
are pairwise disjoint subsets of $B \cap \bigsqcup_{i\in I} S_{i,2} V_i$.
Since the sets $\bigsqcup_{i\in I} S_{i,3} V_i$ and $\bigsqcup_{i\in I} S_{i,2} V_i$ are disjoint,
we have thus verified that $A\prec B$.

(ii)$\Rightarrow$(iii)$\Rightarrow$(iv). Trivial.

Now suppose that $E_G (X)$ is finite and let us verify (iv)$\Rightarrow$(i).
We thus suppose that there is an $m\in\Nb$ such that the action has $m$-comparison.
We may assume that $G$ is infinite, for otherwise minimality implies that $X$ 
consists of a single orbit, in which case the action is obviously almost finite.
Write $E_G (X) = \{ \mu_1 , \dots , \mu_q \}$ and set $\mu = (1/q) \sum_{k=1}^q \mu_k \in M_G (X)$.
Let $K$ be a finite subset of $G$, $\delta > 0$, and $n\in\Nb$. 
Put $\eps = 1/(4nq(m+1))$. Choose an integer $N > 1/\eps$.
Since $G$ is infinite and $m\cdot 4q\eps < 1$
we can find a finite set $K' \subseteq G$ with $K\subseteq K$
and a $\delta' > 0$ with $\delta' \leq \delta$ such that every nonempty
$(K' ,\delta' )$-invariant finite set $F\subseteq G$ has large enough cardinality
so that it has $m$ pairwise disjoint subsets of equal cardinality $\kappa$ 
satisfying $2q\eps < \kappa /|F| < 4q\eps$.

Since the action is free,
by the Ornstein--Weiss tower theorem (as formulated in Theorem~4.46 of \cite{KerLi16})
there exists a finite collection $\{ (M_i ,T_i )\}_{i\in I}$
of measurable towers such that the sets $T_i M_i$ for $i\in I$ are pairwise disjoint,
$\mu (\bigsqcup_{i\in I} T_i M_i ) \geq 1 - \eps /(2q)$, and $T_i$ is $(K',\delta' )$-invariant for
every $i$. By regularity we can find closed sets
$C_i \subseteq M_i$ with $\mu (M_i \setminus C_i )$ small enough to ensure
that $\mu (\bigsqcup_{i\in I} T_i C_i ) \geq 1 - \eps /q$. Then by compactness we can find
open sets $V_i \supseteq C_i$ such that the sets $T_i V_i$ for $i\in I$ are pairwise disjoint.

Let $i\in I$.
By our choice of $K'$ and $\delta'$ we can find pairwise disjoint sets 
$S_{i,0} , \dots , S_{i,m} \subseteq T_i$ all having the same cardinality $\kappa$ 
satisfying $2q\eps < \kappa /|T_i| < 4q\eps$.
Set $T_i' = S_{i,0} \sqcup\cdots\sqcup S_{i,m}$. Then 
\begin{align}\label{E-n}
|T_i' | = (m+1)\kappa < 4(m+1)q\eps |T_i | = \frac1n |T_i | .
\end{align}

Set $A = X\setminus \bigsqcup_{i\in I} T_i V_i$ and $B = \bigsqcup_{i\in I} S_{i,0} V_i$. 
Then for every $\nu\in M_G (X)$ we have, since $\nu$ is convex combination of the measures 
$\mu_1 , \dots , \mu _q$,
\begin{align*}
\nu (A) 
\leq \max_{k=1,\dots ,q} \mu_k (A) 
\leq q\mu (A)
\leq \eps ,
\end{align*}
from which we get $\nu (\bigsqcup_{i\in I} T_i V_i ) \geq 1 - \eps \geq 1/2$ and hence
\begin{align*}
\nu (B) 
\geq \sum_{i\in I} \frac{|S_{i,0} |}{|T_i|}\nu (T_i V_i ) 
> 2q\eps \nu \bigg( \bigsqcup_{i\in I} T_i V_i \bigg) 
\geq \frac{1}{4n(m+1)} 
\geq \eps .
\end{align*}
Since $A$ is closed and $B$ is open we thus have
$A\prec_m B$ by our $m$-comparison hypothesis. 
We can therefore find a finite collection $\cU$
of open subsets of $X$ which cover $A$, an $s_U \in G$ for each $U\in\cU$, and a partition 
$\cU = \cU_0 \sqcup\cdots\sqcup \cU_m$ such that for each $i=0,\dots ,m$ the images 
$s_U U$ for $U\in\cU_i$ are pairwise disjoint subsets of $B$.

For each $i\in I$ and $j=0,\dots ,m$ choose a bijection $\varphi_{i,j} : S_{i,0} \to S_{i,j}$.
For $U\in\cU$, $i\in I$, and $t\in S_{i,0}$ write $W_{U,i,t}$ for the open set
$U\cap s_U^{-1} tV_i$. For a fixed $U$, the sets $W_{U,i,t}$ for $i\in I$ and 
$t\in S_{i,0}$ partition $U$.
Moreover, writing $j_U$ for the $j$ such that $U\in\cU_j$,
the sets $\varphi_{i,j_U} (t)t^{-1} s_U W_{U,i,t}$
over all $U\in\cU$, $i\in I$, and $t\in S_{i,0}$ are pairwise disjoint and contained 
in $\bigsqcup_{i\in I} T_i' V_i$. This shows that 
\begin{align*}
A \prec \bigsqcup_{i\in I} T_i' V_i .
\end{align*}
Combined with (\ref{E-n}), this verifies almost finiteness.
\end{proof}

Combining Theorems~\ref{T-tower} and \ref{T-comparison af} yields:

\begin{theorem}\label{T-comparison af tower}
Suppose that $G$ is amenable.
Let $G\curvearrowright X$ be a free minimal action on a compact metrizable space
such that $E_G (X)$ is finite. Consider the following conditions:
\begin{enumerate}
\item $\towdim (X,G) < \infty$ and $dim (X) < \infty$,

\item $\ftowdim (X,G) < \infty$,

\item the action is almost finite,

\item the action has comparison.
\end{enumerate}
Then (i)$\Leftrightarrow$(ii)$\Rightarrow$(iii)$\Leftrightarrow$(iv).
\end{theorem}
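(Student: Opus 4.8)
The plan is to chain together the results already established, so that the bulk of the work has been done. First, the equivalence (i)$\Leftrightarrow$(ii): this is precisely Proposition~\ref{P-coarse towdim}, whose last sentence records that $\ftowdim (X,G) < \infty$ if and only if $\towdim (X,G) < \infty$ and $\dim (X) < \infty$. So that step is immediate.

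Next, (i)$\Rightarrow$(iv): assume $\towdim (X,G) = d < \infty$ and $\dim (X) = c < \infty$. Since $G$ is amenable and $X$ is a compact metric space, Theorem~\ref{T-tower} applies directly and gives that the action has $((c+1)(d+1)-1)$-comparison; in particular it has $m$-comparison for some $m \geq 0$. Now invoke Theorem~\ref{T-comparison af}: since $E_G (X)$ is finite, conditions (i)--(iv) of that theorem are all equivalent, and in particular ``$m$-comparison for some $m$'' implies almost finiteness (iii) and comparison (ii). This simultaneously yields (i)$\Rightarrow$(iii) and (i)$\Rightarrow$(iv).

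Finally, (iii)$\Leftrightarrow$(iv): this is again just Theorem~\ref{T-comparison af}, using the hypothesis that $E_G (X)$ is finite, under which almost finiteness, comparison, and $m$-comparison for some $m$ are all equivalent. Assembling the arrows: (i)$\Leftrightarrow$(ii) from Proposition~\ref{P-coarse towdim}; (i)$\Rightarrow$(iii) and (i)$\Rightarrow$(iv) via Theorem~\ref{T-tower} followed by Theorem~\ref{T-comparison af}; and (iii)$\Leftrightarrow$(iv) from Theorem~\ref{T-comparison af}. This gives exactly the implication pattern (i)$\Leftrightarrow$(ii)$\Rightarrow$(iii)$\Leftrightarrow$(iv) claimed in the statement.

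Since every ingredient is already in hand, there is no real obstacle here; the statement is essentially a bookkeeping corollary. The only point requiring any care is to make sure that the ``$m$-comparison for some $m$'' produced by Theorem~\ref{T-tower} is fed into the correct branch of Theorem~\ref{T-comparison af} (condition (iv) there), and that the finiteness of $E_G (X)$ is exactly the hypothesis needed to close the loop back to almost finiteness — so the proof is really just one sentence per implication, citing the two theorems and one proposition above.
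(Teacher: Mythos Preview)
Your proof is correct and follows exactly the approach the paper takes: the theorem is stated as an immediate consequence of Theorems~\ref{T-tower} and \ref{T-comparison af}, with Proposition~\ref{P-coarse towdim} supplying the equivalence (i)$\Leftrightarrow$(ii). Your write-up simply makes explicit the routing of implications that the paper leaves to the reader.
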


The implication (ii)$\Rightarrow$(iii) in Theorem~\ref{T-comparison af tower} 
cannot be reversed, as the following examples show.
The obstruction in both cases is infinite-dimensionality,
whether in the space (Example~\ref{E-product}) or in the group (Example~\ref{E-asdim}).

\begin{example}\label{E-product}
Let $\{ \theta_k \}$ be a sequence of rationally independent numbers in $[0,1)$. 
Consider the product action $\Zb\stackrel{\alpha}{\curvearrowright} \prod_{k=0}^\infty X_k$ 
whose zeroeth factor is the odometer action $\Zb\curvearrowright \{ 0,1 \}^\Nb$
and whose $k$th factor for $k\geq 1$ is the action $(n,z) \mapsto e^{2\pi i n\theta_k } z$ on $\Tb$. 
This action is free. It is uniquely ergodic since each factor is uniquely ergodic
(as is well known) and the factors are mutually disjoint (because no two of them, when viewed
as measure-preserving actions with respect to the unique invariant Borel probability measure on each,
have a common eigenvalue except for $1$). It is minimal since the unique invariant
Borel probability measure on $\prod_{k=0}^\infty X_k$, i.e., the product of the 
unique invariant Borel probability measures on the factors, has full support.
It is also almost finite.
To see this, first note that the odometer action $\Zb\curvearrowright \{ 0,1 \}^\Nb$ is almost finite
since for every $n\in\Nb$ the clopen set $\{ 0 \}^{\{1,\dots , n\}} \times \{0,1 \}^{\{ n+1,n+2,\dots \}}$
is the base of a tower with shape $\{ 0,1,\dots , 2^n -1 \}$ whose levels partition $\{ 0,1 \}^\Nb$.
Now for $m\geq 1$ we can view $\Zb\curvearrowright \prod_{k=0}^m X_k$ 
as an extension of $\Zb\curvearrowright \prod_{k=0}^{m-1} X_k$ 
via the natural projection map, and so it follows by Theorem~\ref{T-af extension finite} 
and induction that 
$\Zb\curvearrowright \prod_{k=0}^m X_k$ is almost finite for every $m\geq 0$.
One can alternatively derive this conclusion by combining the fact that the odometer action
has tower dimension 1 (Example~\ref{E-Z Cantor}) with
Proposition~\ref{P-towdim} (tower dimension is nonincreasing under taking extensions)
and (i)$\Rightarrow$(iii) of Theorem~\ref{T-comparison af tower}.
It follows finally by Proposition~\ref{P-af inverse} that $\alpha$, 
being the inverse limit of the actions $\Zb\curvearrowright \prod_{k=0}^m X_k$, is almost finite.
This example shows that, for free minimal actions of $\Zb$, almost finiteness does not imply
finite tower dimension, since the latter implies that the space has finite covering dimension,
which is not the case here.
\end{example}

\begin{example}\label{E-asdim}
By Proposition~\ref{P-asymptotic dimension}, a necessary condition for a free
action $G\curvearrowright X$ to have finite tower dimension is that the group $G$ have
finite asymptotic dimension, which fails for many amenable groups, such as the Grigorchuk group.
Since every countably infinite amenable group admits almost finite free minimal actions
by Theorem~\ref{T-tiling}, this gives many examples of almost finite free minimal actions
which fail to have finite tower dimension.
\end{example}

\section{Disjointness in tower closures and almost finiteness in dimension zero}\label{S-zero dim}

In \cite{Mat12} Matui introduced a notion of almost finiteness for second countable
{\'e}tale groupoids with compact zero-dimensional unit spaces. 
We show in Theorem~\ref{T-zero dim} that when the groupoid arises from a free action 
$G\curvearrowright X$ on a zero-dimensional compact metrizable space, 
our notion of almost finiteness coincides with Matui's, justifying our use of the terminology.
What we in fact prove is that the action is almost finite (in the sense of Definition~\ref{D-af})
if and only if for every finite set $K\subseteq G$ 
and $\delta > 0$ there is a clopen castle (Definition~\ref{D-castle 2}) whose
shapes are $(K,\delta )$-invariant and whose levels partition $X$ (a clopen castle 
whose levels partition $X$ will be called a {\it clopen tower decomposition} of $X$). 
That this characterization is equivalent to Matui's almost finiteness
is recorded as Lemma~5.3 in \cite{Suz17}. 

The following lemma will be useful in establishing not only Theorem~\ref{T-zero dim}
but also Theorem~\ref{T-af Z-stable}.

\begin{lemma}\label{L-tower closures}
In Definition~\ref{D-af}
we may equivalently require each tower $(V_i ,S_i )$ to have the additional property 
that the sets $s\overline{V_i}$ for $s\in S_i$ are pairwise disjoint.
\end{lemma}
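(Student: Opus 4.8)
I will first dispose of the trivial direction and then set up the shrinking.

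The forward implication is immediate: a castle meeting the strengthened requirement is in particular an open castle satisfying Definition~\ref{D-af}. For the converse, my plan is this. Fix $n\in\Nb$, a finite set $K\subseteq G$, and $\delta>0$, and apply Definition~\ref{D-af} to obtain an open castle $\{(V_i,S_i)\}_{i\in I}$ with $(K,\delta)$-invariant shapes and levels of diameter less than $\delta$, together with sets $S_i'\subseteq S_i$ satisfying $|S_i'|<|S_i|/n$ and $X\setminus D\prec\bigsqcup_{i\in I}S_i'V_i$, where $D:=\bigsqcup_{i\in I}S_iV_i$. I then want to replace each base $V_i$ by a slightly smaller open set, keeping $S_i$ and $S_i'$ unchanged, so that the level closures become pairwise disjoint and the comparison in (ii) survives.

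The geometric input is that $\{sV_i:i\in I,\ s\in S_i\}$ is a finite family of pairwise disjoint open sets, so the closure of one level can meet the closure of another only inside $X\setminus D$: if $x\in\overline{sV_i}\cap\overline{tV_j}$ with $(i,s)\neq(j,t)$ and $x$ lay in some level $uV_l$, that open neighbourhood of $x$ would meet $sV_i$ or $tV_j$. Assuming $D\neq X$ (otherwise every level is clopen and there is nothing to do), I fix a compatible metric $d$ and set $Q_k=\{x\in X:d(x,X\setminus D)<1/k\}$, so the closed neighbourhoods $\overline{Q_k}$ of $X\setminus D$ decrease to $X\setminus D$. Put $V_{i,k}'=V_i\setminus\bigcup_{s\in S_i}s^{-1}\overline{Q_k}$, an open subset of $V_i$. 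For $t\in S_i$ one has $tV_{i,k}'\subseteq tV_i\setminus\overline{Q_k}\subseteq X\setminus Q_k$, and as $X\setminus Q_k$ is closed this gives $t\overline{V_{i,k}'}\cap Q_k=\emptyset$; combined with the previous observation, the level closures $s\overline{V_{i,k}'}$ ($s\in S_i$) are then pairwise disjoint (indeed the level closures across all towers are). Since $sV_i\subseteq D$, the set $V_i$ is disjoint from $s^{-1}(X\setminus D)$ for each $s\in S_i$, and a routine monotone-limit computation gives $sV_{i,k}'\nearrow sV_i$ as $k\to\infty$; hence the new hole $H_k':=X\setminus\bigsqcup_i S_iV_{i,k}'$ decreases to $X\setminus D$ and the new target $\bigsqcup_i S_i'V_{i,k}'$ increases to $\bigsqcup_i S_i'V_i$.

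It then remains to produce a single $k$ with $H_k'\prec\bigsqcup_i S_i'V_{i,k}'$. Because $X\setminus D$ is closed, the relation $X\setminus D\prec\bigsqcup_i S_i'V_i$ is witnessed by a fixed finite open cover $\cU$ of $X\setminus D$ and elements $s_U\in G$ with the images $s_UU$ ($U\in\cU$) pairwise disjoint subsets of $\bigsqcup_i S_i'V_i$. The closed sets $H_k'\setminus\bigcup\cU$ decrease to $\emptyset$, so compactness yields a $k_0$ with $H_{k_0}'\subseteq\bigcup\cU$; the shrinking lemma for finite open covers of a closed set in a normal space then provides open sets $U'\subseteq U$ with $\overline{U'}\subseteq U$ such that $\{U':U\in\cU\}$ still covers $H_{k_0}'$. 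The images $s_UU'$ remain pairwise disjoint (each is contained in $s_UU$), and each $s_U\overline{U'}$ is a compact subset of the open set $\bigsqcup_i S_i'V_i=\bigcup_k\bigsqcup_i S_i'V_{i,k}'$, so for some $k_1\geq k_0$ the finite union $\bigcup_U s_U\overline{U'}$ lies in $\bigsqcup_i S_i'V_{i,k_1}'$. Now $\{(V_{i,k_1}',S_i)\}_{i\in I}$ is an open castle (each $V_{i,k_1}'\subseteq V_i$) with $(K,\delta)$-invariant shapes, levels of diameter less than $\delta$, and pairwise disjoint level closures; the unchanged sets $S_i'$ satisfy $|S_i'|<|S_i|/n$; and $\{U':U\in\cU\}$ with the $s_U$ witnesses $H_{k_1}'\prec\bigsqcup_i S_i'V_{i,k_1}'$ (using $H_{k_1}'\subseteq H_{k_0}'$). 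This is precisely the strengthened form of Definition~\ref{D-af}.

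The only delicate point is the order of choices in the last paragraph: one must first fix the comparison witness $\cU$ for the original hole, then enlarge $k$ so that the shrunken hole is covered by $\cU$, then shrink $\cU$ within itself, and only then enlarge $k$ once more so that the translated cover elements fall inside the correspondingly shrunken target. Everything else is elementary point-set topology, so I expect no further obstacle.
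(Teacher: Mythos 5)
Your proof is correct and takes a genuinely different route from the paper's. The paper shrinks each base $V_i$ to an open $U_i$ with $\overline{U_i}\subseteq V_i$ via a uniform-continuity estimate and then \emph{re-derives} the comparison requirement (ii) for the shrunken castle by invoking Theorem~\ref{T-comparison af} (almost finiteness implies comparison); to make that invocation work one needs a strict measure inequality, so the paper first dispatches the case of finite $G$ separately and, for infinite $G$, chooses $K'\supseteq K$ and $\delta'\leq\delta$ forcing shapes to have cardinality greater than $2n$, applies Definition~\ref{D-af} with $2n$ in place of $n$, and pads $S_i'$ to $S_i''=S_i'\cup\{s\}$ while maintaining $|S_i''|<|S_i|/n$. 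You instead cut $1/k$-neighbourhoods of the castle's complement $X\setminus D$ out of every level, exploiting the observation that distinct level closures can only intersect inside $X\setminus D$, which your shrunken level closures now miss; the shrunken hole $H_k'$ decreases to $X\setminus D$ and the shrunken target $\bigsqcup_i S_i'V_{i,k}'$ increases to $\bigsqcup_i S_i'V_i$, so a fixed witness $(\cU,\{s_U\})$ for $X\setminus D\prec\bigsqcup_i S_i'V_i$ can, after one compactness argument, one application of the shrinking lemma, and a second compactness argument, be reused to witness $H_{k_1}'\prec\bigsqcup_i S_i'V_{i,k_1}'$ for a suitably large $k_1$. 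Your approach has the advantage of being self-contained (it makes no appeal to Theorem~\ref{T-comparison af}, which the paper establishes only for free \emph{minimal} actions, whereas Lemma~\ref{L-tower closures} is stated for arbitrary free almost finite actions), it needs no case split on $|G|$, and it leaves the sets $S_i'$ untouched; the paper's route is shorter once the comparison theorem is already available, at the cost of carrying that implicit minimality dependence.
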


\begin{proof}
Let $G\curvearrowright X$ be a free action which is almost finite.
If $G$ is finite, then by taking
$n>|G|$, $K=G$, and $\delta < |G|^{-1}$ in Definition~\ref{D-af}
we are guaranteed the existence of an open castle $\{ (V_i , S_i ) \}_{i\in I}$ 
such that each shape is equal to $G$,
every level has diameter smaller than $\delta$, and
$\bigsqcup_{i\in I} S_i V_i = X$. It follows that every $V_i$ is clopen and so we obtain the
assertion of the lemma. We may thus assume that $G$ is infinite.

Let $K$ be a finite subset of $G$, $n\in\Nb$, and $\delta > 0$.
Since $G$ is infinite, there exists a finite set $K' \subset G$ with $K\subseteq K'$ 
and a $\delta' > 0$ with $\delta' \leq \delta$ such that every $(K' ,\delta' )$-invariant
nonempty finite subset of $G$ has cardinality greater than $2n$.
By almost finiteness there exist
\begin{enumerate}
\item an open castle $\{ (V_i ,S_i ) \}_{i\in I}$ whose shapes are $(K',\delta' )$-invariant
and whose levels have diameter less than $\delta$, 

\item sets $S_i' \subseteq S_i$ with $|S_i' | < |S_i |/(2n)$ such that
\[
X\setminus \bigsqcup_{i\in I} S_i V_i \prec \bigsqcup_{i\in I} S_i' V_i .
\]
\end{enumerate}
For each $i$ the set $S_i$ has cardinality greater than $2n$ by our choice of $K'$ and $\delta'$,
and so by setting $S_i'' = S_i' \cup \{ s \}$ for some arbitrarily chosen $s\in S_i \setminus S_i'$
we will have $|S_i'' | < |S_i |/n$.
Given a $\mu\in M_G (X)$, from (ii) we have
$\mu (X\setminus \bigsqcup_{i\in I} S_i V_i ) \leq \mu (\bigsqcup_{i\in I} S_i' V_i ) $,
which in particular implies that $\mu (V_i ) > 0$ for at least one $i\in I$, and hence that
\begin{align*}
\mu \bigg(X\setminus \bigsqcup_{i\in I} S_i V_i \bigg)
< \mu \bigg(\bigsqcup_{i\in I} S_i'' V_i \bigg) .
\end{align*}
It follows by Lemma~\ref{L-portmanteau} there is an $\eta > 0$ such that the sets
\begin{align*}
B &= \bigg\{ x\in X : d\bigg(x,X\setminus \bigsqcup_{i\in I} S_i'' V_i\bigg) > \eta \bigg\} , \\
A &= \bigg\{ x\in X : d\bigg(x,X\setminus \bigsqcup_{i\in I} S_i V_i\bigg) \leq \eta \bigg\}
\end{align*}
satisfy $\mu (A) < \mu (B)$ for all $\mu\in M_G (X)$.
By uniform continuity we can then find an $\eta' > 0$ such that 
the open sets 
\[
U_i = \{ x\in X : d(x,X\setminus V_i ) > \eta' \}
\]
for $i\in I$ satisfy $X\setminus \bigsqcup_{i\in I} S_i U_i \subseteq A$
and $B \subseteq \bigsqcup_{i\in I} S_i'' U_i$. 
Then for every $\mu\in M_G (X)$ we have
\begin{align*}
\mu \bigg(X\setminus \bigsqcup_{i\in I} S_i U_i \bigg)
\leq \mu (A) 
< \mu (B) 
\leq \mu \bigg(\bigsqcup_{i\in I} S_i'' U_i \bigg) .
\end{align*}
Since the action is almost finite, it has comparison by Theorem~\ref{T-comparison af},
and so we deduce that
\begin{align*}
X\setminus \bigsqcup_{i\in I} S_i U_i 
\prec \bigsqcup_{i\in I} S_i'' U_i
\end{align*}
Therefore the open castle $\{ (U_i ,S_i ) \}_{i\in I}$ and the sets $S_i'' \subseteq S_i$
witness the definition of almost finiteness with respect to $n$, $K$, and $\delta$,
and for each $i\in I$ the inclusion $\overline{U_i} \subseteq V_i$ 
implies that the sets $s\overline{U_i}$ for $s\in S_i$
are pairwise disjoint, as desired.
\end{proof}

\begin{theorem}\label{T-zero dim}
A free action $G\curvearrowright X$ on a zero-dimensional compact metric space
is almost finite if and only if
for every finite set $K\subseteq G$ and $\delta > 0$ there is a clopen castle
whose shapes are $(K,\delta )$-invariant and whose levels partition $X$.
\end{theorem}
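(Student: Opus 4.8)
The backward direction is essentially immediate: if for every finite $K\subseteq G$ and $\delta>0$ we have a clopen castle with $(K,\delta)$-invariant shapes whose levels partition $X$, then this castle already witnesses almost finiteness with an empty remainder. More precisely, given $n\in\Nb$, $K$, and $\delta$, first use zero-dimensionality and compactness to refine the partition so that all levels have diameter less than $\delta$ (a clopen refinement of a clopen tower decomposition is again one, at the cost of enlarging the shapes only trivially—one subdivides each base $V_i$ into small-diameter clopen pieces). Then take each $S_i'\subseteq S_i$ to be empty (or a singleton, if one insists on nonemptiness of the witnessing sets): since $X\setminus\bigsqcup_i S_iV_i=\emptyset$, the comparison $\emptyset\prec\bigsqcup_i S_i'V_i$ holds vacuously. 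So the content is entirely in the forward direction.

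**The forward direction.** Suppose $G\curvearrowright X$ is almost finite with $X$ zero-dimensional. Fix $K\subseteq G$ finite and $\delta>0$; the goal is a clopen tower decomposition with $(K,\delta)$-invariant shapes. The plan is: (1) apply almost finiteness with a large $n$ and with the diameter bound $\delta$, but crucially invoke Lemma~\ref{L-tower closures} so that the open castle $\{(V_i,S_i)\}_{i\in I}$ has the property that the \emph{closures} $s\overline{V_i}$, $s\in S_i$, are pairwise disjoint; (2) by compactness and zero-dimensionality, shrink each $V_i$ to a clopen set $\tilde V_i$ with $\overline{V_i}\supseteq\tilde V_i$ trapped between $V_i$ and a slightly larger open set—actually, since $X$ is zero-dimensional and $s\overline{V_i}$ are disjoint, one can choose clopen $\tilde V_i$ with $\overline{V_i'}\subseteq\tilde V_i\subseteq V_i$ for an open $V_i'$ chosen so that $\{S_i V_i'\}$ still covers $X$, yielding a clopen castle $\{(\tilde V_i,S_i)\}$ whose levels still have diameter $<\delta$ and whose shapes are unchanged; (3) handle the remainder $R:=X\setminus\bigsqcup_i S_i\tilde V_i$, which is now clopen, using the comparison $R\prec\bigsqcup_i S_i'\tilde V_i$ together with Proposition~\ref{P-clopen subequivalence} (clopen subequivalence) to push $R$ onto a clopen subset of $\bigsqcup_i S_i'\tilde V_i$ via a clopen partition $\cP$ of $R$ with group elements $t_U$; (4) absorb $R$ into the castle by carving the image pieces out of the levels indexed by $S_i'$ and reassembling: for each $i$, the levels $s\tilde V_i$ with $s\in S_i'$ get partitioned into the part that receives a piece of $R$ and the part that does not, and one enlarges the tower over the "new" base by appending the relevant translate. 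Since $|S_i'|<|S_i|/n$, this modification changes each shape by a multiplicatively small amount, and for $n$ chosen large enough relative to $K$ and $\delta$ the resulting shapes remain $(K,\delta)$-invariant (here one uses the standard fact that $(K,\delta')$-invariance of $S_i$ for small $\delta'$, together with a uniformly bounded-size perturbation in the sense of relative density $<1/n$, yields $(K,\delta)$-invariance).

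**The main obstacle.** The delicate point is step (4): reassembling the remainder $R$ into the castle while keeping the result a genuine clopen \emph{tower decomposition} (levels partitioning $X$, pairwise disjoint within each tower) and keeping the shapes F\o lner. The issue is that when a clopen piece $W$ of $R$ is sent by $t$ into a level $s\tilde V_i$, one wants to view $t^{-1}W$ as sitting over the base $\tilde V_i$ in a position not already occupied; but the positions in $S_i$ are finite, so one cannot freely append. The correct move—following the pattern already used in the proof of Theorem~\ref{T-comparison af} and in the zero-dimensional arguments—is to \emph{split each tower} $(\tilde V_i,S_i)$ into finitely many subtowers over a clopen partition of $\tilde V_i$ refined according to which points' $S_i'$-levels receive pieces of $R$ and from where, then for the sub-base in question replace its shape $S_i$ by $S_i\cup (\text{the single element }ts^{-1}\text{ needed to reach }W)$; one must check this enlarged shape is still a tower shape (disjointness of levels over the sub-base), which follows because the new level lands inside $R$, disjoint from $\bigsqcup_j S_j\tilde V_j$, hence disjoint from all existing levels. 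Keeping the bookkeeping so that (a) the levels still partition $X$ with zero remainder, (b) each subtower shape differs from some $S_i$ by adjoining at most $|S_i'|<|S_i|/n$ elements, and (c) small diameter is preserved, is the technical heart; once it is done, choosing $n$ large at the outset makes the $(K,\delta)$-invariance of the final shapes automatic, and zero-dimensionality guarantees every set in sight can be taken clopen. I expect the cleanest write-up to first prove comparison can be realized \emph{clopenly with an injection-type structure} (via Proposition~\ref{P-clopen subequivalence}) and then do the reassembly combinatorially, exactly paralleling the end of the proof of Theorem~\ref{T-comparison af}.
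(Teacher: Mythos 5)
Your forward-direction strategy follows the paper's proof step for step: invoke Lemma~\ref{L-tower closures} to get an open castle whose tower-level closures are pairwise disjoint, upgrade the bases to clopen sets, realize the remainder comparison at the clopen level via Proposition~\ref{P-clopen subequivalence}, split the towers so each image piece of the remainder lands exactly on a tower level, append the resulting translates to the shapes, and verify $(K,\delta)$-invariance of the augmented shapes using $|S_i'| < |S_i|/n$. The backward direction and the final F\o lner estimate are likewise as in the paper.

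There is one concrete slip in step (2): you propose to \emph{shrink} each base to a clopen $\tilde V_i \subseteq V_i$, conditioning on ``$\{S_iV_i'\}$ still covers $X$,'' which cannot hold since the castle levels do not cover $X$ to begin with. Shrinking enlarges the remainder $R$ while shrinking the comparison target $\bigsqcup_i S_i'\tilde V_i$, so the asserted relation $R\prec\bigsqcup_i S_i'\tilde V_i$ no longer follows from the almost-finiteness input. The correct move, as in the paper, is to \emph{enlarge}: since the sets $s\overline{V_i}$ are pairwise disjoint and closed, choose open $V_i'\supseteq\overline{V_i}$ with $sV_i'$ still pairwise disjoint, then replace each $V_i$ by a clopen set sandwiched as $\overline{V_i}\subseteq V_i^{\mathrm{new}}\subseteq V_i'$ (cover $\overline{V_i}$ by finitely many clopen subsets of $V_i'$). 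With enlargement the new remainder is contained in the old one and the new target contains the old one, so the comparison persists by transitivity of $\prec$. With that correction, your argument coincides with the paper's.
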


\begin{proof}
The only issue in establishing the backward implication is arranging for the small diameter condition
in the definition of almost finiteness, and this can be done by observing that
for every $\eps > 0$ and clopen tower $(V,S)$ we can use uniform continuity to find a clopen partition
$\{ V_i \}_{i\in I}$ of $V$ such that the levels of the clopen castle $\{ (V_i ,S) \}_{i\in I}$,
which partition $SV$, all have diameter less than $\eps$.

For the forward implication, suppose that the action is almost finite.
Let $K$ be a finite subset of $G$ and $\delta > 0$. Take an $n\in\Nb$ such that
$2/n \leq \delta /2$.
By Lemma~\ref{L-tower closures} there are 
\begin{enumerate}
\item an open castle $\{ (V_i ,S_i ) \}_{i\in I}$ with $(K,\delta /2)$-invariant shapes
such that for each $i$ the sets $s\overline{V_i}$ for $s\in S_i$ are pairwise disjoint,

\item sets $S_i' \subseteq S_i$ such that $|S_i' | < |S_i |/n$ and 
the set $D:=X\setminus \bigsqcup_{i\in I} S_i V_i$ satisfies
\[
D \prec \bigsqcup_{i\in I} S_i' V_i .
\]
\end{enumerate}
Since the sets $s\overline{V_i}$ for $i\in I$ and $s\in S_i$ are closed,
by uniform continuity we can find, for each $i$, an open set $V_i' \supseteq \overline{V_i}$
such that the sets $sV_i'$ for $i\in I$ and $s\in S_i$ are pairwise disjoint.
Then, using compactness and zero-dimensionality, for each $i\in I$ we can cover
$\overline{V_i}$ with finitely many clopen subsets of $V_i'$, and so by replacing
$V_i$ with the union of these clopen sets we may assume that each of the sets $V_i$ is clopen.
Note in particular that the set $D = X\setminus \bigsqcup_{i\in I} S_i V_i$,
which is now clopen, still satisfies $D \prec \bigsqcup_{i\in I} S_i' V_i$,
since each new $V_i$ contains the original one. 
By Proposition~\ref{P-clopen subequivalence}
we can then find a clopen partition $\cU$ 
of $D$ and elements $t_U \in G$ for $U\in\cU$ such that the images
$t_U U$ for $U\in\cU$ are pairwise disjoint subsets of $\bigsqcup_{i\in I} S_i' V_i$.
We may assume,
by splitting each tower $(V_i ,S_i )$ into finitely many towers having the same shape $S_i$
and with bases forming a suitable clopen partition of $V_i$, 
that for every $U\in\cU$, $i\in I$, and $s\in S_i'$
such that $sV_i \cap t_U U \neq \emptyset$ we in fact have $sV_i \subseteq t_U U$.
By replacing $\cU$ with the clopen refinement consisting of the sets 
of the form $t_U^{-1} sV_i$ where $sV_i$ is a tower level which is contained in $t_U U$ for some $U\in\cU$,
we may now also assume that for every $U\in\cU$ there are an $i_U \in I$ and an $s_U \in S_{i_U}'$
such that $t_U U = s_U V_{i_U}$. 

Let $i\in I$. Set $S_i'' = \{ t_U^{-1} s_U : U\in\cU \text{ and } s_U \in S_i' \}$.
Note that the map $U\mapsto t_U^{-1} s_U$ from $\{ U\in\cU : i_U = i \}$ to $S_i'$ is injective,
for if $t_U^{-1} s_U = t_{U'}^{-1} s_{U'}$ for $U$ and $U'$ in the domain then
$U = t_U^{-1} s_U V_i = t_{U'}^{-1} s_{U'} V_i = U'$.
Thus $|S_i'' | \leq |S_i' |$. Define $\tilde{S}_i = S_i \sqcup S_i''$. Then for every $t\in K$ we have
\begin{align*}
|t\tilde{S}_i \Delta \tilde{S}_i |
&\leq |tS_i \Delta S_i | + |tS_i'' | + |S_i'' | \\
&< \frac{\delta}{2} |S_i | + 2|S_i' | \\
&\leq \bigg(\frac{\delta}{2} + \frac{2}{n} \bigg) |S_i | \\
&\leq \delta |\tilde{S}_i | ,
\end{align*}
showing that $\tilde{S}_i$ is $(K,\delta )$-invariant. Therefore 
$\{ (V_i ,\tilde{S}_i ) \}_{i\in I}$ is clopen tower
decomposition of $X$ with $(K,\delta )$-invariant shapes, as desired.
\end{proof}

\begin{remark}\label{R-nilpotent}
Matui showed in \cite{Mat12} that almost finiteness for a second countable
{\'e}tale groupoid $G$ with compact zero-dimensional unit space
has several implications for the homology groups $H^n (G)$ and their relation to
both the topological full group $\llbracket G\rrbracket$ 
and the $K$-theory of the reduced groupoid C$^*$-algebra of $G$.
In particular, if the groupoid is principal and almost finite then there is
a canonical isomorphism $H^1 (G) \cong \llbracket G\rrbracket /N$ 
where $N$ is the subgroup generated by the elements of finite order
(see Section~7 of \cite{Mat12}). As Matui observes in Lemma~6.3 of \cite{Mat12},
the groupoid associated to a free action of $\Zb^m$ on a zero-dimensional
compact metrizable space is almost finite. By Theorem~\ref{T-zero dim},
Example~\ref{E-nilpotent}, and Theorem~\ref{T-comparison af tower},
we see that this is also the case for every free minimal action $G\curvearrowright X$
of a finitely generated nilpotent group on a zero-dimensional compact metrizable space
with $E_G (X)$ finite.
\end{remark}

\section{Almost finiteness and extensions}\label{S-af extn}

As noted in Remark~\ref{R-af extn}, almost finiteness does not pass to extensions in general.
We will show however in Theorem~\ref{T-af extension finite} that an extension $G\curvearrowright Y$
of an almost finite free action $G\curvearrowright X$ is again almost finite
whenever $E_G (Y)$ and $\dim (Y)$ are both finite.
To this end we will employ the following notions of coarse almost finiteness and $m$-almost finiteness.

\begin{definition}\label{D-coarsely af}
We say that a free action $G\curvearrowright X$ on a compact metric space
is {\it coarsely almost finite} if 
for every $n\in\Nb$, finite set $K\subseteq G$, and $\delta > 0$ there are 
\begin{enumerate}
\item a collection $\{ (V_i ,S_i ) \}_{i\in I}$ of open towers with $(K,\delta )$-invariant
shapes such that $\{ S_i \overline{V_i} \}_{i\in I}$ is a castle,

\item sets $S_i' \subseteq S_i$ such that $|S_i' | < |S_i |/n$ and
\[
X\setminus \bigsqcup_{i\in I} S_i V_i \prec \bigsqcup_{i\in I} S_i' V_i .
\]
\end{enumerate}
\end{definition}

An almost finite free action is coarsely almost finite by Lemma~\ref{L-tower closures}.

\begin{definition}\label{D-m-af}
Let $m\in\Nb$.
We say that a free action $G\curvearrowright X$ on a compact metric space
is {\it $m$-almost finite} if for every $n\in\Nb$, finite set $K\subseteq G$, and $\delta > 0$
there are 
\begin{enumerate}
\item a collection $\{ (V_i ,S_i ) \}_{i\in I}$ of open towers with $(K,\delta )$-invariant
shapes such that $\diam (sV_i ) < \delta$ for every $i\in I$ and $s\in S_i$ 
and the family $\{ S_i V_i \}_{i\in I}$ has chromatic number at most $m+1$,

\item sets $S_i' \subseteq S_i$ such that $|S_i' | < |S_i |/n$ and
\[
X\setminus \bigsqcup_{i\in I} S_i V_i \prec \bigsqcup_{i\in I} S_i' V_i .
\]
\end{enumerate}
\end{definition}

The following is easily verified by taking the inverse images under the extension $Y\to X$
of all of the sets at play in the definition of coarse almost finiteness. 

\begin{proposition}\label{P-coarsely af extn}
If $Y\to X$ is an extension of free actions of $G$ and $G\curvearrowright X$ is coarsely
almost finite, then $G\curvearrowright Y$ is coarsely almost finite.
\end{proposition}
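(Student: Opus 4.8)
The plan is to pull everything back along the extension map $\pi\colon Y\to X$ and check that each ingredient in Definition~\ref{D-coarsely af} survives. Suppose $G\curvearrowright X$ is coarsely almost finite, and fix $n\in\Nb$, a finite set $K\subseteq G$, and $\delta>0$. Applying coarse almost finiteness on $X$ (with the same $n$ and $K$, and some $\delta'\le\delta$ to be chosen later), I obtain a collection of open towers $\{(V_i,S_i)\}_{i\in I}$ with $(K,\delta')$-invariant shapes such that $\{S_i\overline{V_i}\}_{i\in I}$ is a castle, together with sets $S_i'\subseteq S_i$ satisfying $|S_i'|<|S_i|/n$ and $X\setminus\bigsqcup_{i\in I}S_iV_i\prec\bigsqcup_{i\in I}S_i'V_i$. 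Set $W_i=\pi^{-1}(V_i)$, which is open in $Y$, and keep the same shapes $S_i$ and subsets $S_i'$.

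First I would observe that $\pi$ is $G$-equivariant, so $sW_i=\pi^{-1}(sV_i)$ for all $s\in G$; hence disjointness of the $sV_i$ over $s\in S_i$ lifts to disjointness of the $sW_i$, making each $(W_i,S_i)$ a tower, and disjointness of the sets $S_i V_i$ over $i$ lifts to disjointness of the $S_iW_i$. Next, since $\pi$ is a continuous closed surjection between compact Hausdorff spaces, one has $\overline{\pi^{-1}(V_i)}\subseteq\pi^{-1}(\overline{V_i})$ (indeed equality, but the inclusion is all I need), so the sets $s\overline{W_i}$ for $s\in S_i$ are contained in the pairwise disjoint sets $s\pi^{-1}(\overline{V_i})=\pi^{-1}(s\overline{V_i})$ and are therefore themselves pairwise disjoint; thus $\{S_i\overline{W_i}\}_{i\in I}$ is a castle, giving condition~(i). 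The shapes are unchanged, so they remain $(K,\delta')$-invariant, and choosing $\delta'\le\delta$ at the outset makes them $(K,\delta)$-invariant. The cardinality condition $|S_i'|<|S_i|/n$ in~(ii) is purely group-theoretic and is inherited verbatim.

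The one genuine point to verify is that the comparison relation pulls back: I must show that $Y\setminus\bigsqcup_{i\in I}S_iW_i\prec\bigsqcup_{i\in I}S_i'W_i$ given the corresponding relation on $X$. Note $Y\setminus\bigsqcup_i S_iW_i=\pi^{-1}\bigl(X\setminus\bigsqcup_i S_iV_i\bigr)$. Given a closed set $C\subseteq Y\setminus\bigsqcup_i S_iW_i$, its image $\pi(C)$ is a closed subset of $X\setminus\bigsqcup_i S_iV_i$, so by $\prec$ on $X$ there is a finite open cover $\cU$ of $\pi(C)$ together with elements $s_U\in G$ making the images $s_U U$ pairwise disjoint subsets of $\bigsqcup_i S_i'V_i$. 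Then $\{\pi^{-1}(U):U\in\cU\}$ is a finite open cover of $C$, and by equivariance $s_U\pi^{-1}(U)=\pi^{-1}(s_UU)$, so these images are pairwise disjoint (preimages of disjoint sets) and contained in $\pi^{-1}\bigl(\bigsqcup_i S_i'V_i\bigr)=\bigsqcup_i S_i'W_i$. This witnesses $C\prec\bigsqcup_i S_i'W_i$, and since $C$ was an arbitrary closed subset, $Y\setminus\bigsqcup_i S_iW_i\prec\bigsqcup_i S_i'W_i$, establishing~(ii).

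I do not expect any serious obstacle here: everything is a routine application of equivariance of $\pi$ together with the elementary topological facts that preimages of open sets are open, preimages of disjoint sets are disjoint, and $\pi$ being closed gives $\overline{\pi^{-1}(V)}\subseteq\pi^{-1}(\overline V)$. The mild subtlety worth flagging is the closure inclusion needed for condition~(i) of Definition~\ref{D-coarsely af} — this is exactly where coarse almost finiteness, rather than plain almost finiteness (whose diameter condition would not survive the pullback, cf.\ Remark~\ref{R-af extn}), is the right notion to propagate along extensions.
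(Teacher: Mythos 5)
Your proof is correct and is exactly the pullback argument the paper has in mind — the paper merely states that the result is ``easily verified by taking the inverse images under the extension $Y\to X$ of all of the sets at play in the definition of coarse almost finiteness,'' and you have supplied precisely that verification in full detail, including the two points that actually require a word (that $\overline{\pi^{-1}(V_i)}\subseteq\pi^{-1}(\overline{V_i})$, so the castle condition survives, and that the relation $\prec$ pulls back using closedness of $\pi$ to ensure $\pi(C)$ is closed). One tiny remark: the inclusion $\overline{\pi^{-1}(V_i)}\subseteq\pi^{-1}(\overline{V_i})$ already follows from continuity of $\pi$ alone (since $\pi^{-1}(\overline{V_i})$ is a closed set containing $\pi^{-1}(V_i)$); closedness of $\pi$ is only needed for the step where you pass to $\pi(C)$.
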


\begin{lemma}\label{L-fd comparison}
Suppose that $X$ has covering dimension $d<\infty$ and let $G\curvearrowright X$ be a free action
which is coarsely almost finite. Then the action is $d$-almost finite.
\end{lemma}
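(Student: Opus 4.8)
The plan is to take the open towers furnished by coarse almost finiteness and subdivide their bases into small pieces, accepting a chromatic number of $d+1$ as the price charged by the covering dimension of $X$. Fix $n\in\Nb$, a finite set $K\subseteq G$, and $\delta>0$. By coarse almost finiteness there are open towers $\{(V_i,S_i)\}_{i\in I}$ with $(K,\delta)$-invariant shapes such that the closed sets $s\overline{V_i}$ for $i\in I$ and $s\in S_i$ are pairwise disjoint (this is the castle requirement in Definition~\ref{D-coarsely af}), together with sets $S_i'\subseteq S_i$ with $|S_i'|<|S_i|/n$ and $X\setminus\bigsqcup_{i\in I}S_iV_i\prec\bigsqcup_{i\in I}S_i'V_i$. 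As in the proof of Theorem~\ref{T-zero dim}, using the disjointness of these finitely many closed sets and the continuity of the action, I would first choose open sets $\hat V_i\supseteq\overline{V_i}$ such that the sets $s\hat V_i$ for $i\in I$ and $s\in S_i$ remain pairwise disjoint.

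Next comes the subdivision. Fix $i\in I$. Using compactness and uniform continuity (and $\overline{V_i}\subseteq\hat V_i$ with $\hat V_i$ open) I would cover $\overline{V_i}$ by finitely many open subsets of $\hat V_i$, each chosen small enough that all of its $S_i$-translates have diameter less than $\delta$. Adjoining the open set $X\setminus\overline{V_i}$ to this collection gives a finite open cover of $X$; by the characterization of covering dimension in terms of chromatic numbers of open covers (as used in the proofs of Theorems~\ref{T-tower} and \ref{T-nucdim towdim}) it admits a finite open refinement of chromatic number at most $d+1$. Discarding those members of the refinement which are disjoint from $\overline{V_i}$ — equivalently, those contained in $X\setminus\overline{V_i}$ — leaves a finite collection $\cW_i$ of open subsets of $\hat V_i$ which still covers $\overline{V_i}$, has chromatic number at most $d+1$, and has all its members with $S_i$-translates of diameter less than $\delta$.

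I would then take $\{(W,S_i):i\in I,\ W\in\cW_i\}$ as the required collection of towers, attaching to each $(W,S_i)$ the set $S_i'$. Each $(W,S_i)$ is a tower since $W\subseteq\hat V_i$ and the $s\hat V_i$ for $s\in S_i$ are pairwise disjoint; the shapes are $(K,\delta)$-invariant and the levels have diameter less than $\delta$ by construction, and $|S_i'|<|S_i|/n$ is inherited. For the chromatic number one checks that $S_iW\cap S_iW'=S_i(W\cap W')$ for $W,W'\in\cW_i$ — the cross terms $sW\cap tW'$ with $s\ne t$ lie in $s\hat V_i\cap t\hat V_i=\emptyset$ — while $S_iW\cap S_jW'\subseteq S_i\hat V_i\cap S_j\hat V_i=\emptyset$ for $i\ne j$; hence a $(d+1)$-colouring of each $\cW_i$ induces a $(d+1)$-colouring of the family $\{S_iW\}_{i\in I,\,W\in\cW_i}$. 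Finally, since $\bigcup_{W\in\cW_i}W\supseteq\overline{V_i}\supseteq V_i$, one has $\bigcup_{i,W}S_iW\supseteq\bigsqcup_iS_iV_i$ and $\bigsqcup_iS_i'V_i\subseteq\bigcup_{i,W}S_i'W$, so that
\[
X\setminus\bigcup_{i,W}S_iW\ \subseteq\ X\setminus\bigsqcup_iS_iV_i\ \prec\ \bigsqcup_iS_i'V_i\ \subseteq\ \bigcup_{i,W}S_i'W ,
\]
using that $A\prec B$ whenever $A\subseteq A'$, $B'\subseteq B$ and $A'\prec B'$. This verifies $d$-almost finiteness.

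The main obstacle I anticipate is the subdivision step: manufacturing a small-diameter open cover of $\overline{V_i}$ with chromatic number $d+1$ whose members genuinely live in the enlargement $\hat V_i$, without falling into the familiar dimension-theoretic tension between relatively open and honestly open sets. The device of enlarging the bases slightly, adjoining $X\setminus\overline{V_i}$ to reduce to a cover of all of $X$, invoking the chromatic-number formulation of covering dimension there, and then discarding the sets that miss $\overline{V_i}$ is precisely what sidesteps this; the remaining steps are bookkeeping paralleling the proofs of Proposition~\ref{P-coarse towdim} and Theorem~\ref{T-zero dim}.
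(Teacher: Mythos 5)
Your proof is correct and follows essentially the same route as the paper: pass to open neighbourhoods $\hat V_i\supseteq\overline{V_i}$ preserving disjointness, subdivide each $\overline{V_i}$ into small open pieces of chromatic number at most $d+1$ using $\dim(X)=d$, and observe that the resulting towers inherit the Følner invariance, small levels, and the comparison relation. The only difference is that you spell out the adjoin-$X\setminus\overline{V_i}$-and-refine device to extract the small-diameter $(d+1)$-coloured cover of $\overline{V_i}$ inside $\hat V_i$, which the paper leaves implicit as a standard covering-dimension fact.
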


\begin{proof}
Let $n\in\Nb$, and let $K$ be a finite subset of $G$ and $\delta > 0$.
By coarse almost finiteness there are 
\begin{enumerate}
\item a collection $\{ (V_i ,S_i ) \}_{i\in I}$ of open towers with $(K,\delta )$-invariant shapes
such that $\{ S_i \overline{V_i} \}_{i\in I}$ is a castle, and

\item sets $S_i' \subseteq S_i$ such that $|S_i' | < |S_i |/n$ and
\[
X\setminus \bigsqcup_{i\in I} S_i V_i \prec \bigsqcup_{i\in I} S_i' V_i .
\]
\end{enumerate}
Since the towers $S_i \overline{V_i}$ for $i\in I$ are pairwise disjoint, 
for each $i$ we can find an open set $U_i \supseteq V_i$ so that the towers
$S_i U_i$ for $i\in I$ are still pairwise disjoint. Since $X$ has covering dimension
$d$, for each $i$ we can find a collection $\{ V_{i,1} , \dots ,V_{i,k_i} \}$ of open
subsets of $U_i$ which covers $\overline{V_i}$, satisfies $\diam (sV_{i,j} ) < \delta$
for every $j=1,\dots , k_i$, and has chromatic number at most $d+1$.
The collection of towers $\{ (V_{i,j} , S_i ) : i\in I,\, 1\leq j\leq k_i \}$
then fulfills the requirements in the definition of $d$-almost finiteness.
\end{proof}

The proof of the following is essentially the same as for (i)$\Rightarrow$(ii)
of Theorem~\ref{T-comparison af}, which is the case $m=0$. 
We leave the details to the reader.

\begin{lemma}\label{L-af comparison}
Let $G\curvearrowright X$ be a free action which is $m$-almost finite. Then the action 
has $m$-comparison.
\end{lemma}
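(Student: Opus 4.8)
The plan is to adapt the proof of the implication (i)$\Rightarrow$(ii) of Theorem~\ref{T-comparison af} almost verbatim, tracking the chromatic-number bookkeeping that turns the single disjoint family produced there into an $m+1$-fold partition here. So let me first recall the structure of that argument and then indicate the changes.

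First I would take a closed set $A\subseteq X$ and an open set $B\subseteq X$ with $\mu(A)<\mu(B)$ for all $\mu\in M_G(X)$, and aim to show $A\prec_m B$. As in the proof of Theorem~\ref{T-comparison af}, I would use Lemma~\ref{L-portmanteau} to get $\eta>0$ with $\mu(A)+\eta\le\mu(B)$, pick $y\in B\setminus A$, and use Lemma~\ref{L-regularity} together with minimality to carve out a small closed ball $C\subseteq B\setminus A$ with $\mu(C)\le\eta/2$ and an open ball $C_-\subseteq C$ with $\mu(C_-)\ge\theta>0$ for all $\mu\in M_G(X)$; set $\tilde B=B\setminus C$. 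Then produce the uniform-over-F{\o}lner-sets estimates: there are a finite $K\subseteq G$ and $\delta>0$ such that every nonempty $(K,\delta)$-invariant $F\subseteq G$ satisfies, for all $x\in X$, both $\frac{1}{|F|}\sum_{s\in F}\unit_{A_+}(sx)+\frac{\eta'}{2}\le\frac{1}{|F|}\sum_{s\in F}\unit_{B_-}(sx)$ and $\frac{1}{|F|}\sum_{s\in F}\unit_{C_-}(sx)\ge\frac{\theta}{2}$, where $A_+$ and $B_-$ are obtained from $\tilde B$ and $A$ via Lemma~\ref{L-portmanteau} with a smaller $\eta'\le\eta$; the proof that these $K,\delta$ exist is the usual weak$^*$-compactness/portmanteau contradiction argument, identical to the one in Theorem~\ref{T-comparison af}.

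Next, set $\eps=\min\{\eta',\kappa/2\}$, choose $n>3/\theta$, and invoke $m$-almost finiteness (Definition~\ref{D-m-af}) to obtain a collection of open towers $\{(V_i,S_i)\}_{i\in I}$ with $(K,\delta)$-invariant shapes, levels of diameter less than $\eps$, chromatic number of $\{S_iV_i\}_{i\in I}$ at most $m+1$, and subsets $S_i'\subseteq S_i$ with $|S_i'|<|S_i|/n$ and $X\setminus\bigsqcup_{i\in I}S_iV_i\prec\bigsqcup_{i\in I}S_i'V_i$. For each $i$ the small-diameter condition forces the sets $S_{i,1}=\{s\in S_i:sV_i\cap A\neq\emptyset\}$, $S_{i,2}=\{s\in S_i:sV_i\cap B_-\neq\emptyset\}$, $S_{i,3}=\{s\in S_i:sV_i\cap C_-\neq\emptyset\}$ to satisfy $|S_{i,1}|\le|S_{i,2}|$ and $|S_{i,3}|\ge|S_i'|$, giving injections $\varphi_i:S_{i,1}\to S_{i,2}$ and $\psi_i:S_i'\to S_{i,3}$. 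Unwinding $X\setminus\bigsqcup S_iV_i\prec\bigsqcup S_i'V_i$ gives a finite open cover $\cU$ of $X\setminus\bigsqcup S_iV_i$ and elements $t_U\in G$ with the $t_UU$ pairwise disjoint inside $\bigsqcup S_i'V_i$; then $\{W_{U,i,s}=U\cap t_U^{-1}sV_i\}$ cover $(X\setminus\bigsqcup S_iV_i)\cap A$, and the images $\psi_i(t_U)W_{U,i,s}$ are pairwise disjoint inside $B$. Combining these with the sets $sV_i$ ($i\in I$, $s\in S_{i,1}$) covering $(\bigsqcup S_iV_i)\cap A$ and their images $\varphi_i(s)V_i$ inside $B$ gives a cover of $A$ by open sets whose chosen $G$-translates land in $B$; this is the point where the argument differs from the $m=0$ case.

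The main obstacle — and it is really just bookkeeping — is verifying the partition into $m+1$ disjoint subcollections. In the $m=0$ proof of Theorem~\ref{T-comparison af} the two families ($\varphi_i(s)V_i$ inside $\bigsqcup S_{i,2}V_i$, and $\psi_i(t_U)W_{U,i,s}$ inside $\bigsqcup S_{i,3}V_i$) are each genuinely disjoint because the towers form a castle, i.e. the $S_iV_i$ are pairwise disjoint. Here they need not be, but the chromatic number of $\{S_iV_i\}_{i\in I}$ is at most $m+1$, so I would fix a partition $I=I_0\sqcup\cdots\sqcup I_m$ with each $\{S_iV_i\}_{i\in I_j}$ disjoint, and then group the covering sets of $A$ according to which block $I_j$ the relevant index $i$ lies in: within block $j$, the translates coming from $\varphi_i$ land in distinct tower levels of a disjoint family and hence are pairwise disjoint, and likewise for those coming from $\psi_i$ (here one uses that the images $t_UU$ are already pairwise disjoint and land in $\bigsqcup S_i'V_i$, so within a fixed block the $\psi_i(t_U)W_{U,i,s}$ stay pairwise disjoint); since the $\varphi$-targets lie in $\bigsqcup S_{i,2}V_i$ and the $\psi$-targets in $\bigsqcup S_{i,3}V_i$ with $S_{i,2}\cap S_{i,3}=\emptyset$ (as $B_-$ and $C_-$ are disjoint), the two within a block also don't collide. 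This yields at most $m+1$ disjoint subcollections, so $A\prec_m B$. The only subtlety to double-check is that reindexing the $W_{U,i,s}$ by their target tower doesn't destroy the blockwise disjointness, which follows exactly as in the $m=0$ argument. I would simply remark that the details are as in the proof of Theorem~\ref{T-comparison af}, with the partition of $I$ inherited from the chromatic-number bound supplying the partition of the covering family of $A$.
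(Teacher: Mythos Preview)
Your proposal is correct and follows precisely the approach the paper indicates: its proof is simply the remark that one argues as in Theorem~\ref{T-comparison af}(i)$\Rightarrow$(ii), and you have carried this out, with the partition $I=I_0\sqcup\cdots\sqcup I_m$ coming from the chromatic-number bound supplying the required $m+1$ disjoint subfamilies. The only wrinkle is that you (like the reference proof) invoke minimality to get $\mu(C_-)>0$, which the lemma as stated does not assume; this is easily circumvented by taking $n>2/\eta'$ and injecting $S_i'$ into $S_{i,2}\setminus\varphi_i(S_{i,1})$ rather than into a separate $C_-$, so no genuine gap arises.
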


\begin{theorem}\label{T-af extension finite}
Let $G\stackrel{\alpha}{\curvearrowright} X$ be an almost finite free action and let 
$G\stackrel{\beta}{\curvearrowright} Y$ be an extension of $\alpha$ such that $E_G (Y)$ is finite
and $\dim (Y) < \infty$.
Then $\beta$ is almost finite.
\end{theorem}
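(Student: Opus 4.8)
The plan is to thread together the machinery built up in Sections~\ref{S-af} and \ref{S-af extn} and then invoke the hard direction of Theorem~\ref{T-comparison af}. Two preliminary observations: since $\alpha$ is almost finite, the F{\o}lner requirement on the shapes of its castles forces $G$ to be amenable; and $\beta$ is automatically free, because if $sy=y$ for some $y\in Y$ and $s\in G\setminus\{e\}$ then applying the factor map $Y\to X$ would give $s\pi(y)=\pi(y)$, contradicting freeness of $\alpha$.

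The first main step is to establish that $\beta$ has $m$-comparison for some $m\geq 0$. Here I would run the chain: $\alpha$ is coarsely almost finite by Lemma~\ref{L-tower closures} (as noted immediately after Definition~\ref{D-coarsely af}); hence $\beta$ is coarsely almost finite by Proposition~\ref{P-coarsely af extn}; since $d:=\dim(Y)<\infty$, Lemma~\ref{L-fd comparison} upgrades this to $\beta$ being $d$-almost finite; and Lemma~\ref{L-af comparison} then yields that $\beta$ has $d$-comparison. In particular $\beta$ has $m$-comparison for some $m\geq 0$.

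The second main step is to deduce almost finiteness of $\beta$ from the facts that $G$ is amenable, $\beta$ is free, $E_G(Y)$ is finite, and $\beta$ has $m$-comparison for some $m$. This is exactly the implication (iv)$\Rightarrow$(i) of Theorem~\ref{T-comparison af} --- \emph{except} that the latter is stated for minimal actions, whereas neither $X$ nor $Y$ need be minimal here. So the point to verify is that the argument for (iv)$\Rightarrow$(i) uses minimality only to reduce to the case $|G|=\infty$: when $G$ is finite, a free action on a compact space is trivially almost finite, being a finite disjoint union of single $G$-orbits, each of which is the set of levels of the clopen tower $(\{y\},G)$, with shape $G$ that is $(K,\delta)$-invariant for every $K$ and $\delta$ and with empty remainder. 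Beyond that, the proof of (iv)$\Rightarrow$(i) carries over verbatim: the Ornstein--Weiss tower theorem is applied to the single measure $\mu=q^{-1}\sum_{k=1}^{q}\mu_k$ where $E_G(Y)=\{\mu_1,\dots,\mu_q\}$; the passage from the resulting measurable towers to an open castle with small-diameter levels and a small-measure remainder uses only regularity and compactness; and the uniform measure estimate transfers to every $\nu\in M_G(Y)$ through the convex decomposition $\nu=\sum_k c_k\mu_k$, which holds because $E_G(Y)$ is finite. Invoking the $m$-comparison of $\beta$ then produces, for each prescribed $n$, $K$, $\delta$, an open castle $\{(V_i,S_i)\}_{i\in I}$ with $(K,\delta)$-invariant shapes and small-diameter levels together with sets $S_i'\subseteq S_i$, $|S_i'|<|S_i|/n$, satisfying $X\setminus\bigsqcup_{i\in I}S_iV_i\prec\bigsqcup_{i\in I}S_i'V_i$, which is Definition~\ref{D-af}.

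The step I expect to be the only genuinely non-mechanical point is this last one: confirming that the proof of Theorem~\ref{T-comparison af}(iv)$\Rightarrow$(i) truly does not use minimality except in the trivial finite-group reduction. Once that is in hand, the theorem follows by the direct concatenation of Lemma~\ref{L-tower closures}, Proposition~\ref{P-coarsely af extn}, Lemma~\ref{L-fd comparison}, and Lemma~\ref{L-af comparison} described above.
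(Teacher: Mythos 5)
Your proof follows precisely the paper's chain: Lemma~\ref{L-tower closures} to pass from almost finiteness of $\alpha$ to coarse almost finiteness, Proposition~\ref{P-coarsely af extn} to transfer coarse almost finiteness to $\beta$, Lemma~\ref{L-fd comparison} (using $\dim(Y)<\infty$) to upgrade to $d$-almost finiteness, Lemma~\ref{L-af comparison} to get $d$-comparison, and then Theorem~\ref{T-comparison af}(iv)$\Rightarrow$(i) (using $E_G(Y)$ finite) to conclude. The minimality scruple you flag is a genuine one that the paper's terse proof leaves unaddressed --- Theorem~\ref{T-comparison af} is stated for free \emph{minimal} actions, while no minimality is assumed in the present theorem --- and your reading that the proof of (iv)$\Rightarrow$(i) invokes minimality only in the reduction to $|G|=\infty$ is accurate. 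One small imprecision in the finite-group reduction: it is not true that a free action of a finite group on a compact space is automatically a finite disjoint union of single orbits (take $\Zb/2$ acting antipodally on $S^1$), so that claim needs a hypothesis. What actually closes the finite-group case here is precisely the assumption that $E_G(Y)$ is finite: for finite $G$ the ergodic $G$-invariant measures on $Y$ are the uniform measures on individual orbits, so $E_G(Y)$ being finite forces $Y/G$ --- and hence $Y$ --- to be a finite set, after which your clopen-tower description of almost finiteness applies verbatim. With that repair, your argument is complete and coincides with the paper's.
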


\begin{proof}
Since $\alpha$ is almost finite it is coarsely almost finite, 
and so by Proposition~\ref{P-coarsely af extn} the action $\beta$ is 
coarsely almost finite. Consequently $\beta$ has $m$-comparison by 
Lemmas~\ref{L-af comparison} and \ref{L-fd comparison}.
We then conclude by Theorem~\ref{T-comparison af} that $\beta$ is almost finite.
\end{proof}

\section{Almost finiteness and $\cZ$-stability}\label{S-af Z-stable}

We show here in Theorem~\ref{T-af Z-stable} that, assuming $G$ is infinite,
the reduced crossed product 
$C(X)\rtimes_\lambda G$ of an almost finite free minimal action $G\curvearrowright X$
on a compact metrizable space is $\cZ$-stable. The argument uses tiling technology
as in the proof of Theorem~5.3 of \cite{ConJacKerMarSewTuc17}.
Note that since almost finiteness implies that
$G$ is amenable, the reduced and full crossed products coincide in this case,
although we will not need this fact. 

Recall that {\it c.p.c.}\ stands for ``completely positive contractive'', and that
a map $\varphi : A\to B$ between C$^*$-algebras is {\it order-zero} if
$\varphi (a_1 )\varphi (a_2 ) = 0$ for all $a_1 , a_2 \in A$ satisfying $a_1 a_2 = 0$.
We write $\precsim$ for the relation of Cuntz subequivalence.

In order to verify $\cZ$-stability we will use the following result 
of Hirshberg and Orovitz (Theorem~4.1 of \cite{HirOro13}).

\begin{theorem}\label{T-Z-stable}
Let $A$ be a simple separable unital nuclear C$^*$-algebra not isomorphic to $\Cb$. 
Suppose that for every
$n\in\Nb$, finite set $\Omega\subseteq A$, $\eps > 0$, and nonzero positive element $a\in A$ 
there exists an order-zero c.p.c.\ map
$\varphi : M_n \to A$ such that
\begin{enumerate}
\item $1-\varphi (1) \precsim a$,

\item $\| [a,\varphi (b)] \| < \eps$ for all $a\in\Omega$ and norm-one $b\in M_n$.
\end{enumerate}
Then $A$ is $\cZ$-stable.
\end{theorem}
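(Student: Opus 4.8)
The plan is to reduce $\cZ$-stability to a central sequence condition that can be checked directly from the tracial hypothesis. Fix a free ultrafilter $\omega$ and write $A_\omega = \ell^\infty (\Nb ,A)/c_\omega (A)$. By the work of R{\o}rdam and Winter on the Jiang--Su algebra, a unital separable C$^*$-algebra $A$ satisfies $A\cong A\otimes\cZ$ as soon as, for every $n\in\Nb$, there is a c.p.c.\ order zero map $\Phi : M_n \to A_\omega \cap A'$ with $1 - \Phi (1_{M_n}) \precsim \Phi (e_{11})$ in $A_\omega \cap A'$: such a $\Phi$, together with the room recorded by this subequivalence, yields a unital embedding of the prime dimension-drop algebra $\cZ_{n,n+1}$ into $A_\omega \cap A'$, and since $\cZ$ is an inductive limit of the $\cZ_{n,n+1}$, a reindexing argument then embeds $\cZ$ unitally into $A_\omega \cap A'$, which for separable $A$ is equivalent to $\cZ$-stability. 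Thus it suffices to produce such a $\Phi$ for each fixed $n$.

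Before doing so I would dispose of the purely infinite case, where the hypothesis may well be satisfied only trivially (with $\varphi = 0$): if $A$ is simple and purely infinite then, being separable unital nuclear, it is $\cO_\infty$-stable by Kirchberg's theorem, and as $\cO_\infty \cong \cO_\infty\otimes\cZ$ we get $A\cong A\otimes\cO_\infty \cong A\otimes\cO_\infty\otimes\cZ \cong A\otimes\cZ$. So from now on I may assume $A$ is not purely infinite; then (using simplicity and $A\neq\Cb$) there is a nonzero positive $a_0\in A$ with $1\not\precsim a_0$, and moreover $A$ is stably finite, hence admits a tracial state.

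Now fix $n$, a dense sequence $(d_j )_{j\geq 1}$ in $A$, and $\eps_k \downarrow 0$, and construct $\Phi$ by iterating the hypothesis into a staggered tower of mutually approximately-commuting order zero maps. At the first stage, apply the hypothesis with data $(n,\{ d_1 \} ,1,a_0 )$; since $1\not\precsim a_0$, the resulting c.p.c.\ order zero map $\psi^{(1)} : M_n \to A$ is nonzero, so $\psi^{(1)} (e_{11} )\neq 0$. At stage $\ell +1$, apply the hypothesis with $\Omega$ the union of $\{ d_1 ,\dots ,d_{\ell +1} \}$ with the matrix units of $\psi^{(1)} ,\dots ,\psi^{(\ell )}$, with tolerance $\eps_{\ell +1}$, and with the positive element $\bigl(\psi^{(\ell )} (e_{11} ) - \tfrac12 \| \psi^{(\ell )} (e_{11} )\| \bigr)_+$, obtaining a c.p.c.\ order zero map $\psi^{(\ell +1)} : M_n \to A$ that, in the ultrapower, commutes with $\psi^{(1)} ,\dots ,\psi^{(\ell )}$ and with all the $d_j$, and whose defect $1 - \psi^{(\ell +1)} (1)$ is Cuntz-below a corner of $\psi^{(\ell )} (e_{11} )$. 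After $L$ stages the induced maps $\psi^{(1)} ,\dots ,\psi^{(L)} : M_n \to A_\omega \cap A'$ have pairwise commuting ranges; forming the iterated product of these commuting order zero maps gives a c.p.c.\ order zero map $M_{n^L} \to A_\omega \cap A'$ whose defect, by a telescoping of the stagewise estimates, is Cuntz-below a corner that shrinks as $L$ grows. Choosing $L$ large and compressing the $M_{n^L}$-structure back down to an $M_n$-structure produces the desired $\Phi : M_n \to A_\omega \cap A'$ with $1 - \Phi (1) \precsim \Phi (e_{11} )$.

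The crux — and the only place nuclearity of $A$ is genuinely used — is that all of these Cuntz comparisons must hold inside the relative commutant $A_\omega \cap A'$, not merely in $A_\omega$: the elements implementing $1 - \varphi (1)\precsim a$ at each application of the hypothesis have to be made approximately central for the finitely many elements in play, which one arranges by routing the comparison through the completely positive approximations of $\id_A$ furnished by the completely positive approximation property, so that it becomes approximately equivariant and survives to the limit. The remaining difficulty is bookkeeping: keeping the $\psi^{(\ell )} (e_{11} )$ nonzero (where the tracial state controls the relevant dimension functions) and organising the telescoped estimates so that the final defect is dominated by $\Phi (e_{11} )$ on the nose rather than merely by an auxiliary corner.
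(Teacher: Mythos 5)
The paper does not prove this theorem; it is imported verbatim as Theorem~4.1 of Hirshberg--Orovitz \cite{HirOro13}, so there is no in-paper proof for your sketch to be measured against. That said, let me assess your proposal against the argument it is implicitly reconstructing.

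Your high-level plan is the right one and does match the shape of the known proof: reduce to producing, for each $n$, a c.p.c.\ order-zero map $\Phi : M_n \to A_\omega \cap A'$ with $1-\Phi(1_{M_n}) \precsim \Phi(e_{11})$ (the R{\o}rdam--Winter criterion via dimension-drop algebras), settle the purely infinite case by Kirchberg's $\cO_\infty$-absorption theorem, and in the stably finite case manufacture $\Phi$ by applying the tracial hypothesis along a F{\o}lner-style exhaustion of $A$.

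However, there are genuine gaps, and they are not merely bookkeeping. First, you assert that ``not purely infinite'' yields ``stably finite, hence admits a tracial state,'' but for general simple C$^*$-algebras this dichotomy fails; it does hold here, but only because the tracial hypothesis first gives strict comparison, which you have not extracted. Second, and more seriously, the entire weight of the theorem rests on the point you describe in one sentence: making the Cuntz subequivalence $1 - \varphi(1) \precsim a$ survive into the \emph{relative commutant} $A_\omega \cap A'$ rather than $A_\omega$. This is not achieved by ``routing the comparison through completely positive approximations of $\id_A$''; it is the Matui--Sato property (SI) mechanism, where nuclearity enters through a delicate averaging/lifting argument that converts a trace-level smallness of the defect into Cuntz-smallness witnessed by approximately central elements. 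Summarizing it as a routing step understates what must be proved and cannot be expanded routinely. Third, your iteration does not visibly deliver the required comparison: after forming the commuting product map on $M_{n^L}$ and restricting along a unital embedding $M_n \hookrightarrow M_{n^L}$, the telescoped defect is dominated by a sum of corners of the earlier $\psi^{(\ell)}(e_{11})$'s, whereas what is needed is domination by the \emph{product} $\Phi(e_{11}) = \psi^{(1)}(e_{11})\cdots\psi^{(L)}(e_{11})$; ``Cuntz-below a corner that shrinks as $L$ grows'' is not the same statement, and you flag this yourself without resolving it. These three items together mean the proposal is a plausible roadmap but not a proof, and the missing middle is precisely the nontrivial content of Hirshberg--Orovitz.
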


The following is the Ornstein--Weiss quasitiling theorem \cite{OrnWei87}. See
Theorem~4.36 of \cite{KerLi16} for this precise formulation. For $0\leq \eta\leq 1$,
we say that a collection $\{ A_i \}$ of subsets of a finite set $E$ is {\it $\eta$-disjoint}
if there exist sets $A_i' \subseteq A_i$ with $|A_i' | \geq (1-\eta )|A_i |$ such that
the collection $\{ A_i' \}$ is disjoint, and that it {\it $\eta$-covers} $E$ if
$|\bigcup_i A_i | \geq \eta |E|$.

\begin{theorem}\label{T-qt}
Let $0<\beta <\frac12$ and let $n\in\Nb$ be such that
$(1-\beta /2)^n < \beta$. Then
whenever $e\in T_1 \subseteq T_2 \subseteq \dots \subseteq T_n$
are finite subsets of a group $G$ such that $|\partial_{T_{i-1}} T_i | \leq (\beta /8)|T_i |$ for
$i=2, \dots , n$, for every $(T_n ,\beta /4)$-invariant nonempty finite set $E\subseteq G$
there exist $C_1 , \dots , C_n \subseteq G$ such that
\begin{enumerate}
\item $\bigcup_{i=1}^n T_i C_i \subseteq E$, and

\item the collection of right translates $\bigcup_{i=1}^n \{ T_i c : c\in C_i \}$ is
$\beta$-disjoint and $(1-\beta )$-covers $E$.
\end{enumerate}
\end{theorem}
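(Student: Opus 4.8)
The plan is to run the classical greedy packing argument of Ornstein and Weiss, processing $T_n, T_{n-1}, \dots, T_1$ in order of decreasing size, with the constants tracked carefully enough that the contraction factor in the covering estimate comes out to be exactly $1-\beta/2$. Throughout, for a finite $F\subseteq G$ let $\partial_F(A)=\{g\in G : Fg\cap A\neq\emptyset\neq Fg\cap(G\setminus A)\}$ denote the combinatorial $F$-boundary of $A$ (the convention in force in the formulation being quoted), and I record the two elementary facts that $\partial_F(A)\subseteq\partial_{F'}(A)$ whenever $F\subseteq F'$, and $\partial_F(Ac)=\partial_F(A)c$. First I would use these to push the hypotheses down to the scale of $T_i$: since $T_i\subseteq T_{j-1}$ for $j>i$, the bound $|\partial_{T_{i-1}}T_i|\leq(\beta/8)|T_i|$ gives $|\partial_{T_i}T_j|\leq(\beta/8)|T_j|$ for every $j>i$, and the $(T_n,\beta/4)$-invariance of $E$ gives $|\{g\in E : T_i g\not\subseteq E\}|\leq|\partial_{T_i}E|\leq|\partial_{T_n}E|\leq(\beta/4)|E|$ for every $i$.

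Next comes the construction. Set $Z_{n+1}=\emptyset$, and having built $C_n,\dots,C_{i+1}$ put $Z_{i+1}=\bigcup_{j>i}T_jC_j$. If $|E\setminus Z_{i+1}|\leq\beta|E|$, stop and take $C_i=\dots=C_1=\emptyset$. Otherwise greedily build $C_i$: repeatedly add a right translate $T_i c$ provided $T_i c\subseteq E$ and $T_i c$ meets the currently covered set (namely $Z_{i+1}$ together with the scale-$i$ translates already chosen) in at most $\beta|T_i|$ points, continuing until no further translate can be added. Property (i) is then immediate. For the $\beta$-disjointness in (ii), order the tiles of the whole collection $\bigcup_i\{T_i c : c\in C_i\}$ by the order in which they were added (scale $n$ first, then scale $n-1$, and so on) and, for each tile $T_i c$, let $(T_i c)'$ be $T_i c$ with every earlier-added tile removed; the acceptance rule gives $|(T_i c)'|\geq(1-\beta)|T_i|$, and the primed sets are pairwise disjoint since each one avoids all tiles added before it. The same argument applied to the prefixes of this ordering shows that each $Z_{i+1}$ is a $\beta$-disjoint union of translates of the $T_j$, $j>i$, so that $\sum_{\mathrm{const}}|T_j|\leq(1-\beta)^{-1}|Z_{i+1}|\leq(1-\beta)^{-1}|E|$; I will need this in the covering step.

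The covering estimate is the crux. Assume $|E\setminus Z_{i+1}|>\beta|E|$. I would first bound the set of $c\in E\setminus Z_{i+1}$ with $T_i c\not\subseteq E\setminus Z_{i+1}$: such a $c$ lies in $\partial_{T_i}(E)$ (if $T_i c\not\subseteq E$) or in $\partial_{T_i}(Z_{i+1})$ (if $T_i c$ meets $Z_{i+1}$, using $\partial_{T_i}(Z_{i+1})\subseteq\bigcup_{\mathrm{const}}\partial_{T_i}(T_j)c$), and by the displayed bounds above these have size at most $(\beta/4)|E|$ and at most $(\beta/8)(1-\beta)^{-1}|E|<(\beta/4)|E|$ respectively; since $|E\setminus Z_{i+1}|>\beta|E|$, their total is less than $\tfrac12|E\setminus Z_{i+1}|$, so at least half of $E\setminus Z_{i+1}$ consists of points $c$ with $T_i c\subseteq E\setminus Z_{i+1}$. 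For each such $c$ the stopping condition forces $|T_i c\cap T_iC_i|>\beta|T_i|$ (it was not accepted, and $T_i c\cap Z_{i+1}=\emptyset$). Summing this over all such $c$ and interchanging the order of summation, the left side exceeds $\beta|T_i|\cdot\tfrac12|E\setminus Z_{i+1}|$ while the right side is at most $|T_i|\cdot|T_iC_i\setminus Z_{i+1}|=|T_i|\cdot|Z_i\setminus Z_{i+1}|$, where $Z_i=Z_{i+1}\cup T_iC_i$; hence $|E\setminus Z_i|<(1-\beta/2)|E\setminus Z_{i+1}|$. Iterating from $i=n$ down to $i=1$ (and noting the conclusion holds a fortiori if the process stopped early) gives $|E\setminus Z_1|\leq(1-\beta/2)^n|E|<\beta|E|$ by the hypothesis on $n$; since $Z_1=\bigcup_i T_iC_i\subseteq E$, this says precisely that the collection $(1-\beta)$-covers $E$.

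The step I expect to be the main obstacle is the covering estimate, specifically verifying that the ``irregularity'' of the uncovered region $E\setminus Z_{i+1}$ at the scale of $T_i$ is controlled by the given invariance and boundary constants rather than by the (uncontrolled) cardinalities $|T_i|$ — this is exactly the role of the nested hypotheses $|\partial_{T_{i-1}}T_i|\leq(\beta/8)|T_i|$, which make $Z_{i+1}$, a union of translates of the much larger sets $T_j$ ($j>i$), behave like a $T_i$-invariant set on the relevant scale. Everything else — property (i), the $\beta$-disjointness, and the final telescoping — is either immediate or a routine averaging computation, and the threshold $\beta$ in the greedy acceptance rule was chosen so that both the $\beta$-disjointness witness and the contraction factor $1-\beta/2$ emerge with the stated constants.
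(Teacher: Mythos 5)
Your proof is correct and reproduces the standard Ornstein--Weiss greedy packing argument, which is the proof in the paper's cited reference (Theorem~4.36 of \cite{KerLi16}); the paper itself states the result without proof. The greedy construction from largest to smallest scale, the $\beta$-disjointness witness obtained by priming tiles in the order of addition, the control of the bad set at scale $i$ by combining the invariance hypothesis with the boundary bound propagated through the nesting $T_i\subseteq T_{j-1}$, and the $(1-\beta/2)$ contraction from double counting all follow the classical computation, and your constants check out.
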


\begin{lemma}\label{L-partial}
Let $G\curvearrowright X$ be an action on a compact metrizable space.
Let $A$ be a closed subset of $X$ and $B$ an open subset of $X$ such that $A\prec B$.
Let $f,g:X\to [0,1]$ be continuous functions such that $f=0$ on $X\setminus A$
and $g=1$ on $B$. Then there is a $v\in C(X)\rtimes_\lambda G$
such that $v^* gv = f$. 
\end{lemma}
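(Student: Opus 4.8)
Here I would unpack $A\prec B$ into an explicit finite open cover and build $v$ by hand as a finite sum indexed by that cover. Since $A$ is closed, applying the definition of $\prec$ (the case $m=0$) with $C=A$ yields a finite collection $U_1,\dots,U_k$ of open subsets of $X$ covering $A$, together with elements $s_1,\dots,s_k\in G$, such that the sets $s_1 U_1,\dots,s_k U_k$ are pairwise disjoint subsets of $B$. Choose a partition of unity $h_1,\dots,h_k\in C(X)$ with $0\le h_i\le 1$, $\supp h_i\subseteq U_i$ for each $i$, and $\sum_{i=1}^k h_i=1$ on $A$, and set $f_i=h_i f$. Then each $f_i$ is a nonnegative continuous function with $\supp f_i\subseteq U_i$, and since $f$ vanishes off $A$ we have $\sum_{i=1}^k f_i=f$.

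Now put
\[
v=\sum_{i=1}^k u_{s_i} f_i^{1/2}\ \in\ C(X)\rtimes_\lambda G ,
\]
where the $u_s$ are the canonical unitaries and $\alpha$ is the induced action on $C(X)$, so that $u_s k u_s^{*}=\alpha_s(k)$ with $\alpha_s(k)(x)=k(s^{-1}x)$. Using the covariance relations one computes
\[
v^{*} g v=\sum_{i,j=1}^k f_i^{1/2}\, u_{s_i}^{*} g\, u_{s_j}\, f_j^{1/2}
=\sum_{i,j=1}^k f_i^{1/2}\,\alpha_{s_i^{-1}}(g)\,\alpha_{s_i^{-1}s_j}\!\big(f_j^{1/2}\big)\, u_{s_i^{-1}s_j}.
\]
In the diagonal term $i=j$ the coefficient of $u_e$ is the function $x\mapsto f_i(x)\, g(s_i x)$; whenever $f_i(x)\neq 0$ we have $x\in U_i$ and hence $s_i x\in s_i U_i\subseteq B$, so $g(s_i x)=1$ and this term equals $f_i$. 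In an off-diagonal term $i\neq j$ the coefficient function vanishes identically: it can be nonzero at $x$ only if $f_i^{1/2}(x)\neq 0$, forcing $x\in U_i$ and thus $s_i x\in s_i U_i$, and simultaneously $f_j^{1/2}(s_j^{-1}s_i x)\neq 0$, forcing $s_i x\in s_j U_j$, which is impossible since $s_i U_i\cap s_j U_j=\emptyset$. Therefore $v^{*} g v=\sum_{i=1}^k f_i=f$, as desired.

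The argument has no serious obstacle; the only thing to be careful about is the bookkeeping with the covariance relation $u_s k u_s^{*}=\alpha_s(k)$, together with the key observation that the pairwise disjointness of the levels $s_i U_i$ guaranteed by $A\prec B$ is exactly what annihilates the cross terms, while $g\equiv 1$ on $B$ collapses each diagonal term to $f_i$.
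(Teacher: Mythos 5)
Your proof is correct and follows essentially the same route as the paper's: unpack $A\prec B$ into a finite open cover $\{U_i\}$ with translates $s_iU_i$ pairwise disjoint in $B$, take a partition of unity on $A$ subordinate to $\{U_i\}$, set $v=\sum_i u_{s_i}(h_if)^{1/2}$, and observe that the disjointness of the $s_iU_i$ kills all cross terms while $g\equiv 1$ on $B$ collapses the diagonal to $\sum_i h_if=f$. The only cosmetic difference is how the cancellation of cross terms is phrased (you argue pointwise on the coefficient functions, while the paper notes directly that $\alpha_{s_i}(h_i^{1/2})\alpha_{s_j}(h_j^{1/2})=0$ for $i\neq j$), but the underlying computation is identical.
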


\begin{proof}
As $A\prec B$ there exist open sets $U_1 , \dots , U_n \subseteq X$
such that $A\subseteq \bigcup_{i=1}^n U_i$ and an $s_i \in G$ for each $i=1,\dots ,n$ such that the images 
$s_i U_i$ for $i=1,\dots ,n$ are pairwise disjoint subsets of $B$.
In the same way that one constructs a partition of unity subordinate to a given open cover,
we can produce, for each $i=1,\dots ,n$, a continuous function $h_i : X\to [0,1]$
with $h_i = 0$ on $X\setminus U_i$ so that $0\leq \sum_{i=1}^n h_i \leq 1$ and
$\sum_{i=1}^n h_i = 1$ on $A$.
Set $v = \sum_{i=1}^n u_{s_i} (fh_i )^{1/2}$. 

Denote by $\alpha$ the induced action of $G$ on $C(X)$,
that is, $\alpha_s (f)(x) = f(s^{-1} x)$ for all $s\in G$, $f\in C(X)$, and $x\in X$.
Since $\alpha_{s_i} (h_i^{1/2} )\alpha_{s_j} (h_j^{1/2} )= 0$ for $i\neq j$
and $g$ dominates $\alpha_{s_i} (h_i^{1/2} )$ for every $i$, 
we have
\begin{align*}
v^* gv 
&= \bigg( \sum_{i=1}^n (h_i f)^{1/2} u_{s_i}^* \bigg) g\bigg( \sum_{i=1}^n u_{s_i} (fh_i )^{1/2} \bigg) \\
&= \bigg( \sum_{i=1}^n u_{s_i}^* \alpha_{s_i} (f^{1/2})\alpha_{s_i} (h_i^{1/2} ) \bigg) 
g\bigg( \sum_{i=1}^n \alpha_{s_i} (h_i^{1/2} )\alpha_{s_i} (f^{1/2}) u_{s_i} \bigg) \\
&= \sum_{i=1}^n u_{s_i}^* \alpha_{s_i} (fh_i) u_{s_i} \\
&= \sum_{i=1}^n u_{s_i}^* \alpha_{s_i} (fh_i) u_{s_i} 
= \sum_{i=1}^n fh_i 
= f ,
\end{align*}
as desired.
\end{proof}

\begin{theorem}\label{T-af Z-stable}
Suppose that $G$ is infinite.
Let $G\curvearrowright X$ be a free minimal action which is almost finite.
Then $C(X)\rtimes_\lambda G$ is $\cZ$-stable.
\end{theorem}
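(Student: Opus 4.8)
The plan is to verify the hypotheses of the Hirshberg--Orovitz criterion (Theorem~\ref{T-Z-stable}) for $A = C(X)\rtimes_\lambda G$. First I would observe that $A$ is simple (by freeness and minimality), separable, unital, and nuclear (since almost finiteness forces $G$ amenable), and that it is not isomorphic to $\Cb$ because $G$ is infinite and the action is free and minimal on a space $X$ which must then be infinite. So the real work is: given $n\in\Nb$, a finite set $\Omega\subseteq A$, $\eps>0$, and a nonzero positive $a\in A$, to construct an order-zero c.p.c.\ map $\varphi:M_n\to A$ with $1-\varphi(1)\precsim a$ and $\|[b',\varphi(b)]\|<\eps$ for $b'\in\Omega$ and norm-one $b\in M_n$. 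As usual it suffices to treat $\Omega = \{ fu_s : f\in\Upsilon,\ s\in F\}$ for a finite $\Upsilon\subseteq C(X)$ of norm-one functions and a finite symmetric $F\ni e$ in $G$.

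The construction proceeds by combining almost finiteness with the Ornstein--Weiss quasitiling theorem (Theorem~\ref{T-qt}), in the spirit of the proof of Theorem~5.3 of \cite{ConJacKerMarSewTuc17}. First, since $a$ is a nonzero positive element, one can (after a standard reduction using that $A$ has a faithful tracial-type structure or simply by an approximation and Cuntz-comparison argument within the simple algebra) find a nonempty open set $W\subseteq X$ and a nonzero positive $a'\in C_0(W)$ with $a'\precsim a$; by minimality $\mu(W)>0$ uniformly over $\mu\in M_G(X)$, so there is $\theta>0$ with $\mu(W)\geq\theta$ for all $\mu\in M_G(X)$. Next, choose $\beta\in(0,\tfrac12)$ small (depending on $n$, $\eps$, and $\theta$), pick the quasitiling ``shapes'' $e\in T_1\subseteq\cdots\subseteq T_m$ with the boundary conditions of Theorem~\ref{T-qt}, and apply almost finiteness with parameters forcing: an open castle $\{(V_i,S_i)\}_{i\in I}$ whose shapes $S_i$ are $(K,\delta)$-invariant (with $K,\delta$ chosen so that each $S_i$ quasitiles well by the $T_k$'s, and so that a uniform-over-$M_G(X)$ ergodic-average estimate gives that a $\theta/2$-fraction of each $S_i$ lands over $W$), whose levels have small diameter, together with small remainder sets $S_i'\subseteq S_i$ with $|S_i'|<|S_i|/n$ and $X\setminus\bigsqcup_i S_iV_i\prec\bigsqcup_i S_i'V_i$. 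Then inside each tower one tiles the shape $S_i$ by right-translates of the $T_k$, and on each tile $T_k c$ one builds, using a partition-of-unity cutoff $g_i$ of the base as in the proof of Theorem~\ref{T-nucdim towdim}, a copy of $M_n$ sitting inside $\bigoplus$ of the matrix algebras $M_{T_k}\otimes C_0(V_i)\cong A_{V_i,S_i}$; more precisely, since $n\leq |T_k|$ one embeds $M_n$ into each $M_{T_k}$ block using $n$ of the $|T_k|$ coordinates, and glues these across tiles and towers. The map $\varphi$ is then assembled as a sum over tiles of these $M_n$-embeddings, weighted by the cutoff functions, and it is c.p.c.\ and order-zero because the tiles are $\beta$-disjoint (so one first shrinks to genuinely disjoint subsets $T_k'$ and absorbs the error into the remainder term).

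For condition (i), $1-\varphi(1)$ is supported on the union of: the uncovered part $X\setminus\bigsqcup_i S_iV_i$, the $\beta$-overlap error, the ``unused'' $|T_k|-n$ coordinates in each tile, and the boundary part of the quasitiling near $\partial_{T_{k-1}}T_k$ — all of which, by choosing $\beta$ and the invariance parameters small and by the $|S_i'|<|S_i|/n$ bound, can be dominated (using transitivity of $\prec$ and Remark~\ref{R-tower comparison}) by $\bigsqcup_i S_i'V_i$, which in turn (after shrinking to levels inside $W$ via the $\theta/2$-estimate and minimality, moving levels by group elements into $W$) can be made $\prec$ a set on which a function dominating $a'$ is supported; then Lemma~\ref{L-partial} converts this $\prec$ into an honest Cuntz subequivalence $1-\varphi(1)\precsim a'\precsim a$. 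For condition (ii), the commutator estimate $\|[fu_s,\varphi(b)]\|<\eps$ follows from the standard argument that the cutoff functions and the tile-to-tile matching change only slightly under the action of $s\in F$ when the shapes are sufficiently invariant and the levels sufficiently small — essentially the same $(d+2)/n$-type bookkeeping as in the proof of Theorem~\ref{T-nucdim towdim}, now with the quasitiling boundary playing the role of the sets $B_{i,k}$. The main obstacle, and the heart of the proof, is the simultaneous bookkeeping in step two: one must choose all of $\beta$, the $T_k$, $K$, $\delta$, $n'$ (the almost-finiteness integer), and the various small constants in the right dependency order so that (a) the quasitiles fit with controlled overlap, (b) the leftover from tiling plus the castle remainder plus the unused matrix coordinates is genuinely $\prec a$, and (c) the approximate equivariance survives; getting a consistent ordering of these quantifiers, and in particular pushing the ``unused $|T_k|-n$ coordinates'' error into the comparison with $a$, is the delicate part.
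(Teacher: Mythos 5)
Your high-level strategy agrees with the paper: reduce to the Hirshberg--Orovitz criterion (Theorem~\ref{T-Z-stable}), reduce $a$ to a positive function in $C(X)$ with small support, combine almost finiteness (in the strengthened form of Lemma~\ref{L-tower closures}) with the Ornstein--Weiss quasitiling theorem~\ref{T-qt} to tile each castle shape $S_i$ by translates of F{\o}lner sets $T_l$, build $\varphi$ from layered cutoff functions in the style of the $B_{i,k}$-bookkeeping from Theorem~\ref{T-nucdim towdim} for the $u_s$-commutator estimate, and use the castle remainder comparison $X\setminus\bigsqcup S_iV_i\prec\bigsqcup S_i'V_i$ together with Lemma~\ref{L-partial} and Remark~\ref{R-tower comparison} for the Cuntz-subequivalence $1-\varphi(1)\precsim a$. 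All of that is right.

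However, there is a genuine gap in the way you embed $M_n$, and you flag exactly the right place where it breaks: you propose to embed $M_n$ into each tile block $M_{T_l}\otimes C_0(V_i)$ by picking $n$ of the $|T_l|$ coordinates, i.e., a rank-$n$ corner of $M_{T_l}$, and then to ``push the unused $|T_l|-n$ coordinates error into the comparison with $a$.'' This cannot be made to work. For the $u_s$-commutator estimate you need $|T_l|$ to be large (a long $F^Q$-interior chain for the weight functions), so $(|T_l|-n)/|T_l|$ is close to $1$. The unused coordinates then occupy essentially all of each tile, so that $\tau(1-\varphi(1))$ is close to $1$ for every invariant trace $\tau$ while $\tau(a)$ can be made arbitrarily small by choosing $a$ with small support. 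Thus $1-\varphi(1)\precsim a$ fails, and this is not a bookkeeping issue: the unused coordinates are not a ``boundary'' or ``remainder'' effect shrinkable by invariance; with this embedding they form the bulk of the space.

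The paper instead distributes $M_n$ over \emph{whole tiles}, not coordinates within a tile. Having also refined the tile-centers $C_{k,l}$ by a fixed open cover $\cU$ of small mesh (so $C_{k,l}=\bigsqcup_m C_{k,l,m}$ with $cV_k\subseteq U_m$ for $c\in C_{k,l,m}$), one chooses $n$ pairwise disjoint subsets $C_{k,l,m}^{(1)},\dots,C_{k,l,m}^{(n)}$ of $C_{k,l,m}$ of \emph{equal} cardinality $\lfloor|C_{k,l,m}|/n\rfloor$, picks bijections $\Lambda_{k,i}$ between them, and defines $\varphi(e_{ij})$ to translate, level by level, the tiles based at $C_{k,l,m}^{(j)}$ to the corresponding tiles based at $C_{k,l,m}^{(i)}$. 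With this design $\varphi(1)$ covers all but the $<n$ leftover tile-centers in each $C_{k,l,m}$, the quasitiling boundary, and the castle remainder, and all three of these are genuinely small (of a controlled small fraction of each $S_i$, hence comparable via Lemma~\ref{L-partial} to $a$). Moreover the small diameter of the $U_m$ is precisely what gives the commutator estimate for $f\in\Upsilon$: for $c,c'\in C_{k,l,m}$ and $x\in V_k$ the points $cx,c'x$ are $\eta$-close, so $|f(tcx)-f(tc'x)|$ is small uniformly in $t\in T_l$. Your corner embedding would instead have to compare $f(tx)$ and $f(t'x)$ for $t,t'$ ranging over a large F{\o}lner set $T_l$, which cannot be controlled by any small-diameter hypothesis on the levels. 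So the correct idea is to view $M_n$ as acting on an $n$-fold partition of the tiles (organized by shape and by cover piece), not on $n$ coordinates inside a single tile.
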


\begin{proof}
As before we denote the induced action of $G$ on $C(X)$ by $\alpha$,
that is, $\alpha_s (f)(x) = f(s^{-1} x)$ for all $s\in G$, $f\in C(X)$, and $x\in X$.

Let $n\in\Nb$. Let $\Upsilon$ be a finite subset of the unit ball of $C(X)$, 
$F$ a symmetric finite subset of $G$ containing $e$, 
and $\eps > 0$. Let $a$ be a nonzero positive element of $C(X)\rtimes G$.
We will show the existence of a map $\varphi : M_n \to C(X)\rtimes_\lambda G$ 
as in Theorem~\ref{T-Z-stable}
where the finite set $\Omega$ there is taken to be $\Upsilon\cup \{ u_s : s\in F \}$.
Since $C(X)\rtimes_\lambda G$ is generated as a C$^*$-algebra by the unit ball of $C(X)$ 
and the unitaries $u_s$ for $s\in G$, we will thereafter be able to conclude by Theorem~\ref{T-Z-stable} 
that $C(X)\rtimes_\lambda G$ is $\cZ$-stable.

By Lemma~7.9 in \cite{Phi14} we may assume that $a\in C(X)$. Then we can find an
$x_0 \in X$ and a $\theta > 0$ such that $a$ is strictly positive on the closed ball
of radius $3\theta$ centred at $x_0$. We may therefore assume that $a$
is a $[0,1]$-valued function which takes value $1$ on all points within distance $2\theta$ from $x_0$
and value $0$ at all points at distance at least $3\theta$ from $x_0$.
Write $O$ for the open ball of radius $\theta$ centred at $x_0$.
Minimality implies that the sets $sO$ for $s\in G$ cover $X$,
and so by compactness there is a finite set $D\subseteq G$ such that $D^{-1} O = X$.

Let $0 < \kappa < 1$, to be determined. Choose an integer $Q > n^2 /\eps$.
Take a $\beta > 0$ 
which is small enough so that if $T$ is a nonempty finite subset of $G$
which is sufficiently invariant under left translation by $F^Q$ then
for every set $T' \subseteq T$ with $|T' | \geq (1-n\beta )|T|$ one has
\begin{align*}
\bigg|\bigcap_{s\in F^Q} sT' \bigg| \geq (1-\kappa )|T| .
\end{align*}

Choose an $L\in\Nb$ large enough so that $(1-\beta /2)^L < \beta$.
Since $G$ is amenable by the almost finiteness of the action,
there exist finite subsets $e\in T_1 \subseteq T_2 \subseteq\dots\subseteq T_L$ of $G$ 
such that $|\partial_{T_{l-1}} T_l | \leq (\beta /8)|T_l |$ for $l=2,\dots , L$. 
By the previous paragraph, we may also assume that for each $l$ the set $T_l$ 
is sufficiently invariant under left translation by $F^Q$ so that 
\begin{align}\label{E-kappa}
\bigg|\bigcap_{s\in F^Q} sT \bigg| \geq (1-\kappa )|T_l | 
\end{align}
for every $T \subseteq T_l$ satisfying $|T| \geq (1-n\beta )|T_l |$. 

By the uniform continuity of functions in $\Upsilon\cup\Upsilon^2$
and the uniform continuity of the transformations $x\mapsto tx$ of $X$ for $t\in T_L$,
there is an $\eta > 0$ such that if $d(x,y) < \eta$ then 
$|f(tx) - f(ty)| < \eps /(4n^2 )$ for all $f\in\Upsilon\cup\Upsilon^2$ and $t\in T_L$.
Let $\cU = \{ U_1 , \dots , U_M \}$ be an open cover of $X$ whose members all have diameter
less that $\eta$. Let $\eta' > 0$ be a Lebesgue number for $\cU$ which is no larger than $\theta$.

Let $E$ be a finite subset of $G$ containing $T_K$ and let $\delta > 0$ be such that $\delta \leq \beta /4$.
Since $G$ is infinite, we may enlarge $E$ and shrink $\delta$ as necessary so as to guarantee
that the cardinality of every nonempty $(E,\delta )$-invariant finite set $S\subseteq G$ is 
large enough to satisfy
\begin{align}\label{E-large card}
\bigg(\sum_{l=1}^L |T_l |\bigg)Mn \leq \beta |S| .
\end{align}

Since the action is almost finite, by Lemma~\ref{L-tower closures} we can find
\begin{enumerate}
\item nonempty open sets $V_1 , \dots , V_K \subseteq X$
and nonempty $(E,\delta )$-invariant finite sets $S_1 , \dots , S_K \subseteq G$
such that the family $\{ (\overline{V_k} ,S_k ) \}_{k=1}^K$ is a castle 
with levels of diameter less than $\eta'$, and

\item sets $S_k' \subseteq S_k$ such that $|S_k' |/|S_k | < 1/(4|D|^2)$ and
\begin{align}\label{E-prec}
X\setminus \bigsqcup_{k=1}^K S_k V_k \prec \bigsqcup_{k=1}^K S_k' V_k .
\end{align}
\end{enumerate}

Let $k\in \{ 1,\dots , K\}$. Since $S_k$ is $(T_L ,\beta /4)$-invariant, 
by Theorem~\ref{T-qt} and our choice of the sets $T_1 , \dots , T_L$ we can find 
$C_{k,1} ,\dots , C_{k,L} \subseteq S_k$ such that the collection 
$\{ T_l c : l=1,\dots ,L, \, c\in C_{k,l} \}$ is $\beta$-disjoint and
$(1-\beta )$-covers $S_k$. By $\beta$-disjointness, 
for every $l=1,\dots ,L$ and $c\in C_{k,l}$ we can find a $T_{k,l,c} \subseteq T_l$ 
satisfying $|T_{k,l,c} | \geq (1-\beta )|T_l |$
so that the collection of sets $T_{k,l,c} c$ 
for $l=1,\dots ,L$ and $c\in C_{k,l}$ is disjoint.

Since $\eta'$ is a Lebesgue number for $\cU$ and the levels
of the tower $(V_k ,S_k )$ have diameter less than $\eta'$,
for each $l=1,\dots ,L$ there is a partition
\begin{align*}
C_{k,l} = C_{k,l,1} \sqcup C_{k,l,2} \sqcup\cdots\sqcup C_{k,l,M}
\end{align*}
such that $cV_k \subseteq U_m$ for all $m=1,\dots ,M$ and $c\in C_{k,l,m}$.
For each $l$ and $m$ choose pairwise disjoint subsets
$C_{k,l,m}^{(1)} , \dots , C_{k,l,m}^{(n)}$ of $C_{k,l,m}$ such that each has cardinality 
$\lfloor |C_{k,l,m} | /n \rfloor$.
For each $i=2,\dots , n$ choose a bijection 
\begin{align*}
\Lambda_{k,i} : \bigsqcup_{l,m} C_{k,l,m}^{(1)} \to \bigsqcup_{l,m} C_{k,l,m}^{(i)}
\end{align*}
which sends $C_{k,l,m}^{(1)}$ to $C_{k,l,m}^{(i)}$ for all $l,m$.
Also, define $\Lambda_{k,1}$ to be the identity map from $\bigsqcup_{l,m} C_{k,l,m}^{(1)}$ to itself,
and write $\Lambda_{k,i,j}$ for the composition $\Lambda_{k,i} \circ\Lambda_{k,j}^{-1}$.

Now consider for each $j=1,\dots ,n$ and $c\in C_{k,l,m}^{(j)}$ the set 
$T_{k,l,c}' := \bigcap_{i=1}^n T_{k,l,\Lambda_{k,i,j} (c)}$, which satisfies
\begin{align}\label{E-n beta}
|T_{k,l,c}' | \geq (1-n\beta )|T_l | .
\end{align}
since each $T_{k,l,\Lambda_{k,i} (c)}$ is a subset of $T_l$ with cardinality at least $(1-\beta )|T_l |$.
Set 
\[
B_{k,l,c,Q} = \bigcap_{s\in F^Q} sT_{k,l,c}'
\]
and for $q=0,\dots, Q-1$ put
\begin{align*}
B_{k,l,c,q} = F^{Q-q} B_{k,l,c,Q} \setminus F^{Q-q-1} B_{k,l,c,Q} .
\end{align*}
Then the sets $B_{k,l,c,0} , \dots , B_{k,l,c,Q}$ partition $F^Q B_{k,l,c,Q}$, 
which is a subset of $T_{k,l,c}'$.
For $s\in F$ it is clear that 
\begin{align}\label{E-B_Q}
sB_{k,l,c,Q} \subseteq B_{k,l,c,Q-1} \cup B_{k,l,c,Q} ,
\end{align}
while for $q=1,\dots , Q-1$ we have
\begin{align}\label{E-B_q}
sB_{k,l,c,q} \subseteq B_{k,l,c,q-1} \cup B_{k,l,c,q} \cup B_{k,l,c,q+1} ,
\end{align}
for if we are given a $t\in B_{k,l,c,q}$ then $st\in F^{Q-q+1} B_{k,l,c,Q}$, while if
$st\in F^{Q-q-2} B_{k,l,c,Q}$ then $t\in F^{Q-q-1} B_{k,l,c,Q}$ since $F$ is symmetric,
contradicting the membership of $t$ in $B_{k,l,c,q}$.

We view $C(X)\rtimes_\lambda G$ as being canonically included in the crossed product $B(X)\rtimes_\lambda G$
of the action induced by $G\curvearrowright X$ on the C$^*$-algebra $B(X)$ of bounded Borel functions on $X$. 
Since the sets $s\overline{V_k}$ for $k=1,\dots ,K$ and $s\in S_k$ are closed and pairwise disjoint, 
for each $k$ we can find an open set $U_k \supseteq \overline{V_k}$ such that
the sets $sU_k$ for $k=1,\dots ,K$ and $s\in S_k$ are pairwise disjoint.
We define a linear map $\psi : M_n \to B(X)\rtimes_\lambda G$ by declaring it on the standard
matrix units $\{ e_{ij} \}_{i,j=1}^n$ of $M_n$ to be given by
\begin{align*}
\psi (e_{ij} ) = \sum_{k=1}^K \sum_{l=1}^L \sum_{m=1}^M \sum_{c\in C_{k,l,m}^{(j)}} \sum_{t\in T_{k,l,c}'} 
u_{t\Lambda_{k,i,j} (c)c^{-1} t^{-1}} \unit_{tcU_k}
\end{align*}
and extending linearly. 

For each $k=1,\dots ,K$ choose a continuous function $h_k : X\to [0,1]$ such that $h_k = 1$ on $\overline{V_k}$
and $h_k = 0$ on $X\setminus U_k$.
Recalling that $\alpha$ denotes the induced action of $G$ on $C(X)$,
for all $k$, $l$, and $m$, all $1\leq i,j\leq n$, and all $c\in C_{k,l,m}^{(j)}$ we set
\begin{align*}
h_{k,l,c,i,j} = \sum_{q=1}^Q \sum_{t\in B_{k,l,c,q}} 
\frac{q}{Q} u_{t\Lambda_{k,i,j} (c)c^{-1} t^{-1}} \alpha_{tc} (h_k ) .
\end{align*}
Define a linear map $\varphi : M_n \to C(X)\rtimes_\lambda G$ by setting
\begin{align*}
\varphi (e_{ij} ) = \sum_{k=1}^K \sum_{l=1}^L \sum_{m=1}^M \sum_{c\in C_{k,l,m}^{(j)}} h_{k,l,c,i,j} 
\end{align*}
and extending linearly.
Note that if we put 
\begin{align*}
h = \sum_{k=1}^K \sum_{l=1}^L \sum_{m=1}^M \sum_{i=1}^n \sum_{c\in C_{k,l,m}^{(i)}} h_{k,l,c,i,i} .
\end{align*}
then $h$ is a continuous function taking values in $[0,1]$ which commutes with the 
image of $\psi$, and we have
\begin{align*}
\varphi (b) = h\psi (b) 
\end{align*}
for all $b\in M_n$, which shows that $\varphi$ is an order-zero c.p.c.\ map. 

We now verify condition (ii) in Theorem~\ref{T-Z-stable} for the elements of the set 
$\{ u_s : s\in F \}$. Let $1\leq i,j\leq n$. 
For $s\in F$ we have
\begin{align*}
u_s h_{k,l,c,i,j} u_s^{-1} - h_{k,l,c,i,j} 
&= \sum_{q=1}^Q \sum_{t\in B_{k,l,c,q}} 
\frac{q}{Q} u_{st\Lambda_{k,i,j} (c)c^{-1} (st)^{-1}} \alpha_{stc} (h_k ) \\
&\hspace*{20mm} \ - \sum_{q=1}^Q \sum_{t\in B_{k,l,c,q}} 
\frac{q}{Q} u_{t\Lambda_{k,i,j} (c)c^{-1} t^{-1}} \alpha_{tc} (h_k ) ,
\end{align*}
and so in view of (\ref{E-B_Q}) and (\ref{E-B_q}) we obtain
\begin{align*}
\| u_s h_{k,l,c,i,j} u_s^{-1} - h_{k,l,c,i,j} \| \leq \frac{1}{Q} < \frac{\eps}{n^2} .
\end{align*}
Since the element $a = u_s h_{k,l,c,i,j} u_s^{-1} - h_{k,l,c,i,j}$ satisfies 
$a^* a \leq \unit_{F^Q B_{k,l,c,Q} c U_k}$ and $aa^* \leq \unit_{F^Q B_{k,l,\Lambda (c),Q} c U_k}$
and the sets $F^Q B_{k,l,c,Q} c U_k$ are pairwise disjoint for all $k$, $l$, and $c$, 
this yields
\begin{align*}
\| u_s \varphi (e_{ij} ) u_s^{-1} - \varphi (e_{ij} ) \|
= \max_{k,l,c} \| u_s h_{k,l,c,i,j} u_s^{-1} - h_{k,l,c,i,j} \| < \frac{\eps}{n^2}
\end{align*}
and hence, for every norm-one $b = (b_{ij} ) \in M_n$,
\begin{align*}
\| [ u_s ,\varphi (b)] \| 
&= \| u_s \varphi (b) u_s^{-1} - \varphi (b) \| \\
&\leq \sum_{i,j=1}^n \| u_s \varphi (b_{ij} ) u_s^{-1} - \varphi (b_{ij} ) \| 
< n^2 \cdot \frac{\eps}{n^2} = \eps .
\end{align*}

Next we verify condition (ii) in Theorem~\ref{T-Z-stable} for the functions in $\Upsilon$.
Let $1\leq i,j\leq n$.
Let $f\in\Upsilon\cup\Upsilon^2$. Let $1\leq k\leq K$ and $1\leq l\leq L$.
Let $c\in C_{k,l,m}^{(j)}$.
Since the elements $t\Lambda_{k,i,j} (c)$ for $t\in T_{k,l,c}'$ are distinct, we have
\begin{align}\label{E-two}
h_{k,l,c,i,j}^* fh_{k,l,c,i,j}
&= \sum_{q=1}^Q \sum_{t\in B_{k,l,c,q}} 
\frac{q^2}{Q^2} \alpha_{tc\Lambda_{k,i,j} (c)^{-1} t^{-1} } (f) \alpha_{tc} (h_k^2 )
\end{align}
and similarly
\begin{align}\label{E-one}
fh_{k,c,i,j}^* h_{k,c,i,j}
&= \sum_{q=1}^Q \sum_{t\in B_{k,l,c,q}} 
\frac{q^2}{Q^2} f\alpha_{tc} (h_k^2 )
\end{align}
Now let $x\in V_k$. Since $\Lambda_{k,i,j} (c)x$ and $cx$ both belong to $U_m$ 
by our definition of $C_{k,l,m}$,
we have $d(\Lambda_{k,i,j} (c)x,cx) < \eta$. It follows 
that for every $t\in T_l$ we have
\begin{align*}
|f(t\Lambda_{k,i,j} (c)x) - f(tcx)| < \frac{\eps}{4n^2} ,
\end{align*}
in which case
\begin{align*}
\| \alpha_{tc\Lambda_{k,i,j} (c)^{-1} t^{-1} } (f) - f \|
&= \| \alpha_{c^{-1} t^{-1}} (\alpha_{tc\Lambda_{k,i,j} (c)^{-1} t^{-1}} (f) - f) \| \\
&= \sup_{x\in V_k} |f(t\Lambda_{k,i,j} (c)x) - f(tcx)| \\
&< \frac{\eps}{3n^2} . 
\end{align*}
Using (\ref{E-two}) and (\ref{E-one}) this gives us
\begin{align}\label{E-max}
\lefteqn{\| h_{k,l,c,i,j}^* fh_{k,l,c,i,j} - fh_{k,l,c,i,j}^* h_{k,l,c,i,j} \|}\hspace*{20mm} \\
\hspace*{20mm} &= \max_{q=1,\dots ,Q} \max_{t\in B_{k,l,c,q}} \frac{q^2}{Q^2} 
\| (\alpha_{tc\Lambda_{k,i,j} (c)^{-1} t^{-1} } (f) - f)\alpha_{tc} (h_k^2 ) \| \notag \\
&< \frac{\eps}{3n^2} .\notag 
\end{align}
Set $w = \varphi (e_{ij} )$ for brevity. Let $f\in\Upsilon$.
Since the functions $h_{k,l,c,i,j}$ for $1\leq k\leq K$, $1\leq l\leq L$, $1\leq m\leq M$,
and $c\in C_{k,l,m}^{(j)}$ have pairwise disjoint supports, we infer 
from (\ref{E-max}) that
\begin{align*}
\| w^* gw - gw^* w \| < \frac{\eps}{3n^2} 
\end{align*}
for $g$ equal to either $f$ or $f^2$.
It follows that 
\begin{align*}
\| w^* f^2 w - fw^* fw \|
\leq \| w^* f^2 w - f^2 w^* w \| + \| f(fw^* w - w^* fw) \|
< \frac{2\eps}{3n^2}
\end{align*}
and hence
\begin{align*}
\| fw - wf \|^2
&= \| (fw-wf)^* (fw - wf) \| \\
&= \| w^* f^2 w - fw^* fw + fw^* wf - w^* fwf \| \\
&\leq \| w^* f^2 w - fw^* fw \| + \| (fw^* w - w^* fw)f \| \\
&< \frac{2\eps}{3n^2} + \frac{\eps}{3n^2} = \frac{\eps}{n^2}.
\end{align*}
Therefore for every norm-one $b= (b_{ij} ) \in M_n$ we have
\begin{align*}
\| [f,\varphi (b)] \| 
\leq \sum_{i,j=1}^n \| [f,\varphi (b_{ij} )] \|
< n^2 \cdot \frac{\eps}{n^2} = \eps . 
\end{align*}

To complete the proof, let us now show that $1-\varphi (1) \precsim a$.
By enlarging $E$ and shrinking $\delta$ if necessary we may assume
that the sets $S_1 , \dots , S_K$ are sufficiently left invariant 
so that for every $k=1,\dots ,K$ there is an $R_k \subseteq S_k$
such that the set $\{ s\in R_k : Ds\subseteq S_k \}$ has cardinality
at least $|S_k|/2$. Let $1\leq k\leq K$. Let $R_k'$ be a maximal subset of $R_k$
with the property that the sets $Ds$ for $s\in R_k'$ 
are pairwise disjoint. Observe that if $s,t\in R_k$ satisfy $Ds \cap Dt \neq \emptyset$
then $s\in D^{-1} Dt$, which shows that $|R_k' |\geq |R_k |/|D^{-1} D| \geq |S_k |/(2|D|^2 )$.
Since $D^{-1} O = X$, for each $s\in R_k'$ there is a $t\in D$ such that $tsV_k$ intersects $O$,
which implies that the function $a$ takes the constant value $1$ on $tsV_k$ since the diameter
of the latter set is less than $\theta$ and $a$ takes value $1$ at all points within 
distance $\theta$ of the set $O$.
Therefore the set $S_k^\sharp$ of all $t\in S_k$ such that $a$ takes the constant value $1$ on
$tV_k$ has cardinality at least $|S_k|/(2|D|^2 )$. Set 
\begin{align*}
S_k'' &= \bigsqcup_{l=1}^L \bigsqcup_{m=1}^M \bigsqcup_{i=1}^n
\bigsqcup_{c\in C_{k,l,m}^{(i)}} B_{k,l,c,Q} c .
\end{align*}
Since  $|B_{k,l,c,Q} | \geq (1-\kappa )|T_l | \geq (1-\kappa )|T_{k,l,m} |$ 
by (\ref{E-kappa}) and (\ref{E-n beta}),
using (\ref{E-large card}) 
we obtain
\begin{align*}
|S_k'' |
&\geq \sum_{l=1}^L \sum_{m=1}^M \sum_{i=1}^n \sum_{c\in C_{k,l,m}^{(i)}} |B_{k,l,c,Q} | \\
&\geq (1-\kappa ) \sum_{l=1}^L \sum_{m=1}^M \sum_{i=1}^n |T_{k,l,m} ||C_{k,l,m}^{(i)} |  \notag \\
&\geq (1-\kappa ) \sum_{l=1}^L \sum_{m=1}^M |T_{k,l,m} | ( |C_{k,l,m} | - n )  \notag \\
&\geq (1-\kappa ) \bigg(\bigg|\bigsqcup_{l=1}^L T_{k,l,m} C_{k,l} \bigg| - Mn\sum_{l=1}^L |T_l | \bigg)  \notag \\
&\geq (1-\kappa ) (1-2\beta )|S_k | , \notag 
\end{align*}
and so if $\kappa$ and $\beta$ are small enough we have 
\begin{align*}
|S_k \setminus S_k'' | \leq \frac{|S_k|}{4|D|^2}\leq \frac{|S_k^\sharp |}{2} .
\end{align*}
Choose an injection $f_k : S_k \setminus S_k'' \to S_k^\sharp$. 
Now since for each $k$ the set $Q_k = S_k^\sharp \setminus f_k (S_k \setminus S_k'')$ satisfies 
\begin{align*}
|Q_k | \geq |S_k^\sharp | - |S_k \setminus S_k''| \geq \frac{|S_k^\sharp |}{2}
\geq \frac{|S_k|}{4|D|^2} ,
\end{align*}
we deduce, in view of (\ref{E-prec}) and Remark~\ref{R-tower comparison}, that
\begin{align*}
X\setminus \bigsqcup_{k=1}^K S_k V_k \prec \bigsqcup_{k=1}^K Q_k V_k .
\end{align*}
We can thus find an open cover $\{ U_1 , \dots , U_r \}$ of 
$X\setminus \bigsqcup_{k=1}^K S_k V_k$ and $s_1 , \dots , s_r \in G$
such that $s_1 U_1 , \dots , s_r U_r$ are disjoint subsets of 
$\bigsqcup_{k=1}^K Q_k V_k$. Then the open sets 
$f_k (s)V_k = (f_k (s)s^{-1} )sV_k$ for $k=1,\dots ,K$ and $s\in S_k \setminus S_k''$ together with 
$s_1 U_1 , \dots , s_r U_r$ form a disjoint collection whose union is contained in 
$\bigsqcup_{k=1}^K S_k^\sharp V_k$, which shows that
\begin{align*}
X\setminus \bigsqcup_{k=1}^K S_k'' V_k \prec \bigsqcup_{k=1}^K S_k^\sharp V_k .
\end{align*}
Since the function $1-\varphi (1)$ is supported on $X\setminus\bigsqcup_{k=1}^K S_k'' V_k$
and $a$ takes the constant value $1$ on $\bigsqcup_{k=1}^K S_k^\sharp V_k$, it follows
by Lemma~\ref{L-partial} that there is a $v\in C(X)\rtimes_\lambda G$
satisfying $v^* a v = 1-\varphi (1)$. This shows that $1-\varphi (1) \precsim a$.
\end{proof}

\begin{example}\label{E-not af}
In \cite{GioKer10} examples were given of free minimal actions $\Zb\curvearrowright X$ 
on compact metrizable spaces such that the crossed product $C(X)\rtimes_\lambda G$ 
fails to be $\cZ$-stable.
As mentioned in Example~\ref{E-mdim}, these examples 
have finite tower dimension since they factor onto an odometer. By Theorem~\ref{T-af Z-stable},
they fail to be almost finite. 
\end{example}

Using Theorem~\ref{T-af Z-stable} we can give some new examples
of classifiable crossed products, as we now demonstrate. 
Let us write $\sC$ for the class of simple separable unital C$^*$-algebras having finite nuclear dimension 
and satisfying the UCT. This class is classified by the Elliott invariant (ordered $K$-theory paired with traces)
as a consequence of the work of Elliott--Gong--Lin--Niu \cite{EllGonLinNiu15}, Gong--Lin--Niu \cite{GonLinNiu15},
Tikuisis--White--Winter \cite{TikWhiWin17} in the stably finite case and of 
Kirchberg \cite{Kir94} and Phillips \cite{Phi00} in the purely infinite case.
Moreover, the stable finite C$^*$-algebras in the class $\sC$ are ASH algebras of topological dimension at most 2.
What is particularly novel in the examples below from the perspective of classification theory
is that one can combine infinite asymptotic dimension in the group
with positive topological entropy in the dynamics.
For a general reference on entropy
for actions of amenable groups see Chapter~9 of \cite{KerLi16}.

\begin{proposition}\label{P-res finite}
Suppose that $G$ is infinite, residually finite, and amenable. 
Let $r\in [0,\infty ]$.
Then there exists a uniquely ergodic free minimal action $G\curvearrowright X$ on the Cantor set
which is almost finite and has topological entropy $r$,
and the crossed product $C(X)\rtimes G$ belongs to the class $\sC$.
\end{proposition}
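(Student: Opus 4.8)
The plan is to produce, for each $r\in[0,\infty]$, a free minimal uniquely ergodic action $G\curvearrowright X$ on the Cantor set of topological entropy exactly $r$ which is an almost one-to-one extension of a $G$-odometer, and then to read off almost finiteness from the zero-dimensional characterization of Theorem~\ref{T-zero dim} and the crossed product properties from Theorem~\ref{T-af Z-stable} together with the monotracial Toms--Winter machinery.

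For the construction, fix by residual finiteness a chain $G=G_0\supseteq G_1\supseteq G_2\supseteq\cdots$ of finite-index normal subgroups with $\bigcap_n G_n=\{e\}$, and let $G\curvearrowright\hat G=\varprojlim_n G/G_n$ be the associated $G$-odometer, a free minimal equicontinuous action on the Cantor set which is uniquely ergodic (Haar measure) and of zero entropy. Now build a Toeplitz-type $G$-subshift $X\subseteq A^G$, with $A$ a finite set when $r<\infty$ and $A$ a Cantor set when $r=\infty$ so that $A^G$ is a Cantor set in either case, by an inductive pattern-filling procedure along the chain $(G_n)$: at stage $n$ one prescribes the symbols on a $G_n$-periodic skeleton while leaving a controlled proportion of ``holes'' to be filled freely later, the proportions being chosen so that the per-site configurational count grows at exactly the exponential rate $r$. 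Carrying out the filling \emph{uniformly}---each admissible pattern of a given level appearing with controlled frequency inside every admissible pattern of each higher level---forces unique ergodicity, while the $G_n$-periodic skeletons yield a factor map $X\to\hat G$ which is bijective on a dense $G_\delta$; hence $G\curvearrowright X$ is minimal and, as an extension of the free action on $\hat G$, free. Since $X$ is a nonempty compact metrizable zero-dimensional space with no isolated points (none, since $G$ is infinite and the action minimal), $X$ is the Cantor set. This explicit prescribed-entropy construction is the main technical obstacle: for $G=\Zb$ it is the classical theory of positive-entropy Toeplitz flows, and for general infinite residually finite amenable $G$ it must be combined with exact F{\o}lner tilings and a uniform frequency control to secure unique ergodicity.

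Granting the construction, almost finiteness of $G\curvearrowright X$ follows from Theorem~\ref{T-zero dim}. Given a finite $K\subseteq G$ and $\delta>0$, choose $n$ so large that the clopen set of points whose configuration is $G_n$-periodic on a prescribed window has measure close to $1$, and apply the exact F{\o}lner tiling theorem of Downarowicz--Huczek--Zhang (used as in Lemma~\ref{L-Folner towers}) to obtain an exact tiling of $G$ by finitely many $(K,\delta)$-invariant F{\o}lner sets that are unions of $G_n$-cosets; pulling this tiling back through the odometer factor map and subordinating a fine clopen partition of the base of each resulting tower produces a clopen tower decomposition of $X$ with $(K,\delta)$-invariant shapes, as required. (Alternatively, genericity of almost finiteness, Theorem~\ref{T-tiling}, can be invoked once the construction is arranged inside the relevant comeager class, but the value of the entropy has to be pinned down by hand in any case.)

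Finally, since $G$ is amenable the reduced and full crossed products coincide and $A:=C(X)\rtimes G$ is separable, unital and nuclear; freeness and minimality of the action make $A$ simple; unique ergodicity makes $A$ have a unique tracial state, as every tracial state of the reduced crossed product of a free action factors through the canonical conditional expectation onto $C(X)$ and is therefore given by a $G$-invariant Borel probability measure; $A$ satisfies the UCT by Tu's theorem \cite{Tu99}; and $A$ is $\cZ$-stable by Theorem~\ref{T-af Z-stable} (using that $G$ is infinite). A simple separable unital nuclear $\cZ$-stable C$^*$-algebra with a unique tracial state has finite nuclear dimension by Sato--White--Winter \cite{SatWhiWin15}, so $A$ has finite nuclear dimension and hence lies in $\sC$; being stably finite it is then an ASH algebra of topological dimension at most $2$, in accordance with the classification. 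Picking $r=0,\,r\in(0,\infty),\,r=\infty$ in turn gives the full range of entropies, completing the proof.
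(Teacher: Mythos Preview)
Your approach is genuinely different from the paper's, and your reduction of almost finiteness to the odometer factor is actually cleaner than what the paper does. The paper does \emph{not} build a single Toeplitz-type extension of the odometer. Instead it takes the product $Y\times Z$ of the odometer $Y=\hat G$ with a Jewett--Krieger model $Z$ of a Bernoulli $G$-action of entropy $r$ (citing \cite{Wei85,Ros87}), uses the disjointness of a compact (zero-entropy) action from a Bernoulli action to conclude that $Y\times Z$ is uniquely ergodic and minimal, obtains entropy $r$ from additivity plus the variational principle, and then invokes Theorem~\ref{T-af extension finite} (which needs $E_G(Y\times Z)$ finite and $\dim(Y\times Z)<\infty$) to deduce almost finiteness of the product from that of $Y$. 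The crossed-product paragraph is essentially identical to yours.

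What your route buys: once you have the Toeplitz model, almost finiteness is immediate---pulling back the single-tower clopen decomposition of $\hat G$ with shape a F{\o}lner fundamental domain for $G_n$ (Weiss \cite{Wei01}) through the factor map $X\to\hat G$ already gives a clopen tower decomposition of $X$ with $(K,\delta)$-invariant shapes, so Theorem~\ref{T-zero dim} applies directly; you never touch the coarse/$m$-almost finite machinery behind Theorem~\ref{T-af extension finite}, nor do you need the disjointness argument. (Your detour through Downarowicz--Huczek--Zhang is unnecessary here: the fundamental-domain towers of the odometer suffice.)

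What it costs, and where the gap is: the entire weight of the argument now rests on the sentence beginning ``Granting the construction\dots''. Producing, for an arbitrary infinite residually finite amenable $G$ and arbitrary $r\in[0,\infty]$, a uniquely ergodic minimal Toeplitz $G$-subshift of topological entropy exactly $r$ is a substantial theorem in its own right; you sketch the mechanism (periodic skeletons, hole-proportions tuned to $r$, uniform frequencies for unique ergodicity, Cantor alphabet for $r=\infty$) but do not prove it, and there is no single reference in the paper's bibliography you can cite for it in this generality. The paper avoids this entirely by outsourcing the entropy to Jewett--Krieger, which is a black box it can legitimately quote. As written, your proof is a correct outline with its hardest step asserted rather than established; to make it a proof you must either carry out the Toeplitz construction in full for general $G$, or cite a source that does.
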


\begin{proof}
As $G$ is residually finite we can find a decreasing sequence 
$N_1 \supseteq N_2 \supseteq\dots$ of finite-index normal subgroups
of $G$ such that $\bigcap_{k=1}^\infty N_k = \{ e \}$. Then for each $k$ we
have the surjective homomorphism $G/N_{k+1} \to G/N_k$ given on cosets by $sN_{k+1} \mapsto sN_k$.
Form the inverse limit $Y$ of
the sequence $G/N_1 \leftarrow G/N_2\leftarrow\cdots$, which as a topological space 
is a Cantor set since $G$ is infinite. Then we have the free minimal action
$G\curvearrowright Y$ arising from the actions $(s,tN_k ) \mapsto stN_k$ of $G$ on each 
$G/N_k$. This is an example of a profinite action, which by definition is an 
inverse limit of actions on finite sets. 
It has a unique $G$-invariant Borel probability measure $\mu$,
namely the one induced from the uniform probability measures on the quotients $G/N_k$.
The p.m.p.\ action $G\curvearrowright (Y,\mu )$, being compact, has zero measure entropy.
By \cite{Wei01} there is a F{\o}lner sequence $\{ F_k \}$ for $G$ such that
for each $k$ the set $F_k$ is a complete set of representatives for the quotient of $G$ by $N_k$,
from which it follows that the action $G\curvearrowright Y$ is almost finite, since the clopen
partition of $Y$ corresponding to $N_k$ is the set 
of levels of a single clopen tower with shape $F_k$. 

By the Jewett--Krieger theorem \cite{Wei85,Ros87}
there is a uniquely ergodic minimal action $G\curvearrowright Z$ on the Cantor set 
whose invariant measure $\kappa$ gives a Bernoulli action with entropy $r$. 
Then the p.m.p.\ actions $G\curvearrowright (Y,\mu )$ and $G\curvearrowright (Z,\nu )$
are disjoint, which implies that the action $G\curvearrowright Y\times Z$ is uniquely ergodic and minimal,
as is readily seen. Moreover, the action $G\curvearrowright Y\times Z$ 
is free because the action $G\curvearrowright Y$ is free,
has entropy $r$ by the additivity of topological entropy and the variational principle, and
is almost finite by Theorem~\ref{T-af extension finite} since it factors onto the almost finite action 
$G\curvearrowright Y$ and the space $Z$ is zero-dimensional.

By Theorem~\ref{T-af Z-stable}, the crossed product $C(X)\rtimes_\lambda G$ is $\cZ$-stable.
Since the action is free, minimal, and uniquely ergodic, the crossed product
also has a unique tracial state, given by composing the canonical conditional expectation 
$C(X)\rtimes_\lambda G \to C(X)$ with integration with respect to the unique $G$-invariant
Borel probability measure on $X$. It follows by \cite{SatWhiWin15} that $C(X)\rtimes_\lambda G$
has finite nuclear dimension. Since $C(X)\rtimes_\lambda G$ satisfies the UCT \cite{Tu99},
we conclude that $C(X)\rtimes_\lambda G$ belongs to the class $\sC$.
\end{proof}


We note finally that profinite actions of countably infinite residually finite amenable groups,
such as the action $G\curvearrowright Y$ in the proof of Proposition~\ref{P-res finite},
are already known to be classifiable (see Remark~5.3 in \cite{SzaWuZac17}).

\section{The type semigroup and almost unperforation}\label{S-type}

Let $G\curvearrowright X$ be an action on a zero-dimensional compact Hausdorff space.
Write $\alpha$ for the induced action on $C(X)$, that is, 
$\alpha_s (f)(x) = f(s^{-1} x)$ for all $s\in G$, $f\in C(X)$, and $x\in X$.
On the space $C(X,\Zbn )$ of continuous functions
on $X$ with values in the set $\Zbn$ of nonnegative integers we define an equivalence relation
by declaring that $f\sim g$ if 
there are $h_1 ,\dots , h_n \in C(X,\Zbn )$ and $s_1 , \dots , s_n \in G$ such that
$\sum_{i=1}^n h_i = f$ and $\sum_{i=1}^n \alpha_{s_i} (h_i) = g$
(transitivity is not immediately obvious but is readily checked).
Write $S(X,G)$ for the quotient $C(X,\Zbn )/\sim$. This is an Abelian semigroup under the 
operation $[f]+[g] = [f+g]$, which is easily seen to be well defined. 
We moreover endow $S(X,G)$
with the algebraic order, that is, for $a,b\in S(X,G)$ we declare that $a\leq b$ whenever 
there exists a $c\in S(X,G)$ such that $a + c = b$.
The ordered Abelian semigroup $S(X,G)$ is called the {\it (clopen) type semigroup} of the action.
Note that, in view of Proposition~\ref{P-clopen subequivalence}, comparison can be expressed in this language 
by saying that, for all clopen sets $A,B\subseteq X$, if $\mu (A) < \mu (B)$ for all $\mu\in M_G (X)$ 
then $[\unit_A ] \leq [\unit_B ]$.

One can equivalently define $S(X,G)$ by considering the collection of clopen subsets of $X\times\Nb$
of the form $\bigsqcup_{i=1}^n A_i \times \{ i\}$ for some $n\in\Nb$ 
(the {\it bounded} subsets of $X\times\Nb$) and quotienting
by the relation of equidecomposability, under which 
$\bigsqcup_{i=1}^n A_i \times \{ i\} \sim \bigsqcup_{i=1}^m B_i \times \{ i\}$
if for each $i=1,\dots , n$ there exist a clopen partition $\{ A_{i,1} , \dots , A_{i,J_i} \}$ of $A_i$, 
$s_{i,1} , \dots , s_{i,J_i} \in G$, and $k_{i,1} ,\dots , k_{i,J_i} \in \{ 1,\dots , m \}$ such that 
\[
\bigsqcup_{i=1}^n \bigsqcup_{j=1}^{J_i} s_{i,j} A_{i,j} \times \{ k_{i,j} \}
= \bigsqcup_{i=1}^m B_i \times \{ i\} .
\]
The semigroup operation is the concatenation
\begin{align*}
\bigg[ \bigsqcup_{i=1}^n A_i \times \{ i\} \bigg] 
+ \bigg[ \bigsqcup_{i=1}^m B_i \times \{ i\} \bigg] 
= \bigg[ \bigg(\bigsqcup_{i=1}^n A_i \times \{ i\} \bigg) 
\sqcup \bigg( \bigsqcup_{i=n+1}^{n+m} B_i \times \{ i\} \bigg) \bigg] .
\end{align*}
The isomorphism with the first construction can be seen by considering for each function $f\in C(X,\Zbn )$
the decomposition $f = \sum_{i=1}^n \unit_{A_i}$ where 
$A_i = \{ x\in X : f(x)\geq i \}$ and $n = \max_{x\in X} f(x)$ and associating to $f$
the subset $\bigsqcup_{i=1}^n A_i \times \{ i \}$ of $X\times \{ 1,\dots , n\}$.

The idea of a type semigroup originates in Tarski's work on amenability (see \cite{Wag93}) 
and has variants depending on the type of action (e.g., on an ordinary set,
on a measure space, or on a zero-dimensional compact space) and the types of sets 
use in the definition equidecomposability (e.g., arbitrary, measurable, or clopen).
The clopen version was studied in \cite{RorSie12}.

Note that every measure $\mu$ in $M_G (X)$ induces a state on $S(X,G)$ given by
$[f] \mapsto \mu (f)$. When the action is minimal, this gives a bijection from measures 
in $M_G (X)$ to states on $S(X,G)$ by the proof of Lemma~5.1 in \cite{RorSie12}.

An ordered Abelian semigroup $A$ is said to be {\it almost unperforated} if, 
for all $a,b\in A$ and $n\in\Nb$, the inequality $(n+1)a \leq nb$ implies $a\leq b$.

\begin{lemma}\label{L-comparison ap}
Let $G\curvearrowright X$ be a free minimal action on the Cantor set 
such that $S(X,G)$ is almost unperforated. Then the action has comparison.
\end{lemma}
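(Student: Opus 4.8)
The plan is to use the characterization of comparison via clopen sets from Proposition~\ref{P-clopen comparison}, so that it suffices to show the following: for nonempty clopen sets $A,B\subseteq X$ with $\mu(A)<\mu(B)$ for all $\mu\in M_G(X)$, one has $[\unit_A]\leq[\unit_B]$ in $S(X,G)$. (As recorded in the discussion preceding the lemma, $[\unit_A]\leq[\unit_B]$ is exactly the statement $A\prec B$ via Proposition~\ref{P-clopen subequivalence}, so establishing this inequality gives comparison.) First I would dispose of the trivial case where $M_G(X)=\emptyset$: since the action is almost finite in spirit... actually no — here we only assume almost unperforation of $S(X,G)$, not amenability, so I should instead note that if $G$ is finite then minimality forces $X$ to be a single orbit and comparison is immediate; and if $M_G(X)=\emptyset$ the comparison hypothesis $\mu(A)<\mu(B)$ is vacuous, but then I need a separate argument — in fact when $M_G(X)=\emptyset$ the type semigroup has no states, and one can use the (standard) fact that an ordered semigroup of this kind with no states is ``purely infinite'' so that $[\unit_A]\leq[\unit_B]$ holds for all nonempty clopen $A,B$; alternatively I would simply remark that the interesting content is when $M_G(X)\neq\emptyset$ and handle the stateless case by a direct Tarski-type dichotomy.

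The heart of the argument is the case $M_G(X)\neq\emptyset$. Here I would use the correspondence (from the proof of Lemma~5.1 in \cite{RorSie12}, quoted above) between $G$-invariant Borel probability measures and states on $S(X,G)$: every state $\omega$ on $S(X,G)$ is of the form $[f]\mapsto\mu(f)$ for a unique $\mu\in M_G(X)$. Fix nonempty clopen $A,B$ with $\mu(A)<\mu(B)$ for all $\mu\in M_G(X)$. Then $\omega([\unit_A])<\omega([\unit_B])$ for every state $\omega$ on $S(X,G)$. Since $M_G(X)$ is weak$^*$ compact and the functions $\mu\mapsto\mu(A)$ and $\mu\mapsto\mu(B)$ are continuous, there is $\varepsilon>0$ with $\mu(B)-\mu(A)>\varepsilon$ for all $\mu\in M_G(X)$, hence an integer $n$ large enough that $(n+1)\mu(A)<n\mu(B)$ for all $\mu\in M_G(X)$, i.e. $(n+1)\omega([\unit_A])\leq n\omega([\unit_B])$ for every state $\omega$. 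Now I would invoke the standard Hahn–Banach/Goodearl–Handelman type criterion for ordered Abelian semigroups: in a preordered Abelian semigroup, if $(n+1)a$ is \emph{not} $\leq nb$, then there is a state separating them, i.e. a state $\omega$ with $(n+1)\omega(a)>n\omega(b)$. (This is the semigroup analogue of the fact that in a cone, an element fails to lie below another iff some positive functional witnesses it.) Contrapositively, $(n+1)[\unit_A]\leq n[\unit_B]$ in $S(X,G)$. Finally, almost unperforation of $S(X,G)$ yields $[\unit_A]\leq[\unit_B]$, which is what we wanted.

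The main obstacle I anticipate is the state-separation step — producing, from the failure of the inequality $(n+1)a\leq nb$ in $S(X,G)$, a \emph{state} on $S(X,G)$ detecting this failure, together with the identification of states with measures in $M_G(X)$. The measure/state correspondence is cited (proof of Lemma~5.1 in \cite{RorSie12}), but I would need to be careful about whether it requires normalization conventions (states sending the class of $\unit_X$ to $1$) and about the stateless case; the separation principle itself is a known fact about preordered Abelian monoids (one passes to the associated ordered group or cone and applies a Hahn–Banach argument), but I should either cite it cleanly or include the short functional-analytic argument. Modulo that, everything else — reducing to clopen sets via Proposition~\ref{P-clopen comparison}, the compactness argument producing $\varepsilon$ and $n$, and the final application of almost unperforation — is routine.
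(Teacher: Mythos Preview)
Your overall strategy---reduce to clopen sets via Proposition~\ref{P-clopen comparison}, identify states on $S(X,G)$ with measures in $M_G(X)$, deduce $(n+1)[\unit_A]\leq n[\unit_B]$ for some $n$, and then apply almost unperforation---is exactly the paper's approach. The gap is in the separation step, and it is precisely the point where you fail to use minimality.

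The separation principle you invoke (``in a preordered Abelian semigroup, if $(n+1)a\not\leq nb$ then some state $\omega$ has $(n+1)\omega(a)>n\omega(b)$'') is not valid in this generality: states need not determine the algebraic order, even up to constants. The correct statement, which is Proposition~2.1 of \cite{OrtPerRor12} as cited in the paper, requires the boundedness hypothesis $a\leq mb$ for some $m$, and its conclusion is that \emph{some} $n$ satisfies $(n+1)a\leq nb$---not the particular $n$ you manufactured from the $\varepsilon$-gap. Your compactness argument fixing $n$ in advance is therefore superfluous, and your contrapositive as stated does not follow. What you are missing is the observation that minimality of the action forces $X$ to be covered by finitely many translates of $B$, whence $[\unit_A]\leq[\unit_X]\leq m[\unit_B]$ for some $m$; this is exactly the boundedness hypothesis needed to invoke the separation result. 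Once you add this one line, your argument coincides with the paper's.

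Your digressions about $M_G(X)=\emptyset$ and $G$ finite are unnecessary: Proposition~2.1 of \cite{OrtPerRor12} applies uniformly (if there are no states the hypothesis on states is vacuous and the conclusion still holds), so the paper's argument handles these cases without a separate Tarski-type dichotomy.
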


\begin{proof}
Let $A$ and $B$ be clopen subsets of $X$ such that $\mu (A) < \mu (B)$ for all $\mu\in M_G (X)$.
By Lemma~5.1 of \cite{RorSie12}, every state $\sigma$ on $S(X,G)$ gives rise to a measure 
$\mu$ in $M_G (X)$ satisfying $\mu (A) = \sigma ([\unit_A ])$ for every clopen set $A\subseteq X$.
Thus $\sigma ([\unit_A ]) < \sigma ([\unit_B ])$ for every state $\sigma$ on $S(X,G)$.
Since the action is minimal, $X$ is covered by finitely many translates of $B$,
so that $[\unit_A ] \leq m[\unit_B ]$ for some $m\in\Nb$. It follows 
by Proposition 2.1 of \cite{OrtPerRor12} that there exists
an $n\in\Nb$ for which $(n+1)[\unit_A ]\leq n[\unit_B ]$. Almost unperforation
then yields $[\unit_A]\leq [\unit_B]$, establishing comparison.
\end{proof}

The following is a generalization of Theorem~\ref{T-comparison af}(i)$\Rightarrow$(ii).
It shows that, for free minimal actions on the Cantor set, 
almost finiteness implies a stable version of comparison.

\begin{lemma}\label{L-type af}
Let $G\curvearrowright X$ be a free minimal action on the Cantor set which is almost finite.
Let $f,g\in C(X,\Zbn )$ be such that $\mu (f) < \mu (g)$
for every $\mu\in M_G (X)$. Then $[f]\leq [g]$.
\end{lemma}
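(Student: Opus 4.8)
The plan is to reduce this statement about functions in $C(X,\Zbn)$ to the set-level comparison provided by Theorem~\ref{T-comparison af}(i)$\Rightarrow$(ii). Recall the second description of $S(X,G)$: to $f\in C(X,\Zbn)$ one associates the bounded subset $\widehat{f}=\bigsqcup_{i=1}^{N}A_i\times\{i\}$ of $X\times\Nb$, where $A_i=\{x:f(x)\ge i\}$ and $N=\max_x f$, and similarly $\widehat{g}=\bigsqcup_{i=1}^{M}B_i\times\{i\}$. The inequality $\mu(f)<\mu(g)$ for every $\mu\in M_G(X)$ says exactly that $\sum_i\mu(A_i)<\sum_i\mu(B_i)$ for all such $\mu$, i.e.\ that the total mass of $\widehat{f}$ is strictly less than that of $\widehat{g}$ in every invariant measure. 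So I want a ``clopen subequivalence'' $\widehat{f}\prec\widehat{g}$ inside $X\times\Nb$ under the action $G\curvearrowright X\times\Nb$ (acting trivially on the second coordinate), and then read off $[f]\le[g]$.

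First I would handle the compactness issue: $X\times\Nb$ is not compact, but $\widehat f$ and $\widehat g$ live in the clopen compact piece $X\times\{1,\dots,\max(N,M)\}$, and $G$ acts on this compact zero-dimensional space preserving each slab $X\times\{i\}$. Since $X$ is the Cantor set and the action is free and minimal, so is $G\curvearrowright X\times\{1,\dots,K\}$ on each clopen $G$-invariant piece — but note the whole product is not minimal (the slabs are invariant). To get around this I would argue slab by slab is not enough, because mass can be transferred between $A_i$ and $B_j$ with $i\ne j$; instead I would work directly with the set-level statement inside $X\times\{1,\dots,K\}$. The key point is that almost finiteness of $G\curvearrowright X$ transfers to $G\curvearrowright X\times\{1,\dots,K\}$ in the form we need: the relevant towers are just products of towers in $X$ with $\{1,\dots,K\}$, which multiplies the shape-sizes and base-counts by $K$ but keeps shapes $(K,\delta)$-invariant and keeps the remainder small in the comparison sense. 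Alternatively, and more cleanly, I would mimic the proof of (i)$\Rightarrow$(ii) in Theorem~\ref{T-comparison af} directly at the level of $X\times\{1,\dots,K\}$, using the Ornstein--Weiss/F{\o}lner averaging from that proof applied to the clopen sets $A_i,B_j$ simultaneously.

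Concretely, the heart of the argument runs as in Theorem~\ref{T-comparison af}(i)$\Rightarrow$(ii). By a $K$-fold application of Lemma~\ref{L-portmanteau} (or a single application to the disjoint union picture) there is $\eta>0$ with $\sum_i\mu(A_i)+\eta\le\sum_i\mu(B_i)$ for all $\mu\in M_G(X)$; since $X$ is zero-dimensional and the action free minimal, a slight enlargement argument lets me produce a small clopen ``spare'' ball $C$ disjoint from all the $A_i$ with uniformly positive invariant measure, and replace the target $\bigsqcup B_i$ by $\bigsqcup B_i\setminus(\text{translates of }C)$ while keeping strict inequality of total masses by $\eta/2$. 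Then I claim there are a finite $K'\subseteq G$ and $\delta>0$ so that every nonempty $(K',\delta)$-invariant $F\subseteq G$ satisfies, for every $x\in X$,
\[
\sum_{i}\frac{1}{|F|}\sum_{s\in F}\unit_{A_i}(sx)+\tfrac{\eta}{2}\le\sum_{i}\frac{1}{|F|}\sum_{s\in F}\unit_{B_i}(sx)
\]
and a similar lower bound for the averaged indicator of $C$ — the proof is the same F{\o}lner-sequence-plus-portmanteau contradiction argument as in Theorem~\ref{T-comparison af}. Now invoke almost finiteness of $G\curvearrowright X$ (using Lemma~\ref{L-tower closures} to get disjoint closures, hence, by zero-dimensionality, clopen towers as in Theorem~\ref{T-zero dim}): choose an open, indeed clopen, castle $\{(V_\ell,S_\ell)\}_{\ell}$ with $(K',\delta)$-invariant shapes and small levels whose level union has small complement $\prec\bigsqcup_\ell S_\ell' V_\ell$ with $|S_\ell'|/|S_\ell|$ tiny. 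On each tower level, the averaging inequality above becomes a counting inequality $|S_{\ell,\,A_i}|$-summed $+\tfrac{\eta}{2}|S_\ell|\le|S_{\ell,\,B_i}|$-summed, which by a Hall-type (or just greedy) matching argument produces, within each tower, an injective assignment of the ``copies of the $A_i$-pieces'' into the ``copies of the $B_j$-pieces,'' implemented by group translations. Patching these over all towers, and using the spare set $C$ to absorb the small remainder $X\setminus\bigsqcup S_\ell V_\ell$ exactly as in Theorem~\ref{T-comparison af}, yields a clopen decomposition witnessing $\widehat{f}\prec\widehat{g}$, hence a clopen equidecomposition of $\widehat f$ with a sub-bounded-set of $\widehat g$, which by definition of the type semigroup gives $[f]\le[g]$.

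The main obstacle I anticipate is the bookkeeping of the matching step when $f$ and $g$ take several values: one must simultaneously match, within a single tower shape $S_\ell$, the multiset $\{A_i\}$ against $\{B_j\}$ using only translations inside $S_\ell$, and one needs the averaged-indicator inequality to give not just $\sum_i|S_{\ell,A_i}|+\tfrac{\eta}{2}|S_\ell|\le\sum_i|S_{\ell,B_i}|$ but enough room to carry out a level-by-level injection that respects the $\Nb$-coordinate (i.e.\ sends a copy of $A_i\times\{i\}$ into some $B_j\times\{j\}$, recording the target index). This is a finite combinatorial lemma — essentially that if $\sum a_i+t\le\sum b_j$ with $a_i,b_j$ the level-counts then $\bigsqcup a_i$ injects into $\bigsqcup b_j$ — and is harmless, but it is the one place where the multi-valued nature of $f,g$ genuinely enters beyond the set case. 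Everything else is a transcription of the proof of Theorem~\ref{T-comparison af}(i)$\Rightarrow$(ii), and indeed one could alternatively phrase the whole argument as: apply that theorem's reasoning verbatim to the free minimal (on each clopen invariant slab) zero-dimensional system obtained by adjoining finitely many $\Nb$-levels, noting that almost finiteness of the base action gives what is needed on the product.
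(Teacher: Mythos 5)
There is a genuine gap in your proposed argument, and it traces back to underusing Theorem~\ref{T-zero dim}. You cite that theorem only to get clopen towers, but you still work with a castle whose ``level union has small complement'' and then try to absorb the remainder via a spare clopen ball $C$ together with a clopen subequivalence from $X\setminus\bigsqcup S_\ell V_\ell$. The paper's proof instead uses the \emph{full} conclusion of Theorem~\ref{T-zero dim}: for a free almost finite action on the Cantor set one can pick a clopen castle whose levels \emph{partition} $X$ exactly, with no remainder at all. With that in hand there is nothing to absorb, no spare ball is needed, and the argument reduces to (a) refining the towers so that every level is contained in or disjoint from each $A_j$ and $B_k$, (b) a weak$^*$ cluster-point argument (as in Theorem~\ref{T-tower}) yielding, after choosing $(K,\delta)$ appropriately, the per-tower counting inequality $\sum_j|E_{i,j}|\le\sum_k|F_{i,k}|$, and (c) an arbitrary injection $\bigsqcup_j E_{i,j}\times\{j\}\hookrightarrow\bigsqcup_k F_{i,k}\times\{k\}$ within each tower, which immediately produces the witness for $[f]\le[g]$.

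Your spare-ball step is not merely unnecessary; as stated it can fail. You ask for a clopen set $C$ disjoint from all the $A_i$, but since $A_1=\{x:f(x)\ge 1\}$ this requires $f$ to vanish somewhere; for $f\equiv 1$ and $g\equiv 2$ (which satisfies the hypothesis $\mu(f)<\mu(g)$) we have $A_1=X$, so no such $C$ exists. The sensible version would place the spare set inside the ``excess'' region of $\widehat g\setminus\widehat f$ in $X\times\Nb$, but then one has to redo the F{\o}lner/portmanteau averaging and the minimality arguments in a non-minimal product, which is exactly the complication you flagged but did not resolve. All of this evaporates once you use the exact clopen tower decomposition from Theorem~\ref{T-zero dim}. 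The per-tower injection (your ``Hall-type/greedy matching'') is fine and matches the paper; it is the remainder-absorption machinery that should be removed.
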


\begin{proof}
Write $f = \sum_{j=1}^n \unit_{A_j}$ and $g = \sum_{k=1}^m \unit_{B_k}$
where $A_j = \{ x\in X : f(x)\geq j \}$ and $B_j = \{ x\in X : g(x)\geq k \}$,
with $n = \max_{x\in X} f(x)$ and $m = \max_{x\in X} g(x)$.

Let $K$ be a finite subset of $G$ and $\delta > 0$, both to be determined.
By Theorem~\ref{T-zero dim}, there is a clopen castle $\{ (V_i , S_i ) \}_{i\in I}$
with $(K,\delta )$-invariant shapes such that $\bigsqcup_{i\in I} S_i V_i = X$.
By replacing each tower $(V_i ,S_i )$ with finitely many thinner towers with the same shape $S_i$
and with bases forming a clopen partition of $V_i$, we may assume that
every level of every tower in the castle is either contained in or disjoint from $A_j$
for every $j=1,\dots , n$, and also either contained in or disjoint from $B_k$
for every $k=1,\dots ,m$. 
For each $i$ set $E_{i,j} = \{ s\in S_i : sV_i \subseteq A_j \}$ for every $j$
and $F_{i,k} = \{ s\in S_i : sV_i \subseteq B_k \}$ for every $k$.
An argument by contradiction using a weak$^*$ cluster point as in the proof
of Theorem~\ref{T-tower} shows that
we can choose $K$ and $\delta$ so that for every $i$ we have
\begin{align*}
\frac{1}{|S_i|} \sum_{j=1}^n |E_{i,j}| 
\leq \frac{1}{|S_i|} \sum_{k=1}^m |F_{i,k}| ,
\end{align*}
in which case we can find a bijection 
$\bigsqcup_{j=1}^n E_{i,j}\times \{ j\} \to \bigsqcup_{k=1}^m F_{i,k}\times \{ k\}$,
which we write as $(s,j) \mapsto (t_{i,s,j} , k_{i,s,j})$.
We then have
\begin{align*}
f = \sum_{i\in I} \sum_{j=1}^n \sum_{s\in E_{i,j}} \unit_{sV_i} 
\end{align*}
and
\begin{align*}
\sum_{i\in I} \sum_{j=1}^n \sum_{s\in E_{i,j}} \alpha_{t_{i,s,j} s^{-1}} (\unit_{sV_i} ) 
= \sum_{i\in I} \sum_{j=1}^n \sum_{s\in E_{i,j}} \unit_{t_{i,s,j} V_i} 
\leq g ,
\end{align*}
showing that $[f]\leq [g]$, as desired.
\end{proof}

The following adds almost unperforation to the conditions in Theorem~\ref{T-comparison af}
in the case that the space is the Cantor set.

\begin{theorem}\label{T-Cantor comparison af}
Let $G\curvearrowright X$ be a free minimal action on the Cantor set
and consider the following conditions:
\begin{enumerate}
\item the action is almost finite,

\item $S(X,G)$ is almost unperforated,

\item the action has comparison,

\item the action has $m$-comparison for all $m\in\Nb$,

\item the action has $m$-comparison for some $m\in\Nb$.
\end{enumerate}
Then (i)$\Rightarrow$(ii)$\Rightarrow$(iii)$\Rightarrow$(iv)$\Rightarrow$(v),
and if $E_G (X)$ is finite then all five conditions are equivalent.
\end{theorem}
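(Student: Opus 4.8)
The plan is to prove the chain of implications (i)$\Rightarrow$(ii)$\Rightarrow$(iii)$\Rightarrow$(iv)$\Rightarrow$(v) and then, assuming $E_G(X)$ finite, close the loop via (v)$\Rightarrow$(i) by invoking the already-established Theorem~\ref{T-comparison af}. Most of the work reduces to the two new lemmas about the type semigroup: Lemma~\ref{L-type af} (almost finiteness implies a stable form of comparison) and Lemma~\ref{L-comparison ap} (almost unperforation implies comparison).

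For (i)$\Rightarrow$(ii): assume the action is almost finite and take $a,b\in S(X,G)$ with $(n+1)a\leq nb$. Represent $a=[f]$ and $b=[g]$ with $f,g\in C(X,\Zbn)$. Applying a state $\sigma$ on $S(X,G)$ — equivalently, integrating against the measure $\mu\in M_G(X)$ it induces via Lemma~5.1 of \cite{RorSie12} — the inequality $(n+1)[f]\leq n[g]$ gives $(n+1)\mu(f)\leq n\mu(g)$ for all $\mu\in M_G(X)$, whence $\mu(f)<\mu(g)$ for every $\mu$ (here one uses that $M_G(X)$ is nonempty, which holds by amenability of $G$, and that $\mu(g)>0$ since $g$ is a nonzero function on the minimal system, making the strict inequality genuine). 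Then Lemma~\ref{L-type af} yields $[f]\leq[g]$, i.e.\ $a\leq b$. This is exactly the definition of almost unperforation of $S(X,G)$.

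The implications (ii)$\Rightarrow$(iii), (iii)$\Rightarrow$(iv), and (iv)$\Rightarrow$(v) are then immediate or nearly so: (ii)$\Rightarrow$(iii) is precisely Lemma~\ref{L-comparison ap}; (iii)$\Rightarrow$(iv) follows because comparison is the $m=0$ case and having $0$-comparison trivially implies $m$-comparison for all $m$ (one can always keep a partition into a single disjoint subcollection and pad with empty ones), using Proposition~\ref{P-clopen comparison} to phrase everything in terms of clopen sets on the Cantor set; and (iv)$\Rightarrow$(v) is trivial. Finally, under the hypothesis that $E_G(X)$ is finite, Theorem~\ref{T-comparison af} gives (v)$\Rightarrow$(i) (its condition (iv) is ``$m$-comparison for some $m$'' and its condition (i) is almost finiteness), so the five conditions become equivalent.

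I expect the only subtle point to be the argument that the weak inequality on type-semigroup elements upgrades to a \emph{strict} measure inequality in the passage (i)$\Rightarrow$(ii): one must check that $\mu(g)>0$ for every $\mu\in M_G(X)$, which is where minimality and nonnegativity/nontriviality of $g$ enter, and one should note that if $g=0$ then $b=0$ forces $a=0$ trivially. Everything else is bookkeeping, citing Lemmas~\ref{L-comparison ap}, \ref{L-type af}, Theorem~\ref{T-comparison af}, Proposition~\ref{P-clopen comparison}, and Lemma~5.1 of \cite{RorSie12}. So the proof is essentially an assembly of previously proved statements, with the genuine mathematical content residing in Lemmas~\ref{L-comparison ap} and \ref{L-type af}, which are established separately.
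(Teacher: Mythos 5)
Your proposal is correct and follows essentially the same route as the paper's proof: (i)$\Rightarrow$(ii) via Lemma~\ref{L-type af}, (ii)$\Rightarrow$(iii) via Lemma~\ref{L-comparison ap}, (iii)$\Rightarrow$(iv)$\Rightarrow$(v) trivially, and (v)$\Rightarrow$(i) as a special case of Theorem~\ref{T-comparison af}. In fact you are slightly more careful than the paper in the step (i)$\Rightarrow$(ii), where you explicitly justify the upgrade from $(n+1)\mu(f)\leq n\mu(g)$ to $\mu(f)<\mu(g)$ by noting that $\mu(g)>0$ by minimality when $g\neq 0$ and by disposing of the case $g=0$ separately, whereas the paper's proof elides this point by starting directly from the strict semigroup inequality $(n+1)[f]<n[g]$.
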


\begin{proof}
(i)$\Rightarrow$(ii). Let $f,g\in C(X,\Zbn )$ be such that $(n+1)[f] < n[g]$ for some $n\in\Nb$.
Then for every $\mu\in M_G (X)$ we have $(n+1)\mu (f) < n\mu (g)$ and hence $\mu (f) < \mu (g)$.
It follows by Lemma~\ref{L-type af} that $[f]\leq [g]$, establishing almost unperforation.

(ii)$\Rightarrow$(iii). This is Lemma~\ref{L-comparison ap}.

(iii)$\Rightarrow$(iv)$\Rightarrow$(v). Trivial.

(v)$\Rightarrow$(i). This is a special case of Theorem~\ref{T-comparison af}(iv)$\Rightarrow$(i).
\end{proof}

\end{document}